\newtheorem{theorem}{Theorem}[section]
\newtheorem{definition}[theorem]{Definition}
\newtheorem{lemma}[theorem]{Lemma}
\newtheorem{corollary}[theorem]{Corollary}
\newtheorem{question}[theorem]{Question}
\newtheorem{example}[theorem]{Example}
\newtheorem{proposition}[theorem]{Proposition}
\newtheorem{remark}[theorem]{Remark}
\newtheorem{assumption}[theorem]{Assumption}
\newtheorem{conjecture}[theorem]{Conjecture}
\title[The Yang-Baxter Equation and Characteristic Finite Simple Quotients of $F_2$]{The Yang-Baxter Equation and Characteristic Finite Simple Quotients of the Free Group of Rank $2$}
\author{Liam Hanany}
\thanks{The author was supported by the European Research Council (ERC) under the European Union’s Horizon 2020 (N. 882751)}
\date{\today}
\begin{document}

\begin{abstract}
We show that infinitely many alternating groups arise as quotients of the free group of rank 2, with kernel a characteristic subgroup. We also show that such simple quotients exist of arbitrarily large Lie rank. This resolves two questions posed by \cite{chen2025finite}.
\end{abstract}
\maketitle
\section{Introduction}
Let $F_n$ denote the free group on $n$ generators. In \cite{chen2025finite}, Chen, Lubotzky and Tiep proved a surprising fact about $F_2$. Namely, they showed that $F_2$ has, for all but finitely many prime powers $q$, finite simple quotients of type $\mathrm{PSL}_3(\mathbb{F}_q)$ and $\mathrm{PSU}_3(\mathbb{F}_q)$ with kernel a characteristic subgroup of $F_2$.

An equivalent formulation of this surprising fact is as follows - there is a pair $a,b$ of generators of these finite simple groups, such that any Nielsen-transformation on $a,b$ will yield a new pair of elements that is equivalent to $a,b$ up to an automorphism of the finite simple group. For such groups when $q$ is prime the outer automorphism group is small, of size at most $6$, and so a small index subgroup of $\mathrm{Aut}(F_2)$ will necessarily act on $a,b$ by conjugation. Another consequence is that every primitive element of $F_2$ will be mapped into a single automorphism-orbit of the finite simple group. 

This surprising result goes against the philosophy of the Wiegold conjecture. Specifically, the following is a conjecture that is a special case of the Wiegold conjecture, see \cite[Section 6]{lubotzky2011dynamics}.
\begin{conjecture}[The "Baby Wiegold Conjecture"]
    For every $n \geq 3$ there are no characteristic subgroups $C \leq F_n$ such that $F_n / C$ is a finite simple group.
\end{conjecture}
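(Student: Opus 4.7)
The Baby Wiegold Conjecture is an open problem, so in lieu of a proof I can only sketch a plausible line of attack. The plan begins by reformulating the conjecture in the Nielsen-transformation language used above for $F_2$: a characteristic subgroup $C \leq F_n$ with $F_n/C = G$ a finite simple group is equivalent to a generating $n$-tuple $(g_1,\dots,g_n)$ of $G$ whose $\mathrm{Aut}(F_n)$-orbit under Nielsen moves coincides with its $\mathrm{Aut}(G)$-orbit.

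The core step I would pursue is to show that the Wiegold Conjecture (transitivity of the $\mathrm{Aut}(F_n)$-action on the set of generating $n$-tuples of $G$ for $n \geq 3$) implies the Baby Wiegold Conjecture. Suppose $C \leq F_n$ is characteristic and $G = F_n/C$ is finite simple. Since nonabelian finite simple groups satisfy $d(G) = 2 \leq n-1$ for $n \geq 3$, one can produce a generating $n$-tuple $(h_1,\dots,h_n)$ of $G$ with $\langle h_2,\dots,h_n\rangle = G$ and $h_2 \neq e$ (take a generating pair and pad with nontrivial elements). Assuming Wiegold, this tuple lies in the Nielsen orbit of $(g_1,\dots,g_n)$; by the characteristic hypothesis, it then lies in its $\mathrm{Aut}(G)$-orbit. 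The elementary Nielsen transformation $R_{12}: x_1 \mapsto x_1 x_2$ must therefore lift to an automorphism $\phi \in \mathrm{Aut}(G)$ with $\phi(h_1) = h_1 h_2$ and $\phi(h_i) = h_i$ for $i \geq 2$. Since $\phi$ fixes the generating set $\{h_2,\dots,h_n\}$, it must be the identity, forcing $h_2 = e$, a contradiction.

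The main obstacle is evidently the Wiegold Conjecture itself, which is a notoriously deep open problem. An alternative route, bypassing full Wiegold, would be to prove the weaker statement that the Nielsen orbit of any fixed generating $n$-tuple contains some $(h_1,\dots,h_n)$ with $\langle h_2,\dots,h_n\rangle = G$ and $h_2 \neq e$. A complementary strategy is to use the Steinberg-type relations $[R_{ij}, R_{jk}] = R_{ik}$ (for distinct $i,j,k$) among the elementary Nielsen transformations for $n \geq 3$ to constrain the induced homomorphism $\mathrm{Aut}(F_n) \to \mathrm{Aut}(G)$, combining this with structural information from the classification of finite simple groups and with recent rigidity results such as Kazhdan's property (T) for $\mathrm{Aut}(F_n)$ with $n \geq 4$ due to Kaluba--Kielak--Nowak. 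The overarching difficulty is that a proof must be uniform across all finite simple groups, and converting local Nielsen constraints into a global obstruction valid for every such $G$ is where a genuinely new idea is required.
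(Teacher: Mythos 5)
This statement is labeled a \emph{conjecture} in the paper precisely because it is an open problem: the paper offers no proof of it, only a citation to Lubotzky's survey where the Wiegold conjecture is discussed. You correctly recognize this and, rather than pretending to have a proof, outline a conditional argument and some speculative strategies. That is the appropriate response, and your conditional reduction is sound.

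Your derivation of the implication ``Wiegold $\Rightarrow$ Baby Wiegold'' is correct: if $C \leq F_n$ is characteristic with $F_n/C \cong G$, then every Nielsen move on the defining tuple is realized by an automorphism of $G$, so the entire $\mathrm{Aut}(F_n)$-orbit sits inside a single $\mathrm{Aut}(G)$-orbit of size $|\mathrm{Aut}(G)|$. Your construction via the tuple $(h_1,\dots,h_n)$ with $\langle h_2,\dots,h_n\rangle = G$, $h_2\neq e$ and the transvection $R_{12}$ forces that automorphism to be the identity, giving a contradiction. A slightly more robust way to close the argument, which sidesteps the choice of a particular $(h_i)$, is pure counting: under Wiegold the Nielsen orbit is \emph{all} generating $n$-tuples, whose number for $n \geq 3$ and $G$ nonabelian simple is at least on the order of $|G|^{n-1}$ and in particular far exceeds $|\mathrm{Aut}(G)| \leq |G|\cdot|\mathrm{Out}(G)|$. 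Either route establishes the implication; neither, of course, removes the dependence on the (open) Wiegold conjecture. Your remarks about Steinberg relations and property (T) for $\mathrm{Aut}(F_n)$ are plausible-sounding directions, but since they are speculative there is nothing to check against the paper, which contents itself with stating the conjecture.
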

It has long been known that $F_2$ and $F_n, n\geq 3$ have different behaviours with regards to the Wiegold conjecture, see \cite{garion2009commutator}. The main reason for this difference is that under the $\mathrm{Aut}(F_2)$-action on $F_2$ the conjugacy class of the commutator $[x,y]$ is preserved up to inverse.
We remark that the finiteness of the simple quotient in the Baby Wiegold conjecture is crucial, as it was shown in \cite{coulon2023infinite} that there are infinitely many infinite simple characteristic quotients of $F_n$ for every $n \geq 2$.

The basic idea behind \cite{chen2025finite} is based on an observation of Dyer, Formanek and Grossman \cite{dyer1982linearity} that the braid group $B_4$ and the automorphism group $\mathrm{Aut}(F_2)$ are related. More precisely, it states that $B_4 / Z(B_4) \simeq \mathrm{Aut}^+(F_2)$, where $\mathrm{Aut}^+(F_2)$ is the index-$2$ subgroup of orientation preserving automorphisms, see Section \ref{The Equivariant Quandle} for further details.

The braid groups have a rich representation theory, see \cite{krammer2000braid},\cite{bigelow2001braid},\cite{krammer2002braid},\cite{abad2014introduction}. Given a Zariski-dense representation of a finitely generated group into a simple lie group, the strong approximation theorem of Weisfeiler and Pink \cite{weisfeiler1984strong},\cite{pink2000strong} gives infinitely many finite simple quotients of the same Lie type. The representation of $B_4$ used in \cite{chen2025finite} is the Burau representation.

Two natural questions were posed in \cite{chen2025finite}. The first is regarding the existence of such finite simple quotients of arbitrarily large Lie rank. The natural idea suggested there is to try and identify large rank Zariski-dense representations of the braid group $B_4$.

The second question offered was regarding the existence of alternating characteristic quotients of the braid group $B_4$. In this paper we answer both questions positively.
\begin{theorem}
\label{Large rank quotients}
    For every $n \geq 3$ there are infinitely many prime powers $q$ such that there is a characteristic subgroup $C \leq F_2$ with $F_2 / C \simeq \mathrm{PSL}_{\binom{n}{2}}(\mathbb{F}_q)$.
\end{theorem}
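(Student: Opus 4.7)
My plan is to follow the template of \cite{chen2025finite}. I will construct a representation $\rho_n \colon B_4 \to \mathrm{GL}_{\binom{n}{2}}(\mathbb{Z}[t^{\pm 1}])$ whose image, after an appropriate specialization of $t$, is Zariski-dense in $\mathrm{SL}_{\binom{n}{2}}$. Via the Dyer--Formanek--Grossman isomorphism $B_4/Z(B_4) \simeq \mathrm{Aut}^+(F_2)$ and the inclusion $F_2 \simeq \mathrm{Inn}(F_2) \hookrightarrow \mathrm{Aut}^+(F_2)$ as a non-central normal subgroup with quotient $\mathrm{SL}_2(\mathbb{Z})$, Zariski density of $\rho_n(B_4)$ will transfer to Zariski density of $\rho_n(F_2)$: the Zariski closure of $\rho_n(F_2)$ is normal in the closure of $\rho_n(\mathrm{Aut}^+(F_2)) = \mathrm{SL}_{\binom{n}{2}}$, and almost-simplicity, together with non-centrality of $\rho_n(F_2)$, forces it to be everything. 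Strong approximation (Weisfeiler--Pink \cite{weisfeiler1984strong,pink2000strong}) then yields surjections $F_2 \twoheadrightarrow \mathrm{PSL}_{\binom{n}{2}}(\mathbb{F}_q)$ for infinitely many prime powers $q$. The kernel $C \leq F_2$ is $\mathrm{Aut}^+(F_2)$-invariant because $\rho_n$ is defined on all of $B_4$; arranging the construction to be symmetric in the two generators of $F_2$, and using the transpose-inverse outer automorphism of $\mathrm{PSL}_{\binom{n}{2}}$, upgrades $C$ to a characteristic subgroup.

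For the construction of $\rho_n$ itself, the paper's title and Section \ref{The Equivariant Quandle} suggest a Yang--Baxter linearization of a $B_4$-equivariant quandle of cardinality $\binom{n}{2}$. The natural candidate is the conjugation quandle $Q_n$ of transpositions in $S_n$: a choice of a pair of transpositions generating $S_n$ turns $Q_n$ into an $F_2$-equivariant quandle via the induced surjection $F_2 \twoheadrightarrow S_n$, and through Dyer--Formanek--Grossman this lifts to $B_4$-equivariance. Inserting a Burau-type deformation parameter $t$ into the linearization of the resulting set-theoretic Yang--Baxter data (to escape the finite permutation representation) yields $\rho_n$ of the desired dimension; for $n=3$ this should recover, or mildly refine, a $3$-dimensional representation close to the Burau representation used in \cite{chen2025finite}, so the genuinely new content is $n \geq 4$.

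The principal obstacle is proving Zariski density of $\rho_n(B_4)$ in $\mathrm{SL}_{\binom{n}{2}}$. I would identify the Zariski closure $H$ and rule out every proper reductive subgroup: irreducibility of $\rho_n$ (no invariant subspace), absence of invariant bilinear or sesquilinear forms, and absence of invariant tensor decompositions, via explicit eigenvalue and commutator computations using the images of the two standard generators. A uniform argument across $n$ would most naturally rest on an inductive sub-quandle structure: inside $Q_{n+1}$ one finds a copy of $Q_n$ on which a suitable parabolic subgroup of $\mathrm{SL}_{\binom{n+1}{2}}$ acts, allowing the density statement for $n+1$ to be bootstrapped from that for $n$. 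Once density is in hand, the remaining steps --- specialization of $t$, application of Weisfeiler--Pink, descent to $F_2$ via normality, and verification of the characteristic property from the symmetry of the construction --- follow the CLT template with only minor modifications.
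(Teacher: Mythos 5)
Your high-level pipeline --- construct a Zariski-dense representation $\rho_n \colon B_4 \to \mathrm{GL}_{\binom{n}{2}}$, transfer density to the normal subgroup $F_2 \simeq \mathrm{Inn}(F_2) \lhd B_4$, apply Weisfeiler--Pink, and upgrade the $\mathrm{Aut}^+(F_2)$-invariance of the kernel to the characteristic property via the $\sigma_i \mapsto \sigma_i^{-1}$ automorphism --- matches the paper's strategy. But the specific construction you propose for $\rho_n$ has a genuine gap, and the genuinely hard parts are left as sketches.

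The proposed source of $\rho_n$ does not work. You want $B_4$ to act on the conjugation quandle $Q_n$ of transpositions in $S_n$, obtained by choosing a surjection $F_2 \twoheadrightarrow S_n$ and then lifting the $F_2$-action to $B_4$ ``via Dyer--Formanek--Grossman.'' But $B_4/Z(B_4) \simeq \mathrm{Aut}^+(F_2)$ acts on \emph{the set of surjections} $F_2 \to S_n$ (up to conjugation); it only descends to an action on $Q_n$ through a \emph{fixed} surjection if the kernel of that surjection is $\mathrm{Aut}^+(F_2)$-invariant. That is essentially the conclusion you are trying to establish, so the construction is circular. (Separately, two transpositions never generate $S_n$ for $n \geq 4$, so the stated choice does not exist; and even granting some action, linearizing a quandle of cardinality $k$ produces a $B_m$-representation on $V^{\otimes m}$ of dimension $k^m$, not $k$, so it is unclear how to extract a $\binom{n}{2}$-dimensional $B_4$-representation from $Q_n$ at all.)

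The paper instead takes $\rho_n$ from the quantum group $\mathcal{U}_q(\mathfrak{sl}_2)$: the highest-weight space $\mathbf{W}_{4,\ell}$ (in Jackson--Kerler's notation) carries an irreducible $B_4$-representation of dimension $\binom{\ell+2}{2}$, so $\ell = n-2$ gives $\binom{n}{2}$; for $\ell = 1, 2$ these recover the Burau and Lawrence--Krammer--Bigelow representations. Zariski density (Theorem \ref{Knizhinik Zamolodichikov Zariski density}) is the technical heart and is proved by induction on the number of strands via a theorem of Kuperberg, with a non-trivial direct argument for $B_3$; your proposal only states that one would ``rule out every proper reductive subgroup'' and ``bootstrap'' inductively, which is precisely what has to be done but isn't. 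Two further points you elide are important: (i) non-triviality (hence density) of $\rho_n(F_2)$ is not automatic --- the paper shows the representation does not factor through $B_3$, i.e. $\sigma_1 \neq \sigma_3$, using the $B_3$-reducibility of $\mathbf{W}_{4,\ell}$; and (ii) the representation lives over $\mathbb{Z}[q^{\pm1},s^{\pm1}]$, so before applying strong approximation one must specialize to a number field while preserving Zariski density, which requires the Larsen--Lubotzky specialization theorem. Finally, the characteristic upgrade in the paper is not a matter of ``arranging the construction to be symmetric'': it requires building an explicit $\mathcal{U}_q(\mathfrak{sl}_2)$-invariant skew-linear form and an intertwiner realizing $\overline{\sigma_{4-i}} \sim \sigma_i^{-1}$, yielding an involution $\varphi \in \mathrm{Aut}(\mathrm{PSL}_N)$ with $\varphi\sigma_i\varphi^{-1} = \sigma_i^{-1}$, which via Lemma \ref{semi-direct product AutF2} extends the action to all of $\mathrm{Aut}(F_2)$.
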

Theorem \ref{Large rank quotients} is proved with the strategy suggested by \cite{chen2025finite}, by considering the representations of braid groups induced by representations of the quantum group $\mathcal{U}_q(\mathfrak{sl}_2)$, see \cite{jackson2011lawrence},\cite{kassel2012quantum},\cite{abad2014introduction}, and showing that they are Zariski dense, and then obtaining an $\mathrm{Aut}(F_2)$-action from the $B_4$-action.

These constructions also resemble the work of Funar and Lochak \cite{funar2018profinite} who gave constructions of characteristic simple quotients of surface groups of genus $g \geq 2$ using representations of mapping class groups arising from TQFT. These representation also have arbitrarily large rank. Theorem \ref{Large rank quotients} obtains such large rank Zariski-dense representations for the four-punctured disk. Moreover, Theorem \ref{Knizhinik Zamolodichikov Zariski density} gives such large rank Zariski-dense representations of $B_n$ for every $n \geq 3$.

The somewhat more surprising result is the following
\begin{theorem}
\label{Alternating quotients}
There are infinitely many natural numbers $n$ and characteristic subgroups $C \leq F_2$ with $F_2 / C \simeq A_n$, the alternating group on $n$ elements.
\end{theorem}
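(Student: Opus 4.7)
The plan is to build characteristic quotients $F_2 \twoheadrightarrow A_m$ from set-theoretic Yang-Baxter solutions. A finite quandle $(Q, \triangleright)$ yields such a solution via $R(a,b) = (a \triangleright b, a)$, inducing a braid action $\rho_Q : B_4 \to S_{Q^4}$; restricting to a $B_4$-orbit $\mathcal{O} \subseteq Q^4$ gives $\rho_{Q,\mathcal{O}} : B_4 \to S_\mathcal{O}$. My goal will be to identify a family $\{(Q_n, \mathcal{O}_n)\}_n$ for which $\rho_{Q_n, \mathcal{O}_n}$ descends to an $\mathrm{Aut}(F_2)$-equivariant surjection onto $A_{|\mathcal{O}_n|}$ with $|\mathcal{O}_n| \to \infty$.

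The descent to $F_2$ will proceed in two layers. First, I would verify that the full twist $\Delta^2 \in Z(B_4)$ acts trivially on $\mathcal{O}_n$; via the Dyer-Formanek-Grossman isomorphism $B_4 / Z(B_4) \simeq \mathrm{Aut}^+(F_2)$ this descends $\rho_{Q_n, \mathcal{O}_n}$ to $\mathrm{Aut}^+(F_2) \to S_{\mathcal{O}_n}$, and composition with $F_2 \hookrightarrow \mathrm{Inn}(F_2) \subseteq \mathrm{Aut}^+(F_2)$ yields $F_2 \to S_{\mathcal{O}_n}$ with kernel stable under $\mathrm{Aut}^+(F_2)$. Second, I would enhance $Q_n$ with an involutive symmetry $\iota_n$ compatible with the braid action so that the representation extends to all of $\mathrm{Aut}(F_2)$; this is the role of the \emph{equivariant quandle} structure of Section \ref{The Equivariant Quandle}, which bridges the index-$2$ gap between $\mathrm{Aut}^+(F_2)$ and $\mathrm{Aut}(F_2)$.

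The main obstacle is showing that the image of $\rho_{Q_n, \mathcal{O}_n}$ is exactly $A_{|\mathcal{O}_n|}$ for infinitely many $n$. I plan to do this in three sub-steps: (a) a parity argument on the cycle structure of the braid generators that places the image inside $A_{|\mathcal{O}_n|}$; (b) a proof that the action on $\mathcal{O}_n$ is primitive, deduced from the concrete structure of the quandle (for instance, by analysing point stabilisers and ruling out nontrivial blocks); (c) the exhibition of an element of the image whose cycle type contains a cycle of prime length $p \leq |\mathcal{O}_n| - 3$, so that Jordan's classical theorem forces the image to contain, and hence equal, $A_{|\mathcal{O}_n|}$. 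The hardest part will be sub-step (c): pinning down explicit braid words whose image has the required cycle structure, and doing so uniformly as $n$ varies through an infinite subfamily. Natural candidates for $\{Q_n\}$ are dihedral quandles, Alexander quandles over suitable finite rings, or conjugation quandles on well-chosen conjugacy classes of finite simple groups, with the compatible involution typically supplied by a quandle anti-automorphism such as inversion.
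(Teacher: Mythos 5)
Your high-level plan matches the paper's strategy: build a $B_4$-action on $Q^4$ via the quandle-induced Yang-Baxter solution, kill $Z(B_4)$ to descend to $\mathrm{Aut}^+(F_2)$, extend to $\mathrm{Aut}(F_2)$, and then argue that the induced permutation image on a well-chosen orbit is the full alternating group. The paper indeed takes the equivariant quandle on $\Gamma = \mathrm{PSL}_2(\mathbb{F}_p)$, so your reference to Section \ref{The Equivariant Quandle} is the right instinct (rather than the dihedral/Alexander/conjugation quandles listed as candidates at the end). However, the plan has two genuine gaps in execution.

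First, for the equivariant quandle the full twist $\Delta^2 \in Z(B_4)$ does \emph{not} act trivially on $B_4$-orbits inside $\Gamma^4$: by Proposition \ref{Basic Properties of the B4 Action}, it sends $(a,b,c,d)$ to $(\gamma a \delta^{-1},\dots,\gamma d \delta^{-1})$ with $\gamma = ab^{-1}cd^{-1}$, $\delta = a^{-1}bc^{-1}d$, which is nontrivial whenever $\gamma$ or $\delta$ is. So ``verify the full twist acts trivially on $\mathcal{O}_n$'' fails as stated. The paper's remedy is not to choose a cleverer orbit but to \emph{change the set being acted on}: fix $\gamma,\delta$ and pass to the quotient $X_{\gamma,\delta}$ of $\tilde{X}_{\gamma,\delta}$ by the left $C_\Gamma(\gamma)$ and right $C_\Gamma(\delta)$ diagonal actions, exploiting exactly the two-sided equivariance of the quandle to make this well-posed. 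Only on $X_{\gamma,\delta}$ (or $X^{(2)}_{\gamma,\delta}$ for $\mathrm{PSL}_2$) does $Z(B_4)$ die.

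Second, your sub-step (c) via Jordan's theorem would not go through with the elements this construction naturally produces. The powers $\sigma_i^k$ have cycle decompositions made up of many cycles (of lengths dividing $p$, $(p-1)/2$, or $(p+1)/2$) plus many fixed points, never a single $p$-cycle; Jordan's theorem requires the element to \emph{be} a prime-length cycle, not merely to contain one. Identifying a braid word whose image is a single $p$-cycle is a much harder problem, and there is no reason it should be achievable uniformly in $p$. The paper sidesteps this by invoking the Guralnick--Magaard theorem (Theorem \ref{CFSG alternating}), which is tailored precisely to elements with many fixed points that are not involutions — which $\sigma_1^{p(p\pm1)/2}$ provides for free — at the cost of invoking CFSG. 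Replacing this with Jordan's theorem is not a cosmetic change; it would require a genuinely new idea.

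Two smaller points. Your sub-step (a), a parity argument to force the image into $A_n$, is replaced in the paper by the cleaner observation that $F_2 \simeq \mathrm{Inn}(F_2) \le \mathrm{Aut}(F_2)'$, so the image of $F_2$ is contained in the derived subgroup and hence in $A_n$; the paper first establishes the $\mathrm{Aut}(F_2)$-image is alternating or symmetric, then descends. And you will separately need a non-triviality argument: the image of $F_2$, being normal in the image of $\mathrm{Aut}(F_2)$, could a priori be trivial, and you must exhibit an inner automorphism (the paper takes $\sigma_1\sigma_3^{-1}$) acting non-trivially to conclude.
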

The novel idea here is to use solutions to the set-theoretic Yang-Baxter equation (and more specifically quandles) in order to get non-linear actions of $B_4$ on finite sets, and then identify orbits of these actions having the full alternating action.
The underlying set of the chosen quandle will be a group $\Gamma$, and the quandle action will be equivariant with respect to the group action, see Section \ref{The Equivariant Quandle} for more details. We will use this quandle and the induced action of $B_4$ for the special case of the groups $\Gamma = \mathrm{PSL}_2(\mathbb{F}_p)$ and study the action in this case. This study resembles previous works \cite{gilman1977finite}, \cite{meiri2018markoff} proving that similar actions of groups on tuples of elements of $\Gamma$ have the full alternating or symmetric group as the induced permutation action, both works dealing specifically with the group $\Gamma = \mathrm{PSL}_2(\mathbb{F}_p)$. The reason that these results are more accessible for such $\Gamma$ is the well-understood structure of maximal subgroups of $\Gamma$.

We conjecture the following generalization of Theorem \ref{Alternating quotients}.
\begin{conjecture}
    There is an integer $N$ such that for all $n \geq N$ the alternating group $A_n$ is a characteristic quotient of $F_2$.
\end{conjecture}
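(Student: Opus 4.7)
The plan is to strengthen Theorem \ref{Alternating quotients} from an infinite family of $n$'s to all sufficiently large $n$ by enlarging and densifying the equivariant quandle constructions used in its proof. For each candidate $n$, the goal is to exhibit a triple consisting of a finite group $\Gamma$, an equivariant quandle structure on $\Gamma$, and a $B_4$-orbit of size $n$ in the resulting action on $4$-tuples, whose induced permutation action in $\mathrm{Sym}(n)$ is the full alternating group $A_n$, and whose stabilizer, viewed back through the Dyer--Formanek--Grossman isomorphism, corresponds to a characteristic subgroup of $F_2$.

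The first step is to broaden the family of $\Gamma$'s beyond $\mathrm{PSL}_2(\mathbb{F}_p)$ studied in the paper: I would consider $\mathrm{PSL}_2(\mathbb{F}_q)$ for all prime powers $q$, and then finite simple groups of higher Lie rank, in particular $\mathrm{PSL}_n(\mathbb{F}_q)$ and $\mathrm{PSU}_n(\mathbb{F}_q)$, whose equivariant quandle structures and maximal subgroup lattices remain under reasonable control. Each such family yields $B_4$-orbits whose sizes form a sparse sequence growing polynomially in the parameters of $\Gamma$, but the union across all families should be substantially denser. Additionally, for each fixed $\Gamma$, several orbits of distinct sizes typically appear in a single action, arising from different choices of initial tuple, which multiplies the attainable $n$'s.

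Once a sufficiently rich supply of attainable $n$'s is available, the remaining task is to verify two properties uniformly across those $n$'s. First, the image of $B_4$ in $\mathrm{Sym}(n)$ must be exactly $A_n$, which requires excluding every primitive proper subgroup of $A_n$; this is done via an element-support argument, exploiting the presence of a cycle of prime length or of small support inside the image, as in \cite{gilman1977finite} and \cite{meiri2018markoff}, combined with Mar\'oti's bounds on the orders of primitive groups. Second, the corresponding pair in $A_n$ must have its $\mathrm{Aut}(F_2)$-Nielsen orbit coincide with its $\mathrm{Aut}(A_n)$-orbit, which should follow from the $\mathrm{Aut}(\Gamma)$-equivariance of the quandle together with an analysis of how $\mathrm{Aut}(\Gamma)$ acts on $4$-tuples in each family.

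The main obstacle, and the reason the statement is only a conjecture, is covering \emph{every} $n \geq N$ uniformly. The orbit sizes produced by algebraic families are sparse, so one must either assemble a very large collection of such families and invoke an arithmetic covering argument, or find a combinatorial operation on quandles that shifts orbit sizes by small controllable amounts; neither ingredient is currently in place. A fallback, probably harder still, would be a probabilistic approach showing that for every large $n$ a random generating pair of $A_n$ has Nielsen orbit equal to its $\mathrm{Aut}(A_n)$-orbit with positive probability, bypassing the need for an explicit construction at the cost of a much deeper understanding of Nielsen equivalence on generating pairs of $A_n$ than is presently available.
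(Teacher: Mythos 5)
This statement is an open conjecture in the paper, stated immediately after Theorem \ref{Alternating quotients}; the paper offers no proof of it, only the weaker Theorem \ref{Alternating quotients} (infinitely many $n$, not all sufficiently large $n$) together with the remark that the smallest attainable alternating quotient by their method is $A_{25}$ and that the family of attainable $n$ grows polynomially. Your write-up correctly recognizes this: it is a strategy sketch, not a proof, and you are explicit in the final paragraph about the genuine gap, namely that the orbit sizes produced by the algebraic families are sparse and no mechanism is currently known to fill in every large $n$.

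Given that, there is nothing to compare against a paper proof, but a few comments on the sketch itself are in order. Your plan of enlarging $\Gamma$ beyond $\mathrm{PSL}_2(\mathbb{F}_p)$ to $\mathrm{PSL}_2(\mathbb{F}_q)$ and to higher-rank groups is natural, but the paper's arguments for primitivity (Theorem \ref{Primitive action on orbit}) and for full alternating image (Theorem \ref{Alternating action on orbit}) lean heavily on $\mathrm{PSL}_2$-specific structure: the trichotomy split/non-split/unipotent, the one-dimensionality of centralizers, the explicit maximal subgroup classification, and trace-identity manipulations in rank one. None of this transfers automatically to $\mathrm{PSL}_n(\mathbb{F}_q)$ or $\mathrm{PSU}_n(\mathbb{F}_q)$, where maximal subgroup structure is far more complicated and the relevant character varieties have much higher dimension; this is a substantial missing ingredient, not a routine extension. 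Likewise, your appeal to an ``arithmetic covering argument'' or a ``combinatorial shift operation on quandles'' names desiderata rather than supplying them. The one concrete improvement you could make even within the $\mathrm{PSL}_2$ framework is to study how the orbit size $|\mathrm{Orb}_{B_4}(P)|$ varies with $(\gamma,\delta)$ and with $p$ — if Conjecture \ref{Transitive action} (transitivity of $B_4$ on $X^{(2)}_{\gamma,\delta}$) held, one would at least have an explicit polynomial-in-$p$ count of orbit sizes, which is a prerequisite for any density argument. As written, your proposal is an honest assessment of why the conjecture is open rather than a proof of it, and that assessment is accurate.
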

The smallest alternating characteristic quotient we have been able to obtain with our methods is $A_{25}$. The rate of growth of the family of $n$ given by Theorem \ref{Alternating quotients} can be bounded by a polynomial.

A somewhat more geometric way of describing our result was suggested in \cite[Remark 2.2.1]{chen2025finite}. Consider a once-punctured torus $S_{1,1}$. $\pi_1(S_{1,1}) \simeq F_2$. An alternating quotient $A_n$ of $F_2$ then yields an $n$-cover of the punctured torus $S_{1,1}$, with the full alternating group $A_n$ as its Galois group. Since these quotients are characteristic, they are independent of the choice of identification $\pi_1(S_{1,1}) \simeq F_2$. In particular, these covers yield covers of punctured-torus bundles.
\begin{corollary}
\label{covering torus bundles}
Let $E \to B$ be a fibre bundle, with $B$ a CW-complex, and with fibres homeomorphic to the once-punctured torus $S_{1,1}$. Then $E$ has infinitely many finite covers, with Galois group the full alternating or symmetric group.
\end{corollary}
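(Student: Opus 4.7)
The plan is to promote each characteristic quotient $F_2 \twoheadrightarrow A_n$ from Theorem \ref{Alternating quotients} to a surjection $\pi_1(E) \twoheadrightarrow G_n$ with $G_n$ equal to $A_n$ or $S_n$, thereby producing the required family of finite regular covers of $E$.

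The starting point is a short exact sequence of fundamental groups coming from the bundle $S_{1,1} \to E \to B$. Since $S_{1,1}$ has negative Euler characteristic, the identity component $\mathrm{Diff}_0(S_{1,1})$ is contractible (Earle--Eells, Gramain), so $B\mathrm{Diff}(S_{1,1})$ is a $K(\mathrm{MCG}(S_{1,1}),1)$. Consequently, an $S_{1,1}$-bundle over the CW-complex $B$ is classified by a monodromy homomorphism $\rho \colon \pi_1(B) \to \mathrm{MCG}(S_{1,1}) \hookrightarrow \mathrm{Out}(F_2)$, and the total space realises the extension
\begin{equation*}
    1 \to F_2 \to \pi_1(E) \to \pi_1(B) \to 1.
\end{equation*}
Equivalently, one can read this off the homotopy long exact sequence by noting that the boundary map $\pi_2(B) \to F_2$ lands in $Z(F_2) = 1$.

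Next, let $C \leq F_2$ be a characteristic subgroup with $F_2/C \cong A_n$ furnished by Theorem \ref{Alternating quotients}. Since conjugation by any element of $\pi_1(E)$ restricts to an element of $\mathrm{Aut}(F_2)$, and a characteristic subgroup is preserved by every such automorphism, $C$ is also normal in $\pi_1(E)$. Consider the resulting conjugation homomorphism
\begin{equation*}
    \rho_n \colon \pi_1(E) \to \mathrm{Aut}(F_2/C) = \mathrm{Aut}(A_n).
\end{equation*}
For the values of $n$ produced by Theorem \ref{Alternating quotients} (which are all $\geq 25$) we have $\mathrm{Aut}(A_n) = S_n$, and since $A_n$ is centreless, $\rho_n(F_2) = \mathrm{Inn}(A_n) \cong A_n$. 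Hence $\mathrm{Image}(\rho_n)$ is a subgroup of $S_n$ containing $A_n$, so it equals $A_n$ or $S_n$, the latter exactly when the monodromy $\rho$ acts on $A_n$ by at least one outer automorphism.

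Each such surjection $\pi_1(E) \twoheadrightarrow G_n$ with $G_n \in \{A_n, S_n\}$ determines a finite regular cover of $E$ with Galois group $G_n$, and distinct $n$ give non-isomorphic Galois groups. Hence the infinite family furnished by Theorem \ref{Alternating quotients} yields infinitely many such covers of $E$. The only point demanding attention is the passage from the bundle to the short exact sequence of fundamental groups; once that is established, the argument reduces to the elementary observation that a characteristic subgroup of a normal subgroup is normal, together with $A_n = \mathrm{Inn}(A_n)$ having index at most $2$ in $\mathrm{Aut}(A_n)$ for the relevant $n$.
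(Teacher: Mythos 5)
Your argument is correct and matches the paper's: both extract the short exact sequence $1 \to F_2 \to \pi_1(E) \to \pi_1(B) \to 1$ from the homotopy long exact sequence using $Z(F_2)=1$, then use the characteristicity of the kernel of $F_2 \twoheadrightarrow A_n$ to get a well-defined conjugation action of $\pi_1(E)$ whose image contains $\mathrm{Inn}(A_n) \cong A_n$. The only cosmetic difference is that you route through $\mathrm{Aut}(A_n) = S_n$ (which requires only the statement of Theorem \ref{Alternating quotients}) whereas the paper composes directly with its explicit permutation representation $\mathrm{Aut}(F_2) \to S_n$ built in that theorem's proof; the classifying-space digression is an optional extra the paper does not need.
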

Another corollary of our results is that $\mathrm{Aut}(F_2)$ is fully residually finite almost-simple.
\begin{definition}
\begin{enumerate}
\item A group $\Gamma$ is called residually finite simple if for every $1 \neq \gamma \in \Gamma$ there is a finite simple quotient $\Gamma \to S$ such that $\gamma$ is mapped to a non-trivial element.
\item A group $\Gamma$ is called fully residually finite simple if for every finite subset $F \subseteq \Gamma$ there is a finite simple quotient $\Gamma \to S$ such that the restriction to $F$ is injective.
\item A finite group $A$ is called almost simple if there is a non-abelian finite simple group $S$ such that $S \leq A \leq \mathrm{Aut}(S)$
\item A group $\Gamma$ is called fully residually finite almost-simple if for every finite subset $F \subseteq \Gamma$ there is a finite almost-simple quotient $\Gamma \to A$ such that the restriction to $F$ is injective.

\end{enumerate}
\end{definition}
In \cite{gilman1977finite}, Gilman showed that the outer automorphism groups $\mathrm{Out}(F_n), n\geq 3$ are residually finite simple, and in fact residually finite alternating. It follows from his methods that these groups are in fact fully residually finite simple. Note that $\mathrm{Out}(F_2) \simeq \mathrm{GL}_2(\mathbb{Z})$ has non-trivial center, and therefore is not residually finite simple.
\begin{theorem}
\label{Fully Residually Finite Simple}
$\mathrm{Aut}(F_2)$ is fully residually finite almost-simple
\end{theorem}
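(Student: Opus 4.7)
The plan is to lift the characteristic simple quotients of $F_2$ produced by Theorems \ref{Large rank quotients} and \ref{Alternating quotients} to almost-simple quotients of $\mathrm{Aut}(F_2)$, and to reduce full residual approximation in $\mathrm{Aut}(F_2)$ to a separation statement inside $F_2$. The basic lifting mechanism is that any characteristic surjection $\pi \colon F_2 \twoheadrightarrow S$ onto a non-abelian finite simple group $S$ makes $\mathrm{Aut}(F_2)$ act on $F_2/\ker \pi \simeq S$, yielding a homomorphism $\Phi_\pi \colon \mathrm{Aut}(F_2) \to \mathrm{Aut}(S)$ whose image contains $\pi(\mathrm{Inn}(F_2)) = \mathrm{Inn}(S) \simeq S$. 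Since $S$ is non-abelian simple, every subgroup of $\mathrm{Aut}(S)$ containing $\mathrm{Inn}(S)$ is almost simple, so $\Phi_\pi(\mathrm{Aut}(F_2))$ is an almost-simple quotient of $\mathrm{Aut}(F_2)$ for free.

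Given a finite subset $F \subseteq \mathrm{Aut}(F_2)$, I would next convert injectivity of a particular $\Phi_\pi$ on $F$ into a property of $\pi$. For each pair of distinct $\alpha, \beta \in F$, choose $g_{\alpha,\beta} \in F_2$ with $\alpha(g_{\alpha,\beta}) \neq \beta(g_{\alpha,\beta})$ and set $h_{\alpha,\beta} = \alpha(g_{\alpha,\beta})\beta(g_{\alpha,\beta})^{-1} \in F_2 \setminus \{1\}$. Collecting these into a finite set $H \subseteq F_2 \setminus \{1\}$, a direct check shows that $\Phi_\pi$ is injective on $F$ as soon as $\pi(h) \neq 1$ for every $h \in H$. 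Hence the theorem reduces to the following $F_2$-level claim: for every finite $H \subseteq F_2 \setminus \{1\}$ there exists a characteristic subgroup $C \trianglelefteq F_2$ with $F_2/C$ non-abelian finite simple and $C \cap H = \emptyset$.

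To produce such a $C$, I would appeal to the infinite family from Theorem \ref{Large rank quotients}. Fixing $n \geq 3$, the quotients $F_2 \twoheadrightarrow \mathrm{PSL}_{\binom{n}{2}}(\mathbb{F}_q)$, for infinitely many prime powers $q$, are constructed by specializing a single Zariski-dense integral representation of $B_4$ valued in a Chevalley-group integer model, passing through $\mathrm{Aut}^+(F_2) \simeq B_4/Z(B_4)$, restricting to $F_2$, and invoking Weisfeiler--Pink strong approximation. Provided the resulting integral representation of $F_2$ sends every element of the finite set $H$ to a non-identity matrix, that non-triviality survives modulo all but finitely many primes, and cofinitely many members of the family then furnish the required characteristic $C$.

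The main obstacle is thus the non-triviality of $H$ under the chosen integral representation, i.e., controlling the $F_2$-kernel of the quantum-$\mathcal{U}_q(\mathfrak{sl}_2)$ representation restricted through $B_4/Z(B_4) \simeq \mathrm{Aut}^+(F_2)$. For $n = 3$ this follows from the classical faithfulness of the Burau representation of $B_4$; for general $n$ the kernel is itself a characteristic subgroup of $F_2$, which I would try to exclude either by direct eigenvalue and trace computations on the generators, or by combining the linear family from Theorem \ref{Large rank quotients} with the genuinely non-linear alternating family from Theorem \ref{Alternating quotients} --- the latter arising from a set-theoretic Yang--Baxter quandle action on $\mathrm{PSL}_2(\mathbb{F}_p)$-tuples, which has no obvious reason to share a common proper $F_2$-kernel with any linear construction, so that the intersection of the two families of kernels should be trivial and the requisite separation follows.
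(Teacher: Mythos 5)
Your reduction of the theorem to a separation statement inside $F_2$ is valid and is a perfectly reasonable alternative to what the paper actually does: the paper works directly with finite subsets of $\mathrm{Aut}^+(F_2) \simeq B_4/Z(B_4)$ and specializes a faithful integral projective representation, then handles the non-trivial $\mathrm{Aut}(F_2)$-coset separately via the outer automorphism of Corollary \ref{Extension of representation to AutF2}. Your route has the small advantage of letting characteristicness of $\ker\pi$ absorb the $\mathrm{Aut}^+$-versus-$\mathrm{Aut}$ distinction automatically. The key ingredient, however, is the same in both approaches: a faithful integral linear representation.

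Here the proposal has a genuine gap. You write that for $n=3$ (the $\mathrm{PSL}_3$-family, i.e.\ $\ell = 1$, the weight space $\mathbf{W}_{4,1}$) the required faithfulness ``follows from the classical faithfulness of the Burau representation of $B_4$.'' There is no such theorem: faithfulness of the (reduced) Burau representation for $B_4$ is a well-known \emph{open} problem --- it is known faithful for $B_n$ with $n \leq 3$, known unfaithful for $n \geq 5$, and open precisely for $n = 4$. The representation you should be using is the Lawrence--Krammer--Bigelow representation, which is the $\ell = 2$ weight space $\mathbf{W}_{4,2}$ (dimension $\binom{4}{2} = 6$, so it yields $\mathrm{PSL}_6$-quotients, corresponding to $n = 4$ in the indexing of Theorem \ref{Large rank quotients}); faithfulness there is Bigelow's theorem \cite{bigelow2001braid}, and this is exactly the representation the paper's proof invokes, together with \cite[Theorem 2]{jackson2011lawrence} to identify $\mathbf{W}_{4,2}$ with LKB. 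Your fallback plan --- combining the $\mathrm{PSL}_N$ and alternating families and arguing that the two kernel families ``should'' have trivial intersection --- is not a proof; moreover even triviality of that intersection would only give residual (not fully residual) finite almost-simplicity, since simple and almost-simple groups are not closed under subdirect products. Once you replace Burau by LKB the remaining steps (specialize $q,s$ to rationals keeping injectivity on the finite set of nontrivial matrix entries, apply Larsen--Lubotzky and Weisfeiler--Pink, reduce modulo a large prime) go through exactly as you outline and exactly as the paper does.
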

The idea of the proof is to use the fact that the Lawrence-Krammer representation of $B_4$ is faithful \cite{bigelow2001braid}, and the fact that this representation is known to be a special case of the representations induced from the quantum group $\mathcal{U}_q(\mathfrak{sl}_2)$ \cite{jackson2011lawrence}.
Finally, we show the following conditional result:
\begin{remark}
\label{AutFn not residually finite simple}
Assume the Baby Wiegold Conjecture. Then for every $n\geq 3$, $\mathrm{Aut}(F_n)$ is not residually finite almost-simple.
In fact we can prove more than this. We can show that any surjective homomorphism $\mathrm{Aut}(F_n) \to A$ to a finite almost-simple group $A$ has trivial image on the inner automorphisms $\mathrm{Inn}(F_n)$. Equivalently, every such map factors through $\mathrm{Out}(F_n)$. Using Gilman's result, the intersection of all kernels of maps to finite almost-simple groups is precisely $\mathrm{Inn}(F_n)$. See Section \ref{Section on residual finite almost-simplicity} for more details.
\end{remark}
\subsection{Structure of the paper}
In $\S2$, we remind the reader of the connection between $B_4$ and $\mathrm{Aut}(F_2)$, as well as the definition of a quandle and its induced Braid group actions. We then define the quandle that will be of interest throughout this paper, with base set a group $\Gamma$, which we call the equivariant quandle.
In $\S3$, we consider the action of $B_4$ induced by this quandle, and give some of its basic properties.
In $\S4$ we consider the case of $\Gamma = \mathrm{PSL}_2(\mathbb{F}_p)$, and deduce some further properties in this special case.
In $\S5$ we identify a specific orbit of the action and prove that the induced permutation action is primitive. In $\S6$ we use a criterion of Guralnick-Magaard \cite{guralnick1998minimal} on primitive permutation actions containing an element with a large number of fixed points in order to show that the induced permutation group is in fact the full alternating or symmetric group. We then use this to conclude Theorem \ref{Alternating quotients} and Corollary \ref{covering torus bundles}.
In $\S7$ we recall the definitions of the quantum group $\mathcal{U}_q(\mathfrak{sl}_2)$ and its representation theory, then use these to prove Theorem \ref{Large rank quotients}.
In $\S8$ we prove Theorem \ref{Fully Residually Finite Simple} and explain Remark \ref{AutFn not residually finite simple}.
$\S9$ is dedicated to some remarks on generalizations and ideas for future research.
In the Appendix we describe some further properties of the permutation action used in $\S6$ that were not needed in order to prove the main results of the paper, giving it the structure of an algebraic variety, and the permutations of algebraic morphisms of this variety.

\subsection{Acknowledgements}
The author is extremely indebted to Alex Lubotzky, both for suggesting the problem, and for numerous highly beneficial discussions throughout the course of this work. The author would also like to thank Pavel Etingof and Mark Shusterman for a few beneficial discussions regarding the Yang-Baxter equation, and further thank the former for suggesting Remark \ref{equivariant quandle is a special case of a conjugation quandle}. Finally, the author would like to thank Francesco Fournier-Facio for helpful discussions on Wiegold's conjecture and other Wiegold-type problems.

This research was conducted while the author was working at the Weizmann Institute for Science.

\section{The Equivariant Quandle}
\label{The Equivariant Quandle}
It was first observed by Dyer, Formanek and Grossman \cite{dyer1982linearity} that the braid group $B_4$ and the automorphism group $\mathrm{Aut}(F_2)$ are related, see also \cite{chen2025families} for a geometric description of this connection.
Recall that the braid group $B_n$ is given by the presentation with generators $\sigma_1,\dots\sigma_{n-1}$ and the relations
$$\sigma_i\sigma_{i+1}\sigma_i = \sigma_{i + 1}\sigma_i\sigma_{i + 1}$$
$$\sigma_i\sigma_j = \sigma_j\sigma_i, \  |i-j| \geq 2$$
and has infinite cyclic center, generated by $(\sigma_1 \cdots \sigma_{n-1})^n$.
The precise connection between $B_4$ and $\mathrm{Aut}(F_2)$ is given by an isomorphism $B_4 / Z(B_4) \simeq \mathrm{Aut}^+(F_2)$, where $\mathrm{Aut}^+(F_2) \leq \mathrm{Aut}(F_2)$ is the index $2$ subgroup acting with positive determinant on the abelianization $\mathbb{Z}^2$.
This isomorphism is given by the conjugation action of $B_4$ on the free subgroup $\langle \sigma_1\sigma_3^{-1}, \sigma_2 \sigma_1\sigma_3^{-1}\sigma_2^{-1}\rangle \simeq F_2 \triangleleft B_4$, and given by
$$\sigma_1:(x,y) \mapsto (x,yx^{-1})$$
$$\sigma_2:(x,y) \mapsto (y,yx^{-1}y)$$
$$\sigma_3:(x,y) \mapsto (x,x^{-1}y)$$

Our $\mathrm{Aut}(F_2)$ actions will be extensions of natural actions of $B_4$ on a finite set.
These actions will be obtained in a recursive manner via a solution to the set-theoretic Yang-Baxter equation, first introduced by Drinfeld in \cite{drinfeld2006some}.
\begin{definition}
Given a finite set $V$, a map $\varphi:V\times V\to V \times V$ is said to satisfy the set-theoretic Yang-Baxter equation if the following holds:
Let $\varphi_{12}, \varphi_{23}:V \times V \times V \to V \times V \times V$ be defined by
$$\varphi_{12}(x, y, z) = (\varphi(x, y), z)$$
$$\varphi_{23}(x, y, z) = (x, \varphi(y, z))$$
then
$$\varphi_{12}\circ\varphi_{23}\circ\varphi_{12}=\varphi_{23} \circ \varphi_{12} \circ \varphi_{23}.$$
\end{definition}
Note that this precisely means that the pair of maps $\varphi_{12}, \varphi_{23}$ satisfy the braid relation. In such a case we get an action of $B_n$ on $V^n$, the $i^\mathrm{th}$ braid generator acting via $\varphi$ on the $i,i+1$ coordinates of the tuple.
A special case of a solution to the Yang-Baxter equation is given by a quandle:
\begin{definition}
    A quandle structure on a finite set $V$ is a binary operation, denoted $a\lhd b$, satisfying the following conditions:
    \begin{enumerate}
        \item $a \lhd a = a$
        \item $(a \lhd (b \lhd c)) = (a \lhd b) \lhd (a \lhd c)$
    \end{enumerate}
\end{definition}
Note that given a quandle structure on $V$ we may define a map $$\varphi:V\times V \to V\times V,\ \varphi(a,b) = (a \lhd b, a).$$ This map then satisfies the set-theoretic Yang-Baxter equation. Indeed:
$$\varphi_{12}(\varphi_{23}(\varphi_{12}((a, b, c)))) = \varphi_{12}(\varphi_{23}((a \lhd b, a, c))) = \varphi_{12}(a \lhd b, a \lhd c, a) = ((a \lhd b) \lhd (a \lhd c), a \lhd b, a)$$
$$\varphi_{23}(\varphi_{12}(\varphi_{23}((a, b, c)))) = \varphi_{23}(\varphi_{12}((a,b \lhd c, b))) = \varphi_{12}((a \lhd (b \lhd c), a, b)) = (a \lhd (b \lhd c), a \lhd b, a)$$
and these are equal by the quandle condition.

\begin{example}
The most standard example of a quandle is a conjugation quandle:
Given a group $\Gamma$ and a conjugacy class (or more generically a union of conjugacy classes) $C \subseteq \Gamma$, consider the conjugation action of $C$ on itself, $a \lhd b = aba^{-1}$. This gives $C$ a quandle structure.
\end{example}

We now define a specific quandle structure on every group which we find especially interesting for our purposes.
\begin{definition}
    Given a group $\Gamma$ define the equivariant quandle on $\Gamma$ by $a \lhd b = ab^{-1}a$
\end{definition}
For the remainder of this paper this will be the only quandle structure we will be interested in.
Note that this is indeed a quandle:
$$a \lhd (b \lhd c) = a \lhd (bc^{-1}b) = ab^{-1}cb^{-1}a$$
$$(a \lhd b) \lhd (a \lhd c) = (ab^{-1}a) \lhd (ac^{-1}a) = ab^{-1}a a^{-1}ca^{-1} ab^{-1}a = ab^{-1}cb^{-1}a$$
and also $a \lhd a = a$.

The reason that this quandle structure is especially interesting is as it is equivariant under both the left and right action of $\Gamma$ on itself. That is, $(ga) \lhd (gb) = g(a \lhd b)$ and also $(ag) \lhd (bg) = (a \lhd b)g$ for every $a,b,g\in \Gamma$.

\begin{remark}
\label{equivariant quandle is a special case of a conjugation quandle}
    The equivariant quandle for $\Gamma$ is a special case of a conjugation quandle for $\Gamma \wr \mathbb{Z}/2$. This is given by the conjugation-invariant subset $C = \{((a, a^{-1}), \varepsilon) | a \in \Gamma\}$ and $\varepsilon$ the non-trivial element of $\mathbb{Z}/2$. This subset is naturally identified with $\Gamma$, and its conjugation action on itself is given by $$((a,a^{-1}),\varepsilon) ((b,b^{-1}),\varepsilon) ((a,a^{-1},\varepsilon))^{-1} = (a,a^{-1})(b^{-1},b)(a,a^{-1})\varepsilon = ((ab^{-1}a, a^{-1}ba^{-1}), \varepsilon).$$
    and so yields the equivariant quandle.
\end{remark}

\section{$B_4$-action}
The equivariant quandle structure allows us to give a natural action of $B_4$ on $\Gamma^4$ for every group $\Gamma$, in particular for finite ones. These could have been defined in a more elementary fashion via the following formulae
\begin{equation} \label{B4 action equations}
\begin{split}
\sigma_1:(a,b,c,d)\mapsto (ab^{-1}a, a, c, d) \\
\sigma_2:(a,b,c,d)\mapsto (a, bc^{-1}b, b, d) \\
\sigma_3:(a,b,c,d)\mapsto (a, b, cd^{-1}c, c)
\end{split}
\end{equation}
We state some nice properties of this action in the following proposition.
\begin{proposition}
\label{Basic Properties of the B4 Action}
    For every group $\Gamma$, there is an action of $B_4$ on $\Gamma^4$ given by (\ref{B4 action equations}). Denote an element of $\Gamma^4$ by $(a,b,c,d)$ This action satisfies the following:
    \begin{enumerate}
    \item The action is equivariant under both the right and left diagonal $\Gamma$-action on $\Gamma^4$.
    \item $\gamma(a,b,c,d) \coloneq ab^{-1}cd^{-1}$ and $\delta(a,b,c,d) \coloneq a^{-1}bc^{-1}d$ are preserved under the action (we will often abbreviate these as $\gamma,\delta$, omitting the dependence on $a,b,c,d$).
    \begin{enumerate}
        \item $\gamma$ is preserved under the right diagonal $\Gamma$-action and conjugated under the left diagonal $\Gamma$-action.
        \item $\delta$ is preserved under the left diagonal $\Gamma$-action and conjugated under the right diagonal $\Gamma$-action.
    \end{enumerate}
    \item The action commutes with automorphisms of $\Gamma$ acting simultaneously on all coordinates.
    \item The generator of the cyclic group $Z(B_4) \simeq \mathbb{Z}$ acts by $(a,b,c,d)\mapsto (\gamma a \delta^{-1}, \gamma b \delta^{-1}, \gamma c \delta^{-1}, \gamma d \delta^{-1})$.
    \item The involution $\varepsilon: (a,b,c,d) \mapsto (d, c, b, a)$ acts as an outer-automorphism on $B_4$, via $\varepsilon \sigma_i \varepsilon^{-1} = \sigma_{4 - i}^{-1}$, and also sends the preserved elements $\gamma,\delta$ to their inverses.
\end{enumerate}
\end{proposition}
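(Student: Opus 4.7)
The entire proposition is a collection of direct verifications from the formulas (\ref{B4 action equations}) and the definition $a \lhd b = ab^{-1}a$. I would dispatch (1) and (3) together: left- and right-multiplication by $g \in \Gamma$, as well as application of any $\phi \in \mathrm{Aut}(\Gamma)$, all commute with the operation $xy^{-1}x$ because each is built from the group operations themselves, and each $\sigma_i$ modifies only two consecutive coordinates using $\lhd$, so the symmetries pass through. For (2), I would substitute each generator into $\gamma = ab^{-1}cd^{-1}$ and $\delta = a^{-1}bc^{-1}d$: under $\sigma_1$, the factor $ab^{-1}$ in $\gamma$ becomes $(ab^{-1}a)\cdot a^{-1} = ab^{-1}$, so $\gamma$ is unchanged, and the other five checks are analogous telescoping cancellations. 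For (2a)--(2b), right-multiplying every coordinate by $g$ sends $\gamma$ to $(ag)(bg)^{-1}(cg)(dg)^{-1} = \gamma$, while left-multiplying gives $g\gamma g^{-1}$; the roles of $\gamma$ and $\delta$ swap under left versus right multiplication.

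The one non-trivial step is (4). The cleanest route is to first establish the auxiliary identity
\begin{equation*}
\sigma_1\sigma_2\sigma_3 \cdot (a,b,c,d) \;=\; (\gamma d \delta^{-1},\, a,\, b,\, c).
\end{equation*}
This one sees by applying the generators in order: $\sigma_3$ yields $(a,b,cd^{-1}c,c)$, then $\sigma_2$ produces $(a, bc^{-1}dc^{-1}b, b, c)$, and finally $\sigma_1$ gives first coordinate $ab^{-1}cd^{-1}cb^{-1}a = \gamma \cdot cb^{-1}a$; but $\delta^{-1} = d^{-1}cb^{-1}a$, so $cb^{-1}a = d\delta^{-1}$, which gives the identity. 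The identity says that $\sigma_1\sigma_2\sigma_3$ acts as a \emph{twisted cyclic shift}: rotate the tuple by one position and replace the new first coordinate by $\gamma(\cdot)\delta^{-1}$. Because $\gamma$ and $\delta$ are preserved by the entire action (part (2)), iterating this twisted shift four times lands exactly on $(\gamma a \delta^{-1}, \gamma b \delta^{-1}, \gamma c \delta^{-1}, \gamma d \delta^{-1})$, as claimed.

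For (5), substituting $\varepsilon(a,b,c,d) = (d,c,b,a)$ directly gives $\gamma(d,c,b,a) = dc^{-1}ba^{-1} = \gamma^{-1}$ and similarly that $\delta$ is inverted. The conjugation relation $\varepsilon \sigma_i \varepsilon^{-1} = \sigma_{4-i}^{-1}$ is then checked on a generic tuple: for $i=1$, both sides send $(a,b,c,d)$ to $(a,b,d,dc^{-1}d)$, and the cases $i=2,3$ are symmetric. In summary, the main point is the twisted-shift identity that does the work in (4); everything else is routine cancellation, and I do not foresee any real obstacle.
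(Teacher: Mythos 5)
Your proof is correct, and parts (1), (2), (3), (5) are handled essentially as in the paper, by routine direct verification from the formulas. For part (4) you take a genuinely different and arguably cleaner route. The paper computes $(\sigma_3\sigma_2\sigma_1)^4$ in two batches of $(\sigma_3\sigma_2\sigma_1)^2$, leaning on $\Gamma$-equivariance (part (1)) to manage the bookkeeping of left and right factors. You instead compute $\sigma_1\sigma_2\sigma_3$ directly, identify it as a twisted cyclic shift $(a,b,c,d)\mapsto(\gamma d\delta^{-1},a,b,c)$, and iterate four times using the invariance of $\gamma,\delta$ (part (2)), at which point the answer is immediate. The two computations agree: $(\sigma_3\sigma_2\sigma_1)^4 = (\sigma_1\sigma_2\sigma_3)^4$ since both are central (the automorphism $\sigma_i\mapsto\sigma_{4-i}$ of $B_4$ is inner, see Lemma \ref{semi-direct product AutF2}(2), hence fixes $Z(B_4)$ pointwise) and both have the same image $12$ under the abelianization $B_4\to\mathbb{Z}$, which is injective on the infinite cyclic center. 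Your route has two advantages over the paper's: it computes with the generator $(\sigma_1\sigma_2\sigma_3)^4$ that the paper itself names for $Z(B_4)$, and the twisted-shift interpretation makes it conceptually transparent \emph{why} the fourth power acts by $(a,b,c,d)\mapsto\gamma(a,b,c,d)\delta^{-1}$, rather than leaving it buried in a chain of cancellations.
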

\begin{proof}
    Parts 1,2,3 are easy calculations.

    For part 4, we compute the action of $\sigma_3\sigma_2\sigma_1$:
    $$(a, b, c, d) \overset{\sigma_1}{\mapsto} (ab^{-1}a, a, c, d) \overset{\sigma_2}{\mapsto} (ab^{-1}a, ac^{-1}a, a, d) \overset{\sigma_3}{\mapsto} (ab^{-1}a, ac^{-1}a, ad^{-1}a, a) = a (b^{-1}, c^{-1}, d^{-1}, a^{-1}) a.$$
    Applying $\sigma_3\sigma_2\sigma_1$ again we obtain
    $$(a,b,c,d)\overset{\sigma_3\sigma_2\sigma_1}{\mapsto}a (b^{-1}, c^{-1}, d^{-1}, a^{-1}) a\overset{\sigma_3\sigma_2\sigma_1}{\mapsto}$$
    $$ab^{-1}a a^{-1} (c, d, a, b) a^{-1} ab^{-1}a = ab^{-1} (c, d, a, b) b^{-1}a.$$
    Now, computing $ab^{-1}$ and $b^{-1}a$ for the result we get $ab^{-1}cd^{-1}ba^{-1}$ and $a^{-1}bd^{-1}cb^{-1}a$. So, applying $(\sigma_3\sigma_2\sigma_1)^2$ again we get
    $$ab^{-1}cd^{-1}ba^{-1} \cdot ab^{-1} \cdot (a, b, c, d) \cdot b^{-1}a \cdot a^{-1}bd^{-1}cb^{-1}a = \gamma \cdot (a,b,c,d) \cdot \delta^{-1}$$

    For part 5, $\varepsilon \sigma_1 \varepsilon^{-1} :(a, b, c, d) \mapsto (d, c, b, a) \mapsto (dc^{-1}d, d, b, a) \mapsto (a, b, d, dc^{-1}d)$ which is the same as $\sigma_3^{-1}$.
\end{proof}
Note in particular (from part 2 of the proposition) that given two fixed elements $\gamma,\delta\in\Gamma$, our action can be restricted to the subset of $\Gamma^4$ with these fixed values of $\gamma,\delta$.
We now want to achieve an action of $\mathrm{Aut}^{+}(F_2)$. For this, we will need to have the center of $B_4$ act trivially. We do this as follows - fix $\gamma,\delta$, and then divide by the left diagonal action of $C_\Gamma(\gamma)$ and by the right diagonal action of $C_\Gamma(\delta)$. By the equivariance of the action with respect to the diagonal actions, we know that the $B_4$ action is well defined on this quotient set. Moreover, both $\gamma,\delta$ are still well defined after the passage to this quotient, as they are conjugated by centralizing elements.
Note that in particular, this means that we have identified $(a,b,c,d) \sim \gamma (a, b, c, d) \delta^{-1}$, and so $Z(B_4)$ acts trivially on this quotient set. We summarize this as follows:
\begin{corollary}
For every group $\Gamma$ and pair of elements $\gamma,\delta\in\Gamma$, there is a well-defined action of $B_4$ on the quotient set $$X_{\gamma, \delta} = \{(a,b,c,d) \in \Gamma^4 | ab^{-1}cd^{-1} = \gamma,\ a^{-1}bc^{-1}d = \delta\}/\sim$$(denote the set before dividing by the equivalence relation by $\tilde{X}_{\gamma, \delta}$.) The equivalence relation $\sim$ is defined by $(a,b,c,d)\sim (\tilde{a}, \tilde{b}, \tilde{c}, \tilde{d})$ if there exist $\hat{\gamma}\in C_\Gamma(\gamma), \hat{\delta} \in C_\Gamma(\delta)$ such that $(\tilde{a},\tilde{b},\tilde{c},\tilde{d})=\hat{\gamma}(a,b,c,d)\hat{\delta}$ i.e. $\tilde{a} = \hat{\gamma} a \hat{\delta}, \tilde{b} = \hat{\gamma} b \hat{\delta}, \tilde{c} = \hat{\gamma} c \hat{\delta}, \tilde{d} = \hat{\gamma} d \hat{\delta}.$
Under this quotient action, $Z(B_4)$ acts trivially.
\end{corollary}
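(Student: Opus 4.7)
The plan is to assemble the claim from the four parts of Proposition~\ref{Basic Properties of the B4 Action}, which do essentially all the work. The proof is structural rather than computational, so I would organize it around three checks: stability of $\tilde X_{\gamma,\delta}$, compatibility of the equivalence relation with both the centralizer action and the $B_4$ action, and triviality of $Z(B_4)$.

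First I would verify that the $B_4$-action on $\Gamma^4$ restricts to $\tilde X_{\gamma,\delta}$. This is immediate from Proposition~\ref{Basic Properties of the B4 Action}(2), since the functions $ab^{-1}cd^{-1}$ and $a^{-1}bc^{-1}d$ are preserved coordinate-by-coordinate under each $\sigma_i$. Next I would check that the equivalence relation $\sim$ is well-posed on $\tilde X_{\gamma,\delta}$, i.e.\ that $(\hat\gamma a\hat\delta,\hat\gamma b\hat\delta,\hat\gamma c\hat\delta,\hat\gamma d\hat\delta)$ still satisfies the two defining equations whenever $(a,b,c,d)$ does and $\hat\gamma\in C_\Gamma(\gamma)$, $\hat\delta\in C_\Gamma(\delta)$. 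This is a one-line computation: the $\hat\delta$'s cancel in pairs, giving $\hat\gamma(ab^{-1}cd^{-1})\hat\gamma^{-1}=\hat\gamma\gamma\hat\gamma^{-1}=\gamma$, and symmetrically the $\hat\gamma$'s cancel to give $\hat\delta^{-1}\delta\hat\delta=\delta$.

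The second step is to show that the $B_4$-action descends to the quotient. Here I would invoke Proposition~\ref{Basic Properties of the B4 Action}(1): the $B_4$-action is equivariant with respect to both the left and right diagonal $\Gamma$-actions, hence in particular with respect to the subgroup actions by $C_\Gamma(\gamma)$ on the left and $C_\Gamma(\delta)$ on the right. This means $B_4$ permutes the equivalence classes of $\sim$, giving a well-defined action on $X_{\gamma,\delta}$.

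Finally, for the triviality of $Z(B_4)$, I would appeal to Proposition~\ref{Basic Properties of the B4 Action}(4): the central generator $(\sigma_1\sigma_2\sigma_3)^4$ sends $(a,b,c,d)$ to $\gamma(a,b,c,d)\delta^{-1}$. Setting $\hat\gamma=\gamma\in C_\Gamma(\gamma)$ and $\hat\delta=\delta^{-1}\in C_\Gamma(\delta)$ exhibits this tuple as equivalent to $(a,b,c,d)$ under $\sim$, so the generator acts as the identity on $X_{\gamma,\delta}$, and hence so does all of $Z(B_4)$. There is no real obstacle in this argument; the only thing requiring a moment of care is observing that $\gamma$ and $\delta^{-1}$ lie in the respective centralizers, which is of course automatic.
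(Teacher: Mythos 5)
Your proposal is correct and follows essentially the same route as the paper: it uses the equivariance from Proposition~\ref{Basic Properties of the B4 Action}(1) to descend the action, and part~(4) together with the observation that $\gamma\in C_\Gamma(\gamma)$, $\delta^{-1}\in C_\Gamma(\delta)$ to kill the center. The only difference is that you spell out the well-posedness check of $\sim$ on $\tilde X_{\gamma,\delta}$ a bit more explicitly than the paper's one-line remark, which is fine.
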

The involution $\varepsilon$ from the proposition will allow us later on to extend our action from an $\mathrm{Aut}^{+}(F_2)$-action to an $\mathrm{Aut}(F_2)$-action, given certain conditions on $\gamma,\delta \in \Gamma$. Moreover, note that the automorphism $\sigma_1 \mapsto \sigma_3, \sigma_2 \mapsto \sigma_2, \sigma_3 \mapsto \sigma_1$ of $B_4$ is inner, given by conjugation by $(\sigma_2\sigma_3\sigma_1)^2$. So, conjugating by $(\sigma_2\sigma_3\sigma_1)^2\varepsilon$ acts by the outer automorphism $\sigma_i \mapsto \sigma_i^{-1}$ of $B_4$, see Lemma \ref{semi-direct product AutF2} below.

We give some extra motivation as to the specific interest in the equivariant quandle in the following remark:
\begin{remark}
    The quandle structure we have described gives rise to a $B_4$-action on $\Gamma^4$ for every group $\Gamma$, in particular for the free group $F_4$. For the free group, we get an action on $F_4^4$ preserving the subgroup generated by the tuple, which may be thought of as a map $B_4 \to \mathrm{Aut}(F_4)$.
    
   For the special case of a conjugation quandle, the resulting map $B_4 \to \mathrm{Aut}(F_4)$ is precisely the Artin representation of the braid group, induced by the geometric action of $B_4$ as the mapping class group of a $4$-punctured disk, acting on its fundamental group.
    
    For the equivariant quandle, we get a map $B_4 \to \mathrm{Aut}(F_4)$ with some extra nice properties. This action is equivariant under both left and right diagonal multiplication. Using this fact, we may divide by the diagonal right action, and get an action on $\Gamma^3$ for every group $\Gamma$, simply assuming the first coordinate is trivial using the diagonal action. This induces a map $B_4 \to \mathrm{Aut}(F_3)$. After dividing by the right diagonal action the element $\gamma$ is still preserved by the $B_4$ action, and also by the right diagonal action. (Note that the element $\delta$ is no longer well-defined, only its conjugacy class is well-defined). Moreover this element $\gamma$ is primitive in $F_3$. Hence, considering the action on $F_2 \simeq F_3/\langle\gamma\rangle$ (which we may think of as assuming $\gamma=1$) we get an action on $\Gamma^2$ for every group $\Gamma$, and as before this induces a map $B_4 \to \mathrm{Aut}(F_2)$. This reproduces the Dyer-Formanek-Grossman isomorphism, sending isomorphically $B_4 / Z(B_4) \simeq \mathrm{Aut}^{+}(F_2)$.
    
    Moreover, a similar procedure yields  more generally maps $B_{2n} \to \mathrm{Aut}(F_{2n-2})$, also constructed in \cite{kassel2013action} by more topological means.
\end{remark}
It also follows from the previous remark that our study is related to the previous work of Meiri and Puder \cite{meiri2018markoff}:
\begin{remark}
    The action of $B_4$ on $X_{1,\delta}$ is precisely the action of $\mathrm{Aut}^+(F_2)$ on those pairs of elements $(x,y) \in \Gamma^2$ such that $[x,y]=\delta$, up to simultaneous conjugation by an element of $C_\Gamma(\delta)$. Equivalently stated, this is the action on the set of pairs of elements of $\Gamma$, up to simultaneous conjugation by an element of $\Gamma$, with commutator in the conjugacy class of $\delta$. This action factors through $\mathrm{Out}^+(F_2)$, as the inner automorphisms act by conjugation. Such actions were studied for a finite simple group $\Gamma$ by Garion and Shalev \cite{garion2009commutator}, see also \cite{lubotzky2011dynamics}. They were further studied by Meiri and Puder \cite{meiri2018markoff} more specifically for $\Gamma = \mathrm{PSL}_2(\mathbb{F}_p)$, who also showed that a related permutation action is the full alternating or symmetric group, when $\delta$ is unipotent.
\end{remark}
We end this section with the following lemma on $B_4$ which will be useful later
\begin{lemma}
\label{semi-direct product AutF2}
\begin{enumerate}
\item The map $\varphi: B_4 \to B_4$ defined on generators by $\varphi(\sigma_i) = \sigma_i^{-1}$ is an automorphism. Moreover, restricting the automorphism to $B_4/Z(B_4)$, we get an isomorphism $\mathrm{Aut}(F_2) \simeq B_4/Z(B_4) \rtimes_\varphi \mathbb{Z}/2$.
\item The map $\psi: B_4 \to B_4$ defined on generators by $\psi(\sigma_i) = \sigma_{4-i}$ is an automorphism. Moreover, this automorphism is inner, and given by conjugation by $(\sigma_2\sigma_3\sigma_1)^2$.
\end{enumerate}
\end{lemma}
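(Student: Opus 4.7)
I would verify both parts by direct computation using the braid relations and the Dyer-Formanek-Grossman formulas of Section~\ref{The Equivariant Quandle}.

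For part (1), the fact that $\varphi$ is an endomorphism follows from checking each defining relation: inverting both sides of the braid relation $\sigma_i\sigma_{i+1}\sigma_i = \sigma_{i+1}\sigma_i\sigma_{i+1}$ yields $\sigma_i^{-1}\sigma_{i+1}^{-1}\sigma_i^{-1} = \sigma_{i+1}^{-1}\sigma_i^{-1}\sigma_{i+1}^{-1}$, another braid relation, while the commutation relations are manifestly preserved. Since $\varphi^2$ fixes the generators, $\varphi$ is an involutive automorphism; it stabilizes $Z(B_4) = \langle (\sigma_1\sigma_2\sigma_3)^4\rangle$ (sending the generator to its inverse), so descends to an involutive automorphism $\bar\varphi$ of $B_4/Z(B_4) \simeq \mathrm{Aut}^+(F_2)$.

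For the semi-direct product identification, $\mathrm{Aut}^+(F_2)$ is the index-two kernel of the orientation character $\det\circ\mathrm{ab}: \mathrm{Aut}(F_2) \to \{\pm 1\}$, and $\mathrm{Aut}(F_2)$ contains involutions $\tau$ with $\det\tau = -1$, so the sequence $1 \to \mathrm{Aut}^+(F_2) \to \mathrm{Aut}(F_2) \to \mathbb{Z}/2 \to 1$ splits. To identify the splitting action with $\bar\varphi$, I would pick an explicit $\tau$ whose abelianization $S\in \mathrm{GL}_2(\mathbb{Z})$ satisfies $S\bar\sigma_i S^{-1} = \bar\sigma_i^{-1}$ for $i=1,2,3$ at the level of $\mathrm{SL}_2(\mathbb{Z})$; a short matrix calculation against the images $\bar\sigma_{1}=\bar\sigma_3=\begin{pmatrix}1 & -1 \\ 0 & 1\end{pmatrix}$ and $\bar\sigma_2=\begin{pmatrix}0 & -1 \\ 1 & 2\end{pmatrix}$ produces $S=\begin{pmatrix}1 & 2 \\ 0 & -1\end{pmatrix}$, realized for instance by $\tau\colon x\mapsto x,\ y\mapsto xy^{-1}x$, which is visibly an involution. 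I would then substitute directly into the formulas of Section~\ref{The Equivariant Quandle} to check $\tau\sigma_i\tau^{-1} = \sigma_i^{-1}$ in $\mathrm{Aut}^+(F_2)$, absorbing any residual discrepancy lying in $\mathrm{Inn}(F_2)\subset \mathrm{Aut}^+(F_2)$ by modifying $\tau$ within its coset.

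For part (2), the braid and commutation relations are symmetric under $i\leftrightarrow 4-i$, so $\psi$ is an involutive automorphism. To identify $\psi$ with conjugation by $w := (\sigma_2\sigma_3\sigma_1)^2$, I would first rewrite $w$ as the half-twist $\Delta := \sigma_1\sigma_2\sigma_3\sigma_1\sigma_2\sigma_1$ by successively applying $\sigma_1\sigma_3 = \sigma_3\sigma_1$, $\sigma_3\sigma_2\sigma_3 = \sigma_2\sigma_3\sigma_2$, and $\sigma_2\sigma_1\sigma_2 = \sigma_1\sigma_2\sigma_1$ to the expansion $\sigma_2\sigma_3\sigma_1\sigma_2\sigma_3\sigma_1$; the classical identity $\Delta\sigma_i\Delta^{-1} = \sigma_{n-i}$ in $B_n$ then yields $w\sigma_iw^{-1} = \sigma_{4-i}$ for $i=1,2,3$, proving $\psi = \mathrm{inn}_w$.

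The main obstacle is the explicit step in part (1): naive involutions such as $(x,y)\mapsto(y,x)$ or $(x,y)\mapsto(x^{-1},y)$ induce outer automorphisms of $\mathrm{Aut}^+(F_2)$ that do not match $\bar\varphi$ on all three $\sigma_i$ simultaneously, so one must choose $\tau$ via its abelianization and be prepared to make a small inner-automorphism adjustment. The computations in part (2) are all routine braid-word manipulations once $w = \Delta$ is established.
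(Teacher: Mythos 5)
Your Part (2) is correct and, if anything, cleaner than the paper: you identify $(\sigma_2\sigma_3\sigma_1)^2$ with the half-twist $\Delta$ by a short sequence of braid moves and then cite the classical identity $\Delta\sigma_i\Delta^{-1}=\sigma_{n-i}$, whereas the paper verifies $\Delta\sigma_1\Delta^{-1}=\sigma_3$ and $\Delta\sigma_2\Delta^{-1}=\sigma_2$ by two direct braid-word manipulations. Either route is fine.

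Part (1) has a genuine gap in the step where you produce the explicit involution $\tau$. The condition $S\bar\sigma_iS^{-1}=\bar\sigma_i^{-1}$ in $\mathrm{GL}_2(\mathbb{Z})$ determines $S$ only up to the central sign $\pm I$, and the sign matters because $-I$ does \emph{not} lift to an inner automorphism of $F_2$. Your choice $S=\left(\begin{smallmatrix}1&2\\0&-1\end{smallmatrix}\right)$, realized by $\tau\colon x\mapsto x,\ y\mapsto xy^{-1}x$, is the wrong sign: computing directly with the formulas of Section~\ref{The Equivariant Quandle} one finds $\tau\sigma_1\tau^{-1}\colon(x,y)\mapsto(x,xy)$ while $\sigma_1^{-1}\colon(x,y)\mapsto(x,yx)$, so the discrepancy is $c_x$; and $\tau\sigma_3\tau^{-1}\colon(x,y)\mapsto(x,yx)$ while $\sigma_3^{-1}\colon(x,y)\mapsto(x,xy)$, so the discrepancy is $c_{x^{-1}}$. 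Replacing $\tau$ by $c_w\tau$ changes the two discrepancies to $c_{wx\,\sigma_1^{-1}(w)^{-1}}$ and $c_{wx^{-1}\sigma_3^{-1}(w)^{-1}}$, and passing to abelianizations the two vanishing conditions force simultaneously $b=1$ and $b=-1$ for $\bar w=(a,b)$. So no inner adjustment can repair all three $\sigma_i$ at once, and the absorption step fails. Moreover, by centerlessness of $\mathrm{Aut}^+(F_2)\simeq B_4/Z(B_4)$ the element of $\mathrm{Aut}(F_2)$ conjugating each $\sigma_i$ to $\sigma_i^{-1}$ is \emph{unique} if it exists, and it lives in the $\mathrm{Inn}(F_2)$-coset of $-S$, not $S$. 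The correct involution is $x\mapsto x^{-1},\ y\mapsto x^{-1}yx^{-1}$, which is exactly what the paper obtains. The paper's route also sidesteps the explicit verification: it observes that $\varphi\in\mathrm{Aut}(B_4)$ preserves the normal free subgroup $F_2=\langle\sigma_1\sigma_3^{-1},\,\sigma_2\sigma_1\sigma_3^{-1}\sigma_2^{-1}\rangle\lhd B_4$ (the only computation needed is $\varphi(y)=x^{-1}yx^{-1}$), and then the identity $(\varphi|_{F_2})\,c_b\,(\varphi|_{F_2})^{-1}=c_{\varphi(b)}$ holds tautologically for all $b\in B_4$, giving $\varphi\sigma_i\varphi^{-1}=\sigma_i^{-1}$ in $\mathrm{Aut}^+(F_2)$ with no further checking. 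You would do well to adopt that mechanism; it avoids the sign trap entirely.
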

\begin{proof}
\begin{enumerate}
    \item It is easy to see that the map $\varphi$ preserves the braid relations, and is therefore an automorphism.
    Define $x = \sigma_1\sigma_3^{-1}, y = \sigma_2\sigma_1\sigma_3^{-1}\sigma_2^{-1}$. Consider the action of $B_4$ on $F_2 \simeq \langle x, y\rangle.$ It is given by $\varphi(x) = x^{-1}, \varphi(y) = \sigma_2^{-1} \sigma_1^{-1} \sigma_3 \sigma_2$. Therefore,
    $$x^{-1} y x^{-1} = \sigma_1^{-1} \sigma_3 \cdot \sigma_2 \sigma_1 \sigma_3^{-1} \sigma_2^{-1} \cdot \sigma_1^{-1} \sigma_3 = \sigma_3 \sigma_1^{-1} \cdot \sigma_2 \sigma_1 \sigma_3^{-1} \sigma_2^{-1 } \cdot \sigma_3 \sigma_1^{-1} =$$
    $$\sigma_3 \sigma_1^{-1} \cdot \sigma_2 \sigma_1 \sigma_2 \cdot \sigma_2^{-1}\sigma_3^{-1} \sigma_2^{-1 } \cdot \sigma_3 \sigma_1^{-1} = \sigma_3 \sigma_1^{-1} \cdot \sigma_1 \sigma_2 \sigma_1 \cdot \sigma_3^{-1}\sigma_2^{-1} \sigma_3^{-1 } \cdot \sigma_3 \sigma_1^{-1} =$$
    $$\sigma_3 \sigma_2 \cdot \sigma_1 \sigma_3^{-1} \cdot \sigma_2^{-1} \sigma_1^{-1} = \sigma_3 \sigma_2 \cdot \sigma_3^{-1} \sigma_1 \cdot \sigma_2^{-1} \sigma_1^{-1} = \sigma_2^{-1} \cdot \sigma_2 \sigma_3 \sigma_2 \cdot \sigma_3^{-1} \sigma_1 \cdot \sigma_2^{-1} \sigma_1^{-1} \sigma_2^{-1} \cdot \sigma_2 = $$
    $$\sigma_2^{-1} \cdot \sigma_3 \sigma_2 \sigma_3 \cdot \sigma_3^{-1} \sigma_1 \cdot \sigma_1^{-1} \sigma_2^{-1} \sigma_1^{-1} \cdot \sigma_2 = \sigma_2^{-1} \sigma_3 \sigma_1^{-1} \sigma_2 = \varphi(y).$$
So $\varphi$ is an automorphism, $\varphi^2 = 1$, and $\varphi \notin \mathrm{Aut}^+(F_2)$, as $\varphi$ acts on $\mathbb{Z}^2$ as $\begin{pmatrix}-1 & -2 \\ 0 & 1\end{pmatrix}$ which has determinant $-1$. $\mathrm{Aut}(F_2)$ is then generated by its normal index-$2$ subgroup $\mathrm{Aut}^+(F_2)$ and the disjoint subgroup $\langle\varphi\rangle$ of order $2$. Moreover, the conjugation action of $\varphi$ on $\mathrm{Aut}^+(F_2)$ is given by $\varphi \sigma_i \varphi^{-1} = \sigma_i^{-1}$, yielding the required semi-direct product decomposition.
    \item $$(\sigma_2\sigma_3\sigma_1)^2 \cdot \sigma_1 = \sigma_2\sigma_3 \cdot \sigma_1\sigma_2\sigma_1 \cdot \sigma_3\sigma_1 = \sigma_2\sigma_3\sigma_2 \cdot \sigma_1\sigma_2\sigma_3\sigma_1 = \sigma_3\sigma_2\sigma_3\cdot\sigma_1\sigma_2\sigma_3\sigma_1 = \sigma_3(\sigma_2\sigma_3\sigma_1)^2.$$   $$\sigma_2(\sigma_2\sigma_3\sigma_1)^2 = \sigma_2\cdot\sigma_2\sigma_3\sigma_1\cdot\sigma_2\sigma_3\sigma_1 = \sigma_2\sigma_2\sigma_3\cdot\sigma_1\sigma_2\sigma_1\cdot\sigma_3 = \sigma_2\cdot\sigma_2\sigma_3\sigma_2\cdot\sigma_1\sigma_2\sigma_3 = \sigma_2\cdot\sigma_3\sigma_2\sigma_3\cdot\sigma_1\sigma_2\sigma_3 =$$ $$\sigma_2\sigma_3\sigma_2\sigma_1\cdot\sigma_3\sigma_2\sigma_3 = \sigma_2\sigma_3\cdot\sigma_2\sigma_1\sigma_2\cdot\sigma_3\sigma_2 = \sigma_2\sigma_3\sigma_1\sigma_2\cdot\sigma_1\sigma_3\cdot\sigma_2 = \sigma_2\sigma_3\sigma_1\cdot\sigma_2\sigma_3\sigma_1\cdot\sigma_2 = (\sigma_2\sigma_3\sigma_1)^2 \sigma_2.$$
The final claim follows as $(\sigma_2\sigma_3\sigma_1)^4$ is central, and so conjugation by $(\sigma_2\sigma_3\sigma_1)^2$ is an involution.
\end{enumerate}
\end{proof}
\section{The case of $\mathrm{PSL}_2(\mathbb{F}_p)$}
We consider in greater detail the case of $\Gamma = \mathrm{PSL}_2(\mathbb{F}_p)$, for $p\geq 3$. Our $\mathrm{Aut}(F_2)$-actions will be the induced actions on specific orbits for particular choices of $\gamma,\delta \in \mathrm{PSL}_2(\mathbb{F}_p)$. 
To represent the fact the we have moved to work with matrices, we now denote points in $X_{\gamma,\delta}$ by $(A,B,C,D)$ instead of $(a,b,c,d)$.

\begin{remark}
In this case, we wish to informally think of the group $\Gamma$ as a $3$-dimensional algebraic variety over $\mathbb{F}_p$ . Fixing the values of $\gamma,\delta$ we get a $6$-dimensional subvariety of $\Gamma^4$, as there are $12$ degrees of freedom in the choice of $4$ elements of $\Gamma$, and two conditions coming from the particular choice of $\gamma,\delta$. Moreover, in $\Gamma$, the centralizer of every non-trivial element is a $1$-dimensional subvariety. Therefore, dividing by the left and right diagonal actions of $C_\Gamma(\gamma),C_\Gamma(\delta)$ we get a $4$-dimensional variety. We will make this informal discussion formal in the Appendix.
\end{remark}
We will make the following two slight modifications on the definitions from the previous section for the specific case of $\mathrm{PSL}_2(\mathbb{F}_p)$. For a matrix $A \in \mathrm{SL}_2(\mathbb{F}_p)$ denote by $\bar{A}$ the corresponding element of $\mathrm{PSL}_2(\mathbb{F}_p)$.
\begin{definition}
Fix two elements $\gamma,\delta \in \mathrm{SL}_2(\mathbb{F}_p)$. Denote
$$\tilde{X}_{\gamma,\delta}^{(2)} = \{(\bar{A},\bar{B},\bar{C},\bar{D})| A,B,C,D \in \mathrm{SL}_2(\mathbb{F}_p), AB^{-1}CD^{-1} = \gamma, A^{-1}BC^{-1}D = \delta\} \subseteq \mathrm{PSL}_2(\mathbb{F}_p)^4.$$
$$X_{\gamma,\delta}^{(2)} = \tilde{X}_{\gamma, \delta}^{(2)} / \sim$$
where the equivalence relation is defined by $(A,B,C,D) \sim (\tilde{A}, \tilde{B}, \tilde{C}, \tilde{D})$ if there are $\hat{\gamma} \in C_{\mathrm{PGL}_2(\mathbb{F}_p)}(\bar{\gamma}),\hat{\delta} \in C_{\mathrm{PGL}_2(\mathbb{F}_p)}(\bar{\delta})$ such that $(A,B,C,D) = \hat{\gamma}(\tilde{A},\tilde{B}, \tilde{C},\tilde{D})\hat{\delta}$.
\end{definition}
Note that the $B_4$-action on $X_{\gamma,\delta}$ yields an action on $X_{\gamma,\delta}^{(2)}$. Indeed, changing the signs of $A,B,C,D$ will change the signs of $AB^{-1}CD^{-1},A^{-1}BC^{-1}D$ in the same way, and therefore $\gamma,\delta$ are indeed defined up to a common sign rather than each of them defined separately up to sign. The $B_4$-action on $\tilde{X}_{\gamma,\delta}^{(2)}$ is therefore the quotient to $\mathrm{PSL}_2(\mathbb{F}_p)$ of the $B_4$-action on $\tilde{X}_{\gamma,\delta}$ from $\mathrm{SL}_2(\mathbb{F}_p)$.
Moreover $\mathrm{PSL}_2(\mathbb{F}_p) \leq \mathrm{PGL_2}(\mathbb{F}_p)$ is exactly the embedding $\Gamma \simeq \mathrm{Inn}(\Gamma) \leq \mathrm{Aut}(\Gamma)$, and is of index $2.$ Therefore by part $3$ of Proposition \ref{Basic Properties of the B4 Action} the action on $X_{\gamma,\delta}^{(2)}$ is well defined. For ease of notation, we will often write $\gamma,\delta$ to mean $\bar{\gamma},\bar{\delta} \in \mathrm{PSL}_2$, unless the distinction is important.

We now state a basic fact on the action of a generator of the braid group.
\begin{lemma}
\label{Action of single generator}
\begin{enumerate}
\item For every non-trivial, non-involution $\gamma \in \Gamma = \mathrm{PSL}_2(\mathbb{F}_p)$, the centralizer $C_\Gamma(\gamma)$ is cyclic.
\item Consider the action of a generator on pairs of elements $(A,B) \mapsto (AB^{-1}A,A)$.
    \begin{enumerate}
    \item This action preserves both $AB^{-1},A^{-1}B$.
    \item If $AB^{-1}$ generates $C_\Gamma(AB^{-1})$, then the action is transitive on those pairs with fixed values of $AB^{-1}, A^{-1}B$
    \end{enumerate}
\end{enumerate}
\end{lemma}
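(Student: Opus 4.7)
For part (1), the plan is to lift $\gamma$ to $A \in \mathrm{SL}_2(\mathbb{F}_p)$. An element of $C_\Gamma(\gamma)$ lifts to $G \in \mathrm{SL}_2$ with $GAG^{-1} = \pm A$; the case $GAG^{-1} = -A$ requires $A$ and $-A$ to share a characteristic polynomial, i.e.\ $\mathrm{tr}(A) = 0$, equivalently $A^2 = -I$ by Cayley--Hamilton --- but this is exactly the case where $\gamma$ is an involution, excluded by hypothesis. Hence $C_\Gamma(\gamma) = C_{\mathrm{SL}_2(\mathbb{F}_p)}(A)/\{\pm I\}$. Since $\gamma \neq 1$, the matrix $A$ is non-scalar, so $C_{\mathrm{GL}_2(\mathbb{F}_p)}(A) = \mathbb{F}_p[A]^*$ is the unit group of a $2$-dimensional commutative $\mathbb{F}_p$-algebra. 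A short case analysis on the type of $A$ (split semisimple, non-split semisimple, unipotent) identifies this algebra as $\mathbb{F}_p \times \mathbb{F}_p$, $\mathbb{F}_{p^2}$, or $\mathbb{F}_p[\epsilon]/(\epsilon^2)$ respectively. In each case, intersecting the unit group with $\ker\det$ yields a cyclic group (of order $p-1$, $p+1$, or $2p$; cyclicity in the unipotent case uses $\gcd(2,p)=1$), and the quotient by $\{\pm I\}$ remains cyclic.

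Part (2a) is an immediate identity: with $(A',B')=(AB^{-1}A,A)$, one has $A'B'^{-1} = AB^{-1}A\cdot A^{-1} = AB^{-1}$ and $A'^{-1}B' = A^{-1}BA^{-1}\cdot A = A^{-1}B$.

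For part (2b), my plan is to turn the iteration of the generator into a single translation on a coset. Set $\gamma = AB^{-1}$ and $\delta = A^{-1}B$. Then $B = A\delta$, and the relation $AB^{-1} = \gamma$ becomes $A\delta^{-1}A^{-1} = \gamma$. For fixed values of $\gamma,\delta$, the valid $A$'s therefore form a single coset $A_0 C_\Gamma(\delta)$ (with $B$ determined by $A$). The generator sends $A$ to $\gamma A = A(A^{-1}\gamma A) = A\delta^{-1}$, so after $k$ iterations $A$ becomes $A\delta^{-k}$, and the orbit is $A\langle\delta\rangle$. Transitivity on the coset is thus equivalent to $\langle\delta\rangle = C_\Gamma(\delta)$; since $\delta^{-1} = A^{-1}\gamma A$ is $\Gamma$-conjugate to $\gamma$, this is equivalent to $\langle\gamma\rangle = C_\Gamma(\gamma)$, which is the hypothesis. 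The only substantive step is the case analysis in (1), and that is routine linear algebra; I expect no real obstacle.
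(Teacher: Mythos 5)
Your proof is correct and follows essentially the same route as the paper: a type-by-type case analysis for the cyclicity of the centralizer, and for (2b) the identification of the orbit with a cyclic translation on the coset of elements conjugating $AB^{-1}$ to $(A^{-1}B)^{-1}$ (your right-translation by $\delta^{-1}$ and the paper's left-multiplication by $AB^{-1}$ are the same map, since $\gamma A = A\delta^{-1}$). Your packaging of part (1) via $C_{\mathrm{GL}_2(\mathbb{F}_p)}(A)=\mathbb{F}_p[A]^{\ast}$ together with the initial reduction $GAG^{-1}=\pm A \Rightarrow \mathrm{tr}(A)=0$ is slightly more structural than the paper's explicit coordinate computation over $\mathbb{F}_{p^2}$, but carries the same mathematical content.
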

Note that $AB^{-1},A^{-1}B$ are inverse-conjugate in $\Gamma$, and therefore one of them generates its centralizer if and only if the other one does.
Recall before the proof that elements $\gamma \in \Gamma$ are of one of three types - split, non-split or unipotent. The first two types meaning that $\gamma$ is diagonalizable, with eigenvalues in $\mathbb{F}_p$ or $\mathbb{F}_{p^2}$ respectively, and the second type being a non-trivial Jordan block with eigenvalues $\pm 1$.
\begin{proof}
\begin{enumerate}
    \item If $\gamma$ is diagonalizable, we conjugate $\gamma$ over $\mathbb{F}_{p^2}$ to the form $D_\lambda = \begin{pmatrix}\lambda & 0 \\ 0 & \lambda^{-1}\end{pmatrix}$. If $\gamma \neq 1$ in $\mathrm{PSL}_2(\mathbb{F}_p)$ then $\lambda \neq \pm 1$. Assume $AD_\lambda = D_\lambda A$, for $A = \begin{pmatrix}a & b \\ c & d\end{pmatrix}$, i.e. $$\begin{pmatrix} \lambda a & \lambda^{-1}b \\ \lambda c & \lambda^{-1}d \end{pmatrix} = \begin{pmatrix}\lambda a & \lambda b \\ \lambda^{-1}c & \lambda^{-1} d\end{pmatrix}$$up to sign. Since $\gamma$ is not an involution, $\lambda \neq \pm \lambda^{-1}$ and it follows that $b=c=0$ and therefore $C_\Gamma(\gamma)$ is a sub-quotient of $\mathbb{F}_{p^2}^\times$ and hence cyclic.
    Otherwise, $\gamma$ is unipotent, and we may conjugate it (also over $\mathbb{F}_{p^2})$ to the form $u = \begin{pmatrix}1 & 1 \\ 0 & 1\end{pmatrix}$. Therefore, if $Au = uA$ for $A = \begin{pmatrix}a & b \\ c & d\end{pmatrix}$, then $\begin{pmatrix} a & a + b \\ c & c + d\end{pmatrix} = \begin{pmatrix}a + c & b + d \\ c & d\end{pmatrix}$ up to sign. If $c \neq 0$, then the sign is $1$ and $a = a + c,$ in contradiction. So $c = 0$, meaning $ad = 1$ so both $a$ and $d$ aren't zero. Therefore the sign is $1$ and $a + b = b + d$, and so $a=d=\pm 1$, and $C_\Gamma(\gamma) \simeq \mathbb{F}_p$.

    \item Part (a) is an easy computation. For part (b), note that the action simply multiplies both $A,B$ by $AB^{-1}$ on the left. Fix the values $X,Y$ such that $X = AB^{-1},Y = A^{-1}B$, and note that $AY^{-1}A^{-1} = X$, so $A$ is an element conjugating $X$ to $Y^{-1}$.

    Therefore, any other element $A$ conjugating $X$ to $Y^{-1}$ will be of the form $\hat{X} A$, for $\hat{X} \in C_{\Gamma}(X)$. Assuming that $X$ generates $C_\Gamma(X)$, we get $\hat{X} = X^n$, and therefore applying the transformation $n$ times we move from $(A,B)$ to $(\hat{X}A,\hat{X}B)$, and this is an arbitrary element with the given values $X,Y$.
\end{enumerate}
\end{proof}
The lemma makes the following definition very natural in our context:
\begin{definition}
\label{maximal definition}    
Call an element $\gamma \in \Gamma$ maximal if it generates $C_\Gamma(\gamma)$.
\end{definition}
Note also that we may informally think of this centralizer as the $\mathbb{F}_p$-points of a $1$-dimensional algebraic subvariety of $\mathrm{PSL}_2$.
\subsection{Foliations}
We can now define a certain family of foliations of our $4$-dimensional variety that will help us greatly in its study.
These foliations will serve intuitively as "Zariski-closures" of orbits of certain copies of subgroups $\mathbb{Z}^2 \leq B_4$ (This is not technically true as the orbits are finite). These copies of $\mathbb{Z}^2$ will give $2$-dimensional foliations.

The first foliation is defined by the subgroup $\mathbb{Z}^2 \simeq \langle \sigma_1, \sigma_3\rangle \leq B_4$, and is defined by those quadruples with fixed values of $AB^{-1},A^{-1}B,CD^{-1},C^{-1}D$. This is a $2$-dimensional subvariety, as a given value for both of the elements $AB^{-1}, A^{-1}B$ still leaves one degree of freedom by the previous Lemma. Note moreover that by the Lemma, if both $AB^{-1},CD^{-1}$ are maximal, every two elements on such a leaf of the foliation are on the same $B_4$-orbit.

The second foliation chosen is that defined by the subgroup $\mathbb{Z}^2 \simeq \langle \sigma_2 \sigma_1 \sigma_2^{-1}, \sigma_2 \sigma_3 \sigma_2^{-1} \rangle \leq B_4$. It can be checked that this subgroup leaves the following elements invariant: $AC^{-1}, A^{-1}C, CB^{-1}CD^{-1}, C^{-1}BC^{-1}D.$
Once again - if $AC^{-1}, CB^{-1}CD^{-1}$ are both maximal, then any two points of this $2$-dimensional subvariety lie on the same $B_4$-orbit.

We can in fact get an infinite sequence of foliations by taking further conjugations of these $\mathbb{Z}^2$ subgroup by $\sigma_2.$ However, only the first foliation will be used in our proof. We mention as a special case the conjugation by $\sigma_2^{-1}$ which preserves $BD^{-1}, B^{-1}D, AB^{-1}CB^{-1}, A^{-1}BC^{-1}B$.

These foliations give rise to the following definition
\begin{definition}
\label{proper decomposition definition}
    A proper decomposition of a pair of elements $\gamma, \delta \in \mathrm{SL}_2(\mathbb{F}_p)$ is a quadruple of elements $x, y, z, w \in \mathrm{SL}_2(\mathbb{F}_p)$, such that
    \begin{enumerate}
        \item $x,y$ are conjugate in $\mathrm{SL}_2(\mathbb{F}_p)$.
        \item $z,w$ are conjugate in $\mathrm{SL}_2(\mathbb{F}_p)$.
        \item $xz = \gamma$
        \item $wy = \delta^{-1}$
    \end{enumerate}
Moreover, call such a proper decomposition maximal if $x,y,z,w$ are all maximal as elements of $\mathrm{PSL}_2(\mathbb{F}_p)$.
\end{definition}
Given a point $(A,B,C,D) \in X^{(2)}_{\gamma, \delta}$, We can build a proper decomposition corresponding to each of the two foliations. For the first:
$$x = AB^{-1}, y = B^{-1}A, z = CD^{-1}, w = D^{-1}C.$$
And indeed $xz = AB^{-1}CD^{-1} = \gamma$ and $wy = D^{-1}CB^{-1}A = (A^{-1}BC^{-1}D)^{-1} = \delta^{-1}$.
For the second foliation, we may take
$$x = AC^{-1}, y = C^{-1}A, z = CB^{-1}CD^{-1}, w = D^{-1}CB^{-1}C.$$
Moreover, we already know that the $\mathbb{Z}^2$-subgroup corresponding to each foliation preserves this chosen proper decomposition, and that if the proper decomposition is maximal, then all points on the subvariety are on the same $B_4$-orbit (in fact even the same $\mathbb{Z}^2$-orbit).

Note however that a point in $X^{(2)}_{\gamma,\delta}$ is only defined up to an equivalence relation - and therefore we need the following equivalence relation on proper decompositions
\begin{definition}
    Say that two proper decompositions are equivalent if $x,z$ are equivalent up to a conjugation by an element of $C_{\mathrm{PGL}_2(\mathbb{F}_p)}(\gamma)$, and $y,w$ are equivalent up to a conjugation by an element of $C_{\mathrm{PGL}_2(\mathbb{F}_p)}(\delta)$ by elements of the same determinant. In formulae:
    $(x,y,z,w)\sim(\tilde{x},\tilde{y},\tilde{z},\tilde{w})$ if there exist $\tilde{\gamma}\in C_{\mathrm{PGL}_2(\mathbb{F}_p)}(\gamma),\tilde{\delta}\in C_{\mathrm{PGL}_2(\mathbb{F}_p)}(\delta)$ such that $x=\tilde{\gamma}\tilde{x}\tilde{\gamma}^{-1},z=\tilde{\gamma}\tilde{z}\tilde{\gamma}^{-1}$ and $y=\tilde{\delta}\tilde{y}\tilde{\delta}^{-1}, w=\tilde{\delta}\tilde{w}\tilde{\delta}^{-1}$ and $\det(\tilde{\gamma}) = \det(\tilde{\delta})$.
\end{definition}
Here we think of the determinant as a map $\mathrm{PGL}_2(\mathbb{F}_p) \to \mathbb{Z}/2$, given by the Legendre symbol of the determinant, with kernel $\mathrm{PSL}_2(\mathbb{F}_p).$

Note that two different representations of a given point in $X^{(2)}_{\gamma,\delta}$ yield possibly distinct, but equivalent proper decompositions (under both foliations). Moreover - given a point, and corresponding proper decomposition, any equivalent proper decomposition is given by a different representation of this same point.
\begin{corollary}
\label{Moving inside first foliation}
    Let $Q \in X^{(2)}_{\gamma,\delta}$ be a point with a maximal first proper decomposition. Then every other point $R \in X^{(2)}_{\gamma,\delta}$ with equivalent first proper decomposition to $Q$ is on the same $\langle\sigma_1,\sigma_3\rangle$-orbit as $Q$.
\end{corollary}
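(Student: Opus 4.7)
The plan is to use the two-step structure of the first foliation. First, I would lift the hypothesis that the first proper decompositions of $Q$ and $R$ are merely equivalent to an actual equality of proper decompositions, at the cost of replacing the chosen representative of $R$ in $\tilde X^{(2)}_{\gamma, \delta}$. Second, I would invoke Lemma \ref{Action of single generator}(2)(b) once on the pair $(A, B)$ via $\sigma_1$ and once on the pair $(C, D)$ via $\sigma_3$, exploiting that these generators commute and act on disjoint coordinate pairs.

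For the first step, choose representatives $(A, B, C, D)$ and $(A', B', C', D')$ in $\tilde X^{(2)}_{\gamma, \delta}$ of $Q$ and $R$ respectively. By hypothesis there exist $\tilde\gamma \in C_{\mathrm{PGL}_2(\mathbb{F}_p)}(\gamma)$ and $\tilde\delta \in C_{\mathrm{PGL}_2(\mathbb{F}_p)}(\delta)$ with $\det\tilde\gamma = \det\tilde\delta$ that intertwine the two first proper decompositions. Setting $\hat\gamma = \tilde\gamma$ and $\hat\delta = \tilde\delta^{-1}$, the determinant condition becomes $\det\hat\gamma = \det\hat\delta$, which is precisely the compatibility required so that the quadruple $(\hat\gamma A' \hat\delta, \hat\gamma B' \hat\delta, \hat\gamma C' \hat\delta, \hat\gamma D' \hat\delta)$ lies in $\mathrm{PSL}_2(\mathbb{F}_p)^4$ and hence represents the same class $R$ in $X^{(2)}_{\gamma, \delta}$. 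A direct computation of the form $\hat\gamma A' \hat\delta \cdot \hat\delta^{-1} B'^{-1} \hat\gamma^{-1} = \hat\gamma (A'B'^{-1}) \hat\gamma^{-1}$ and its analogues then shows that the first proper decomposition of this new representative equals $(AB^{-1}, B^{-1}A, CD^{-1}, D^{-1}C)$ exactly.

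For the second step, write $x = AB^{-1}, y = B^{-1}A, z = CD^{-1}, w = D^{-1}C$, each of which is maximal by hypothesis. The $\sigma_1$-action modifies only the first two coordinates and preserves both $AB^{-1}$ and $B^{-1}A$; since $x$ is maximal, Lemma \ref{Action of single generator}(2)(b) says the $\sigma_1$-orbit of $(A, B)$ contains every pair $(A'', B'')$ with $A''B''^{-1} = x$ and $B''^{-1}A'' = y$. The analogous statement holds for the $\sigma_3$-action on $(C, D)$ using maximality of $z$. Because $\sigma_1$ and $\sigma_3$ commute and act on disjoint coordinate pairs, the joint $\langle \sigma_1, \sigma_3\rangle$-orbit of $(A, B, C, D)$ is the full set of quadruples in $\tilde X^{(2)}_{\gamma,\delta}$ whose first proper decomposition equals $(x, y, z, w)$. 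It therefore contains the adjusted representative of $R$, and the corollary follows.

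The only subtle point is the lifting in the first step: without the determinant condition built into the definition of equivalence on proper decompositions, one could not in general produce a representative of $R$ in $\mathrm{PSL}_2(\mathbb{F}_p)^4$ whose first proper decomposition matches that of $Q$ on the nose. Once that compatibility is tracked, the rest of the argument is a direct unpacking of the foliation discussion and of Lemma \ref{Action of single generator}.
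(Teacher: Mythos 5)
Your proposal is correct and follows essentially the same two-step strategy as the paper's proof: you first adjust the representative of $R$ in $\tilde{X}^{(2)}_{\gamma,\delta}$ by an allowed pair $(\hat\gamma,\hat\delta)=(\tilde\gamma,\tilde\delta^{-1})$ (with the determinant constraint ensuring the adjusted quadruple stays in $\mathrm{PSL}_2(\mathbb{F}_p)^4$) so that the first proper decompositions of $Q$ and the new representative of $R$ coincide on the nose, and then apply Lemma \ref{Action of single generator} to $\sigma_1$ on the $(A,B)$ pair and to $\sigma_3$ on the $(C,D)$ pair, exactly as the paper does.
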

\begin{proof}
    Given a point $R = (A_R, B_R, C_R, D_R)$ with equivalent first proper decomposition to $Q = (A_Q, B_Q, C_Q, D_Q)$. Choose $\tilde{\gamma} \in C_{\mathrm{PGL}_2(\mathbb{F}_p)}(\gamma),\tilde{\delta} \in C_{\mathrm{PGL}_2(\mathbb{F}_p)}(\delta)$ such that $A_QB_Q^{-1}=\tilde{\gamma}A_RB_R^{-1}\tilde{\gamma}^{-1},C_QD_Q^{-1}=\tilde{\gamma}C_RD_R^{-1}\tilde{\gamma}^{-1}$ and $B_Q^{-1}A_Q=\tilde{\delta}B_R^{-1}A_R\tilde{\delta}^{-1}, D_Q^{-1}C_Q=\tilde{\delta}D_R^{-1}C_R\tilde{\delta}^{-1}$ and $\det(\tilde{\gamma}) = \det(\tilde{\delta})$. Consider $\tilde{R} = \tilde{\gamma}(A_R, B_R, C_R, D_R)\tilde{\delta}^{-1}$ which is equivalent to $R$ in $X^{(2)}_{\gamma,\delta}$. Note that since $\det(\tilde{\gamma})=\det(\tilde{\delta})$, these four matrices are still in $\mathrm{PSL}_2(\mathbb{F}_p)$.
    Now, $Q,\tilde{R}$ have the same first proper decompositions. In particular, $A_QB_Q^{-1} = A_RB_R^{-1}$ and $B_Q^{-1}A_Q = B_R^{-1}A_R$. By Lemma \ref{Action of single generator}, since $A_QB_Q^{-1}$ is maximal, we can move from $(A_Q,B_Q)$ to $(A_R, B_R)$ by a power of $\sigma_1$. Similarly for $(C_Q, D_Q)$ and $(C_R, D_R)$ by a power of $\sigma_3$.
\end{proof}

\subsection{Matrices up to conjugation}
We will see in the Appendix that understanding triplets in $\Gamma$ up to conjugation will help us understand the algebraic variety in question. Therefore, we are interested in representation varieties (sometimes also called character varieties) of free groups. We will state the necessary well-known results in the following lemma, see for instance \cite{magnus1980rings}.
\begin{lemma}
\label{Magnus character varieties}
\begin{enumerate}
    \item Two matrices $ M,N\in\mathrm{SL}_2(\mathbb{F}_p)$ such that $M, N \neq \pm 1$ are $\mathrm{GL}_2(\mathbb{F}_p)$-conjugate if and only if they have the same trace.
    \item Two matrices $M,N\in\mathrm{PSL}_2(\mathbb{F}_p)$ such that $M,N \neq 1$ are $\mathrm{PGL}_2(\mathbb{F}_p)$-conjugate if and only if they have the same trace up to sign.
    \item The following two equations on traces are true for every $M,N\in\mathrm{SL}_2(\mathbb{F}_p)$, $\mathrm{tr}(M)=\mathrm{tr}(M^{-1})$ and $\mathrm{tr}(MN) + \mathrm{tr}(MN^{-1}) = \mathrm{tr}(M)\mathrm{tr}(N)$.
    \item In $\mathrm{SL}_2(\mathbb{F}_p)$, two tuples of $n$ elements $(M_1,\dots,M_n),(N_1,\dots,N_n)$ yield the same trace function as maps $F_n \to \mathrm{SL}_2(\mathbb{F}_p)$, that is $\mathrm{tr}(w(M_1,\dots,M_n))=\mathrm{tr}(N_1,\dots,N_n)$ for every word $w\in F_n$, if and only if every one of the $2^n$ products over subsets of $[n]$ in one tuple are conjugate to the corresponding product of a subset from the other tuple, where the product is taken in increasing order.
    \item For a triple of $\mathrm{SL}_2$ elements $(M_1,M_2,M_3)$ denote
    $$a=\mathrm{tr}(M_1),b=\mathrm{tr}(M_2),c=\mathrm{tr}(M_3),x=\mathrm{tr}(M_2M_3),y=\mathrm{tr}(M_3M_1),z=\mathrm{tr}(M_1M_2),p=\mathrm{tr}(M_1M_2M_3).$$
    Then these satisfy the equation
    $$p^2 - (ax+by+cz - abc)p + (a^2 + b^2 + c^2 + x^2 + y^2 + z^2 + xyz - abz - bcx - cay - 4) = 0.$$
    The other solution to this quadratic equation in $p$ is $q = \mathrm{tr}(M_3M_2M_1)$.
\end{enumerate}
\end{lemma}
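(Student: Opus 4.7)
The plan is to treat the five parts as a package of classical Fricke--Vogt--Magnus identities, all rooted in the Cayley--Hamilton relation for $\mathrm{SL}_2$, and to proceed in the order $(3)\to(4)\to(1,2)\to(5)$. For part (3), Cayley--Hamilton for $M\in\mathrm{SL}_2$ gives $M+M^{-1}=\mathrm{tr}(M)\cdot I$, from which $\mathrm{tr}(M^{-1})=\mathrm{tr}(M)$ is immediate; multiplying the relation $N+N^{-1}=\mathrm{tr}(N)\cdot I$ by $M$ on the left and taking traces yields $\mathrm{tr}(MN)+\mathrm{tr}(MN^{-1})=\mathrm{tr}(M)\mathrm{tr}(N)$. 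For part (4) I would induct on the length of the word $w\in F_n$, using (3) repeatedly together with cyclic invariance of the trace to express $\mathrm{tr}(w(M_1,\dots,M_n))$ as an integer polynomial in the $2^n$ subset-product traces; agreement on these finitely many traces then forces agreement on every word-trace.

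For parts (1) and (2) the characteristic polynomial is the key: two non-central elements of $\mathrm{SL}_2(\mathbb{F}_p)$ with the same trace share the same polynomial $t^2-\mathrm{tr}(M)\,t+1$. A case distinction on the discriminant $\mathrm{tr}(M)^2-4$ shows that in the semisimple case (nonzero discriminant) the two matrices have distinct eigenvalues and are $\mathrm{GL}_2(\mathbb{F}_p)$-conjugate by passing to a common rational canonical form, while in the unipotent case (zero discriminant, $M\neq\pm I$) every such element is, up to a central sign, a non-trivial Jordan block, and any two such are $\mathrm{GL}_2(\mathbb{F}_p)$-conjugate by a diagonal rescaling. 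Part (2) then descends from (1): the two $\mathrm{SL}_2$-lifts of a non-trivial element of $\mathrm{PSL}_2(\mathbb{F}_p)$ differ by a sign, so their traces coincide up to sign, and $\mathrm{PGL}_2(\mathbb{F}_p)$-conjugacy is precisely the image of $\mathrm{GL}_2(\mathbb{F}_p)$-conjugacy under the quotient map.

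The main obstacle is part (5), which I would attack by computing $p+q$ and $p\cdot q$ directly as polynomials in $a,b,c,x,y,z$ and then reading off the quadratic $(t-p)(t-q)=0$. For $p+q$, using cyclic invariance to rewrite $q=\mathrm{tr}(M_1 M_3 M_2)$ and the sandwiched identity $\mathrm{tr}(XBY)+\mathrm{tr}(XB^{-1}Y)=\mathrm{tr}(B)\mathrm{tr}(XY)$ (obtained from (3) by multiplying the Cayley--Hamilton relation for $B$ on the left by $X$ and on the right by $Y$), one obtains $\mathrm{tr}(M_1 M_2^{-1} M_3)=by-p$ and $\mathrm{tr}(M_1^{-1} M_2^{-1} M_3)=cz-q$; combining with $\mathrm{tr}(M_1 M_2^{-1} M_3)+\mathrm{tr}(M_1^{-1} M_2^{-1} M_3)=a(bc-x)$ yields $p+q=ax+by+cz-abc$. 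For $p\cdot q$, I would expand $\mathrm{tr}(M_1 M_2 M_3)\mathrm{tr}(M_1 M_3 M_2)$ via the product-to-sum identity and reduce each of the two resulting length-six traces to a polynomial in subset-traces by iteratively replacing internal squares $M_i^2=\mathrm{tr}(M_i)M_i-I$ and reapplying (3); by (4) the result must be an explicit polynomial in $a,b,c,x,y,z$, and comparison yields the stated constant term. The bookkeeping is tedious but entirely mechanical; alternatively one can argue structurally that the coordinate ring of the $\mathrm{SL}_2$-character variety of $F_3$ is generated by the seven traces with a single relation of the stated shape, whose coefficients are then pinned down by evaluating on a generic triple. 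The fact that $q$ is the second root is then built into the construction.
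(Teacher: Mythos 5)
The paper gives no proof of this lemma; it is stated as classical with a citation to Magnus's 1980 work on Fricke characters, so there is no in-paper argument to compare yours against. Your outline is correct and follows the standard Fricke--Vogt--Magnus derivation from the Cayley--Hamilton identity $M+M^{-1}=\mathrm{tr}(M)I$: parts (3), (1), (2) come straight from the characteristic polynomial and rational canonical forms; part (4) by induction on word length, using (3) and cyclicity to reduce any word trace to a polynomial in the $2^n$ subset-product traces; and part (5) by identifying the two elementary symmetric functions of $p$ and $q$. Your derivation of $p+q=ax+by+cz-abc$ is complete and checks out. The deferred computation of $pq$ is where the real work of a self-contained proof lives: after applying $\mathrm{tr}(A)\mathrm{tr}(B)=\mathrm{tr}(AB)+\mathrm{tr}(AB^{-1})$ with $A=M_1M_2M_3$, $B=M_1M_3M_2$, one of the two resulting terms collapses by conjugation-invariance to the commutator trace $\mathrm{tr}([M_2,M_3])=b^2+c^2+x^2-bcx-2$, and the remaining length-six trace must still be ground down via repeated use of (3); it is mechanical but it is the bottleneck, and the paper sidesteps it entirely by citation. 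One caution: the ``structural'' alternative you float for $pq$ --- that the $\mathrm{SL}_2$-character ring of $F_3$ is generated by the seven traces subject to a single quadratic relation in $p$ with second root $q$ --- is essentially the content of part (5) itself (it is the Fricke--Vogt theorem), so invoking it would be circular; the direct trace computation is the honest route.
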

Denote by $\mathrm{Ch}_{\mathrm{SL}_2(\mathbb{F}_p)}(F_n)$ to be the set of characters of two-dimensional representations $\varphi:F_n\to \mathrm{SL}_2(\mathbb{F}_p)$, i.e. $\tau(w) = \mathrm{tr}(\varphi(w))$. Given an $n$-tuple of matrices $A_1,\dots,A_n \in \mathrm{SL}_2(\mathbb{F}_p)$ we get a natural character in $\mathrm{Ch}_{\mathrm{SL}_2(\mathbb{F}_p)}(F_n)$. Note that conjugating the matrices simultaneously by an element of $\mathrm{GL}_2(\mathbb{F}_p)$ yields the same character.
We will require the following Lemma on generating pairs of matrices. An analogue of this Lemma for triples is Lemma \ref{Triples up to conjugation}.
\begin{lemma}
\label{Pairs up to conjugation}
Let $A,B \in \mathrm{SL}_2(\mathbb{F}_p)$ be two matrices generating $\mathrm{PSL}_2(\mathbb{F}_p)$. Then for every pair of matrices $\tilde{A},\tilde{B}$ yielding the same character in $\mathrm{Ch}_{\mathrm{SL}_2(\mathbb{F}_p)}(F_2)$ there is a matrix $g \in \mathrm{GL}_2(\mathbb{F}_p)$ conjugating $(\tilde{A},\tilde{B})$ simultaneously to $(A,B)$.
\end{lemma}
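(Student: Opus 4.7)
The plan is to invoke the classical principle that absolutely irreducible representations are determined up to isomorphism by their character, while producing the conjugating element $g$ explicitly through Burnside and Skolem-Noether.

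First, I would verify absolute irreducibility of the representation $\rho: F_2 \to \mathrm{SL}_2(\mathbb{F}_p)$ sending the free generators to $(A, B)$. Since $A$ and $B$ generate a subgroup of $\mathrm{SL}_2(\mathbb{F}_p)$ whose image in $\mathrm{PSL}_2(\mathbb{F}_p)$ is everything, they act on $\mathbb{F}_p^2$ through the natural representation of $\mathrm{PSL}_2(\mathbb{F}_p)$, which remains irreducible over $\overline{\mathbb{F}_p}$ (indeed $\mathrm{PSL}_2(\mathbb{F}_p)$ contains transvections, and the group generated by all transvections over the algebraic closure is $\mathrm{SL}_2(\overline{\mathbb{F}_p})$, which acts transitively on lines). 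Burnside's theorem then tells us that the $\mathbb{F}_p$-subalgebra of $\mathrm{Mat}_2(\mathbb{F}_p)$ generated by $A$ and $B$ is the entire matrix algebra.

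Next, I would choose words $w_1, \ldots, w_4 \in F_2$ such that $w_i(A, B)$ form an $\mathbb{F}_p$-basis of $\mathrm{Mat}_2(\mathbb{F}_p)$; existence is guaranteed by Burnside. The trace pairing $(X, Y) \mapsto \mathrm{tr}(XY)$ on $\mathrm{Mat}_2(\mathbb{F}_p)$ is non-degenerate, so the Gram matrix $M_{ij} = \mathrm{tr}(w_i(A,B)\, w_j(A,B))$ is invertible. Since $(\tilde{A},\tilde{B})$ has the same character, the analogous Gram matrix built from $w_i(\tilde{A},\tilde{B})$ coincides with $M$, so the $w_i(\tilde{A},\tilde{B})$ also form a basis of $\mathrm{Mat}_2(\mathbb{F}_p)$.

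Finally, I would define $\phi: \mathrm{Mat}_2(\mathbb{F}_p) \to \mathrm{Mat}_2(\mathbb{F}_p)$ on the basis by $\phi(w_i(A,B)) = w_i(\tilde{A},\tilde{B})$ and extend linearly. The structure constants $c^{k}_{ij}$ of multiplication in the basis $\{w_i(A,B)\}$ are recovered from the invertible Gram matrix $M$ together with the traces $\mathrm{tr}(w_i w_j w_k)(A,B)$, which depend only on the character; hence $\phi$ preserves multiplication and is an $\mathbb{F}_p$-algebra automorphism of $\mathrm{Mat}_2(\mathbb{F}_p)$. By the Skolem-Noether theorem, $\phi$ is inner, given by conjugation by some $g \in \mathrm{GL}_2(\mathbb{F}_p)$. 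Since $A$ and $B$ are themselves words in the generators, applying $\phi$ yields $\tilde{A} = gAg^{-1}$ and $\tilde{B} = gBg^{-1}$, and $g^{-1}$ is the required simultaneous conjugator. The only step needing a moment of care is the verification that $\phi$ respects multiplication, but this is the classical calculation underlying ``characters determine absolutely irreducible representations''; I do not anticipate any substantive obstacle.
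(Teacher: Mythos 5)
Your argument is correct, and it takes a genuinely different route from the paper. The paper proceeds by an explicit matrix computation: it normalizes $A$ (either diagonalizing it over $\mathbb{F}_{p^2}$ when $\pm A$ is non-unipotent, or reducing to the opposite-unipotent case otherwise), then shows that the entries of $B$ are pinned down by $\mathrm{tr}(B)$, $\mathrm{tr}(AB)$ and $\det(B)=1$ up to conjugation by a diagonal matrix in $\mathrm{GL}_2$. That argument is elementary and self-contained but requires a case split and some care about which conjugations are defined over $\mathbb{F}_p$ versus $\mathbb{F}_{p^2}$. Your proof instead invokes the general principle that characters determine absolutely irreducible representations: absolute irreducibility follows because $\langle A, B\rangle$ surjects onto $\mathrm{PSL}_2(\mathbb{F}_p)$ (so it is not contained in any Borel, nor can its commutant be $\mathbb{F}_{p^2}$, which would force it to lie in a maximal torus), Burnside gives that $A,B$ generate $\mathrm{Mat}_2(\mathbb{F}_p)$ as an $\mathbb{F}_p$-algebra, the Gram matrix of the trace form in a word-basis identifies the structure constants with quantities read off the character, and Skolem--Noether produces the inner automorphism. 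This buys a coordinate-free, case-free argument whose conjugator $g$ automatically lies in $\mathrm{GL}_2(\mathbb{F}_p)$, and which generalizes verbatim to higher rank; the cost is appealing to three theorems rather than computing directly.

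One small point deserves to be made explicit at the end. You define $\phi$ only on the four basis words $w_1,\dots,w_4$ and then assert that ``since $A$ and $B$ are themselves words in the generators, applying $\phi$ yields $\tilde A = gAg^{-1}$.'' This is not automatic from the definition unless the generators happen to be among your chosen basis words. The clean fix is to observe that for \emph{any} word $w$, the coefficients $\alpha_i$ in $w(A,B) = \sum_i \alpha_i w_i(A,B)$ are recovered from $M^{-1}$ and the traces $\mathrm{tr}\bigl(w\, w_j(A,B)\bigr)$, which depend only on the character; hence the same coefficients express $w(\tilde A,\tilde B)$ in the basis $w_i(\tilde A,\tilde B)$, so $\phi(w(A,B)) = w(\tilde A,\tilde B)$ holds for all words, in particular for $w = x$ and $w = y$. (Alternatively, one can choose the $w_i$ to include the generators, which is possible since $A$ and $B$ are linearly independent.) With this remark inserted, the proof is complete.
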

\begin{proof}
    Assume that we have two matrices $A,B\in\mathrm{SL}_2(\mathbb{F}_p)$ with quotient images generating $\mathrm{PSL}_2(\mathbb{F}_p)$. Assume first that $\pm A$ is non-unipotent. Up to a conjugation in $\mathrm{SL}_2(\mathbb{F}_{p^2})$, we may diagonalize our matrix to the form $\begin{pmatrix}\lambda & 0 \\ 0 & \lambda^{-1}\end{pmatrix}.$ The trace $\mathrm{tr}(A)$ uniquely defines $\lambda$ up to inverse. Note that $\lambda \neq \pm 1$ otherwise $A = 1$ in $\mathrm{PSL}_2(\mathbb{F}_p)$ and the two matrices generate a cyclic group. Let $B = \begin{pmatrix}a & b \\ c & d\end{pmatrix}.$ The traces $\mathrm{tr}(B),\mathrm{tr}(AB)$ are $a+d, \lambda a + \lambda^{-1}d$, and so uniquely define $a,d$. The product $bc$ is uniquely defined by $bc = ad -  \det(B) = ad - 1$. Conjugating by $\begin{pmatrix}\mu & 0 \\ 0 & 1\end{pmatrix} \in \mathrm{GL}_2(\mathbb{F}_p)$, we get $\begin{pmatrix}a & \mu b \\ \mu^{-1}c & d\end{pmatrix}$, so an arbitrary matrix with given values of $a,d,bc$, as long as $bc \neq 0$ i.e. $ad \neq 1.$ So, we are left with the case $ad = 1$ and $bc = 0$. Hence the matrices $A,B$ are contained either in the upper or lower diagonal matrices, so generate a solvable subgroup, in contradiction.

    Otherwise $A,B$ are both unipotent up to sign. If they are in the same unipotent subgroup, then they generate a cyclic group, in contradiction. Therefore, they are in different unipotent subgroups. As the action of $\mathrm{PSL}_2(\mathbb{F}_p)$ on $\mathbb{P}^1_{\mathbb{F}_p}$ is $2$-transitive (the action of $\mathrm{PGL}_2(\mathbb{F}_p)$ is even $3$-transitive) then these are conjugate to matrices of the form $\varepsilon_A\begin{pmatrix}1 & t \\ 0 & 1\end{pmatrix},\varepsilon_B\begin{pmatrix}1 & 0 \\ s & 1\end{pmatrix}$, for $\varepsilon_A,\varepsilon_B \in \{-1, 1\}$. Note that $\varepsilon_A,\varepsilon_B$ are uniquely defined by $\mathrm{tr}(A),\mathrm{tr}(B)$. $\mathrm{tr}(AB) = \varepsilon_A\varepsilon_B(ts + 2)$, and so uniquely defines the product $ts$. Since $A,B\neq 1$, as otherwise they generate a cyclic subgroup in $\mathrm{PSL}_2(\mathbb{F}_p)$, it follows that $t,s \neq 0$. Now, conjugating by a diagonal matrix $\begin{pmatrix}\lambda & 0 \\ 0 & 1\end{pmatrix} \in \mathrm{PGL}_2(\mathbb{F}_p)$ will multiply $t$ by $\lambda$ and divide $s$ by $\lambda$, hence obtain arbitrary such matrices with given product $ts$.
\end{proof}

\section{Primitivity of the action}
We start by making the following simplifying assumption that will help us in studying the induced permutation group. However, it is likely that a more general study can be performed, and some of the results follow verbatim without this assumption.
\begin{assumption}
\label{Assumption on non-conjugation}
    Assume that $\gamma,\delta \in \Gamma$ are non-trivial, non-involutions and non-unipotent. Moreover, assume that one of $\gamma,\delta$ is split, and the other is non-split.
    In particular, $\gamma,\delta$ have non-intersecting conjugate-centralizers. That is, for every $g\in \Gamma$, we have that $C_\Gamma(\gamma)\cap gC_\Gamma(\delta)g^{-1} = 1$.
\end{assumption}
We also make the following assumption, which should be true for almost any $\gamma,\delta$.
\begin{assumption}
\label{Assumption on point in orbit}
    Assume that there is a point $P = (A,B,C,D) \in X_{\gamma,\delta}^{(2)}$ such that $AB^{-1},BC^{-1},CD^{-1}$ are all unipotent, and moreover that they generate distinct unipotent subgroups.
\end{assumption}
The reason for this is that $X_{\gamma,\delta}^{(2)}$ is $4$-dimensional, and these are three $1$-dimensional conditions on the point. In such a case,
\begin{theorem} \label{Primitive action on orbit}
Under Assumptions \ref{Assumption on non-conjugation} and \ref{Assumption on point in orbit}, $B_4$ acts primitively on the orbit of $P$.
\end{theorem}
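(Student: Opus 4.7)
The plan is to argue by contradiction: I will assume that $\mathcal{B}$ is a block for the $B_4$-action on the orbit $\mathrm{Orb}(P)$ containing $P$ with $|\mathcal{B}| > 1$, and show that $\mathcal{B}$ must be the whole orbit. Throughout, let $K = \mathrm{Stab}_{B_4}(\mathcal{B})$, which is a subgroup of $B_4$ strictly containing $\mathrm{Stab}_{B_4}(P)$ and satisfying $K \cdot P = \mathcal{B}$.

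The first preparatory step is to verify that at the point $P$ both the first \emph{and} second proper decompositions (from Section~4) are maximal. Under Assumption~\ref{Assumption on point in orbit} the elements $AB^{-1}, BC^{-1}, CD^{-1}$ are non-trivial unipotents, hence they generate their (cyclic, of prime order $p$) centralizers in $\mathrm{PSL}_2(\mathbb{F}_p)$ and are therefore maximal in the sense of Definition~\ref{maximal definition}. Using that the three unipotent subgroups are \emph{distinct}, one checks that the further elements $AC^{-1}$ and $CB^{-1}CD^{-1}$ controlling the second foliation are non-trivial (and hence again maximal), so that Corollary~\ref{Moving inside first foliation} and its analogue for the second foliation both apply at $P$.

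With maximality in hand, I would use the foliations as the main geometric tool. By Corollary~\ref{Moving inside first foliation}, the first foliation leaf $\mathcal{L}_1(P)$ through $P$ is a single $\langle\sigma_1,\sigma_3\rangle$-orbit, and the second leaf $\mathcal{L}_2(P)$ is a single $\langle\sigma_2\sigma_1\sigma_2^{-1},\sigma_2\sigma_3\sigma_2^{-1}\rangle$-orbit. The subgroup $K$ meets each of $\langle\sigma_1,\sigma_3\rangle$ and its $\sigma_2$-conjugate in a sub-lattice whose $P$-orbit equals $\mathcal{B}\cap\mathcal{L}_1(P)$ and $\mathcal{B}\cap\mathcal{L}_2(P)$ respectively, so that these intersections are themselves sub-blocks for the corresponding abelian leaf-actions. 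The first goal is to upgrade these into full leaves: I would exploit the assumption that $\gamma$ and $\delta$ have non-intersecting conjugate centralizers (Assumption~\ref{Assumption on non-conjugation}) together with the structure of character varieties (Lemma~\ref{Magnus character varieties} and Lemma~\ref{Pairs up to conjugation}) to rule out proper $K$-invariant coarsenings of the leaves. Concretely, the elementary equivalence on a leaf is parametrised by a cyclic group, and the only proper block systems of a cyclic group correspond to divisors of its order; the split/non-split dichotomy between $\gamma$ and $\delta$ prevents the relevant divisibility from being preserved by the cross-talk with $\sigma_2$.

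The final, and main, step is to propagate the block through the whole orbit by alternating between leaves. Assuming $\mathcal{B}\supseteq\mathcal{L}_1(P)$, every $Q\in\mathcal{L}_1(P)$ is also the basepoint of a second leaf $\mathcal{L}_2(Q)$, and by the previous paragraph $\mathcal{B}\supseteq\mathcal{L}_2(Q)$ as well; continuing in this fashion produces an ever-growing subset of $\mathrm{Orb}(P)$ contained in $\mathcal{B}$. I would then show that the equivalence relation on $\mathrm{Orb}(P)$ generated by the relation \enquote{lies in a common $\mathcal{L}_1$- or $\mathcal{L}_2$-leaf} has a single class, for example by exhibiting a path of leaves connecting $P$ to an arbitrary element of the orbit (this uses that three successive conjugates of $\langle\sigma_1,\sigma_3\rangle$ by powers of $\sigma_2$ already generate a finite-index subgroup of $B_4$, and that the invariants $(\gamma,\delta)$ suffice to describe $\mathrm{Orb}(P)$). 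This would force $\mathcal{B}=\mathrm{Orb}(P)$, establishing primitivity.

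The main obstacle, I expect, is Step~3 above: ruling out proper sub-blocks inside a single foliation leaf. On a leaf the action is by an abelian group that in general is \emph{not} primitive, so the ambient $B_4$-structure must be invoked through the element $\sigma_2$ which mixes the two foliations. Making this mixing argument rigorous, and in particular verifying the required non-degeneracy at $P$ from Assumption~\ref{Assumption on point in orbit}, is where the analysis of $\mathrm{PSL}_2(\mathbb{F}_p)$-specific information (unipotent centralizer of prime order, distinct unipotent subgroups, non-commuting split/non-split tori) enters essentially.
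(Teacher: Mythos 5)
Your plan diverges substantially from the paper's proof, and as written it has several genuine gaps that I don't see how to fill without essentially discovering the paper's argument.

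First, the claim that the second proper decomposition at $P$ is maximal does not follow from what you say. Maximality (Definition~\ref{maximal definition}) means the element generates its centralizer, not merely that it is non-trivial. The element $AC^{-1} = (AB^{-1})(BC^{-1})$ is a product of two unipotents in distinct unipotent subgroups, so it is non-unipotent; whether it generates a maximal cyclic torus is a genuine arithmetic condition on its trace that you have not verified. The paper in fact only ever uses maximality of the \emph{first} proper decomposition at $P$, and it gets that for free because $AB^{-1}, CD^{-1}$ are unipotent (and $C_\Gamma$ of a non-trivial unipotent has order $p$).

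Second, and more seriously, the middle step -- ruling out proper sub-blocks of the cyclic leaf-action -- is exactly where the argument must happen, and you acknowledge you don't have it. The paper's proof does not try to rule out this possibility a priori. Instead, it establishes a dichotomy: either some $\sigma_i$ preserves the block (and then, by a product-of-unipotents argument, so do the others, forcing the block to be the whole orbit), or \emph{every} point of the block has all three $\sigma_i$-matrices unipotent. The second case is where Lemma~\ref{Unique unipotent proper decomposition} (uniqueness of the unipotent proper decomposition) enters: it pins the block into a single first-foliation leaf, identified with a subset of $\mathbb{A}^2_{\mathbb{F}_p}$, and then the block is shown to be a line, which the three-unipotent computation ($u^t v w^t$ unipotent for all $t$ forces $uvw = v$) collapses to a single point. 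The dichotomy driven by $\sigma_i^p$ versus $\sigma_i^{p^2-1}$ and the uniqueness lemma for unipotent decompositions are the two ideas that do the real work, and neither appears in your sketch.

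Third, the final propagation step asserts that the equivalence relation generated by common first- or second-foliation leaves is trivial on $\mathrm{Orb}(P)$. This is not established in the paper and is close in spirit to Conjecture~\ref{Transitive action}, which the paper leaves open. The paper's proof is constructed precisely so as not to require any such connectedness statement.

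In summary, the approach of "show the block contains a full leaf, then propagate" is an attractive idea but is not what the paper does, and the three steps above are each substantive gaps rather than routine verifications.
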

We need two lemmas for the proof
\begin{lemma}
\label{Lemma on opposite unipotents}
    Let $u, v \in \mathrm{SL}_2(\mathbb{F}_p)$ be unipotents in distinct unipotent subgroups. Then $u, v$ are conjugate in $\mathrm{SL}_2(\mathbb{F}_p)$ to elements of the form $\begin{pmatrix}1 & t \\ 0 & 1\end{pmatrix},\begin{pmatrix}1 & 0 \\ s & 1\end{pmatrix}$. Allowing conjugation in $\mathrm{GL}_2(\mathbb{F}_p)$ we may further assume that $t = 1$.
\end{lemma}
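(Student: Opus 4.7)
The plan is to exploit the bijection between unipotent subgroups of $\mathrm{SL}_2(\mathbb{F}_p)$ and points of $\mathbb{P}^1(\mathbb{F}_p)$, combined with the $2$-transitivity of $\mathrm{SL}_2(\mathbb{F}_p)$ on $\mathbb{P}^1(\mathbb{F}_p)$.

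First I would observe that a non-trivial unipotent $w \in \mathrm{SL}_2(\mathbb{F}_p)$ has a unique fixed line in $\mathbb{F}_p^2$, namely its eigenspace for the eigenvalue $1$. Moreover, two non-trivial unipotents lie in the same unipotent subgroup if and only if they share this fixed line (a unipotent subgroup of $\mathrm{SL}_2$ is precisely the set of elements fixing a given point of $\mathbb{P}^1$ and acting trivially on its quotient). Therefore the hypothesis that $u$ and $v$ lie in \emph{distinct} unipotent subgroups translates to the statement that their fixed points $p_u, p_v \in \mathbb{P}^1(\mathbb{F}_p)$ are distinct.

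Next I would invoke $2$-transitivity of $\mathrm{SL}_2(\mathbb{F}_p)$ on $\mathbb{P}^1(\mathbb{F}_p)$: given any two ordered pairs of distinct points, a $\mathrm{GL}_2$-matrix mapping one to the other exists by the change-of-basis principle, and by rescaling one of the chosen representative vectors we can arrange determinant $1$. Applying this, I choose $g \in \mathrm{SL}_2(\mathbb{F}_p)$ with $g \cdot [1{:}0] = p_u$ and $g \cdot [0{:}1] = p_v$. Then $g^{-1} u g$ fixes $[1{:}0]$ and is unipotent, so it is upper-triangular with $1$s on the diagonal, i.e.\ of the form $\begin{pmatrix}1 & t \\ 0 & 1\end{pmatrix}$ with $t \neq 0$ (since $u \neq 1$); similarly $g^{-1} v g = \begin{pmatrix}1 & 0 \\ s & 1\end{pmatrix}$ with $s \neq 0$. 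This settles the first assertion.

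Finally, to normalize $t = 1$ I would conjugate further by a diagonal matrix $d = \mathrm{diag}(1, t) \in \mathrm{GL}_2(\mathbb{F}_p)$; a direct computation shows that this preserves the upper/lower-unipotent shapes and rescales the top-right entry from $t$ to $1$ (simultaneously rescaling $s$ to $st$, which is irrelevant for the conclusion). This step genuinely requires $\mathrm{GL}_2$ rather than $\mathrm{SL}_2$, since the pair $(t, s)$ is an $\mathrm{SL}_2$-conjugation invariant only up to the action $(t, s) \mapsto (\lambda^2 t, \lambda^{-2} s)$ of $\mathbb{F}_p^\times$, and $t$ need not be a square. There is no substantive obstacle here: the lemma is essentially a compact packaging of $2$-transitivity of $\mathrm{SL}_2(\mathbb{F}_p)$ on $\mathbb{P}^1(\mathbb{F}_p)$ together with the diagonal rescaling available in $\mathrm{GL}_2$.
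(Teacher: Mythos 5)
Your proposal is correct and follows essentially the same route as the paper: identify the unique fixed point of each unipotent in $\mathbb{P}^1(\mathbb{F}_p)$, use $2$-transitivity of the $\mathrm{SL}_2$ (equivalently $\mathrm{PSL}_2$) action to send these to $0$ and $\infty$, and finish by conjugating with a diagonal matrix in $\mathrm{GL}_2(\mathbb{F}_p)$ to rescale $t$ to $1$. The only cosmetic differences are the specific diagonal conjugator chosen and your additional remark on the $\mathrm{SL}_2$-invariance class of $(t,s)$, which is a nice justification of why $\mathrm{GL}_2$ is genuinely needed but not something the paper spells out.
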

\begin{lemma}
\label{Unique unipotent proper decomposition}
    Under Assumption \ref{Assumption on non-conjugation}, there is at most one proper decomposition of $\gamma,\delta$ up to equivalence such that $x,y,z,w$ are unipotent as elements of $\mathrm{PSL}_2(\mathbb{F}_p)$.
\end{lemma}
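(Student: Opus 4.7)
The strategy is to classify unipotent decompositions of $\gamma$ and $\delta^{-1}$ separately via Lemmas \ref{Lemma on opposite unipotents} and \ref{Pairs up to conjugation}, and then use the conjugacy constraints $x \sim y$ and $z \sim w$ built into a proper decomposition together with Assumption \ref{Assumption on non-conjugation} to collapse the remaining candidates into a single equivalence class.

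First, since $\gamma$ is non-unipotent and $x, z$ are both unipotent, they must lie in distinct Borel subgroups of $\mathrm{PSL}_2(\mathbb{F}_p)$; otherwise their product $xz$ would also be unipotent. Two unipotents in distinct Borels generate $\mathrm{PSL}_2(\mathbb{F}_p)$, so by Lemma \ref{Pairs up to conjugation} the pair $(x, z)$ is determined up to $\mathrm{GL}_2(\mathbb{F}_p)$-conjugation by its character $(\mathrm{tr}(x), \mathrm{tr}(z), \mathrm{tr}(xz))$, whose first two entries lie in $\{\pm 2\}$ and whose third entry is $\mathrm{tr}(\gamma)$. Lemma \ref{Lemma on opposite unipotents} makes this explicit by placing $(x, z)$ in a canonical normal form. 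Restricting to conjugations that fix $\gamma$ yields at most one $C_{\mathrm{PGL}_2(\mathbb{F}_p)}(\gamma)$-orbit for each sign pair $(\varepsilon_x, \varepsilon_z) \in \{\pm 1\}^2$. The analogous classification applies to the decomposition $\delta^{-1} = wy$ up to $C_{\mathrm{PGL}_2(\mathbb{F}_p)}(\delta)$-conjugation.

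Second, the constraints $\mathrm{tr}(x) = \mathrm{tr}(y)$ and $\mathrm{tr}(z) = \mathrm{tr}(w)$ forced by $x \sim y$ and $z \sim w$ tie the two classifications together: the signs for $(w, y)$ are determined by those for $(x, z)$. We are left with at most four candidate unipotent proper decompositions, indexed by the choice of signs for $(x, z)$.

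Finally, I would invoke the equivalence on proper decompositions — which allows simultaneous conjugation by a pair $(\hat\gamma, \hat\delta) \in C_{\mathrm{PGL}_2(\mathbb{F}_p)}(\gamma) \times C_{\mathrm{PGL}_2(\mathbb{F}_p)}(\delta)$ subject to $\det(\hat\gamma) = \det(\hat\delta)$ — together with the square/non-square refinement of $\mathrm{SL}_2$-conjugacy classes of unipotents, to collapse the four candidates into a single equivalence class. Assumption \ref{Assumption on non-conjugation} is the crucial input: since $C_\Gamma(\gamma) \cap g C_\Gamma(\delta) g^{-1} = 1$ for every $g \in \Gamma$, the centralizer actions on $(x, z)$ and on $(w, y)$ are essentially independent, so no residual symmetry can split the remaining candidates into multiple equivalence classes. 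The main obstacle I expect is this last step — carefully matching the determinant condition in the equivalence relation against the square/non-square refinement of the unipotent $\mathrm{SL}_2$-conjugacy classes, and using Assumption \ref{Assumption on non-conjugation} to rule out the residual sign ambiguities that would otherwise multiply the number of equivalence classes.
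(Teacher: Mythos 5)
Your setup is on the right track: you correctly reduce to the question of decomposing $\gamma$ and $\delta^{-1}$ separately into unipotents via Lemma \ref{Lemma on opposite unipotents} and Lemma \ref{Pairs up to conjugation}, observe that the only invariants are trace signs, and recognize that the square/non-square refinement of unipotent $\mathrm{SL}_2$-conjugacy classes under $\mathrm{PSL}_2$-conjugation will be the decisive tool. (A minor point: because traces are only defined up to a common sign in $\mathrm{PSL}_2$, you should get at most two candidates rather than four, which is what the paper states.)

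However, the final step — which you yourself flag as the main obstacle — is where the proposal has a genuine gap, and moreover the mechanism you gesture at is not the one that closes it. You write that ``Assumption \ref{Assumption on non-conjugation} is the crucial input: since $C_\Gamma(\gamma) \cap g C_\Gamma(\delta) g^{-1} = 1$... no residual symmetry can split the remaining candidates into multiple equivalence classes.'' This is not how the argument goes, and as stated it is not even the right shape: smaller or more independent centralizers would give you \emph{fewer} opportunities to identify candidates, not more, so this cannot by itself collapse two distinct candidates into one. The part of Assumption \ref{Assumption on non-conjugation} that actually does the work is that exactly one of $\gamma,\delta$ is split and the other is non-split. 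The paper's proof exploits this via an explicit Legendre-symbol computation: normalizing both candidate decompositions of $\pm\gamma$ so that $u_i = \begin{pmatrix}1 & t_i \\ 0 & 1\end{pmatrix}$, $v_i = \begin{pmatrix}1 & 0 \\ s_i & 1\end{pmatrix}$ with $t_1,t_2$ squares, one gets the identity $\mathrm{tr}(\gamma)^2-4 = -t_1t_2s_1s_2$, so that whether $v_1$ and $v_2$ lie in the same $\mathrm{PSL}_2$-class is governed by whether $-(\mathrm{tr}(\gamma)^2-4)$ is a square. Because $\gamma$ is split and $\delta$ is non-split, the two sides of the would-be proper decomposition give \emph{opposite} answers to this parity question (with the case split on $p \bmod 4$ determining which of $\gamma,\delta$ plays which role), and that is exactly what rules out one of the two sign choices. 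Your sketch never engages with this quadratic-residue calculation, and without it there is no reason the conjugacy constraints $x\sim y$, $z\sim w$ cannot be satisfied for both sign choices simultaneously. So the proposal would not close as written.
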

\begin{proof}[Proof of Lemma \ref{Lemma on opposite unipotents}]
    A unipotent matrix has a unique preserved point in $\mathbb{P}^1_{\mathbb{F}_p}$. Since the two unipotents are in different subgroups, the preserved points are distinct. The $\mathrm{PSL}_2(\mathbb{F}_p)$-action on $\mathbb{P}^1_{\mathbb{F}_p}$ is $2$-transitive. Therefore we may move the two preserved points to be $0, \infty$ giving the required claim.
    Conjugating by $\begin{pmatrix}\frac{1}{t} & 0 \\ 0 & 1\end{pmatrix} \in \mathrm{PGL}_2(\mathbb{F}_p)$ we may assume that $t = 1$.
\end{proof}
We remark before the proof of Lemma \ref{Unique unipotent proper decomposition} that a matrix $A \in \mathrm{SL}_2(\mathbb{F}_p)$ is unipotent or trivial up to sign if and only if $\mathrm{tr}(A) = \pm 2$, and is split if and only if $\mathrm{tr}(A)^2 - 4$ is a square, and non-split if and only if $\mathrm{tr}(A)^2 - 4$ is not a square. This is because the eigenvalues $\lambda,\lambda^{-1}$ of $A$ satisfy $\lambda + \lambda^{-1} = \mathrm{tr}(A)$, and so are in $\mathbb{F}_p$ if and only if the quadratic equation $\lambda^2 - \mathrm{tr}(A)\lambda + 1 = 0$ has two solutions in $\mathbb{F}_p$, i.e. the discriminant $\mathrm{tr}(A)^2 - 4$ is a square.
\begin{proof}[Proof of Lemma \ref{Unique unipotent proper decomposition}]
    Assume that $uv = \gamma$ is a decomposition of $\gamma \in \mathrm{SL}_2(\mathbb{F}_p)$ into two matrices $u,v \in \mathrm{SL}_2(\mathbb{F}_p)$ such that $\pm u, \pm v$ are unipotent. Since $\gamma$ as an element of $\mathrm{PSL}_2(\mathbb{F}_p)$ is not unipotent, $u,v$ are neccesarily in different unipotent subgroups of $\mathrm{PSL}_2(\mathbb{F}_p)$. Therefore $u,v$ generate $\mathrm{PSL}_2(\mathbb{F}_p)$. By Lemma \ref{Pairs up to conjugation}, we know that $\mathrm{tr}(u),\mathrm{tr}(v),\mathrm{tr}(\gamma)$ uniquely define $u,v$ up to $\mathrm{GL}_2(\mathbb{F}_p)$-conjugation. Therefore, as $\mathrm{tr}(u),\mathrm{tr}(v) \in \{-2, 2\}$ the pair of signs of $\mathrm{tr}(u),\mathrm{tr}(v)$ (up to a common sign) are the only invariants for such a decomposition, and so there are at most two such decompositions in $\mathrm{PSL}_2(\mathbb{F}_p)$. The same holds for $\delta$.

    Now, given a proper decomposition $(x, y, z, w)$ of $\gamma,\delta$, with $x,y,z,w$ all unipotent, the pair $\mathrm{tr}(x),\mathrm{tr}(z)$ up to a common sign uniquely define the decomposition, so there are at most two such decompositions.

    The crucial point here is that there are two different $\mathrm{PSL}_2(\mathbb{F}_p)$-conjugacy classes of unipotents $u\in \mathrm{SL}_2(\mathbb{F}_p)$. Indeed, these are the conjugacy classes of $\begin{pmatrix}1 & t \\ 0 & 1\end{pmatrix}$ for $t \in \mathbb{F}_p$ a square or not a square. These conjugacy classes merge as $\mathrm{PGL}_2(\mathbb{F}_p)$-conjugacy classes.

    Assume that $\gamma$ is split and $\delta$ is non-split, and that $p \equiv 3 (\mathrm{mod}\ 4)$.

    Consider the two possible decompositions of $\gamma$ into two unipotents up to sign. Equivalently, we are considering decompositions of $\pm \gamma$ into two $\mathrm{SL}_2(\mathbb{F}_p)$-unipotents (i.e. unipotents with trace $2$). Consider the two such decompositions $\gamma = u_1v_1$ and $-\gamma = u_2v_2$. Up to $\mathrm{PSL}_2(\mathbb{F}_p)$-conjugation we may assume that $u_1v_1 = \tilde{\gamma}, u_2v_2 = \tilde{\tilde{\gamma}}$ for $u_1 = \begin{pmatrix}1 & t_1 \\ 0 & 1\end{pmatrix}, u_2 = \begin{pmatrix}1 & t_2 \\ 0 & 1\end{pmatrix}, v_1 = \begin{pmatrix} 1 & 0 \\ s_1 & 1\end{pmatrix}, v_2 = \begin{pmatrix}1 & 0 \\ s_2 & 1\end{pmatrix}$ and $\tilde{\gamma},\tilde{\tilde{\gamma}}$ conjugate to $\gamma$. Up to further $\mathrm{GL}_2(\mathbb{F}_p)$-conjugation we may assume that $t_1,t_2 \in \mathbb{F}_p$ are both squares.

    In particular $t_1s_1 + 2 = \mathrm{tr}(\gamma), t_2s_2 + 2 = -\mathrm{tr}(\gamma)$, i.e. $t_1s_1 = \mathrm{tr}(\gamma) - 2, t_2s_2 = -\mathrm{tr}(\gamma) - 2$. Therefore $\mathrm{tr}(\gamma)^2 - 4 = (\mathrm{tr}(\gamma) - 2)(\mathrm{tr}(\gamma) + 2) = -t_1t_2s_1s_2$. As $p \equiv 3(\mathrm{mod}\ 4)$ and $\gamma$ is split, $t_1s_1t_2s_2$ is not a square. Therefore $s_1s_2$ is not a square, so $s_1$ and $s_2$ have different Legendre symbols in $\mathbb{F}_p$, and $v_1$ is not conjugate to $v_2$.

    Doing the same for $\delta$ we get that in the two decompositions of $\pm \delta$, $v_1$ is conjugate to $v_2$. If both decompositions were available, then we would be able to conjugate them in $\mathrm{PGL}_2(\mathbb{F}_p)$ in such a way that every relevant pair is $\mathrm{PSL}_2(\mathbb{F}_p)$-conjugate. However, we have shown that the product of all Legendre symbols for $\gamma$ is different from the product of all Legendre symbols for $\delta$, and so at most one of the decompositions can work.

    If $p \equiv 1(\mathrm{mod}\ 4)$, we reverse the roles of $\gamma,\delta$ in the proof.
\end{proof}
In fact it follows from the proof that there is exactly one such decomposition.

We are now ready to prove Theorem \ref{Primitive action on orbit}.
\begin{proof}[Proof of Theorem \ref{Primitive action on orbit}]
    Assume by contradiction that $X\subseteq \mathrm{Orb}_{B_4}(P)$ is a block containing $P$.
    
    Consider the element $\sigma_1^p\in B_4$. This element preserves our point $P$, as $A_PB_P^{-1}$ is unipotent. Therefore it preserves the block $X$.
    
    Now, assume that there is some point $Q\in X$ such that $A_QB_Q^{-1}$ is not unipotent. This means that $\sigma_1^{p^2-1}$ preserves it, and therefore preserves $X$, so $\sigma_1$ preserves $X$.
    
    Giving a similar argument for $\sigma_2,\sigma_3$ we see that either all points have relevant coordinate unipotent, or the corresponding group element preserves the block $X$.
    
    Now, assume that $\sigma_1$ preserves $X$. In such a case, $\sigma_1^k(P) \in X$ for every $k$. However, $B_{\sigma_1^k(P)}C_{\sigma_1^k(P)}^{-1} = (A_PB_P^{-1})^k \cdot B_PC_P^{-1}$. Seeing as these elements are unipotent and in different unipotent subgroups, the product cannot be unipotent for every $k.$ Therefore $\sigma_2$ also preserves our block. Using a similar argument we see that if any of $\sigma_1,\sigma_2,\sigma_3$ preserves our block, then so do the others, and therefore $X$ is not a proper block.
    
    We now know that none of the $\sigma_i$ preserves $X$, and therefore every point on $X$ satisfies that all three elements $AB^{-1},BC^{-1},CD^{-1}$ are unipotent.
    
    Now, by Lemma \ref{Unique unipotent proper decomposition}, there is a unique proper decomposition with both elements of the decomposition unipotent. Therefore any such point has the same first proper decomposition as $P$, and moreover this proper decomposition is maximal. Therefore any two such points are on the same $\langle \sigma_1,\sigma_3\rangle$-orbit by Corollary \ref{Moving inside first foliation}. In this case the orbit is naturally identified with a subset of $\mathbb{A}_{\mathbb{F}_p}^2$.

    As $|X| \geq 2,$ there are two points on $X$ that have the same first proper decomposition. Then, there is at least one element of $\langle \sigma_1,\sigma_3\rangle$ that preserves $X.$ We have already seen that $\sigma_1,\sigma_3$ do not preserve $X$, so this preserving element is of the form $\sigma_1^i\sigma_3^j$ with $p\nmid i,j$. Note that this means that $X$ is precisely the set of points on this $\mathbb{A}^2_{\mathbb{F}_p}$ on a specific line, as any point outside this line would give another $\langle\sigma_1, \sigma_3\rangle$-preserving direction.

    As a consequence, we get three unipotents $u,v,w$ in distinct unipotent subgroups, such that $u^tvw^t$ are all unipotent for every $t$. These three unipotents are $u = (A_PB_P^{-1})^{i}, v = B_PC_P^{-1}, w=(C_PD_P^{-1})^{-j}.$ We will prove that in such a case $uvw = v$. However, in this case every two points on the line have the same values for $AB^{-1}, BC^{-1}, CD^{-1}$ simultaneously. This means the two points are equal up to right-multiplication by some element $g \in \Gamma$ and therefore as $A^{-1}BC^{-1}D = \delta$, $g \in C_\Gamma(\delta)$ implying that the points on $X_{\gamma,\delta}^{(2)}$ are the same. So, all of the points on the line are the same, and there is only one point on $X,$ in contradiction.

    By Lemma \ref{Lemma on opposite unipotents}, up to a $\mathrm{PGL}_2(\mathbb{F}_p)$-conjugation we may assume that $u,w$ are lower and upper unipotents. Then we get $$2 = \mathrm{tr}\left(\begin{pmatrix}1 & t \\ 0 & 1\end{pmatrix} \cdot \begin{pmatrix}v_{11} & v_{12} \\ v_{21} & v_{22}\end{pmatrix} \cdot \begin{pmatrix}1 & 0 \\ \alpha t & 1\end{pmatrix}\right) = v_{11} + v_{22} + (\alpha v_{12} + v_{21})t + \alpha v_{22}t^2$$ for every $t.$
    Therefore $v_{22} = 0$, $v_{11} = 2$ and $\alpha v_{12} + v_{21} = 0.$ Moreover $v_{12}v_{21} = -1,$ as $\mathrm{det}(v) = 1.$ So $\alpha$ is a square. Therefore we may assume up to change of coordinates by a diagonal element $\sqrt{\alpha},\frac{1}{\sqrt{\alpha}}$ that $\alpha = 1$. So, $v = \begin{pmatrix}2 & 1 \\ -1 & 0\end{pmatrix}$ and $uvw = v$.
\end{proof}
\begin{remark}
\label{Remark on AutF2 extension}
    Note that the extension from $B_4$ to $\mathrm{Aut}(F_2)$ still acts on $\mathrm{Orb}_{B_4}(P).$ Indeed, the action $(A,B,C,D)\mapsto (D,C,B,A)$ sends our special point $P$ to one where $AB^{-1},CD^{-1}$ are both unipotent. This puts it on the unique unipotent leaf of the foliation. So, the two points are even on the same $\langle\sigma_1,\sigma_3\rangle$-orbit.
\end{remark}
\section{Proof of Theorem \ref{Alternating quotients}}
We have now shown that the action of $B_4$ on $\mathrm{Orb}_{B_4}(P)$ is primitive. The goal of this section is to prove that in many cases this primitive action is in fact the full alternating or symmetric action. We will use the following theorem of Guralnick-Magaard \cite[Theorem 1]{guralnick1998minimal}, which we quote from Meiri-Puder \cite[Theorem 4.13]{meiri2018markoff} (Note that the proof of this theorem uses the classification of finite simple groups).
\begin{theorem}
\label{CFSG alternating}
    Let $G \leq S_n$ be a primitive permutation group. Assume that there exists an $x \in G$ with at least $\frac{n}{2}$ fixed points, and that $x$ is not an involution. Then one of the following holds
    \begin{enumerate}
    \item There are $r \geq 1, m\geq 5, 1 \leq k \leq \frac{m}{4}$, such that $n=\binom{m}{k}^r$, $S_m$ acts on $\binom{[m]}{k}$, the set of subsets of $[m]$ of size $k$ in the natural way, $A_m^r \leq G \leq S_m \wr S_r$ acts on $\binom{[m]}{k}^r$.
    \item There is an $r \geq 1$ such that $n = 6^r$, the group $S_6$ acts on $[6]$ via the non-trivial outer automorphism, and $A_6^r \leq G \leq S_6 \wr S_r$ acts on $[6]^r$.
    \end{enumerate}
\end{theorem}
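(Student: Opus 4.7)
The plan is to approach this via the O'Nan--Scott theorem combined with the Classification of Finite Simple Groups (CFSG), since the hypothesis is essentially a statement about minimal degree. Recall that the \emph{minimal degree} $\mu(G)$ of a permutation group $G \leq S_n$ is the smallest number of points moved by a non-identity element. Having some $x \in G$ with at least $n/2$ fixed points means $\mu(G) \leq n/2$, that is, $G$ has an element moving at most half the points. Classical work of Wielandt, Babai and Liebeck--Saxl shows that primitive groups with such a small minimal degree are extremely restricted in structure, and this is exactly the sort of input CFSG is designed to exploit.

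First, I would apply the O'Nan--Scott theorem to split the primitive group $G$ into one of five socle types: affine, almost simple, simple diagonal, product action, and twisted wreath. For the affine case $G \leq \mathrm{AGL}_d(p)$, every non-trivial translation moves all $n = p^d$ points, and for a non-translation one can use linear-algebraic fixed-space dimension bounds to rule out $\mu(G) \leq n/2$ outside a short list of small cases that are checked directly. Simple diagonal and twisted wreath types can be excluded by direct lower bounds on $\mu$ coming from the orbit-length structure of their socles. The product action case $G \leq H \wr S_r$ on $[m_0]^r$ reduces to the almost simple case by writing fixed points of $(h_1,\dots,h_r)\pi$ as a product over the cycles of $\pi$ of fixed-point counts of appropriate elements of $H$, which forces each factor to achieve the large fixed-point ratio, and then the wreath permutation to be trivial or a short product of transpositions; this is exactly the source of the wreath families in case (1) and of the exceptional $S_6$-twist in case (2).

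The heart of the argument is therefore the almost simple case $S \trianglelefteq G \leq \mathrm{Aut}(S)$ with $S$ non-abelian simple. Here I would enumerate the primitive actions of $S$ using the CFSG list: alternating groups, classical groups in their natural or subspace actions, exceptional groups of Lie type, and sporadic groups. For the alternating family, the action on $\binom{[m]}{k}$ with $k \leq m/4$ produces a $3$-cycle fixing a fraction $\frac{(m-k)(m-k-1)(m-k-2)}{m(m-1)(m-2)}$ of the $k$-subsets, which exceeds $1/2$ precisely in the regime of case (1); the $k > m/4$ range and other primitive actions of $S_m$ must be excluded by direct counting. For classical groups, geometric fixed-subspace bounds (together with sharp estimates like the Guralnick--Magaard bound $\mathrm{fix}(g)/n \leq 4/(3|g|)$ for appropriate $g$) eliminate non-involutions with large fixed-point sets. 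Sporadic groups and small-rank exceptional groups are handled by finite computation against character tables.

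The main obstacle is the uniform treatment of the classical groups in arbitrary primitive actions: one needs sharp, family-by-family estimates on fixed-point ratios of non-involutions, and these ratios must be pushed below $1/2$ while keeping track of the graph automorphism for types such as $D_n$ and the triality automorphism of $D_4$. This is the technical core where CFSG is genuinely needed, and is also where the exceptional case (2) arises from the outer automorphism of $S_6$. Once all non-alternating almost simple cases are excluded and the alternating case is pinned down to $k \leq m/4$, the product action reduction above assembles everything into the two listed cases.
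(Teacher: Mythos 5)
This statement is not proved in the paper at all: it is quoted verbatim (via Meiri--Puder, Theorem 4.13) from Guralnick--Magaard, and the paper explicitly remarks that the proof of the cited theorem uses the classification of finite simple groups. So there is no ``paper's own proof'' to compare against, only a citation.

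That said, your sketch is a fair high-level description of how Guralnick and Magaard actually argue: translate the hypothesis into a statement about minimal degree, run the O'Nan--Scott reduction to rule out affine, diagonal and twisted wreath types, reduce the product-action case to the almost simple case by factoring fixed-point counts across the cycles of the top permutation, and then do a CFSG-driven case analysis of primitive actions of almost simple groups, with the alternating subset actions surviving as case (1) and the $S_6$ outer-automorphism twist as case (2). But you should be clear that what you have written is a plan, not a proof. The entire technical weight of the theorem sits in the almost simple case --- especially the classical groups in their subspace and non-subspace primitive actions --- and your paragraph about ``geometric fixed-subspace bounds'' and a $4/(3|g|)$-type estimate only names the shape of what is needed; it does not supply the family-by-family estimates, the handling of field, graph and diagonal outer automorphisms, or the finite check of sporadics and low-rank exceptional groups. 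Likewise, the product-action reduction needs the quantitative claim that a high fixed-point ratio forces the top permutation to act trivially (or as a short product of transpositions) together with a high ratio in each factor, which you assert but do not justify. Since the paper treats this result as a black box, the appropriate move for the paper's purposes is exactly what the paper does: cite Guralnick--Magaard and Meiri--Puder. If you want a self-contained proof, you are in effect re-proving a substantial CFSG-dependent theorem, and the sketch as written is far from sufficient.
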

We also quote the following nice lemma from Meiri-Puder \cite{meiri2018markoff}
\begin{lemma}
\label{primes in subset action}
    Consider the embedding $\iota:S_m\to S_n$, for $n = \binom{m}{k}$ given by the natural action on $\binom{[m]}{k}$, for some $2 \leq k \leq \frac{m}{4}.$ Moreover let $q,s$ be arbitrary primes. Then if for some $\pi \in S_m$ $\iota(\pi)$ has cycles of length divisible by $q$ and $s$, then it also has a cycle of length divisible by $qs$.
\end{lemma}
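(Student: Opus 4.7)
The plan is to analyze the cycles of $\iota(\pi)$ in terms of those of $\pi$ and then construct an explicit $k$-subset whose orbit has length divisible by $qs$. We assume throughout that $q\neq s$, which is necessary (the conclusion fails when $q=s$, as one sees with $\pi$ a product of disjoint transpositions and $q=s=2$). Let $C_1,\dots,C_r$ be the cycles of $\pi$ with lengths $\ell_1,\dots,\ell_r$, and for a $k$-subset $U\subseteq[m]$ set $a_i:=|U\cap C_i|$. On each $C_i$, $\pi$ acts as a rotation of order $\ell_i$, so $U\cap C_i$ has a rotational stabilizer of some order $d_i\mid\gcd(\ell_i,a_i)$; every divisor of $\gcd(\ell_i,a_i)$ is realized by some subset, and in particular any \emph{consecutive arc} in $C_i$ of length $a_i\in[1,\ell_i-1]$ yields $d_i=1$ (two arcs of the same length $<\ell_i$ agree iff they share their starting point). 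The orbit length of $U$ under $\iota(\pi)$ is then $\mathrm{lcm}_i(\ell_i/d_i)$.

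Any prime dividing this lcm divides some $\ell_i$, so the hypothesis yields cycles $C_q,C_s$ of $\pi$ with $q\mid\ell_{C_q}$ and $s\mid\ell_{C_s}$. If $C_q=C_s$ then $qs\mid\ell_{C_q}$, and any $k$-subset meeting $C_q$ in exactly one point has orbit length divisible by $\ell_{C_q}$, hence by $qs$. The interesting case is $C_q\neq C_s$, where I would build $U=S\sqcup S'\sqcup R$ with $S\subseteq C_q$ and $S'\subseteq C_s$ consecutive arcs of sizes $|S|\in[1,\ell_{C_q}-1]$ and $|S'|\in[1,\ell_{C_s}-1]$ (hence with trivial stabilizer), and $R\subseteq[m]\setminus(C_q\cup C_s)$ filling the rest. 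The contributions of $C_q$ and $C_s$ to the lcm are then exactly $\ell_{C_q}$ and $\ell_{C_s}$, so the orbit length of $U$ is divisible by $\mathrm{lcm}(\ell_{C_q},\ell_{C_s})$, and hence by $qs$ since $q\neq s$.

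The remaining point is to verify that the sizes can be chosen subject to
\[
|S|+|S'|\in\bigl[\,\max(2,\,k+\ell_{C_q}+\ell_{C_s}-m),\ \min(k,\,\ell_{C_q}+\ell_{C_s}-2)\,\bigr].
\]
The four endpoint inequalities needed for $T_{\min}\leq T_{\max}$ reduce to $k\geq 2$, $\ell_{C_q},\ell_{C_s}\geq 2$, $\ell_{C_q}+\ell_{C_s}\leq m$, and $k\leq m-2$; all of these are immediate from the hypotheses (the last using $k\leq m/4$, which in particular forces $m\geq 8$). For any $T$ in this non-empty interval a partition $T=|S|+|S'|$ with $|S|\in[1,\ell_{C_q}-1]$ and $|S'|\in[1,\ell_{C_s}-1]$ is also feasible by the same elementary book-keeping, which completes the construction. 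Since both the consecutive-arc observation and the interval arithmetic are elementary, I do not anticipate any deep obstacle; the main risk is careful bookkeeping rather than ideas.
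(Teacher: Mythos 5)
The paper quotes this lemma from Meiri--Puder without including a proof, so there is no in-paper argument to compare against; I am assessing your proposal on its own terms. Your argument is correct: the identification of the orbit length of a $k$-subset $U$ as $\mathrm{lcm}_i(\ell_i/d_i)$, the observation that a consecutive arc of length in $[1,\ell_i-1]$ has trivial rotational stabilizer, the extraction of cycles $C_q,C_s$ of $\pi$ with $q\mid\ell_{C_q}$, $s\mid\ell_{C_s}$ from the hypothesis, and the interval arithmetic in the main case $C_q\neq C_s$ (the four endpoint inequalities do follow from $k\geq2$, $\ell_{C_q},\ell_{C_s}\geq2$, disjointness of the cycles, and $k\leq m/4\Rightarrow m\geq 8\Rightarrow k\leq m-2$) all check out. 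Your remark that $q\neq s$ is genuinely necessary is also right, and consistent with the paper's application, where $q\mid(p-1)/2$ and $s\mid(p+1)/2$ are automatically distinct.

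One small gap: in the case $C_q=C_s$ you invoke ``any $k$-subset meeting $C_q$ in exactly one point,'' but such a subset need not exist --- if, say, $\pi$ is a single $m$-cycle, then every $k$-subset is contained in $C_q$ and meets it in $k\geq2$ points. The fix is immediate by the same consecutive-arc device used in the main case: take $U\cap C_q$ to be a consecutive arc of size $a\in\bigl[\max(1,\,k-m+\ell_{C_q}),\ \min(k,\,\ell_{C_q}-1)\bigr]$ (a non-empty range by the same checks) and fill the remaining $k-a$ slots outside $C_q$. Then $d_{C_q}=1$, the orbit length is divisible by $\ell_{C_q}$, and since $qs\mid\ell_{C_q}$ in this subcase you are done. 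With this patch the proof is complete.
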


\begin{theorem}
\label{Alternating action on orbit}
    Under the Assumptions  \ref{Assumption on non-conjugation},\ref{Assumption on point in orbit}, and for sufficiently large primes $p$, the action of $B_4$ on $\mathrm{Orb_{B_4}(P)}$ is the full alternating or symmetric action.
 \end{theorem}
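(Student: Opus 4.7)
The plan is to invoke the Guralnick--Magaard result (Theorem~\ref{CFSG alternating}) for the primitive action established in Theorem~\ref{Primitive action on orbit}, and then to exclude its two allowed conclusions on geometric and arithmetic grounds. To apply the theorem I need to produce a non-involution element of $B_4$ whose image in $S_n$, with $n = |\mathrm{Orb}_{B_4}(P)|$, fixes at least $n/2$ points.

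For this I would take $x = \sigma_1^M$, where $M = \mathrm{lcm}\bigl((p-1)/2,(p+1)/2\bigr)$, so that $M$ is divisible by the order of every semisimple element of $\mathrm{PSL}_2(\mathbb{F}_p)$ but not by $p$. By Lemma~\ref{Action of single generator}(2), $\sigma_1$ acts on the first two coordinates by left multiplication by $AB^{-1}$, hence $\sigma_1^M$ fixes $(A,B,C,D) \in X_{\gamma,\delta}^{(2)}$ whenever $AB^{-1}$ is semisimple. The locus where $AB^{-1}$ is unipotent is cut out by an extra trace equation $\mathrm{tr}(AB^{-1}) = \pm 2$ and so is strictly lower-dimensional in the four-dimensional variety $X_{\gamma,\delta}^{(2)}$; an $\mathbb{F}_p$-point count then gives a fixed-point fraction for $x$ that tends to $1$ as $p \to \infty$ and in particular exceeds $1/2$ for large $p$. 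That $x$ is not an involution follows from evaluating $x^2 = \sigma_1^{2M}$ at a point of $\mathrm{Orb}_{B_4}(P)$ with $AB^{-1}$ unipotent, guaranteed to exist by Assumption~\ref{Assumption on point in orbit}: there $\mathrm{ord}(AB^{-1}) = p$ and $p \nmid 2M$, so $x^2$ moves this point. A small technical point to discharge is that the orbit $\mathrm{Orb}_{B_4}(P)$ inherits the same asymptotic ratio between the semisimple and unipotent loci as the ambient variety; this should follow from transitivity together with the fact that the foliations of Section~4 sweep the orbit across both strata.

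Theorem~\ref{CFSG alternating} then forces the image of $B_4$ in $S_n$ into one of the two structured families, and I would rule each out. Case (2) is immediate for large $p$ because $n$ is of order $p^4$ and cannot be a power of $6$. For case (1), the image would lie between $A_m^r$ and $S_m \wr S_r$ acting on $\binom{[m]}{k}^r$; Lemma~\ref{primes in subset action}, applied inside each wreath factor, then implies that any element whose cycle lengths are divisible by two distinct primes $q$ and $s$ must also have a cycle of length divisible by $qs$. My task is therefore to exhibit a single element of $B_4$ whose induced permutation on $\mathrm{Orb}_{B_4}(P)$ has a $p$-cycle (obtained from its action on a unipotent leaf of the first foliation) together with a cycle of prime length $q$ dividing $(p-1)/2$ or $(p+1)/2$ (obtained from its action on a semisimple leaf), yet possesses no $pq$-cycle. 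The natural candidate is a power of $\sigma_1$: by Lemma~\ref{Action of single generator}(2) its order on each point is exactly the order of $AB^{-1}$ at that point, so no single $B_4$-cycle can have length $pq$ because no element of $\mathrm{PSL}_2(\mathbb{F}_p)$ has order $pq$.

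The main obstacle is the final step: pinning down the cycle structure of concrete elements of $B_4$ on $\mathrm{Orb}_{B_4}(P)$ tightly enough to apply Lemma~\ref{primes in subset action}, and in particular ruling out $pq$-cycles globally rather than merely on a single leaf. The fixed-point estimate and the dispatch of case (2) I expect to be routine point-counting; by contrast, controlling cycle lengths on an orbit whose precise combinatorics depend on the chosen $\gamma,\delta$ requires the foliation picture of Section~4 in an essential way, together with Assumption~\ref{Assumption on non-conjugation} to prevent the centralizer equivalence from creating accidental cycle fusion. This combinatorial/geometric input is the heart of the argument, after which the conclusion that $A_n \le \mathrm{Im}(B_4) \le S_n$ is automatic.
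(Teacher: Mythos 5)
Your overall strategy is the same as the paper's: establish a non-involution with many fixed points, invoke Guralnick--Magaard (Theorem~\ref{CFSG alternating}), then rule out the two structured families. But there are two genuine gaps, precisely at the places you flag as ``technical points to discharge,'' and they are where the paper's proof does its real work.

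First, the fixed-point estimate. You take $x=\sigma_1^M$ with $M=\mathrm{lcm}\bigl((p-1)/2,(p+1)/2\bigr)$, which fixes exactly the points where $AB^{-1}$ is semisimple, and you claim the fixed-point fraction on the orbit tends to $1$ by counting $\mathbb{F}_p$-points of the unipotent locus. That count is valid for $X^{(2)}_{\gamma,\delta}$, but the orbit $\mathrm{Orb}_{B_4}(P)$ is not known to be a positive-density subset of it (that is essentially Conjecture~\ref{Transitive action}), so the unipotent points could a priori dominate the orbit. The paper sidesteps this with a pigeonhole argument that stays on the orbit: the cycle lengths of $\sigma_1$ split into the three residue-order classes dividing $\frac{p-1}{2}$, $p$, $\frac{p+1}{2}$; the least populous class occupies at most $1/3$ of the orbit, so raising $\sigma_1$ to the product of the other two orders fixes at least $2/3$ of the orbit. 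No geometric point count is needed, and pairwise coprimality of $\frac{p-1}{2},p,\frac{p+1}{2}$ both guarantees the non-involution property and drives the rest of the argument.

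Second, and more seriously, the $r>1$ case in conclusion~(1) is not handled. Lemma~\ref{primes in subset action} concerns a single $S_m$ acting on $\binom{[m]}{k}$; in the wreath product $S_m\wr S_r$ acting on $\binom{[m]}{k}^r$, cycles can mix the $r$ factors and their lengths are no longer constrained by element orders in $S_m$ alone. ``Applying the lemma inside each wreath factor'' is not available until you have shown that the relevant elements actually sit inside a single factor. The paper does exactly this: it first bounds $r\le 12$ using the existence of an element of order $p$ together with $|\mathrm{Orb}_{B_4}(P)|\le p^{12}$, then uses Lemma~\ref{wreath product structure} to analyse $\sigma_i^{r!}\in S_m^r$, uses Lemma~\ref{non-commutation of sigma powers} to force all three to be supported on the same coordinate, and finally deduces that the $S_r$-projections $\tau_i$ fix that coordinate --- which produces a block system, contradicting primitivity unless $r=1$. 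Only after this reduction is the $q$/$s$/$qs$ argument via Lemma~\ref{primes in subset action} applicable, and even then the paper must pass to primes $q\mid\frac{(p-1)/2}{\gcd((p-1)/2,r!)}$ and $s\mid\frac{(p+1)/2}{\gcd((p+1)/2,r!)}$ because it works with $\sigma_i^{r!}$ rather than $\sigma_i$ itself. Your proposal uses $\sigma_1$ directly and never contends with the $r!$-power subtlety. Your dispatch of case~(2) via ``$n$ is of order $p^4$'' is similarly unsupported; the paper's version uses the element of order $p$ to force $r\ge p$ and then compares $6^p$ against $p^{12}$.

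In short: the Guralnick--Magaard framework and the idea of playing $p$, $\frac{p-1}{2}$, $\frac{p+1}{2}$ off each other are right, but the reduction to $r=1$ (which needs the non-commutation and wreath-structure lemmas plus primitivity) and the pigeonhole replacement for the point count are the missing ingredients, and they are not routine.
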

The following definitions will be useful for the proof of the theorem
\begin{definition}
For every point $Q \in \tilde{X}_{\gamma,\delta}^{(2)}$ its $\sigma_1$-matrix is $A_QB_Q^{-1}$, its $\sigma_2$-matrix is $B_QC_Q^{-1}$ and its $\sigma_3$-matrix is $C_QD_Q^{-1}$. We say that $Q$ is $\sigma_i$-unipotent (respectively split, or non-split) if its $\sigma_i$-matrix is unipotent (respectively split, or non-split). We say that $Q$ is a $\sigma_i$-involution if its $\sigma_i$-matrix is an involution in $\Gamma$.
\end{definition}
Note that a point $Q \in X_{\gamma,\delta}^{(2)}$ having a particular $\sigma_i$-type is independent of the choice of lift to $\tilde{X}_{\gamma,\delta}^{(2)}$ as this simply conjugates the corresponding $\sigma_i$-matrix. So, the types are well defined on $X_{\gamma,\delta}^{(2)}.$ We will need the following lemmas for the proof of theorem \ref{Alternating action on orbit}.
\begin{lemma}
\label{arbitrary types}
     Under Assumption \ref{Assumption on point in orbit}, for every $i$ there are points $Q \in \mathrm{Orb}_{B_4}(P)$ of every possible type (among unipotent, split, non-split), and moreover the $\sigma_i$-matrix can have the order of an arbitrary non-trivial element of $\Gamma$.
\end{lemma}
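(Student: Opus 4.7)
The plan is, for each $i \in \{1,2,3\}$, to produce an explicit one-parameter family of points in $\mathrm{Orb}_{B_4}(P)$, obtained by applying a power of an adjacent braid generator to $P$, whose $\sigma_i$-matrices realize every trace in $\mathbb{F}_p$. Write $u_1 = A_PB_P^{-1}$, $u_2 = B_PC_P^{-1}$, $u_3 = C_PD_P^{-1}$; by Assumption \ref{Assumption on point in orbit} these are unipotents lying in pairwise distinct unipotent subgroups of $\Gamma = \mathrm{PSL}_2(\mathbb{F}_p)$.

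First I work out iterated braid formulas. Since $\sigma_2$ acts on the middle pair by $(B,C) \mapsto (BC^{-1}B, B)$, it preserves $u := BC^{-1}$, and a one-line induction gives
\[
\sigma_2^k \cdot (A,B,C,D) \;=\; \bigl(A,\; u^k B,\; u^{k-1}B,\; D\bigr).
\]
Applied to $P$, the new $\sigma_1$-matrix is $A_P(u_2^k B_P)^{-1} = u_1 u_2^{-k}$, and the new $\sigma_3$-matrix is $u_2^{k-1}B_P D_P^{-1} = u_2^k u_3$. Analogously, $\sigma_1$ preserves $AB^{-1}$ and yields $\sigma_1^k \cdot P = (u_1^k A_P,\, u_1^{k-1}A_P,\, C_P,\, D_P)$, whose $\sigma_2$-matrix is $u_1^{k-1} A_P C_P^{-1} = u_1^k u_2$. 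Thus for every $i$ the $\sigma_i$-matrix at the translated point has the form $u_j^a u_\ell^b$ with $\{j,\ell\}$ a pair of original indices, one exponent equal to $\pm 1$, and the other exponent a free parameter $k \in \mathbb{F}_p$.

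The crux is then the elementary trace fact: if $u,v \in \mathrm{SL}_2(\mathbb{F}_p)$ are unipotents in distinct unipotent subgroups of $\mathrm{PSL}_2(\mathbb{F}_p)$, then $\{\mathrm{tr}(u v^k) : k \in \mathbb{F}_p\} = \mathbb{F}_p$. Indeed, by Lemma \ref{Lemma on opposite unipotents} I may conjugate over $\mathrm{GL}_2(\mathbb{F}_p)$ to the normal form $u = \begin{pmatrix}1 & 1 \\ 0 & 1\end{pmatrix}$, $v = \begin{pmatrix}1 & 0 \\ s & 1\end{pmatrix}$ with $s \neq 0$, whereupon $u v^k = \begin{pmatrix}1+ks & 1 \\ ks & 1\end{pmatrix}$ has trace $2 + ks$; moreover $u v^k$ is never $\pm I$, so its image in $\Gamma$ is non-trivial. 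Since by Lemma \ref{Magnus character varieties}(2) the $\mathrm{PGL}_2(\mathbb{F}_p)$-conjugacy class of any non-trivial element of $\Gamma$—and hence its order and its type (split, non-split, unipotent)—is determined by its trace up to sign, solving $2 + ks \equiv \pm\mathrm{tr}(g) \pmod{p}$ for $k$ produces, for each $i$, a point in $\mathrm{Orb}_{B_4}(P)$ whose $\sigma_i$-matrix has the same order as the given non-trivial $g \in \Gamma$. I do not foresee any real obstacle: the induction for $\sigma_2^k$ and the $2\times 2$ product are mechanical, and the only subtlety is the $\mathrm{SL}_2$-vs-$\mathrm{PSL}_2$ sign ambiguity, which is absorbed by the "up to sign" clause of Lemma \ref{Magnus character varieties}(2).
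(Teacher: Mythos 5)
Your proof is correct and follows essentially the same strategy as the paper's: translate $P$ by powers of an adjacent braid generator, observe that the resulting $\sigma_i$-matrix is a product $u v^{\pm k}$ of unipotents in distinct unipotent subgroups, and note that its trace $2 \pm ks$ sweeps through all of $\mathbb{F}_p$. You are somewhat more explicit than the paper (which only works out the $i=1$ case and says "a similar argument works") in writing out the iterated braid formulas and handling the $\mathrm{SL}_2$/$\mathrm{PSL}_2$ sign, but the content is the same.
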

\begin{lemma}
\label{orders of sigmas}
     Under Assumption \ref{Assumption on non-conjugation}, the size of the $\sigma_i$-cycle of a point $Q \in X_{\gamma,\delta}^{(2)}$ is precisely the order in $\Gamma$ of the corresponding $\sigma_i$-matrix.
\end{lemma}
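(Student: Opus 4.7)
The plan is to show that for every positive integer $k$, the equality $\sigma_i^k Q = Q$ holds in $X_{\gamma,\delta}^{(2)}$ if and only if $k$ is a multiple of the order of the $\sigma_i$-matrix in $\mathrm{PSL}_2(\mathbb{F}_p)$; the statement of the lemma is then immediate. Fix a lift $(A, B, C, D) \in \tilde X_{\gamma, \delta}^{(2)}$ of $Q$ and let $M$ be the relevant $\sigma_i$-matrix. By the formula recalled in the proof of Lemma \ref{Action of single generator}, $\sigma_i$ acts on its two non-fixed coordinates by left multiplication by $M$, so one direction of the equivalence is immediate: if $M^k = 1$ in $\mathrm{PSL}_2$ then $\sigma_i^k$ already fixes the lift, even in $\tilde X_{\gamma,\delta}^{(2)}$.

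For the converse I will work out the case $i = 1$ in detail (so $M = AB^{-1}$); the cases $i = 2, 3$ are entirely parallel. Suppose $\sigma_1^k Q = Q$ in $X_{\gamma,\delta}^{(2)}$, so there exist $\hat\gamma \in C_{\mathrm{PGL}_2}(\gamma)$ and $\hat\delta \in C_{\mathrm{PGL}_2}(\delta)$ with $\sigma_1^k(A, B, C, D) = \hat\gamma(A, B, C, D)\hat\delta$. The two equations coming from the coordinates $C, D$, which are untouched by $\sigma_1$, read $C = \hat\gamma C \hat\delta$ and $D = \hat\gamma D \hat\delta$, and together force the identity $\hat\gamma = C\hat\delta^{-1}C^{-1} = D\hat\delta^{-1}D^{-1}$. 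In particular, since $\hat\gamma = C\hat\delta^{-1}C^{-1}$ must lie in $C_{\mathrm{PGL}_2}(\gamma)$, we get $\hat\delta^{-1}$ (and hence $\hat\delta$) in $C^{-1} C_{\mathrm{PGL}_2}(\gamma) C = C_{\mathrm{PGL}_2}(C^{-1}\gamma C)$. Combined with $\hat\delta \in C_{\mathrm{PGL}_2}(\delta)$, this places $\hat\delta$ inside the intersection of two maximal tori of $\mathrm{PGL}_2(\mathbb{F}_p)$.

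This is where Assumption \ref{Assumption on non-conjugation} enters: since one of $\gamma, \delta$ is split and the other non-split, the tori $C_{\mathrm{PGL}_2}(\delta)$ and $C_{\mathrm{PGL}_2}(C^{-1}\gamma C)$ are of opposite types, hence never $\mathrm{PGL}_2$-conjugate, and in particular distinct. Distinct maximal tori of $\mathrm{PGL}_2$ over a field intersect trivially, so $\hat\delta = 1$, whence $\hat\gamma = 1$, and the remaining equation $M^k A = \hat\gamma A \hat\delta = A$ yields $M^k = 1$ in $\mathrm{PSL}_2$, as required. The main subtlety is upgrading Assumption \ref{Assumption on non-conjugation} from $\Gamma = \mathrm{PSL}_2$ (where it is stated) to $\mathrm{PGL}_2$ (where the equivalence relation lives); this is painless since the split-versus-non-split dichotomy is intrinsic to the maximal torus and remains a $\mathrm{PGL}_2$-conjugacy invariant. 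For $i = 2, 3$ the argument is identical after changing which pair of coordinates is left untouched --- namely $(A, D)$ for $\sigma_2$ and $(A, B)$ for $\sigma_3$ --- so that $\hat\gamma$ becomes a conjugate of $\hat\delta^{-1}$ by $A$ instead of $C$, and the trivial-intersection conclusion is unchanged.
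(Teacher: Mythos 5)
Your proof is correct and follows essentially the same strategy as the paper's: read off the fixed coordinate(s) to force $\hat\gamma$ to be a conjugate of $\hat\delta^{-1}$ by $C$ (or $A$), then invoke the trivial-intersection clause of Assumption~\ref{Assumption on non-conjugation} to conclude $\hat\gamma = \hat\delta = 1$. You do handle one point the paper glosses over: the equivalence relation on $X_{\gamma,\delta}^{(2)}$ allows $\hat\gamma, \hat\delta$ to lie in $C_{\mathrm{PGL}_2}$ rather than $C_\Gamma = C_{\mathrm{PSL}_2}$, and you correctly note that the split/non-split dichotomy persists in $\mathrm{PGL}_2(\mathbb{F}_p)$, where opposite-type maximal tori (which here have coprime-up-to-2 orders $p-1$ and $p+1$, with involutions lying in distinct $\mathrm{PGL}_2(\mathbb{F}_p)$-conjugacy classes) still meet trivially.
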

\begin{lemma}
\label{non-commutation of sigma powers}
    Under Assumptions \ref{Assumption on non-conjugation}, \ref{Assumption on point in orbit} and for $k < p$, $\sigma_1^k,\sigma_2^k$ do not commute on $\mathrm{Orb}_{B_4}(P)$, and similarly $\sigma_2^k,\sigma_3^k$ do not commute on $\mathrm{Orb}_{B_4}(P)$.
\end{lemma}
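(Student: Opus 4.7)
The approach is direct computation followed by a trace obstruction. For a point $Q=(A,B,C,D)\in\widetilde X_{\gamma,\delta}^{(2)}$ with $X:=AB^{-1}$ and $Y:=BC^{-1}$, I would first apply the formulae \eqref{B4 action equations} to obtain
\[
\sigma_1^k\sigma_2^k Q=\bigl((XY^{-k})^k A,\ (XY^{-k})^k Y^k B,\ Y^k C,\ D\bigr),
\]
\[
\sigma_2^k\sigma_1^k Q=\bigl(X^k A,\ (X^k Y)^k X^k B,\ (X^k Y)^k C,\ D\bigr).
\]
Since the fourth coordinate agrees, any equivalence between these two tuples in $X_{\gamma,\delta}^{(2)}$ is witnessed by a single $\hat\gamma\in C_{\mathrm{PGL}_2(\mathbb F_p)}(\gamma)$ with $\hat\delta=D^{-1}\hat\gamma^{-1}D$ forced by the fourth coordinate. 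Conjugation of the new $\sigma_i$-matrices by $\hat\gamma$ then gives three necessary trace-up-to-sign identities; focusing on the new $\sigma_2$-matrix, a necessary condition for the commutator $[\sigma_1^k,\sigma_2^k]$ to fix $Q$ reads
\[
\mathrm{tr}\bigl((XY^{-k})^k Y\bigr)\equiv\pm\,\mathrm{tr}(X^k Y)\pmod{p}.
\]

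Next I would evaluate at the point $P$ of Assumption~\ref{Assumption on point in orbit}. By Lemma~\ref{Lemma on opposite unipotents}, after a $\mathrm{GL}_2(\mathbb F_p)$-conjugation we may take $X=\bigl(\begin{smallmatrix}1&1\\0&1\end{smallmatrix}\bigr)$ and $Y=\bigl(\begin{smallmatrix}1&0\\-1&1\end{smallmatrix}\bigr)$. A direct computation yields $\mathrm{tr}(X^k Y)=2-k$, and applying the Cayley--Hamilton recursion $M^{j+1}=\mathrm{tr}(M)\,M^j-M^{j-1}$ to $M:=XY^{-k}$ (of trace $2+k$) expresses $\mathrm{tr}((XY^{-k})^k Y)$ as a concrete integer, growing roughly like $(2+k)^{k-1}$. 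Denoting the difference and the sum of the two traces by $D_k$ and $S_k$, the commutator can fix $P$ only if $p\mid D_k$ or $p\mid S_k$, and consistently so across the $\sigma_1$- and $\sigma_3$-matrix analogues of the identity (to allow a single $\hat\gamma$ to realize all three conjugations simultaneously).

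For each fixed $p$, only finitely many $k\in\{1,\dots,p-1\}$ can satisfy $p\mid D_k$ or $p\mid S_k$, so commutativity at $P$ is ruled out for all but an exceptional set of residues. For each exceptional $k$ I would use Lemma~\ref{arbitrary types} to replace $P$ by another orbit point $Q$ at which the $\sigma_1$-matrix is chosen to have a large prescribed order coprime to $k$; an analogous trace (or, using Lemma~\ref{orders of sigmas}, cycle-length) argument then detects non-commutativity on $Q$. The corresponding statement for $\sigma_2^k,\sigma_3^k$ follows symmetrically via the involution $\varepsilon:(A,B,C,D)\mapsto(D,C,B,A)$ of Proposition~\ref{Basic Properties of the B4 Action}, which intertwines $\sigma_1$ and $\sigma_3^{-1}$.

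The principal obstacle is handling the exceptional $k$ uniformly: the polynomial trace analysis at $P$ is straightforward, but one must produce a companion orbit point $Q$ for each residual $k$ at which the trace obstruction is still detectable, and verify that the one-parameter freedom in $\hat\gamma\in C_{\mathrm{PGL}_2(\mathbb F_p)}(\gamma)$ cannot simultaneously satisfy all three matrix-conjugation constraints at the chosen $Q$.
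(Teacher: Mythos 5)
Your setup matches the paper's: you compute $\sigma_1^k\sigma_2^k Q$ and $\sigma_2^k\sigma_1^k Q$ correctly, and you observe that the unchanged fourth coordinate constrains the possible $(\hat\gamma,\hat\delta)$ witnessing equivalence. However, you then weaken to a trace-up-to-sign condition and fall into the problem you yourself flag at the end: for each prime $p$ there could be exceptional residues $k$ where the trace congruence accidentally holds, and your proposed remedy (replacing $P$ by another orbit point with prescribed $\sigma_1$-matrix order) is not worked out and is not obviously tractable.

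Two things would have closed the gap. First, the weakening to traces is unnecessary: since $\hat\gamma\in C_{\mathrm{PGL}_2}(\gamma)$ and $\hat\delta=D^{-1}\hat\gamma^{-1}D$ must lie in $C_{\mathrm{PGL}_2}(\delta)$, the non-intersection of conjugate centralizers in Assumption \ref{Assumption on non-conjugation} forces $\hat\gamma=\hat\delta=1$, so the relevant entries must agree as matrices, not merely in trace. Comparing the $\sigma_2$-matrices of the two sides then gives the \emph{matrix} identity $X^k=(XY^{-k})^k$. Since $X^k$ is a nontrivial unipotent for $0<k<p$, so is $(XY^{-k})^k$, hence $XY^{-k}$ is unipotent; but $\mathrm{tr}(XY^{-k})=2+k\alpha$ with $\alpha\ne 0$ is $\pm 2$ for exactly one $k\not\equiv 0\pmod p$. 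Second — and this is the key idea you miss — if $\sigma_1^k$ and $\sigma_2^k$ commute, then so do $\sigma_1^{2k}$ and $\sigma_2^{2k}$. Applying the same unipotency deduction to $2k$ pins down a \emph{second, different} residue for $k$, giving a contradiction for every $0<k<p$ at once. This single observation is what makes the paper's argument uniform in $k$ and dispenses entirely with exceptional cases; without it your approach remains incomplete.
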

\begin{lemma}
\label{wreath product structure}
    Choose $\tau \in S_m \wr S_r$ of the form $\tau = ((\tau_1,\dots,\tau_r),\tilde{\tau})$ for $\tau_i\in S_m$ and $\tilde{\tau}\in S_r$. Consider $\rho = \tau^{r!} \in S_m^r \leq S_m \wr S_r$, and $\rho = (\rho_1,\dots,\rho_r)$ for some $\rho_i \in S_m$. Then $\rho$ has fixed conjugacy class along the cycles of $\tilde{\tau}$, i.e. $\rho_j,\rho_{\tilde{\tau}(j)}$ are conjugate.
\end{lemma}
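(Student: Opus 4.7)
The plan is to exploit the fact that $\rho = \tau^{r!}$ commutes with $\tau$, which forces a transparent conjugation relation between the base-group components $\rho_j$ and $\rho_{\tilde{\tau}(j)}$. No deeper structure theorem about wreath products is needed; the whole statement comes out of one direct calculation.

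First I would verify that $\rho$ actually lies in the base subgroup $S_m^r \le S_m \wr S_r$. Projecting onto the top quotient $S_r$ sends $\tau$ to $\tilde{\tau}$, so it sends $\rho$ to $\tilde{\tau}^{r!}$. Since the order of any element of $S_r$ divides $r!$, this is trivial, so $\rho = ((\rho_1,\ldots,\rho_r),1)$ as asserted.

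Next, I would fix the wreath-product multiplication rule
$$((\tau_j)_j, \tilde{\tau}) \cdot ((\sigma_j)_j, \tilde{\sigma}) = \bigl((\tau_{\tilde{\sigma}(j)} \sigma_j)_j,\ \tilde{\tau}\tilde{\sigma}\bigr)$$
and compute $\tau \rho$ and $\rho \tau$ by plugging in $\rho = ((\rho_j)_j, 1)$ on each side. One gets
$$\tau \rho = \bigl((\tau_j \rho_j)_j,\ \tilde{\tau}\bigr), \qquad \rho \tau = \bigl((\rho_{\tilde{\tau}(j)} \tau_j)_j,\ \tilde{\tau}\bigr).$$
Since $\rho$ is a power of $\tau$, these two expressions must be equal, and comparing the $j$-th component yields $\tau_j \rho_j = \rho_{\tilde{\tau}(j)} \tau_j$, i.e.\ $\rho_{\tilde{\tau}(j)} = \tau_j \rho_j \tau_j^{-1}$. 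Iterating along a cycle of $\tilde{\tau}$ shows that $\rho_j$ and $\rho_{\tilde{\tau}^k(j)}$ are conjugate in $S_m$ for every $k$, which is exactly the claim.

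There is essentially no obstacle here; the only thing to be careful of is the convention for wreath-product multiplication, which is easily fixed at the start of the proof. The same argument would, of course, work in any group $G \wr H$ in place of $S_m \wr S_r$, but only the permutation group version is used in the sequel.
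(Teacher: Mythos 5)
Your proof is correct, and it takes a genuinely different route from the paper's. The paper first reduces to the case where $\tilde{\tau}$ is a single $r$-cycle (by restricting attention to one cycle at a time and noting that if $\tau^k$ has the conjugacy property so does any $\tau^{mk}$), and then computes $\bigl((\tau_1,\ldots,\tau_r),(1\,\cdots\,r)\bigr)^r = \bigl((\tau_1\tau_2\cdots\tau_r,\ \tau_2\cdots\tau_r\tau_1,\ \ldots,\ \tau_r\tau_1\cdots\tau_{r-1}),\ \mathrm{id}\bigr)$ explicitly, observing that the components are cyclic rotations of one word and hence conjugate. You instead bypass any explicit computation by exploiting that $\rho = \tau^{r!}$ commutes with $\tau$, which immediately yields $\rho_{\tilde{\tau}(j)} = \tau_j\,\rho_j\,\tau_j^{-1}$; this is shorter, avoids the reduction to a single cycle, produces an explicit conjugator, and (as you note) works verbatim in any $G \wr H$. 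The paper's computation carries a bit of extra information, namely the precise cyclic-rotation form of the components, but since the lemma only asserts conjugacy, your commutation argument is the more economical one. One small caveat: the conjugation relation you derive depends on which wreath-product multiplication convention one adopts (yours has the top permutation acting on the \emph{left} factor's indices), but under the other standard convention the same argument gives $\rho_{\tilde{\tau}(j)} = \tau_{\tilde{\tau}(j)}\,\rho_j\,\tau_{\tilde{\tau}(j)}^{-1}$, so the conclusion is unaffected.
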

\begin{proof}[Proof of Lemma \ref{arbitrary types}]
    Consider the unipotents $u = A_PB_P^{-1},v = B_PC_P^{-1}$. By our assumption they are in different unipotent subgroups. Therefore they are conjugate in $\mathrm{PGL}_2(\mathbb{F}_p)$ to matrices of the form $\begin{pmatrix}1 & 0 \\ \alpha & 1\end{pmatrix}, \begin{pmatrix}1 & 1 \\ 0 &  1\end{pmatrix}$. Now, $uv^k=\begin{pmatrix}1 & 0 \\ \alpha & 1\end{pmatrix}\begin{pmatrix}1 & k \\ 0 &  1\end{pmatrix}=\begin{pmatrix}1 & k \\ \alpha & k\alpha + 1\end{pmatrix}$ has trace $2 + k\alpha$. In particular, as $\alpha \neq 0$, this trace can take any possible value in $\mathbb{F}_p$, and so the matrix $uv^k$ can be in an arbitrary non-unipotent $\Gamma$-conjugacy class (and choosing $k = 0$ it can be unipotent). Now, noting that $A_{\sigma_2^k(P)}B_{\sigma_2^k(P)}^{-1} = A_PB_P^{-1} (B_PC_P^{-1})^k = uv^k$ we get points with arbitrary $\sigma_1$-matrix order.
    A similar argument works for the $\sigma_2,\sigma_3$-types.
\end{proof}
\begin{proof}[Proof of Lemma \ref{orders of sigmas}]
Clearly the order of the matrix divides the order of the cycle. We wish to prove the converse. let $k$ be the order of the cycle, i.e. $\sigma_i^k(Q) = Q$.
We assume that $i = 1$, the other two cases have a very similar proof.
$\sigma_1^k(Q) = ((A_QB_Q^{-1})^kA_Q,(A_QB_Q^{-1})^kB_Q,C_Q,D_Q)$. Therefore, for some $\hat{\gamma}\in C_\Gamma(\gamma),\hat{\delta}\in C_\Gamma(\delta)$, $\sigma_1^k(Q) = \hat{\gamma}Q\hat{\delta}$. In particular, $\hat{\gamma}C_Q\hat{\delta} = C_Q$ and therefore $\hat{\delta} = C_Q^{-1}\hat{\gamma}^{-1}C_Q$, contradiction our assumption. Therefore $\hat{\gamma}=\hat{\delta} = 1$, and hence $(A_QB_Q^{-1})^k = 1$, as needed.
\end{proof}
\begin{proof}[Proof of Lemma \ref{non-commutation of sigma powers}]
    Assume by contradiction that $\sigma_1^k,\sigma_2^k$ commute. In particular, $\sigma_1^{mk},\sigma_2^{mk}$ commute for every $m$. Consider the action on our special point $P$.
    $$\sigma_2^k(P) = (A_P, (B_PC_P^{-1})^kB_P, (B_PC_P^{-1})^kC_P, D_P).$$
    $$\sigma_1^k(P) = ((A_PB_P^{-1})^kA_P, (A_PB_P^{-1})^kB_P, C_P, D_P).$$
    $$\sigma_1^k(\sigma_2^k(P))=((A_PB_P^{-1}(B_PC_P^{-1})^{-k})^kA_P, (A_PB_P^{-1}(B_PC_P^{-1})^{-k})^k(B_PC_P^{-1})^kB_P, (B_PC_P^{-1})^kC_P, D_P)$$
    If $\sigma_1^k\sigma_2^k=\sigma_2^k\sigma_1^k$ then we get that
    $$(A_PB_P^{-1})^kB_PC_P^{-1} = (A_PB_P^{-1}(B_PC_P^{-1})^{-k})^k(B_PC_P^{-1})^kB_P ((B_PC_P^{-1})^kC_P)^{-1} =$$
    $$(A_PB_P^{-1}(B_PC_P^{-1})^{-k})^k B_PC_P^{-1}$$ We note that the fact that the $D$-coordinate is fixed and Assumption \ref{Assumption on non-conjugation} give that there is real equality and not equality up to conjugation, as $\hat{\gamma}D_P\hat{\delta} \neq D_P$ unless $\hat{\gamma} = \hat{\delta} = 1.$
    Therefore $(A_PB_P^{-1})^k = (A_PB_P^{-1}(B_PC_P^{-1})^{-k})^k$. However, the left hand side is a (non-trivial) unipotent, and therefore so is the right hand side. So, $A_PB_P^{-1}(B_PC_P^{-1})^k$ is unipotent. As this is the product of two unipotents in different unipotent subgroups, this is only possible for one specific value of $k \neq 0 (\mathrm{mod}\ p)$, and therefore cannot be simultaneously true for $k$ and $2k$, in contradiction.
\end{proof}
\begin{proof}[Proof of Lemma \ref{wreath product structure}]
Note that if $\tau^k \in S_m^r$ has the required property, then so does $\tau^{mk}$ for any $m$. Therefore, it suffices to prove our property for the case that $\tilde{\tau}$ is a cycle. In this case, $((\tau_1, \dots, \tau_r),(1 \dots r))^r = ((\tau_1 \tau_2 \cdots \tau_r, \tau_2 \cdots \tau_r \tau_1, \dots, \tau_r\tau_1 \cdots \tau_{r - 1}), id) \in S_m^r$.
\end{proof}
We can now turn to the proof of Theorem \ref{Alternating action on orbit}
\begin{proof}[Proof of Theorem \ref{Alternating action on orbit}]
    Consider the element $\sigma_1$ as a permutation of the orbit $\mathrm{Orb}_{B_4}(P)$. Using Lemmas \ref{arbitrary types}, \ref{orders of sigmas} we get that it has cycles of order dividing each one of $\frac{p-1}{2},p,\frac{p+1}{2}$, and not of order $2$. Moreover, every cycle has order dividing one of these three numbers. Now consider the three permutations $\sigma_1^{\frac{p-1}{2} \cdot p},\sigma_1^{\frac{p-1}{2}\cdot\frac{p+1}{2}},\sigma_1^{p\cdot\frac{p+1}{2}}$. One of these three permutations preserves at least $\frac{2}{3}$ of the elements of $\mathrm{Orb}_{B_4}(P)$. Indeed, we simply need to choose the type with the least number of points on the orbit, and raise to the power of the orders of the remaining two types.

    Moreover, note that none of these three permutations are involutions, as $\frac{p-1}{2},p,\frac{p+1}{2}$ are pairwise coprime. Therefore, by Theorem \ref{CFSG alternating}, we get that our action has one the two possible types.
    Assume first that $A_6^r \leq G \leq S_6 \wr S_r$ acts on $[6]^r$. In this case, as $G$ contains an element of order divisible by $p$, it follows that $r \geq p$. However, $6^p \leq 6^r = |\mathrm{Orb}_{B_4}(P)| \leq |\Gamma|^4 \leq p^{12}$, which is impossible for large $p$.

    Therefore there are values of $m,k,r$ such that the permutation group is of the form $A_m^r \leq G \leq S_m \wr S_r$ acting on $\Delta^r$ for $\Delta = \binom{[m]}{k}$ the set of $k$-subsets of $[m]$, for some $m \geq 5, r \geq 1$ and $1 \leq k \leq \frac{m}{4}$. Once again, there is an element of order $p$, and so either $r \geq p$ or $m \geq p$. If $r \geq p$, then $$5^p \leq 5^r \leq m^r \leq \binom{m}{k}^r = |\mathrm{Orb}_{\mathrm{B_4}}(P)| \leq |\Gamma|^4 \leq p^{12},$$which is once again impossible for large $p$. So, $m \geq p$, implying that $$p^r \leq m^r \leq \binom{m}{k}^r \leq |\mathrm{Orb}_{B_4}(P)| \leq p^{12},$$ so $r \leq 12$.

    For every $1\leq i \leq 3$ consider the permutation $\sigma_i^{r!} \in S_m^r$. Note that for sufficiently large $p,$ this permutation is non-trivial. Moreover, every one of its cycles has order dividing one of $\frac{p-1}{2},p,\frac{p+1}{2},$ and it has cycles of order dividing each of these. So, $\sigma_i^{r!} \in S_m^r$ can only have one non-trivial coordinate. By Lemma \ref{non-commutation of sigma powers}, we get that this coordinate is the same for $\sigma_1^{r!},\sigma_2^{r!},\sigma_3^{r!}$. Assume without loss of generality that this is the first coordinate.

    Denote the projections of $\sigma_1,\sigma_2,\sigma_3$ to $S_r$ by $\tau_1,\tau_2,\tau_3$. By Lemma \ref{wreath product structure}, we know that $\tau_i(1) = 1$, as otherwise there would be at least two non-trivial coordinates in $\sigma_i^{r!}$. It now follows that the permutation group generated by $\sigma_1,\sigma_2,\sigma_3$ is contained in $S_m^r \rtimes S_{r-1}$, where $S_{r - 1} \leq S_r$ is the subgroup preserving the first coordinate. In particular, our permutation group preserves the block structure defined by $\{\{J\} \times \Delta^{r-1} | J \in \Delta\}$, and therefore $r = 1$ by Theorem \ref{Primitive action on orbit}.

Let $q$ be a prime dividing $\frac{\frac{p-1}{2}}{\mathrm{gcd}(\frac{p-1}{2},r!)}$ and let $s$ be a prime dividing $\frac{\frac{p+1}{2}}{\mathrm{gcd}(\frac{p+1}{2}, r!)}$. For large enough $p$ such $q,s$ exist. $\sigma_i^{r!}$ has cycles of length divisible by each of $q,s,p.$ However, there are no cycles of length divisible by $qs, qp, sp$. Now, by Lemma \ref{primes in subset action}, we know that $k = 1$, as $\sigma_1$ contains cycles of length divisible by $q$ and $s$ but not by $qs$. This implies our required result.
\end{proof}
We now have all of the tools to prove the main theorem. The proof uses particular choices of matrices for convenience, however these are not necessary and much more general constructions are possible.
\begin{proof}[Proof of Theorem \ref{Alternating quotients}]
    Pick a sufficiently large prime $p$ as in Theorem \ref{Alternating action on orbit}. Moreover, assume that $5$ is a square and $13$ is not a square mod $p$. This can be achieved for infinitely many primes $p$ using Dirichlet's Theorem and Quadratic Reciprocity, by assuming (for example) that $$ p \equiv 1 (\mathrm{mod}\ 5), p \equiv -2 (\mathrm{mod}\ 13).$$
    Consider the three unipotents $$u = \begin{pmatrix}1 & 0 \\ 1 & 1\end{pmatrix}, v = \begin{pmatrix} 1 & 1 \\ 0 & 1\end{pmatrix}, w = \begin{pmatrix}-1 & 1 \\ -4 & 3\end{pmatrix},$$generating distinct unipotent subgroups. Note that $$uw = \begin{pmatrix}-1 & 1 \\ -5 & 4\end{pmatrix}, uvwv^{-1} = \begin{pmatrix}-5 & 9 \\ -9 & 16\end{pmatrix}$$ and so $\mathrm{tr}(uw) = 3, \mathrm{tr}(uvwv^{-1}) = 11$. Therefore $uw$ is in a split torus, as $\mathrm{tr}(uw)^2 - 4 = 5$ is a square mod $p.$ Moreover, $uvwv^{-1}$ is in a non-split torus, as $\mathrm{tr}(uvwv^{-1})^2 - 4 = 117 = 3^2 \cdot 13$ is not a square mod $p.$ Define $\gamma = uw$, $\delta = (uvwv^{-1})^{-1}$, and consider the point $P = (1, u^{-1}, v^{-1}u^{-1}, w^{-1}v^{-1}u^{-1}) \in X^{(2)}_{\gamma,\delta}$, as $A_PB_P^{-1}C_PD_P^{-1} = uw=\gamma$ and $D_P^{-1}C_PB_P^{-1}A_P = uvwv^{-1} = \delta^{-1}$, so $A_P^{-1}B_PC_P^{-1}D_P = \delta$. $P$ satisfies Assumption \ref{Assumption on point in orbit}. Moreover, $\gamma,\delta$ satisfy Assumption \ref{Assumption on non-conjugation}. By Theorem \ref{Alternating action on orbit} we get that the $B_4$ action on $\mathrm{Orb}_{B_4}(P)$ is alternating or symmetric.

    Using the involution $\varepsilon$ from Proposition \ref{Basic Properties of the B4 Action}, and the fact that every non-unipotent element of $\mathrm{PSL}_2(\mathbb{F}_p)$ is conjugate to its inverse, we may extend the $B_4$ action to an $\mathrm{Aut}(F_2)$-action. By Remark \ref{Remark on AutF2 extension}, this extension preserves $\mathrm{Orb}_{B_4}(P)$, and so the $\mathrm{Aut}(F_2)$ action is also alternating or symmetric. Moreover, note that $|\mathrm{Orb}_{B_4}(P)| \geq p$, as $\sigma_1^k(P) \neq \sigma_1^m(P)$ for all $k,m < p$.

    Consider $F_2 \simeq \mathrm{Inn}(F_2) \lhd \mathrm{Aut}(F_2)$. This subgroup acts on $\mathrm{Orb}_{B_4}(P)$ as a normal subgroup of the alternating or symmetric group. Therefore it is either alternating, symmetric, or trivial. Since $F_2 \leq \mathrm{Aut}(F_2)'$, it cannot be the entire symmetric group. We must prove it is non-trivial. For this, consider the action of $\sigma_1\sigma_3^{-1} \in F_2$ on $P$. This is $\sigma_1\sigma_3^{-1}(P) = (A_PB_P^{-1}A_P, A_P, D_P, D_PC_P^{-1}D_P)$. Assume that $\sigma_1\sigma_3^{-1}(P) = P$, i.e. $(A_PB_P^{-1}A_P, A_P, D_P, D_PC_P^{-1}D_P) = \hat{\gamma} \cdot (A_P, B_P, C_P, D_P) \cdot \hat{\delta}$, for some $\hat{\gamma} \in C_\Gamma(\gamma), \hat{\delta} \in C_\Gamma(\delta)$. In particular, $A_PB_P^{-1} = \hat{\gamma} A_PB_P^{-1} \hat{\gamma}^{-1}$. Since $\hat{\gamma}$ is non-unipotent and $A_PB_P^{-1}$ is unipotent, $\hat{\gamma} = 1$. Arguing similarly for $A_P^{-1}B_P$ we get that $\hat{\delta} = 1$. However, this means that $A_P = B_P$, which is not the case. Hence we get a characteristic alternating quotient of $F_2$.
\end{proof}
We finish this section with a proof of corollary \ref{covering torus bundles}
\begin{proof}[Proof of Corollary \ref{covering torus bundles}]
The fibre bundle $S_{1,1} \to E \to B$ yields a long exact sequence of homotopy groups $$\dots \to \pi_2(B) \to \pi_1(S_{1,1}) \to \pi_1(E) \to \pi_1(B) \to \pi_0(S_{1,1}) \to \dots$$Since $\pi_1(S_{1,1}) \simeq F_2$ has trivial center, the map $\pi_2(B) \to \pi_1(S_{1,1})$ is trivial, see \cite[Section 2.6]{brown2011nonabelian}, \cite[Chapter IV.3]{whitehead2012elements}. Since $S_{1,1}$ is connected, we get an exact sequence $$1 \to \pi_1(S_{1,1}) \to \pi_1(E) \to \pi_1(B) \to 1.$$Using this, the conjugation action gives a map $\pi_1(E) \to \mathrm{Aut}(\pi_1(S_{1,1})) \simeq \mathrm{Aut}(F_2)$. Composing this with our quotients $\mathrm{Aut}(F_2) \to S_n$, we get finite alternating or symmetric quotients of $\pi_1(E)$. Indeed, $A_n$ is contained in the image of these maps as the conjugation action of $\pi_1(S_{1,1})$ on itself implies that the inner automorphisms are contained in the image of the map $\pi_1(E) \to \mathrm{Aut}(F_2)$, and the inner automorphisms map surjectively to $A_n$.
\end{proof}
\section{Large rank quotients}
In this section we will describe an infinite sequence of Zariski-dense representations of $B_4$ in $\mathrm{PSL}_n$, using similar ideas to \cite{chen2025finite}, \cite{funar2018profinite}. Such a Zariski dense representation of a finitely generated group can then give infinitely many finite simple quotients via the Weisfeller-Pink strong approximation \cite{weisfeiler1984strong}, \cite{pink2000strong}.
Note that as $\mathrm{PSL}_n$ is center-free, it will follow from the Zariski-density that $Z(B_4)$ will have trivial image. If the inner automorphisms $F_2 \lhd B_4$ are mapped non-trivially, then their image will also be Zariski-dense and therefore we will get infinitely many finite simple quotients of $F_2$ that are $\mathrm{Aut}^+(F_2)$-preserved.
We will then need to find an extension of these from the index-$2$ subgroup $\mathrm{Aut}^+(F_2)$ to all of $\mathrm{Aut}(F_2)$.

The representations we will use are the monodromy representations of the Knizhnik-Zamolodchikov connection, or equivalently - due to a theorem of Kohno-Drinfeld (see \cite{kohno1987monodromy} or \cite{abad2014introduction}, \cite{kassel2012quantum}), representations of braid groups arising from representations of quantum groups. We refer the reader to Kassel's book \cite{kassel2012quantum} for more information and motivation as to the definitions and properties of quantum groups and their representations.

We will be interested specifically in the quantum group $\mathcal{U}_q(\mathfrak{sl}_2)$, and will use the results of \cite{jackson2011lawrence} characterizing these representations.

\subsection{Quantum representations of braid groups}
We will use the notation developed in section $2$ of \cite{jackson2011lawrence}, describing an integral-subalgebra $\mathcal{U}$ of the quantum group $\mathcal{U}_q(\mathfrak{sl}_2)$. This is a Hopf-algebra, which we will define with generators and relations, over the ring $\mathbb{L} = \mathbb{Z}[q,q^{-1},s,s^{-1}]$, with $q,s$ two variables.

Denote the $q$-numbers, $q$-factorials, $q$-binomials by the formulas
$$[n]_q = \frac{q^n - q^{-n}}{q - q^{-1}}, [n]_q! = [n]_q\cdot\ldots\cdot[1]_q, {n \brack k}_q = \frac{[n]_q!}{[k]_q!\cdot[n-k]_q!}.$$
The algebra is generated over $\mathbb{L}$ by the elements $K,K^{-1},E,(F^{(n)})_{n\geq0}$ under the relations
$$KK^{-1} = K^{-1}K = 1, KEK^{-1} = q^2E, KF^{(n)}K^{-1} = q^{-2n}F^{(n)},$$
$$F^{(n)}F^{(m)} = {n + m \brack n}_q F^{(n + m)}, [E, F^{(n + 1)}] = F^{(n)}(q^{-n}K-q^nK^{-1})$$
where we think of $F^{(n)}$ as the divided powers $\frac{(q-q^{-1})^n}{[n]_q!}F^n$. The co-product and antipode are defined by
$$\Delta(K) = K \otimes K, \Delta(E) = E \otimes K + 1 \otimes E,$$ $$\Delta(F^{(n)}) = \sum_{j=0}^n q^{-j(n-j)}K^{j-n}F^{(j)}\otimes F^{(n-j)}$$
$$S(K) = K^{-1}, S(E) = -EK^{-1}, S(F^{(n)}) = (-1)^n q^{n(n-1)}K^nF^{(n)}.$$
This Hopf algebra has an infinite dimensional representation $\textbf{V}$ generated over $\mathbb{L}$ freely by $(v_n)_{n\geq0}$, and with the generators acting via
$$Kv_j = sq^{-2j}v_j, Ev_j = v_{j - 1}$$
$$F^{(n)}v_j = \left({n + j \brack j}_q\prod_{k=0}^{n-1} (sq^{-k-j} - s^{-1}q^{k+j})\right) v_{j + n}.$$
where we use the convention that $v_{-1} = 0$.
The action of $\mathcal{U}$ on $\textbf{V}$ yields a natural action of $\mathcal{U}\otimes\mathcal{U}$ on $\textbf{V}\otimes\textbf{V}$, the tensors taken over $\mathbb{L}$. The co-product then naturally gives an action of $\mathcal{U}$ on $\textbf{V} \otimes \textbf{V}$ by $u(v\otimes w) = \Delta(u)(v \otimes w)$, and therefore more generally an action of $\mathcal{U}$ on $\textbf{V}^{\otimes n}$.

There is an element $\reflectbox{R} \in \mathcal{U} \tilde{\otimes} \mathcal{U}$, where the $\tilde{\otimes}$ allows infinite sums, turning $\mathcal{U}$ into a quastriangular Hopf algebra. For any given element of $\textbf{V} \otimes \textbf{V}$ only a finite number of non-zero summands will occur in the infinite sum representation of $\reflectbox{R}$, and therefore multiplication by $\reflectbox{R}$ gives a map $\textbf{V} \otimes \textbf{V} \to \textbf{V} \otimes \textbf{V}$. Composing this map with the transposition of the coordinates of $\textbf{V}$, we get a solution $R \in \mathrm{End}(\textbf{V} \otimes \textbf{V})$ to the Quantum Yang-Baxter equation, i.e. $(R \otimes 1) (1 \otimes R) (R \otimes 1) = (1 \otimes R) (R \otimes 1) (1 \otimes R)$ as elements of $\mathrm{End}(\textbf{V} \otimes \textbf{V} \otimes \textbf{V})$. This $R$ acts on $\textbf{V} \otimes \textbf{V}$ by
$$R(v_i \otimes v_j) = s^{-i-j}\sum_{n=0}^{i} q^{2(i-n)(j+n)+\frac{n(n-1)}{2}}{n + j \brack j}_q\prod_{k=0}^{n-1} (sq^{-k-j} - s^{-1}q^{k + j})v_{j + n} \otimes v_{i-n}.$$
The fact that $R$ satisfies the Quantum Yang-Baxter equation induces a representation of $B_n$ on $\textbf{V}^{\otimes n}$. Moreover $R$ is an automorphism of $\textbf{V}\otimes\textbf{V}$ as a representation of $\mathcal{U}$, i.e. commutes with the $\mathcal{U}$-action, and therefore $\textbf{V}^{\otimes n}$ has commuting $\mathcal{U},B_n$-actions, see also \cite[Theorem $7$]{jackson2011lawrence}.
Let $\textbf{V}_{n,\ell}$ denote the weight subspaces, generated over $\mathbb{L}$ by pure tensors of length $n$ and coordinate sum $\ell$. Equivalently, $\textbf{V}_{n,\ell} = \mathrm{ker}(K - s^nq^{-2\ell}) \leq \textbf{V}^{\otimes n}$.
Define $\textbf{W}_{n,\ell} = \mathrm{ker}(E)\cap\textbf{V}_{n,\ell}$, called the highest weight space. The $B_n$-action then restricts to an action on $\textbf{W}_{n,\ell}$. We summarize these in the following theorem, see \cite[Theorems 1, 3]{jackson2011lawrence}.
\begin{theorem}
\label{Jackson irreducibility}
    The highest weight space $\textbf{W}_{n,\ell} \leq \textbf{V}_{n,\ell} \leq \textbf{V}^{\otimes n}$ is a free $\mathbb{L}=\mathbb{Z}[q,q^{-1},s,s^{-1}]$-module of dimension $\binom{n+\ell-2}{\ell}$. There is a representation of $B_n$ on $\textbf{W}_{n,\ell}$, and is irreducible over the fraction field $\mathbb{K} = \mathbb{Q}(q,s)$.
\end{theorem}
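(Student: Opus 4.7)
The proof naturally splits into three parts: (i) freeness and rank of $\textbf{W}_{n,\ell}$ as an $\mathbb{L}$-module, (ii) $B_n$-invariance of $\textbf{W}_{n,\ell}$, and (iii) irreducibility over $\mathbb{K}$.

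For (i), I would first note that $\textbf{V}_{n,\ell}$ has the obvious $\mathbb{L}$-basis of pure tensors $v_{i_1}\otimes\cdots\otimes v_{i_n}$ with $\sum i_j=\ell$, so it is free of rank $\binom{n+\ell-1}{\ell}$. Then I would verify that the iterated coproduct $\Delta^{(n)}(E)=\sum_j K^{\otimes(j-1)}\otimes E\otimes 1^{\otimes(n-j)}$ maps $\textbf{V}_{n,\ell}$ onto $\textbf{V}_{n,\ell-1}$: since $E v_j = v_{j-1}$ and $K$ acts on each $v_j$ by the unit $sq^{-2j}\in\mathbb{L}^\times$, an explicit preimage for each basis monomial of $\textbf{V}_{n,\ell-1}$ can be written down over $\mathbb{L}$. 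A triangular change of basis on $\textbf{V}_{n,\ell}$ then identifies $\textbf{W}_{n,\ell}$ with the $\mathbb{L}$-span of the remaining basis vectors, exhibiting it as a free direct summand. Pascal's identity yields the rank $\binom{n+\ell-1}{\ell}-\binom{n+\ell-2}{\ell-1}=\binom{n+\ell-2}{\ell}$.

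For (ii), the quasitriangularity of $\mathcal{U}$ means that $\reflectbox{R}$ intertwines $\Delta$ and $\Delta^{op}$, so the braided transposition $R$ on $\textbf{V}\otimes\textbf{V}$, and hence each braid generator acting on $\textbf{V}^{\otimes n}$, commutes with $\Delta^{(n)}(u)$ for every $u\in\mathcal{U}$. In particular $B_n$ preserves $\ker\Delta^{(n)}(E)$ and the $\Delta^{(n)}(K)$-weight spaces, and therefore preserves their intersection $\textbf{W}_{n,\ell}$.

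The substantive step is (iii). Over $\mathbb{K}$, the module $\textbf{V}\otimes_{\mathbb{L}}\mathbb{K}$ is the generic Verma module of $\mathcal{U}_q(\mathfrak{sl}_2)$ with highest weight $s$; since $s$ is a transcendental, it is irreducible and its tensor powers are semisimple. I would then invoke a quantum Schur–Weyl-type double centralizer theorem in this Verma setting: the commutant of $\mathcal{U}_q(\mathfrak{sl}_2)$ in $\mathrm{End}(\textbf{V}^{\otimes n}\otimes_{\mathbb{L}}\mathbb{K})$ is the image of the Temperley–Lieb algebra $TL_n(q+q^{-1})$, through which the $B_n$-action factors (the $\sigma_i$ satisfy a quadratic relation coming from the eigenvalue decomposition of $R$ on $\textbf{V}\otimes\textbf{V}$). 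Decomposing $\textbf{V}^{\otimes n}_{\mathbb{K}}$ into simple $\bigl(\mathcal{U}_q(\mathfrak{sl}_2),TL_n\bigr)$-bimodules, the multiplicity space of the weight $s^nq^{-2\ell}$ highest-weight summand is exactly $\textbf{W}_{n,\ell}\otimes_{\mathbb{L}}\mathbb{K}$, which coincides with a standard cell module of $TL_n$ of dimension $\binom{n+\ell-2}{\ell}$ at a generic parameter, and is therefore simple and irreducible as a $B_n$-representation.

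The hard part is executing (iii) rigorously, since $\textbf{V}$ is infinite-dimensional and one cannot directly quote Jimbo's finite-dimensional quantum Schur–Weyl duality. The obstacle is to establish generic semisimplicity of the tensor powers and match dimensions with the Temperley–Lieb standard modules. The cleanest way I would attempt this is to pick a topologically generic specialization of $(q,s)$ where the Verma module and all weight spaces behave semisimply, verify the required isomorphism and irreducibility there, and then spread back to $\mathbb{K}$ using that the characters and dimensions involved are polynomial in $q,s$, so simplicity is a generic condition and persists over the fraction field.
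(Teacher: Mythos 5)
The paper does not prove this theorem itself; it cites \cite[Theorems 1, 3]{jackson2011lawrence} (Jackson--Kerler), so any proposed proof should be compared against that source. Your parts (i) and (ii) are plausible in outline, though (i) needs care to establish surjectivity of $\Delta^{(n)}(E)$ over $\mathbb{L}$ rather than merely over the fraction field (the unit coefficients $sq^{-2j}$ make this work, but the leading-term argument should be spelled out).

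The serious gap is in (iii): the $B_n$-action on $\textbf{V}^{\otimes n}$ does \emph{not} factor through the Temperley--Lieb algebra. That quotient requires the $R$-matrix on $\textbf{V}\otimes\textbf{V}$ to have exactly two eigenvalues (a Hecke-type quadratic relation), which holds for the $2$-dimensional vector representation but fails for the infinite-dimensional generic Verma module $\textbf{V}$: by the decomposition $\textbf{V}_{2,\ell}\simeq\bigoplus_{k=0}^{\ell}\textbf{W}_{2,k}$ and Lemma \ref{Eigenvalue computation}, $R$ acts on the summand $\textbf{W}_{2,k}$ by the scalar $(-1)^k q^{k(k-1)}s^{-2k}$, and these scalars are pairwise distinct as $k$ varies. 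So the braid generators have arbitrarily many distinct eigenvalues and satisfy no polynomial relation of bounded degree; there is no Temperley--Lieb (nor Hecke) quotient here, and no Schur--Weyl matching with TL cell modules. Your fallback via a generic specialization does not rescue this, since the structural claim (factoring through $TL_n$) is false for all specializations, not just special ones.

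The argument that the cited source actually uses, and which this paper relies on repeatedly (Theorems \ref{Irreducible Decomposition of Bn-1}, \ref{Vn,l irreducible decomposition}, Lemma \ref{Mixing subspaces Jackson}), is an inductive one: $\textbf{W}_{n,\ell}|_{B_{n-1}}\simeq\bigoplus_{k=0}^{\ell}\textbf{W}_{n-1,k}$, the braid generators act with pairwise distinct eigenvalues on these summands, and a mixing lemma shows that $\sigma_1$ has non-trivial matrix blocks between consecutive summands. Any $B_n$-invariant subspace is then a sum of $B_{n-1}$-isotypic pieces, but the mixing forces it to be everything. If you want to reconstruct the proof rather than cite it, that is the route to follow; the Verma-module Schur--Weyl/TL picture does not apply.
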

This representation can be described as a map $B_n \to \mathrm{GL}_{\binom{n + \ell - 2}{\ell}}(\mathbb{L})$, and projects to a map $B_n \to \mathrm{PGL}_{\binom{n+\ell - 2}{\ell}}(\mathbb{L})$. Let $\bar{\mathbb{K}}$ denote the algebraic closure of $\mathbb{K}$, and let $N = \binom{n+\ell-2}{\ell}$. Then there are natural embeddings $\mathrm{PGL}_N(\mathbb{L}) \leq \mathrm{PGL}_N(\mathbb{K}) \leq \mathrm{PGL}_N(\bar{\mathbb{K}}) = \mathrm{PSL}_N(\bar{\mathbb{K}})$, the last equality due to the fact that we may divide by the $N^\mathrm{th}$-root of the determinant of the matrix. We will prove the following fact:
\begin{theorem}
\label{Knizhinik Zamolodichikov Zariski density}
    The action of $B_n$ on $\textbf{W}_{n,\ell}$ has Zariski-dense image in $\mathrm{PSL}_{\binom{n + \ell - 2}{\ell}}(\bar{\mathbb{K}})$.
\end{theorem}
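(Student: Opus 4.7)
The plan is to identify the Zariski closure $H \le \mathrm{PSL}_{N}(\bar{\mathbb{K}})$ of the image of $B_n$, where $N=\binom{n+\ell-2}{\ell}$, and to show that $H = \mathrm{PSL}_{N}(\bar{\mathbb{K}})$. Theorem \ref{Jackson irreducibility} already ensures that $H$ acts absolutely irreducibly on the ambient $N$-dimensional space, which is the starting point of the argument.

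First, I would pass to the identity component $H^{0} \trianglelefteq H$, a closed connected normal subgroup. Because $H$ acts irreducibly, $H^{0}$ is reductive, and by Clifford's theorem either $H^{0}$ itself acts irreducibly on $\textbf{W}_{n,\ell}$, or the representation decomposes as a transitive $H/H^{0}$-orbit $V_{1}\oplus\cdots\oplus V_{r}$ of isotypic components with $r\ge 2$. I would rule out the second possibility by observing that the spectrum of a single braid generator $\sigma_{i}$ on $\textbf{W}_{n,\ell}$, computable from the $R$-matrix via the Clebsch--Gordan-type decomposition of $\textbf{V}\otimes\textbf{V}$, consists of monomials in $q$ and $s$ with pairwise distinct exponents, which cannot split into $r$ equal sub-multisets when $q, s$ are algebraically independent.

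Next, once $H^{0}$ is known to act irreducibly, it is a connected reductive subgroup of $\mathrm{PSL}_{N}$ with an irreducible defining representation, and by the Dynkin classification of irreducible closed connected subgroups of $\mathrm{PSL}_{N}$, it is either all of $\mathrm{PSL}_{N}$ or lies in one of the following: a symplectic subgroup $\mathrm{PSp}_{N}$; an orthogonal subgroup $\mathrm{PSO}_{N}$; a tensor-product subgroup arising from $\mathrm{PSL}_{a}\otimes\mathrm{PSL}_{b}$ with $ab=N$; a tensor-induced subgroup; or the image of a simple group in an exceptional small-dimensional representation. The plan is to eliminate each alternative using eigenvalue data. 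A $B_{n}$-invariant non-degenerate bilinear form would force the spectrum of each $\sigma_{i}$ to be closed under $\lambda\mapsto\lambda^{-1}$, but the explicit $R$-matrix eigenvalues involve non-symmetric powers of the Cartan parameter $s$ and therefore fail this closure. A tensor decomposition would force the spectrum of each generator to factor as a product of two smaller multisets, which is ruled out by comparison with the explicit $R$-matrix formula and the algebraic independence of $q$ and $s$. The exceptional embeddings fall into finitely many cases bounded by dimension, each excluded either by direct computation or by exhibiting a braid word with more distinct eigenvalues than the embedding can accommodate.

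The main obstacle is the second step, and in particular the tensor and tensor-induced alternatives, where one must translate abstract structural constraints into explicit polynomial identities in $q,s$. The key leverage throughout is the algebraic independence of $q$ and $s$ over $\mathbb{Q}$: any nontrivial constraint forced by a proper closed irreducible subgroup of $\mathrm{PSL}_{N}$ would impose a nontrivial polynomial relation on the $R$-matrix eigenvalues, contradicting their explicit description. A cleaner alternative, if available, would be to exhibit a specific braid word whose image is a pseudoreflection (an element with one eigenvalue of multiplicity $N-1$), which by a Guralnick--Saxl-type classification of irreducible linear groups generated by pseudoreflections would force $H=\mathrm{PSL}_{N}$ directly and bypass the case analysis.
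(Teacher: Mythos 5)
Your proposal is a genuinely different route from the paper's, and it has substantive gaps. The paper proves the theorem by induction on $n$: the base case $n=3$ is established by a direct Lie algebra computation (together with a graph-theoretic lemma, Lemma \ref{Graph Theoretic Lemma}, and a parabolic lemma, Lemma \ref{Parabolic subgroup reducibility}), and the inductive step then uses Kuperberg's ``amplification'' result (Theorem \ref{lie algebra amplification}) applied to the branching $\textbf{W}_{n,\ell} \simeq \bigoplus_{k=0}^\ell \textbf{W}_{n-1,k}$ from Theorem \ref{Irreducible Decomposition of Bn-1}. That branching, and its irreducible pieces having pairwise distinct dimensions, is the crucial structural input the paper exploits. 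Your plan instead attempts a single-shot classification argument (Clifford theory plus a Dynkin/Liebeck--Seitz style list of irreducible closed connected subgroups of $\mathrm{PSL}_N$) and rules out the alternatives via eigenvalue constraints; this approach is in the style of Larsen--Wang-type density arguments for TQFT representations, and in principle can work, but it does not match the paper.

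There are two concrete gaps. First, your Clifford step relies on the spectrum of a single $\sigma_i$ on $\textbf{W}_{n,\ell}$ having pairwise distinct eigenvalues. That is true for $n=3$ (where $\textbf{W}_{3,\ell}$ has dimension $\ell+1$ and $\sigma_2$ has the $\ell+1$ distinct eigenvalues from Lemma \ref{Eigenvalue computation}), but for $n>3$ the dimension $\binom{n+\ell-2}{\ell}$ exceeds the number of distinct $\sigma_i$-eigenvalues, which equals $\ell+1$, so the eigenvalues of $\sigma_i$ have multiplicities. Your proposed exclusion of a nontrivial isotypic decomposition therefore does not go through as stated; you would need a different argument (for instance, exploiting that a suitable power of $\sigma_i$ lies in $H^{0}$ and then comparing the dimensions of the $\textbf{W}_{2,k}$-eigenspaces with possible block sizes). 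Second, and more seriously, you explicitly leave open the tensor-product and tensor-induced cases, which are precisely the hardest alternatives to exclude in this classification scheme, and you offer no concrete mechanism for doing so. Also note that the self-duality exclusion needs care: the paper (Corollary \ref{preserved hermitian form}) shows there \emph{is} a nondegenerate form on $\textbf{V}^{\otimes 4}$ with respect to which $\sigma_2^{\ast}\sigma_2 = 1$; what saves the argument is that this form is sesquilinear with respect to the involution $q\mapsto q^{-1}, s\mapsto s^{-1}$ of $\mathbb{K}$ rather than a $\bar{\mathbb{K}}$-bilinear form, so it does not constrain the Zariski closure over $\bar{\mathbb{K}}$ --- but this is a subtlety you should state explicitly rather than assert that no invariant form exists. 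Finally, your pseudoreflection alternative is plausible for small $\ell$ (the $\ell = 1$ case is the reduced Burau representation, where generators are essentially pseudoreflections), but it would not by itself handle arbitrary $\ell$ without additional work. The paper's induction-on-$n$ strategy via Theorem \ref{lie algebra amplification} sidesteps all of this classification once the $n=3$ base case is settled, which is the key idea your proposal is missing.
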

Using this, we obtain in particular that $Z(B_4)$ has trivial image in $\mathrm{PGL}_{\binom{\ell + 2}{2}}(\mathbb{L})$, as $\mathrm{PSL}_{\binom{\ell + 2}{2}}(\bar{\mathbb{K}})$ has trivial center. So, we have an induced projective representation of $B_4/Z(B_4)$.

In \cite{jackson2011lawrence}, the action of $B_{n-1} \simeq \langle\sigma_2, \dots, \sigma_{n-1} \rangle  \leq B_n$ on $\textbf{W}_{n,\ell}$ was studied. We will use this structure for an inductive proof of Zariski-density. The following is \cite[Lemma 16]{jackson2011lawrence}, taking into account also \cite[Theorem 3]{jackson2011lawrence}.
\begin{theorem}
\label{Irreducible Decomposition of Bn-1}
    $\textbf{W}_{n,\ell} \simeq \bigoplus_{k=0}^\ell \textbf{W}_{n-1,k}$ is the decomposition of $\textbf{W}_{n,\ell}$ into irreducible components of the $B_{n-1}$-action.
\end{theorem}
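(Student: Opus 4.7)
The plan is to construct an explicit $B_{n-1}$-equivariant embedding $\Phi_k \colon \textbf{W}_{n-1,k} \hookrightarrow \textbf{W}_{n,\ell}$ for each $0 \leq k \leq \ell$ by a quantum Clebsch--Gordan style formula, verify $B_{n-1}$-equivariance from the fact that the $R$-matrix commutes with the $\mathcal{U}$-action, show that the summed map is injective, then use the dimension formula from Theorem \ref{Jackson irreducibility} together with the hockey-stick identity to upgrade injectivity to an isomorphism, and finally invoke Theorem \ref{Jackson irreducibility} again for irreducibility.

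For the construction, given $w \in \textbf{W}_{n-1,k}$ (so $K w = s^{n-1} q^{-2k} w$ and $E w = 0$, where $E, K$ act via the iterated coproduct on the last $n-1$ factors), I would set
$$\Phi_k(w) \;=\; \sum_{j=0}^{\ell-k} c_j \, v_{\ell-k-j} \otimes F^{(j)} w,$$
and determine the scalars $c_j \in \mathbb{K} = \mathbb{Q}(q,s)$, normalized by $c_0 = 1$, so that $\Delta(E)\,\Phi_k(w) = 0$. Unpacking $\Delta(E) = E \otimes K + 1 \otimes E$ and using $E v_a = v_{a-1}$ together with the identity $E F^{(j+1)} = F^{(j+1)} E + F^{(j)}(q^{-j} K - q^{j} K^{-1})$ applied to $w$, the condition that the coefficient of every basis vector $v_{a} \otimes F^{(b)} w$ (with $a + b = \ell - k - 1$) in $\Delta(E)\Phi_k(w)$ vanishes gives a triangular two-term recursion among the $c_j$, whose pivots are differences of the form $s^{n-1} q^{\alpha} - s^{-(n-1)} q^{\beta}$ and so are nonzero over $\mathbb{K}$ because $s$ is a transcendental parameter. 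Clearly $\Phi_k(w) \in \textbf{V}_{n,\ell}$, so by construction $\Phi_k(w) \in \textbf{W}_{n,\ell}$.

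For $B_{n-1}$-equivariance, the generators $\sigma_2, \dots, \sigma_{n-1}$ act only on the last $n-1$ tensor factors, and because $R \in \mathrm{End}(\textbf{V} \otimes \textbf{V})$ is a $\mathcal{U}$-module automorphism, the $B_{n-1}$-action on $\textbf{V}^{\otimes(n-1)}$ commutes with every $F^{(j)}$ acting via the iterated coproduct on those factors. Hence for $i \geq 2$,
$$\sigma_i\bigl(v_{\ell-k-j} \otimes F^{(j)} w\bigr) \;=\; v_{\ell-k-j} \otimes F^{(j)} (\sigma_i w),$$
and since the scalars $c_j$ depend only on $k, \ell, n$ we get $\sigma_i \Phi_k(w) = \Phi_k(\sigma_i w)$. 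Injectivity of $\Phi_k$ is immediate from the leading term $v_{\ell-k} \otimes w$, and linear independence of the images $\Phi_k(\textbf{W}_{n-1,k})$ for different $k$ is visible from the fact that the first tensor factor of the leading term lies in the distinct weight subspaces $\mathbb{K}\cdot v_{\ell-k} \subseteq \textbf{V}$. So $\Phi := \bigoplus_k \Phi_k$ is an injective $B_{n-1}$-equivariant map. By Theorem \ref{Jackson irreducibility}, $\dim_{\mathbb{K}} \textbf{W}_{n,\ell} = \binom{n+\ell-2}{\ell}$ and $\dim_{\mathbb{K}} \textbf{W}_{n-1,k} = \binom{n-3+k}{k}$, and the hockey-stick identity
$$\sum_{k=0}^\ell \binom{n-3+k}{k} \;=\; \binom{n+\ell-2}{\ell}$$
forces $\Phi$ to be an isomorphism. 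Each $\textbf{W}_{n-1,k}$ is $B_{n-1}$-irreducible over $\mathbb{K}$ by Theorem \ref{Jackson irreducibility}, and since $\binom{n-3+k}{k}$ is strictly increasing in $k$ for $n \geq 3$ the summands have pairwise distinct dimensions, hence are pairwise non-isomorphic; this identifies the decomposition as the isotypic / irreducible decomposition.

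The main obstacle is the explicit solvability of the recursion for the $c_j$, i.e.\ confirming that no denominator appearing when one solves for $c_{j+1}$ in terms of $c_j$ vanishes in $\mathbb{K}$, and that $F^{(j)} w$ is nonzero for $0 \leq j \leq \ell-k$. Both are generic nonvanishing statements over $\mathbb{Q}(q,s)$ that follow from tracking the factors $sq^{\alpha} - s^{-1} q^{\beta}$ introduced by the action formula for $F^{(n)}$ on $\textbf{V}$ and by the commutator $[E, F^{(j+1)}]$; since $s$ is transcendental these factors cannot vanish identically, so the construction goes through over $\mathbb{K}$, which is all that the statement requires.
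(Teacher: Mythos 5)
The paper does not prove this theorem; it imports it verbatim from \cite[Lemma 16 and Theorem 3]{jackson2011lawrence}, so there is no in-paper argument to compare against. Your proposal supplies a plausible and essentially correct proof along the lines one would expect the cited reference to take: a quantum Clebsch--Gordan map
$$\Phi_k(w)=\sum_{j=0}^{\ell-k}c_j\,v_{\ell-k-j}\otimes F^{(j)}w$$
from $\textbf{W}_{n-1,k}$ into $\textbf{W}_{n,\ell}$, with the $c_j$ determined by killing $\Delta(E)=E\otimes K+1\otimes E$; the subgroup $B_{n-1}=\langle\sigma_2,\dots,\sigma_{n-1}\rangle$ acts only on the last $n-1$ tensor factors and commutes with the iterated $\mathcal{U}$-action there, so $\Phi_k$ is $B_{n-1}$-equivariant; looking at the smallest $k$ with a nonzero contribution gives linear independence of the images; and the hockey-stick identity $\sum_{k=0}^{\ell}\binom{n-3+k}{k}=\binom{n+\ell-2}{\ell}$ forces $\bigoplus_k\Phi_k$ to be an isomorphism, with irreducibility of each summand from Theorem \ref{Jackson irreducibility} at rank $n-1$. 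Two small remarks. The pivots in your recursion are of the form $s^{n-1}q^{-j-2k}-s^{-(n-1)}q^{j+2k}$, which are invertible in $\mathbb{K}$ simply because $n\geq 2$ makes them two-term Laurent polynomials in $s$ of distinct degrees; you also do not need $F^{(j)}w\neq 0$, since injectivity rests on the $j=0$ term alone and the recursion is solved independently of $w$. Your claim that $\binom{n-3+k}{k}$ is strictly increasing in $k$ for $n\geq 3$ is wrong at $n=3$, where $\binom{k}{k}=1$ for every $k$; in that case pairwise non-isomorphism of the (one-dimensional) summands instead follows from Lemma \ref{Eigenvalue computation}, which shows the $B_2$-characters $(-1)^kq^{k(k-1)}s^{-2k}$ are pairwise distinct in $\mathbb{K}$. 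This last point is cosmetic: the theorem only asserts a decomposition into $B_{n-1}$-irreducibles, which your injectivity, the dimension count, and Theorem \ref{Jackson irreducibility} already deliver without the non-isomorphism claim.
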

We also mention \cite[Lemma 13]{jackson2011lawrence}
\begin{theorem}
\label{Vn,l irreducible decomposition}
$\textbf{V}_{n, \ell}$ decomposes as a $B_n$-representation into irreducibles as $\textbf{V}_{n, \ell} \simeq \bigoplus_{k=0}^\ell \textbf{W}_{n, \ell-k}$.
\end{theorem}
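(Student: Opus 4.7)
The plan is to exploit the fact, observed just before Theorem \ref{Jackson irreducibility}, that the $B_n$-action and the $\mathcal{U}$-action on $\textbf{V}^{\otimes n}$ commute. In particular, for every integer $m \geq 0$ the operator $F^{(m)} \in \mathcal{U}$ raises the $K$-weight by $q^{-2m}$, and hence gives a $B_n$-equivariant $\mathbb{L}$-linear map
\begin{equation*}
F^{(m)} : \textbf{V}_{n,\ell-m} \longrightarrow \textbf{V}_{n,\ell}.
\end{equation*}
Restricting to the highest weight subspace gives $B_n$-equivariant maps $F^{(m)} : \textbf{W}_{n,\ell-m} \to \textbf{V}_{n,\ell}$ for $m = 0, 1, \dots, \ell$. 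The plan is to show that after base change to the fraction field $\mathbb{K} = \mathbb{Q}(q,s)$, these assemble into a direct sum decomposition
\begin{equation*}
\textbf{V}_{n,\ell} \otimes_{\mathbb{L}} \mathbb{K} \;=\; \bigoplus_{m=0}^{\ell} F^{(m)} \textbf{W}_{n,\ell-m} \otimes_{\mathbb{L}} \mathbb{K}.
\end{equation*}

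To carry this out, I would first prove each $F^{(m)}$ is injective on $\textbf{W}_{n,\ell-m}$ over $\mathbb{K}$ by iterating the relation $[E, F^{(m+1)}] = F^{(m)}(q^{-m} K - q^m K^{-1})$ to obtain a formula $E^m F^{(m)} w = \alpha_{n,\ell,m}(q,s)\, w$ for $w \in \textbf{W}_{n,\ell-m}$, with $\alpha_{n,\ell,m}(q,s)$ an explicit product of $q$-integers and factors of the form $sq^a - s^{-1}q^{-a}$, none of which vanish identically in $\mathbb{Q}(q,s)$. The same calculation shows the sum is direct: applying $E^m$ kills every summand with $m' < m$ while acting injectively on the $m$-th summand, so the sum is separated by the natural filtration by $\ker(E^{m+1})$. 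A dimension count via the hockey-stick identity
\begin{equation*}
\sum_{m=0}^{\ell} \binom{n+\ell-m-2}{\ell-m} \;=\; \binom{n+\ell-1}{\ell} \;=\; \dim_{\mathbb{L}} \textbf{V}_{n,\ell}
\end{equation*}
then confirms that the direct sum exhausts $\textbf{V}_{n,\ell} \otimes_{\mathbb{L}} \mathbb{K}$.

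Since $F^{(m)}$ is a $B_n$-equivariant $\mathbb{K}$-linear isomorphism onto its image, the $m$-th summand is isomorphic to $\textbf{W}_{n,\ell-m}$ as a $B_n$-module, and by Theorem \ref{Jackson irreducibility} each such $\textbf{W}_{n,\ell-m}$ is irreducible over $\mathbb{K}$. Reindexing $k = \ell-m$ yields $\textbf{V}_{n,\ell} \simeq \bigoplus_{k=0}^{\ell} \textbf{W}_{n,k}$ as stated. The main technical obstacle is the nonvanishing of the scalars $\alpha_{n,\ell,m}$; this is essentially the statement that, generically in the parameters, the infinite-dimensional Verma-type module $\textbf{V}$ behaves like a classical $\mathfrak{sl}_2$ highest-weight module and no root-of-unity degeneracies occur. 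The presence of $s$ as a formal variable (rather than a fixed scalar) is precisely what makes this nonvanishing automatic, since any would-be obstruction is a nonzero Laurent polynomial in $s$ and $q$.
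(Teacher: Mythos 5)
The paper does not actually prove this statement; it simply cites it as \cite[Lemma 13]{jackson2011lawrence}. Your proposal supplies a genuine proof, and it is correct.

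The argument is the standard quantum-$\mathfrak{sl}_2$ weight-theory decomposition, and it checks out in detail. Since the $B_n$- and $\mathcal{U}$-actions on $\textbf{V}^{\otimes n}$ commute, each $F^{(m)}$ is a $B_n$-equivariant map $\textbf{V}_{n,\ell-m}\to\textbf{V}_{n,\ell}$. For $w\in\textbf{W}_{n,\ell-m}$, iterating $[E,F^{(j)}]=F^{(j-1)}(q^{-(j-1)}K-q^{j-1}K^{-1})$ and using $Ew=0$, $Kw=s^nq^{-2(\ell-m)}w$ gives
\begin{equation*}
E^mF^{(m)}w=\Bigl(\prod_{j=1}^{m}\bigl(s^nq^{-2\ell+m+j}-s^{-n}q^{2\ell-m-j}\bigr)\Bigr)w,
\end{equation*}
a nonzero element of $\mathbb{L}=\mathbb{Z}[q^{\pm1},s^{\pm1}]$, so $F^{(m)}$ is injective on $\textbf{W}_{n,\ell-m}$ over $\mathbb{K}$. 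The filtration-by-$\ker(E^{m+1})$ argument gives directness of the sum, and the hockey-stick identity $\sum_{k=0}^\ell\binom{n+k-2}{k}=\binom{n+\ell-1}{\ell}=\operatorname{rk}_{\mathbb{L}}\textbf{V}_{n,\ell}$ confirms the sum exhausts $\textbf{V}_{n,\ell}\otimes_{\mathbb{L}}\mathbb{K}$. Each summand is $B_n$-isomorphic to $\textbf{W}_{n,\ell-m}$, which is irreducible over $\mathbb{K}$ by Theorem \ref{Jackson irreducibility}. One tiny quibble: you say $F^{(m)}$ ``raises the $K$-weight by $q^{-2m}$''; it multiplies the $K$-eigenvalue by $q^{-2m}$, which lowers the weight in the usual sense, but your indexing and the resulting map $\textbf{V}_{n,\ell-m}\to\textbf{V}_{n,\ell}$ are nonetheless correct because $\textbf{V}_{n,\ell}=\ker(K-s^nq^{-2\ell})$. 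What your proof buys over the paper's citation is a self-contained argument; what the citation buys is brevity and an appeal to a reference that also handles the integral ($\mathbb{L}$-module) structure rather than just the $\mathbb{K}$-linear decomposition.
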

In the following lemma we compute the eigenvalues of the $\sigma_i$ on $\textbf{W}_{n, \ell}$.
\begin{lemma}
\label{Eigenvalue computation}
    Consider the $B_2 \simeq \mathbb{Z}$-action on the one-dimensional space $\textbf{W}_{2,\ell}$. This action is multiplication by $(-1)^\ell q^{\ell(\ell-1)}s^{-2\ell}$.
\end{lemma}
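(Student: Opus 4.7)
The plan is to find an explicit highest-weight generator of the one-dimensional space $\mathbf{W}_{2,\ell}$, observe that $R$ (which realises the $B_2$-action) acts on it by a scalar $\mu_\ell$, and pin down $\mu_\ell$ by reading off a single coefficient of $Rw$.

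First I would write a general weight-$\ell$ vector as $w = \sum_{i=0}^{\ell} a_i\, v_i \otimes v_{\ell-i}$ and impose $Ew = 0$. Using $\Delta(E) = E \otimes K + 1 \otimes E$ together with $Kv_j = sq^{-2j}v_j$ and $Ev_j = v_{j-1}$, the condition unfolds into the one-step recurrence $a_{i+1}\,s\,q^{-2(\ell - i - 1)} + a_i = 0$. Normalising $a_0 = 1$, a short induction gives $a_i = (-1)^i s^{-i} q^{i(2\ell - i - 1)}$; in particular $a_\ell = (-1)^\ell s^{-\ell} q^{\ell(\ell-1)}$. By Theorem~\ref{Jackson irreducibility} this single vector spans $\mathbf{W}_{2,\ell}$.

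Next, since $R$ commutes with the $\mathcal{U}$-action it preserves both $\mathbf{V}_{2,\ell}$ (a $K$-eigenspace) and $\ker(E)$, so it restricts to an $\mathbb{L}$-linear endomorphism of the rank-one module $\mathbf{W}_{2,\ell}$. Thus $Rw = \mu_\ell w$ for some $\mu_\ell \in \mathbb{L}$, and to identify $\mu_\ell$ it suffices to compare the coefficient of a single basis vector on each side. The cleanest choice is $v_0 \otimes v_\ell$: inspection of the explicit formula for $R(v_i \otimes v_j) = s^{-i-j}\sum_n (\cdots)\, v_{j+n} \otimes v_{i-n}$ shows that $v_0 \otimes v_\ell$ arises only from $i = \ell$, $j = 0$, $n = 0$. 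At that point the $q$-binomial is trivial and the product over $k$ is empty, so the coefficient collapses to $s^{-\ell}$.

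Assembling the pieces, the coefficient of $v_0 \otimes v_\ell$ in $Rw$ equals $a_\ell \cdot s^{-\ell}$, while in $\mu_\ell w$ it equals $\mu_\ell a_0 = \mu_\ell$. Equating yields $\mu_\ell = a_\ell\, s^{-\ell} = (-1)^\ell q^{\ell(\ell-1)} s^{-2\ell}$, as claimed. The only real obstacle is the bookkeeping in solving $Ew = 0$ and keeping the $q$- and $s$-exponents straight in the closed form for $a_\ell$; the rest reduces to reading off a single entry of the $R$-matrix. As a sanity check one can verify the $\ell = 1$ case by hand, where $w = v_0 \otimes v_1 - s^{-1} v_1 \otimes v_0$ and a direct computation gives $Rw = -s^{-2} w$, matching $(-1)^1 q^0 s^{-2}$.
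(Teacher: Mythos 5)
Your proof is correct and follows essentially the same strategy as the paper's: solve the recurrence coming from $Ew=0$ to get the explicit highest-weight vector spanning $\mathbf{W}_{2,\ell}$, note that $R$ commutes with the $\mathcal{U}$-action and hence acts by a scalar on this line, and read off the scalar from the $v_0\otimes v_\ell$ coefficient. The only cosmetic difference is your normalisation ($a_0=1$ on $v_0\otimes v_\ell$, so $\mu_\ell = a_\ell s^{-\ell}$) versus the paper's ($\alpha_0 = 1$ on $v_\ell\otimes v_0$, so $\mu_\ell$ is the ratio $s^{-\ell}/\alpha_\ell$); the two highest-weight vectors are proportional, and both routes give the same answer.
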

\begin{proof}
    $E(v_i\otimes v_j) = (E\otimes K + 1\otimes E)(v_i \otimes v_j) = sq^{-2j} \cdot v_{i-1} \otimes v_j + v_i \otimes v_{j-1}$.
    Therefore the kernel of $E: \textbf{V}_{2,\ell} \to \textbf{V}_{2,\ell - 1}$ is the one dimensional space given by $u = \sum_{i=0}^\ell \alpha_i v_{\ell - i} \otimes v_i$ such that $\alpha_i \cdot sq^{-2i} + \alpha_{i+1} = 0$. Assuming that $\alpha_0 = 1$, we get $\alpha_i = (-1)^is^iq^{-i(i-1)}$, so $u = \sum_{i=0}^\ell(-1)^i s^i q^{-i(i-1)} v_{\ell - i} \otimes v_i$.
    Now, applying $\sigma_1$ on $\textbf{V}_{2,\ell}$ is equivalent to applying $R$, whose action is given by
    $$Ru = \sum_i (-1)^i s^i q^{-i(i-1)} R(v_{\ell - i} \otimes v_i) = $$
    $$\sum_i  (-1)^i s^i q^{-i(i-1)} s^{-\ell} \sum_{m=0}^{\ell - i}q^{2(\ell -i -m)(i+m)+\frac{m(m-1)}{2}}{m + i \brack m}_q\prod_{k=0}^{m-1} (sq^{-k-i} - s^{-1}q^{k + i})v_{i + m} \otimes v_{\ell - i-m} = $$
    $$\sum_{t=0}^\ell q^{2(\ell - t)t}s^{-\ell}\left(\sum_{i + m =t}  (-1)^i s^i q^{-i(i-1)} q^{\frac{m(m-1)}{2}}{t \brack m}_q\prod_{k=i}^{t-1} (sq^{-k} - s^{-1}q^{k })\right)v_t \otimes v_{\ell - t} = $$
 In order to prove the claim, it suffices to consider the coordinate of $v_0 \otimes v_\ell$, as we already know that $R$ preserves the one-dimensional kernel of $E$. In this case, the coordinate of $Ru$ is $s^{-\ell}$ and the coordinate of $u$ is $(-1)^\ell s^\ell q^{-\ell(\ell - 1)}$.
\end{proof}
\subsection{Proof of Zariski Density}
In the current subsection we will prove Theorem \ref{Knizhinik Zamolodichikov Zariski density}. The proof will be by induction on $n$, where Theorem \ref{Irreducible Decomposition of Bn-1} gives our representations an inductive structure. Theorem \ref{lie algebra amplification} below, which is \cite[Theorem 7.4]{kuperberg2018tqft} will be the induction step. The base case, $n = 3$, will be proved directly, as Theorem \ref{lie algebra amplification} does not apply to the decomposition into $B_2 \simeq \mathbb{Z}$-irreducible representations. We state this separately in the following Proposition
\begin{proposition}
\label{Zariski density n=3}
    For every $\ell \geq 0$, the action of $B_3$ on $\textbf{W}_{3,\ell}$ has Zariski-dense image in $\mathrm{PSL}_{\ell + 1}(\bar{\mathbb{K}})$.
\end{proposition}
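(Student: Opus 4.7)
The plan is to identify the Lie algebra $\mathfrak{g}$ of the Zariski closure $G$ of the $B_3$-image in $\mathrm{PSL}_{\ell+1}(\bar{\mathbb{K}})$ directly. By Theorem \ref{Jackson irreducibility} the representation is absolutely irreducible, so by Lie--Kolchin the unipotent radical of $G$ has a nonzero fixed vector whose span is $G$-stable and hence all of $V=\bar{\mathbb{K}}^{\ell+1}$; thus $G$ is reductive. The key computation, using Theorem \ref{Irreducible Decomposition of Bn-1} applied to $\langle\sigma_2\rangle\simeq B_2\leq B_3$ together with Lemma \ref{Eigenvalue computation}, exhibits $\sigma_2$ in a suitable basis as the diagonal matrix with entries $\lambda_k=(-1)^k q^{k(k-1)}s^{-2k}$ for $k=0,\ldots,\ell$. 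Since $q$ and $s$ are algebraically independent over $\mathbb{Q}$, three properties then follow: (a) the $\lambda_k$ are distinct; (b) no ratio $\lambda_i/\lambda_j$ with $i\neq j$ is a root of unity; and (c) \emph{all} such ratios are mutually distinct, since $\lambda_a\lambda_b=\lambda_{a'}\lambda_{b'}$ forces both $a+b=a'+b'$ and $a^2+b^2=a'^2+b'^2$, whence $\{a,b\}=\{a',b'\}$. Consequently $\mathrm{Ad}(\sigma_2)$ acts on $\mathfrak{gl}_{\ell+1}(\bar{\mathbb{K}})$ with $\ell(\ell+1)$ distinct nontrivial eigenvalues on the one-dimensional root lines $\bar{\mathbb{K}} E_{ij}$ and with eigenvalue $1$ on the Cartan $\mathfrak{t}$.

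Using (b) I would first show $G$ is connected: if $V|_{G^0}$ had several isotypic components permuted nontrivially by $G/G^0$, then $\sigma_2$ would act block-monomially and its eigenvalues would partition into orbits of size $N>1$ whose elements differ by primitive $N$-th roots of unity, contradicting (b). In characteristic zero, the connectedness of $G$ together with the irreducibility of $V$ makes $\mathfrak{g}:=\mathrm{Lie}(G)$ an irreducible $\mathfrak{g}$-module on $V$. The $\mathrm{Ad}(\sigma_2)$-invariance of $\mathfrak{g}$ together with (c) forces the decomposition
\[
\mathfrak{g}=\mathfrak{t}_{\mathfrak{g}}\oplus\bigoplus_{(i,j)\in S}\bar{\mathbb{K}} E_{ij}
\]
for some $\mathfrak{t}_{\mathfrak{g}}\leq\mathfrak{t}$ and some subset $S\subseteq\{(i,j):i\neq j\}$.

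To conclude, the bracket relation $[E_{ij},E_{jk}]=E_{ik}$ (for $i\neq k$) shows that $S$ is transitively closed as a directed relation on $\{0,\ldots,\ell\}$. Since $\sigma_2$ has $\ell+1$ distinct eigenvalues and lies in the torus corresponding to $\mathfrak{t}_{\mathfrak{g}}$, the $\mathfrak{t}_{\mathfrak{g}}$-weight spaces on $V$ are the coordinate lines $\bar{\mathbb{K}} e_k$, so every $\mathfrak{g}$-invariant subspace of $V$ is of the form $\bigoplus_{k\in T}\bar{\mathbb{K}} e_k$ for $T$ closed under the edge relation of $S$. Irreducibility forces this directed graph to be strongly connected, and combined with transitive closure this gives $S=\{(i,j):i\neq j\}$. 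Finally the brackets $[E_{ij},E_{ji}]=E_{ii}-E_{jj}$ span the traceless diagonal, so $\mathfrak{g}\supseteq\mathfrak{sl}_{\ell+1}(\bar{\mathbb{K}})$ and hence $G=\mathrm{PSL}_{\ell+1}(\bar{\mathbb{K}})$.

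The main obstacle — and the reason this direct argument works for $n=3$ but not for larger $n$, where one must appeal to Kuperberg's Lie-algebra amplification — is the verification of the distinct-ratios property (c), which crucially relies on the specific quadratic dependence of the exponents $k(k-1)$ and $-2k$ in the formula for $\lambda_k$ coming from Lemma \ref{Eigenvalue computation}. Once (c) is in hand, the combinatorial-Lie-algebraic bookkeeping that turns it into full density is essentially formal.
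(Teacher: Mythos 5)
Your approach is correct in its broad structure and genuinely different from the paper's, and in several ways cleaner. Let me compare and flag one gap.

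The paper argues within the Lie algebra $\mathfrak{g}$ much as you do (the distinct-ratios observation (c) is exactly the paper's key first step), and it too sets up a directed graph on the weight spaces $\mathbf{W}_{2,i}$ and uses $[E_{ij},E_{jk}]=E_{ik}$ for transitivity. But where you invoke irreducibility of $V$ under $\mathfrak{g}$ to get strong connectivity of the graph, the paper instead deploys Jackson's mixing lemma (Lemma \ref{Mixing subspaces Jackson}) to produce enough edges to satisfy explicit degree bounds, and then feeds these bounds into a separate graph-theoretic lemma (Lemma \ref{Graph Theoretic Lemma}). That lemma has exceptional cases at $\ell\in\{3,4\}$ (where the graph could split as two complete halves with all edges in one direction between them), which the paper resolves separately via the parabolic-subgroup normalizer lemma (Lemma \ref{Parabolic subgroup reducibility}). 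Your route avoids all of this: you never need the mixing lemma or the degree-counting, and the bipartite exception dies immediately because such a graph is not strongly connected (hence the coordinate-line span indexed by one side would be $\mathfrak{g}$-invariant). In effect you are replacing the paper's use of irreducibility in the corner cases by a uniform use of irreducibility. This is a real simplification.

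The one step I don't believe as written is the connectedness argument. You claim that if $G/G^0$ permuted the $G^0$-isotypic components nontrivially then $\sigma_2$ would act block-monomially, forcing its eigenvalues into orbits differing by roots of unity. But by your own property (b), no ratio $\lambda_i/\lambda_j$ is a root of unity, and over $\mathrm{PSL}$ this means the Zariski closure of $\langle\sigma_2\rangle$ is a connected torus — so $\sigma_2\in G^0$, and $\sigma_2$ preserves each isotypic component rather than permuting them. The block-monomial scenario you invoke simply cannot involve $\sigma_2$. The fix is easy and is in fact the same observation turned the other way: since no ratio of eigenvalues of $\sigma_2$ (equivalently of $\sigma_1$, as they are conjugate) is a root of unity, each generator $\sigma_i$ lies in a connected subtorus of $G$, hence in $G^0$; therefore the image of $B_3$ lies in $G^0$ and $G=G^0$. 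With this repaired, the remainder of your argument — reductivity via Lie--Kolchin (where the $G$-stable space is the full $U$-fixed subspace, not the span of a single vector, but the conclusion is the same), irreducibility of $V$ as a $\mathfrak{g}$-module, transitive closure plus strong connectivity forcing $S$ to be complete, and $[E_{ij},E_{ji}]=E_{ii}-E_{jj}$ filling in the Cartan — is correct.
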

The following lemma \cite[Lemma 19]{jackson2011lawrence} will be a useful computational tool in the proof for $n = 3$.
\begin{lemma}
\label{Mixing subspaces Jackson}
For every $0 < k < \ell$ there is an element $w \in \textbf{W}_{n,\ell - k}$ such that $\sigma_1(w)$ has non-trivial coordinates in all of $\textbf{W}_{n, \ell - r}$ for every $1 \leq r \leq k + 1$.
\end{lemma}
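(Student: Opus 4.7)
The plan is to prove the mixing lemma by a direct calculation, leveraging the explicit $R$-matrix formula given earlier and the concrete description of the $\mathcal{U}$-module structure on $\mathbf{V}^{\otimes n}$. First I would set up suitable highest-weight vectors spanning each summand in the $B_{n-1}$-decomposition $\mathbf{W}_{n,\ell} \simeq \bigoplus_{k=0}^\ell \mathbf{W}_{n-1,k}$. Since $\mathcal{U}$ commutes with the $B_{n-1}$-action on $\mathbf{V}^{\otimes n} = \mathbf{V} \otimes \mathbf{V}^{\otimes(n-1)}$, each summand $\mathbf{W}_{n-1,k}$ can be realized concretely by picking a highest weight vector $u_k \in \ker(E) \cap \mathbf{V}_{n-1,k}$ sitting in $\mathbf{V}^{\otimes(n-1)}$ and then building an $\mathcal{U}$-highest weight vector of weight $\ell$ in $\mathbf{V} \otimes \mathbf{V}^{\otimes(n-1)}$ of the form $w_k = \sum_{j\geq 0} c_j \, v_j \otimes F^{(a_j)} u_k$ where the coefficients $c_j$ are determined (up to scalar) by the requirement that $\Delta(E) w_k = 0$, using the coproduct formula $\Delta(E) = E \otimes K + 1 \otimes E$. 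The key point is that the summand $\mathbf{W}_{n-1,k}$ is recognized as the image of the $\mathcal{U}$-homomorphism built from this data.

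Next, I would apply $\sigma_1$, which acts as $R$ on the first two tensor factors, to the chosen vector $w \in \mathbf{W}_{n-1, \ell-k}$ and expand using the explicit formula
\[
R(v_i \otimes v_j) = s^{-i-j}\sum_{n\geq 0} q^{2(i-n)(j+n)+\frac{n(n-1)}{2}} {n+j \brack j}_q \prod_{t=0}^{n-1}(sq^{-t-j} - s^{-1}q^{t+j}) \, v_{j+n} \otimes v_{i-n}.
\]
The resulting vector lies in $\mathbf{W}_{n,\ell}$, and the task is to decompose it along the $B_{n-1}$-irreducible summands $\mathbf{W}_{n-1,\ell-r}$. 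Because $\mathcal{U}$ and $B_{n-1}$ commute, the projection onto each such summand is governed by the $\mathcal{U}$-module structure, and can be extracted by pairing $\sigma_1(w)$ against the highest weight vectors dual to the $w_{k'}$'s (or equivalently, by tracking the minimal weight $\mathcal{U}$-component of $\sigma_1(w)$ in each $\mathbf{V}_{n-1,k'}$ slot). The combinatorics of the $R$-matrix expansion then produces, for each $1 \leq r \leq k+1$, a specific rational function in $q, s$ as the coefficient of the $\mathbf{W}_{n-1, \ell - r}$-component; one checks that these coefficients are nonzero Laurent polynomials in $q, s$, so nonzero in $\mathbb{K}$.

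The main obstacle is the bookkeeping needed to show that \emph{exactly} the range $1 \leq r \leq k+1$ gives nonzero components. The upper bound $r \leq k+1$ should reflect that applying $R$ can shift the $\mathcal{U}$-weight type by at most one notch per summand jump, because the nontrivial terms of $R - \mathrm{swap}$ raise the first tensor index by at most the original degree of the highest weight vector in the $\textbf{V}$-direction, and the lower bound $r \geq 1$ should follow from the fact that $\sigma_1$ does not act as a scalar on the summand containing $w$ (which can be verified using the eigenvalue computation for $B_2$ acting on $\mathbf{W}_{2,m}$ from Lemma \ref{Eigenvalue computation}, since the scalars $(-1)^m q^{m(m-1)} s^{-2m}$ are distinct for distinct $m$). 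To rule out accidental cancellations in the intermediate range, the argument reduces to showing that a certain triangular system of coefficients, with diagonal entries built from nonvanishing $q$-binomials and the generically nonzero products $\prod (sq^{-t-j} - s^{-1} q^{t+j})$, has no spurious zero entries; this is a genericity check on $q, s$ together with a combinatorial identity in $q$-binomials, and is the technical heart of the computation.
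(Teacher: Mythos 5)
Note first that the paper does not actually prove this lemma: it is quoted verbatim from Jackson as \cite[Lemma 19]{jackson2011lawrence}, so there is no in-paper proof to compare against. (There is also a notational slip in the paper's statement: the summands in the $B_{n-1}$-decomposition of $\textbf{W}_{n,\ell}$ are $\textbf{W}_{n-1,\ell-r}$, not $\textbf{W}_{n,\ell-r}$; you have interpreted this correctly.)

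Your overall plan --- realize each $B_{n-1}$-summand $\textbf{W}_{n-1,k}$ of $\textbf{W}_{n,\ell}$ by an explicit $\mathcal{U}$-highest-weight vector $w_k$, apply $\sigma_1 = R \otimes \mathrm{id}$ on the first two tensor slots, and track the projections onto the other summands --- is the right framework and is essentially what Jackson does. But what you have written is a plan, not a proof, and the gap sits exactly where the content of the lemma lies. The two heuristic arguments you offer do not close it. The discussion of an ``upper bound $r \le k+1$'' is not needed at all: the lemma asserts only \emph{nonvanishing} in that window, not vanishing outside it, so proving anything about $r > k+1$ is wasted effort. More seriously, the argument you give for the ``lower bound'' --- that $\sigma_1$ is not scalar on the summand containing $w$, since the $B_2$-eigenvalues from Lemma~\ref{Eigenvalue computation} are distinct --- shows only that \emph{some} projection onto \emph{some} other summand is nonzero. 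It cannot yield nonvanishing of the projection onto every $\textbf{W}_{n-1,\ell-r}$ in the stated range; that requires the explicit coefficient expansion coming from the $R$-matrix formula and the particular form of the highest-weight vectors. You acknowledge this as ``the technical heart'' and describe it as a genericity check on a triangular system, but you neither exhibit that triangular system nor identify its diagonal entries, so the nonvanishing is asserted rather than proved. To turn this into a proof you would need to carry out the computation --- for each $r$ in the range, isolate a monomial in $q,s$ appearing in the corresponding coefficient with nonzero integer multiple --- or simply cite Jackson's Lemma~19, as the paper does.
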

We will also need the following lemma about parabolic subgroups of $\mathrm{GL}_n$. More specifically, it states that if $P \leq G$ is a parabolic subgroup of a reductive group $G$, and $U \lhd P$ is its unipotent radical, then $N_G(U) =P$. As we need this fact only for maximal parabolic subgroups of $\mathrm{GL}_n,$ we include a simple proof here.
\begin{lemma}
\label{Parabolic subgroup reducibility}
Let $U, V$ be vector spaces over an arbitrary field $k$. Denote $W = \mathrm{End}(U \oplus V)$. Let $\pi: W \to \mathrm{Hom}(V, U)$ be the natural projection, and let $W^+ \leq W$ denote the kernel of $\pi$.
    
Let $A \in \mathrm{GL}(U \oplus V)$. Assume that $A \cdot \mathrm{Hom}(U, V) \cdot A^{-1} \leq W^+$, where $\mathrm{Hom}(U, V) \leq W$ is the natural embedding. Then $A \in W^+$, and in particular $A$ preserves the subspace $U \leq U \oplus V$.
\end{lemma}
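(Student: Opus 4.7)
The plan is to prove this by a direct block matrix computation. Write elements of $W = \mathrm{End}(U \oplus V)$ in $2\times 2$ block form according to the decomposition, so that the embedded copy of $\mathrm{Hom}(U,V) \leq W$ consists of matrices with only a lower-left block, and $W^+$ consists of block lower-triangular matrices (zero upper-right block). In particular, the conclusion $A \in W^+$ is exactly the statement that $A$ preserves $U$. So I would write
\[
A = \begin{pmatrix} a & b \\ c & d \end{pmatrix}, \qquad A^{-1} = \begin{pmatrix} a' & b' \\ c' & d' \end{pmatrix}, \qquad f = \begin{pmatrix} 0 & 0 \\ f & 0 \end{pmatrix},
\]
with $b: V \to U$ and $b': V \to U$, and the goal becomes to show $b = 0$.

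Next I would compute $A f A^{-1}$ directly in blocks. A short calculation gives
\[
A f A^{-1} = \begin{pmatrix} b f a' & b f b' \\ d f a' & d f b' \end{pmatrix},
\]
so the upper-right block, which is $\pi(AfA^{-1})$, equals $b f b'$. The hypothesis therefore says that $b f b' = 0$ as a map $U \to U$, for every $f \in \mathrm{Hom}(U,V)$.

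From here it is a simple linear-algebra argument that $b = 0$ or $b' = 0$: if both were nonzero, pick $u \in U$ with $b'(u) \neq 0$ and $v \in V$ with $b(v) \neq 0$, then choose any $f: U \to V$ with $f(b'(u)) = v$, producing $bfb'(u) = b(v) \neq 0$, a contradiction. If $b = 0$ we are done. If instead $b' = 0$, then $A^{-1} \in W^+$, so $A^{-1}(U) \subseteq U$; since $A$ is invertible on the finite-dimensional space $U \oplus V$ (the lemma is applied inside $\mathrm{GL}_n$), this forces $A^{-1}(U) = U$ and hence $A(U) = U$, giving $A \in W^+$ and $b=0$ as well. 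There is no real obstacle here; the only subtle point is the finite-dimensionality comment needed to swap between $A$ and $A^{-1}$ preserving $U$, but this is automatic in the intended application.
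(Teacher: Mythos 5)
Your argument is correct and takes essentially the same route as the paper's proof: the paper works entry‑by‑entry in a basis and derives $A_{m,i}\,(A^{-1})_{j,k}=0$, concluding that either $A$ or $A^{-1}$ lies in $W^+$ before appealing to $W^+\cap\mathrm{GL}$ being a subgroup, while you run the identical computation in block form to get $bfb'=0$ for all $f$ and hence $b=0$ or $b'=0$, then close the loop with the same finite‑dimensionality/subgroup fact. One small slip: in your element‑chasing step the vector $u$ should be taken in $V$ rather than $U$ (so that $b'(u)\in U$, $f(b'(u))\in V$, and $bfb'\colon V\to U$), but this is a notational typo and does not affect the substance; your statement that ``$A\in W^+$ means $A$ preserves $U$'' mirrors the same $U\leftrightarrow V$ labelling imprecision already present in the lemma as stated.
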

\begin{proof}
We prove this claim working with coordinates. Denote $p = \mathrm{dim}(U), q = \mathrm{dim}(V)$. In a basis for $U \oplus V$, the matrices of $\mathrm{Hom}(U, V)$ are those supported on coordinates $(i, j)$ where $p + 1 \leq i \leq p + q$ and $1 \leq j \leq p$.

Fix $i, k \geq p + 1, j, m \leq p.$ Denote by $E_{i, j}$ the matrix with a unique $1$ coordinate at $(i, j)$. Then $E_{i, j} \in \mathrm{Hom}(U, V)$, so by our condition $$(A \cdot E_{i,j} \cdot A^{-1})_{m, k} = A_{m, i} \cdot (A^{-1})_{j, k} = 0.$$
As this is true for every choice of $i, j, k, m,$ either $A$ or $A^{-1}$ is in $W^+$. As $W^+ \cap \mathrm{GL}(U \oplus V) \leq \mathrm{GL}(U \oplus V)$ is a (parabolic) subgroup, if $A^{-1} \in W^+$, then $A \in W^+$.
\end{proof}
Lastly, the following graph-theoretic lemma will be used in the proof of Proposition \ref{Zariski density n=3}.
\begin{lemma}
\label{Graph Theoretic Lemma}
Let $\Gamma$ be a directed loop-less graph on $n$ vertices. Assume that this graph is transitive in the sense that if $\vec{uv}, \vec{vw} \in \vec{E}(\Gamma),$ for three distinct vertices $u,v,w \in V(\Gamma)$, then $\vec{uw} \in \vec{E}(\Gamma).$ Moreover, assume
\begin{enumerate}
    \item For every vertex $v \in V(\Gamma)$, the out-degree of $v$ is at least $\max\{1, n - 3\}.$
    \item For every vertex $v \in V(\Gamma)$, the in-degree of $v$ is at least $\max\{1, n - 4\}.$
    \item There is a vertex $u \in V(\Gamma)$ with out-degree at least $n - 2.$
\end{enumerate}
Then either $\Gamma$ is the complete directed graph on $n$ vertices, or $n$ is either $4$ or $5$ and there is a partition $V(\Gamma) = A \cup B$, where the induced subgraph on both $A$ and $B$ is the complete directed graph, and every vertex of $A$ is has an outgoing edge to every vertex of $B$. Moreover $|A|, |B| \geq 2$.
\end{lemma}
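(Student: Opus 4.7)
The plan is to analyze $\Gamma$ via the equivalence relation $\sim$ on $V(\Gamma)$ defined by $u\sim v$ iff $u=v$ or both $\vec{uv},\vec{vu}\in\vec{E}(\Gamma)$. The transitivity hypothesis makes each $\sim$-class a directed clique, and between any two distinct classes $C, C'$ exactly one of the following holds: every vertex of $C$ sends an edge to every vertex of $C'$ (write $C\succ C'$), the reverse $C'\succ C$, or there are no edges between them ($C\parallel C'$). Transitivity makes $V/\!\sim$ a poset, and the out-degree of any $v\in C$ equals $|C|-1+\sum_{C\succ C'}|C'|$ (with an analogous formula for in-degree). The three hypotheses then translate into upper bounds on the total sizes of classes lying above or incomparable to a given class: for $n\geq 4$, condition (1) reads ``non-out-neighbors of any vertex total at most $2$'', condition (2) reads ``non-in-neighbors total at most $3$'', and condition (3) supplies a class $C_0$ whose non-out-neighbors total at most~$1$.

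I would first dispatch the small cases $n\leq 3$ directly: the out- and in-degree lower bounds together with transitivity force every possible edge to exist, so $\Gamma$ is complete and the first alternative holds. Assume henceforth $n\geq 4$.

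The crux of the argument is to show that $C_0$ is the unique maximum of the poset. The only alternative permitted by (3) is that there is a single class $\tilde C$ of size $1$ that is above or incomparable to $C_0$. If $\tilde C\succ C_0$, a short transitivity check (any class above or incomparable to $\tilde C$ would also be above or incomparable to $C_0$, hence coincide with $\tilde C$) shows that $\tilde C$ must itself be the poset maximum; but then the vertex in $\tilde C$ has in-degree $0$, violating (2). If instead $\tilde C\parallel C_0$, a similar check rules out any class strictly above $\tilde C$, again giving in-degree $0$ at $\tilde C$, a contradiction.

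With $C_0$ the unique maximum, condition (2) applied at $C_0$ gives $n-|C_0|\leq 3$, so $|C_0|\geq n-3$, while condition (1) at any other class --- whose vertices have all of $C_0$ among their non-out-neighbors --- gives $|C_0|\leq 2$. Together these yield $n\leq 5$. Finally, if the poset had three classes $C_0\succ C_1,C_2$, the non-out-neighbor budget of a vertex in $C_1$ would already be saturated by $C_0$, forcing $C_1\succ C_2$; by symmetry $C_2\succ C_1$, which merges $C_1$ and $C_2$ --- a contradiction. Hence the poset consists of exactly two classes $C_0\succ C_1$, and taking $A=C_0$, $B=C_1$ produces the claimed structure with $|A|=2$ and $|B|=n-2\in\{2,3\}$. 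The most delicate part of the plan is the first step, where condition (3) supplies only a single class and one must carefully eliminate the two possibilities for that class failing to be the poset maximum; the remaining steps are extremal deductions from the numerical bounds.
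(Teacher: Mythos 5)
Your proof is correct and takes a genuinely different route from the paper's. The paper handles $n \geq 6$ with a one-shot neighborhood-intersection argument (if $\vec{uv}$ is absent, the out-neighborhood of $u$ and in-neighborhood of $v$ both live in the $(n-2)$-element set $V \setminus \{u,v\}$ and have total size $> n-2$, so they meet and transitivity supplies $\vec{uv}$), and then disposes of $n \leq 5$ by a somewhat informal case analysis built around short directed cycles. Your quotient-by-$\sim$ construction, turning the graph into a strict poset of directed cliques and translating the degree hypotheses into ``budget'' bounds on non-neighbors, replaces both halves with a single uniform extremal argument; it also explains structurally \emph{why} the exceptional configuration is exactly two cliques, one dominating the other, which the paper's cycle-chasing obscures. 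So this is a real improvement in clarity, not just a repackaging.

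One small wrinkle to tighten: you extract only $n - |C_0| \leq 3$ (equivalently $|C_0| \geq n-3$) from condition (2) at $C_0$, which is the binding part of $\max\{1, n-4\}$ only when $n \geq 5$. For $n = 4$ this gives merely $|C_0| \geq 1$, whereas your final saturation argument (``the budget of a vertex in $C_1$ is already used up by $C_0$'') and the conclusion $|A| = 2$ both need $|C_0| = 2$. The fix is one line: since $C_0$ is the unique maximum, the in-degree of a $C_0$-vertex is $|C_0| - 1 \geq \max\{1, n-4\} \geq 1$, so $|C_0| \geq 2$ for all $n$. You already invoke the ``in-degree $\geq 1$'' part of condition (2) when eliminating $\tilde{C}$, so the ingredient is on the table; you just need to apply it to $C_0$ as well before stating $|C_0| = 2$.
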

\begin{proof}
Assume first that $n \geq 6$. Fix a pair of vertices $u, v \in V(\Gamma)$. We wish to prove that $\vec{uv} \in \vec{E}(\Gamma).$ Assume by contradiction that this is not the case.
Then both the out-neighborhood of $u$, and the in-neighborhood of $v$ are subsets of $V(\Gamma) - \{u, v\}$ of sizes $n-3, n-4$ respectively. Since $(n - 3) + (n - 4) > n - 2$, these subsets must intersect, and then the transitivity property implies our claim.

We are left with the case $n \leq 5$. Note that if the graph contains a cycle of length $n - 1$ then every two vertices on the cycle are connected by edges in both directions. The remaining vertex must have an in-edge and an out-edge to vertices of this cycle, and therefore the graph is the complete directed graph.

Moreover, note that the graph must contain a cycle, as we may pick an arbitrary vertex and walk from it forward until the walk crosses itself. This completes the case of $n \leq 3$.

The remaining cases are of $n \in \{4, 5\}$.
If $n = 4$, then either the graph is the complete directed graph, or it contains a cycle of length $2$. The remaining two vertices have both incoming and outgoing edges. Either they form a second cycle, or both are connected in both directions to the cycle, in which case the graph is the complete directed graph.
So, we may assume that $\Gamma$ contains two disjoint cycles on $2$ vertices. In this case, as there is a vertex with out-degree at least $2$, this means that there is at least one edge between the two cycles. By transitivity, the required claim follows.

We are left with the case $n = 5$. In this case every vertex has at least $2$ outgoing edges. Assume firstly that the graph contains a cycle of length $3$. In this case, as before, the remaining $2$ vertices are either connected to the cycle in both directions, in which case we have a cycle of length at least $4$, or are on a cycle of length $2$. As the graph also has a vertex of out-degree at least $3$, the two cycles are connected in some direction and we are once again done by transitivity.

So, we may assume that our graph contains no cycle of length greater than $2$. Consider a maximal (directed) path in the graph. The last vertex of the path can only have out-going edges back to vertices of the path. However, as it has at least $2$ such edges, this gives a cycle of length at least $3$.
\end{proof}
We are now ready to prove Proposition \ref{Zariski density n=3}.
\begin{proof}[Proof of Proposition \ref{Zariski density n=3}]
Consider the Zariski-closure of $B_3$, $G \leq \mathrm{PSL}_{\ell + 1}(\bar{\mathbb{K}})$. Theorem \ref{Irreducible Decomposition of Bn-1} gives a decomposition of this representation into $B_2$-irreducible representations, or equivalently into the eigenspaces of $\sigma_2$, $\textbf{W}_{3,\ell} = \bigoplus_{k=0}^\ell \textbf{W}_{2, k}$. By Lemma \ref{Eigenvalue computation}, we obtain that these eigenvalues are all distinct, and moreover the quotients of pairs of these eigenvalues are also all distinct.
    
    Consider the lie algebra $\mathfrak{sl}(\textbf{W}_{3,\ell}) = D \bigoplus_{i \neq j} \textbf{W}_{2,i} \otimes \textbf{W}_{2,j}^\ast$, where $D \simeq \bar{\mathbb{K}}^{n-1}$ are the diagonal matrices of trace $0$. Let $\mathfrak{g}$ be the lie algebra of $G$. We wish to prove that $\mathfrak{g} = \mathfrak{sl}(\textbf{W}_{3,\ell})$. Firstly, seeing as $\sigma_2$ has eigenvalues that are not roots of unity, its set of power is indiscrete in the Zariski topology, and therefore $G$ contains a positive dimensional torus. Therefore $\dim(\mathfrak{g}) > 0$. Moreover, the adjoint action of $B_3$ on $\mathfrak{sl}(\textbf{W}_{3,\ell})$ factors through the action of $G$, and so preserves the sub-algebra $\mathfrak{g}$. Hence, $\mathfrak{g}$ is the direct sum of a subspace of $D$ and a subset of the subspaces $\textbf{W}_{2,i} \otimes \textbf{W}_{2,j}^\ast$.

    Assume firstly that $\mathfrak{g}$ contains some subspace $\textbf{W}_{2,i_0} \otimes \textbf{W}_{2,j_0}^\ast$ for some $i_0 \neq j_0$. In such a case we define a finite directed graph $\Gamma$ on the set of spaces $\textbf{W}_{2, i}$, with a directed edge from $\textbf{W}_{2, j}$ to $\textbf{W}_{2, i}$ if $\textbf{W}_{2, i} \otimes \textbf{W}_{2, j}^\ast \leq \mathfrak{g}$. By our assumption the graph $\Gamma$ is non-empty. Moreover, as $\mathfrak{g}$ is a lie algebra, this graph is transitive. Indeed this follows as the lie brackets $[E_{i,j}, E_{j,k}] = E_{i,k}$, for distinct $i, j, k.$ We will prove the remaining assumptions of Lemma \ref{Graph Theoretic Lemma}.
    
    Choose a pure tensor in $\textbf{W}_{2, i_0} \otimes \textbf{W}_{2, j_0}$. If we consider the adjoint action of $b \in B_3$ on such a matrix, we get a pure tensor in $\textbf{W}_{3, \ell} \otimes \textbf{W}_{3, \ell}^\ast.$ Therefore the set of pairs $(i, j)$ with non-trivial coordinate after the adjoint action is a product of two subsets. Since the trace of the matrix is $0$, the non-trivial coordinates cannot be only of the form $\textbf{W}_{2, k} \otimes \textbf{W}_{2, k}^\ast$ for some $k.$ In fact there cannot be exactly one non-zero diagonal coordinate.

    Fix $0 \leq i \leq \ell$. Using Theorem \ref{Jackson irreducibility} we may find a braid $b \in B_3$ such that the image of an arbitrary vector in $\textbf{W}_{2, i_0}$ has a non-zero $\textbf{W}_{2, i}$-coordinate. Applying this braid element to $\textbf{W}_{2, i_0} \otimes \textbf{W}_{2, j_0}^\ast$ we obtain that $\mathfrak{g}$ contains $\textbf{W}_{2, i} \otimes \textbf{W}_{2, j_1(i)}^\ast$ for some $j_1(i) \neq i.$ Indeed, if the only non-zero coordinate of the $\textbf{W}_{3,\ell}^\ast$-vector is $\textbf{W}_{2,i}^\ast$, then there must be at least one diagonal coordinate other than $\textbf{W}_{2, i} \otimes \textbf{W}_{2, i}^\ast$, say $\textbf{W}_{2, j_1(i)} \otimes \textbf{W}_{2, j_1(i)}^\ast$. As the element is a pure tensor the $\textbf{W}_{2, i} \otimes \textbf{W}_{2, j_1(i)}^\ast$-coordinate is also non-zero.
    
    Applying a similar argument, we see that for every $0 \leq j \leq \ell$, there is an $0 \leq i_1(j) \leq \ell$ such that $\textbf{W}_{2, i_1(j)} \otimes \textbf{W}_{2, j}^\ast \leq \mathfrak{g},$ and $i_1(j) \neq j.$ In particular, we see that the graph $\Gamma$ has both an incoming and outgoing edge out of every vertex.

    Now, using Lemma \ref{Mixing subspaces Jackson}, applying $\sigma_1$ to $\textbf{W}_{2, 1} \otimes \textbf{W}_{2, j_1(1)}^\ast$ we obtain that $\mathfrak{g}$ contains every subspace of the form $\textbf{W}_{2, i} \otimes \textbf{W}_{2, j_2}^\ast$ for some fixed $j_2$ and every $j_2 \neq i < \ell$. This means that every vertex of the graph $\Gamma$, with the exception of at most two, has an incoming edge from $\textbf{W}_{2, j_2}$. Similarly, applying $\sigma_1^{-1}$ to $\textbf{W}_{2, i_1(\ell - 1)} \otimes \textbf{W}_{2, \ell - 1}^\ast$ and remembering that the action on the dual will then be given by $\sigma_1^T$, we obtain that $\mathfrak{g}$ contains every subspace of the form $\textbf{W}_{2, i_2} \otimes \textbf{W}_{2, j}^\ast$ for some fixed $i_2$ and every $0 < j < \ell, j \neq i_2.$ That is, every vertex with the exception of at most three, has an outgoing edge to $\textbf{W}_{2, i_2}$.

    It follows that there is a subspace $V \leq \textbf{W}_{3, \ell}$ of codimension at most $2$ such that $V \otimes \textbf{W}_{2, j_2}^\ast \leq \mathfrak{g}$. Using Theorem \ref{Jackson irreducibility} once more, for every fixed $j$ we get that we can find a braid $b \in B_3$ such that the image of an arbitrary vector of $\textbf{W}_{2, j_2}^\ast$ has non-trivial $\textbf{W}_{2, j}^\ast$-coordinate. The subspace $b(V)$ is of codimension at most $2$, and therefore has vectors with non-zero $\textbf{W}_{2, i}$-coordinate for all but at most $2$ choices of $i$. Therefore, up to at most $3$ exceptions, $\textbf{W}_{2, i} \otimes \textbf{W}_{2, j}^\ast \leq \mathfrak{g}$ (possibly removing $i = j$). Reformulated in terms of the graph $\Gamma$, every vertex has outgoing edges to all vertices with at most $3$ exceptions.

    Similarly, we obtain that every vertex has incoming edges from all vertices with at most $4$ exceptions. This implies all of the conditions of Lemma \ref{Graph Theoretic Lemma}. If $\ell \neq 3,4$ this means that the graph $\Gamma$ is the complete directed graph. Using the fact that $[E_{i,j}, E_{j, i}] = E_{i, i} - E_{j, j}$, we see that $\mathfrak{g} = \mathfrak{sl}(\textbf{W}_{3, \ell}).$ This implies that $B_3$ is Zariski dense.

    For the remaining cases of $\ell \in \{3, 4\}$, no extra edge can be added to $\Gamma$ without implying that $\Gamma$ is the full directed graph. So, $\mathfrak{g}$ is a subalgebra of a maximal / parabolic Lie algebra, containing its unipotent radical. Using Lemma \ref{Parabolic subgroup reducibility}, we see that every matrix in $B_3$ keeps a subspace of $\textbf{W}_{3, \ell}$ invariant, in contradiction to Theorem \ref{Jackson irreducibility}.

    We are left with the case that $\mathfrak{g}$ is contained in the diagonal matrices. However, seeing as both $\sigma_1,\sigma_2$ have distinct eigenvalues, they are both contained in a unique maximal torus. Therefore these maximal tori must be equal. However, this means that the eigenvectors of $\sigma_1, \sigma_2$ are the same, in contradiction to the irreducibility of $W_{3,\ell}$.
\end{proof}
We now prove Theorem \ref{Knizhinik Zamolodichikov Zariski density} by induction on $n.$ This will be done using similar ideas to \cite{kuperberg2018tqft}. We will need use \cite[Theorem 7.4]{kuperberg2018tqft}:
\begin{theorem}
\label{lie algebra amplification}
    Let $V$ be a finite-dimensional irreducible representation of a connected algebraic group $G$. Let $H \leq G$ be a Zariski-closed, connected subgroup. Let $V|_H = \bigoplus_{k=1}^m W_k$ be the irreducible decomposition of the restriction to $H$. Assume that
\begin{enumerate}
\item $(W_k)_{k=1}^m$ is a family of pairwise non-isomorphic and non-dual projective representations of $H$.
\item For every $k$, $H$ surjects onto $\mathrm{PSL}(W_k)$.
\item At most one of the $W_i$ has dimension $1$ and at most one has dimension $2$.
\end{enumerate}
Then $G$ surjects onto $\mathrm{PSL}(V)$.
\end{theorem}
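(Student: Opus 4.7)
The plan is to pass to the Lie algebra level. Let $\mathfrak{g} = \mathrm{Lie}(G)$, $\mathfrak{h} = \mathrm{Lie}(H)$, and let $\mathfrak{g}' \subseteq \mathfrak{sl}(V) = \mathrm{Lie}(\mathrm{PSL}(V))$ be the image of $\mathfrak{g}$ under the differential of $G \to \mathrm{PSL}(V)$. Since both groups are connected, proving the desired surjection is equivalent to showing $\mathfrak{g}' = \mathfrak{sl}(V)$. Observe that $\mathfrak{g}'$ is an $\mathrm{Ad}_G$-stable Lie subalgebra of $\mathfrak{sl}(V)$, hence in particular $\mathrm{Ad}_H$-stable, and the irreducibility of $V$ as a $G$-representation translates into $\mathfrak{g}'$ acting irreducibly on $V$.

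Combining hypotheses (1) and (2) via a Goursat-type argument (using the simplicity of the factors $\mathrm{PSL}(W_k)$), the product map $H \to \prod_k \mathrm{PSL}(W_k)$ is surjective, so the image $\mathfrak{h}' \subseteq \mathfrak{sl}(V)$ contains the block-diagonal Levi subalgebra $\mathfrak{s} := \bigoplus_{k} \mathfrak{sl}(W_k)$. Decompose $\mathfrak{sl}(V)$ as an $\mathfrak{s}$-module using the direct sum $V = \bigoplus_k W_k$:
\[
\mathfrak{sl}(V) \;=\; \mathfrak{s} \;\oplus\; \mathfrak{a} \;\oplus\; \bigoplus_{i \neq j} \mathrm{Hom}(W_i, W_j),
\]
where $\mathfrak{a}$ is the $(m-1)$-dimensional abelian piece of traceless block-scalar diagonal matrices. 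Using hypothesis (3), each off-diagonal block $\mathrm{Hom}(W_i, W_j) \simeq W_j \otimes W_i^*$ is $\mathfrak{s}$-irreducible, and hypotheses (1) and (3) together ensure that the nontrivial irreducible summands in the decomposition above are pairwise non-isomorphic as $\mathfrak{s}$-modules.

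Define a directed graph $\Gamma$ on $\{1, \ldots, m\}$ with $i \to j$ iff $\mathrm{Hom}(W_i, W_j) \subseteq \mathfrak{g}'$. By the distinctness of the $\mathfrak{s}$-summands, $\mathfrak{g}'$ (as an $\mathfrak{s}$-submodule containing $\mathfrak{s}$) is completely determined by $\Gamma$ together with $\mathfrak{g}' \cap \mathfrak{a}$. The Lie-bracket inclusion $[\mathrm{Hom}(W_j, W_k), \mathrm{Hom}(W_i, W_j)] \subseteq \mathrm{Hom}(W_i, W_k)$ combined with irreducibility of the target block shows $\Gamma$ is transitive. On the other hand, any $\mathfrak{g}'$-invariant subspace of $V$ is in particular $\mathfrak{s}$-invariant, and by the distinctness of the $W_k$ as $\mathfrak{s}$-modules (from (1)) must be of the form $\bigoplus_{i \in S} W_i$; such a subspace is $\mathfrak{g}'$-invariant iff $S$ has no outgoing $\Gamma$-edge to its complement. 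Irreducibility of $V$ under $\mathfrak{g}'$ then forces $\Gamma$ to be strongly connected, and transitive plus strongly connected forces $\Gamma$ to be the complete directed graph. Hence all off-diagonal blocks lie in $\mathfrak{g}'$, and the commutators $[\mathrm{Hom}(W_i, W_j), \mathrm{Hom}(W_j, W_i)]$ produce traceless block-scalar elements spanning $\mathfrak{a}$, giving $\mathfrak{g}' = \mathfrak{sl}(V)$.

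The main obstacle is rigorously establishing the distinctness of the $\mathfrak{s}$-irreducible summands of $\mathfrak{sl}(V)$. Self-duality of the standard $\mathfrak{sl}_2$-representation forces $W_j \otimes W_i^* \simeq W_i \otimes W_j^*$ as $\mathfrak{s}$-modules exactly when $\dim W_i = \dim W_j = 2$, which is the phenomenon that the dimension-$2$ restriction in (3) is designed to rule out. One must also separately handle the edge cases where some $W_k$ has dimension $1$ (in which case $\mathfrak{sl}(W_k) = 0$ and hypothesis (2) is vacuous on that factor) and carefully check that projective non-isomorphism of the $W_k$'s as $H$-representations (hypothesis (1)) upgrades to non-isomorphism of the relevant $\mathfrak{s}$-modules — for instance to conclude that each $W_k$-isotypic component of $V|_{\mathfrak{s}}$ is $W_k$ itself. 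These small-dimensional case analyses, together with the Goursat reduction used to identify $\mathfrak{h}' \supseteq \mathfrak{s}$, form the delicate technical core of the argument.
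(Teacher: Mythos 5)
The paper itself does not prove this statement — it is imported verbatim as \cite[Theorem 7.4]{kuperberg2018tqft} and used as a black box — so there is no in-paper proof to compare against. Your Lie-algebra/graph strategy is almost certainly the one used there, but as written it has a genuine gap.

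The gap is in the claimed distinctness of the $\mathfrak{s}$-module summands of $\mathfrak{sl}(V)$. You assert that $W_j \otimes W_i^* \simeq W_i \otimes W_j^*$ as $\mathfrak{s}$-modules ``exactly when $\dim W_i = \dim W_j = 2$'', which (3) rules out. That is not correct: the isomorphism holds whenever both standard representations are self-dual over their own $\mathfrak{sl}(W_k)$ factor, i.e.\ whenever $\dim W_i \le 2$ and $\dim W_j \le 2$. Hypothesis (3) still permits one $W_i$ of dimension $1$ together with one $W_j$ of dimension $2$, and then $\mathrm{Hom}(W_i, W_j)$ and $\mathrm{Hom}(W_j, W_i)$ are \emph{both} the standard $2$-dimensional representation of the $\mathfrak{sl}(W_j) \simeq \mathfrak{sl}_2$ factor, with all other factors acting trivially. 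This is exactly the configuration the paper runs into: in the induction for Theorem \ref{Knizhinik Zamolodichikov Zariski density} at $n=4$, the $B_3$-summands $\textbf{W}_{3,k}$ of $\textbf{W}_{4,\ell}$ have dimensions $1, 2, 3, \dots, \ell+1$, so there is exactly one summand of dimension $1$ and one of dimension $2$. In that situation $\mathfrak{g}'$ is \emph{not} determined by the graph $\Gamma$ together with $\mathfrak{g}' \cap \mathfrak{a}$: a priori it could contain a diagonal $\mathfrak{s}$-submodule $D_\lambda \subseteq \mathrm{Hom}(W_i, W_j) \oplus \mathrm{Hom}(W_j, W_i)$ with $\lambda \neq 0$, and then both your transitivity step and your invariant-subspace step break as stated. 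The gap is fillable, but only by invoking the Lie structure of $\mathfrak{g}'$ beyond its $\mathfrak{s}$-module structure: one checks that $[D_\lambda, D_\lambda]$ has a nonzero component in $\mathfrak{a}$, and bracketing that $\mathfrak{a}$-element back against $D_\lambda$ produces $D_{-\lambda}$, which together with $D_\lambda$ (in characteristic $\neq 2$) spans the full $\mathrm{Hom}(W_i,W_j) \oplus \mathrm{Hom}(W_j,W_i)$ — so $\mathfrak{g}'$ cannot contain a proper diagonal. Your write-up asserts the distinctness outright and never appeals to bracket-closure to rule out the diagonal, so the argument is incomplete precisely in a case the paper needs.
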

Note that conditions $1, 3$ are immediately satisfied if the $W_k$ all have different dimensions, as will be our case.
\begin{proof}[Proof of Theorem \ref{Knizhinik Zamolodichikov Zariski density}]
The case $n = 2$ is trivial as the representations are $1$-dimensional. $n = 3$ is Proposition \ref{Zariski density n=3}. We assume $n \geq 4.$

By Theorem \ref{Irreducible Decomposition of Bn-1}, we get a decomposition into $B_{n-1}$-irreducible representations. The dimensions of these irreducible components are all distinct, and by the induction hypothesis, we are in the case of Theorem \ref{lie algebra amplification}. This finishes the proof.
\end{proof}
\subsection{Extension to $\mathrm{Aut}(F_2)$}
Knowing this Zariski density, we almost have our required finite simple quotients. We need to be able to extend our maps from $\mathrm{Aut}^+(F_2)$ to $\mathrm{Aut}(F_2)$. We do this using the following theorem:
\begin{theorem}
    The map $B_4 / Z(B_4) \to \mathrm{PGL}_{\binom{\ell + 2}{2}}(\mathbb{L})$ extends to a map $\mathrm{Aut}(F_2) \to \mathrm{Aut}(\mathrm{PSL}_{\binom{\ell + 2}{2}}(\mathbb{L}))$.
\end{theorem}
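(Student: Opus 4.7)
The plan is to use the semidirect-product decomposition $\mathrm{Aut}(F_2) \simeq (B_4/Z(B_4)) \rtimes_{\varphi} \mathbb{Z}/2$ from Lemma~\ref{semi-direct product AutF2}(1), where $\varphi$ acts on $B_4$ by $\sigma_i \mapsto \sigma_i^{-1}$. Writing $N = \binom{\ell+2}{2}$ and viewing the given map as $\rho : B_4/Z(B_4) \to \mathrm{PGL}_N(\mathbb{L}) \hookrightarrow \mathrm{Aut}(\mathrm{PSL}_N(\mathbb{L}))$, the extension will be constructed by producing a single involution $\tau \in \mathrm{Aut}(\mathrm{PSL}_N(\mathbb{L}))$ satisfying $\tau(\rho(\sigma_i)) = \rho(\sigma_i)^{-1}$ in $\mathrm{PGL}_N(\mathbb{L})$ for $i = 1, 2, 3$; sending $\varphi \mapsto \tau$ on the second factor of the semidirect product then extends $\rho$.

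I will take $\tau$ to be the inverse-transpose outer automorphism of $\mathrm{PSL}_N(\mathbb{L})$ twisted by an inner automorphism: $\tau(A) := M \cdot A^{-T} \cdot M^{-1}$ for a suitable $M \in \mathrm{GL}_N(\mathbb{L})$. If $B$ is the bilinear form on $\textbf{W}_{4, \ell}$ with Gram matrix $M$, a direct calculation shows that $\tau(\rho(\sigma_i)) = \rho(\sigma_i)^{-1}$ in $\mathrm{PGL}_N$ exactly when each $\rho(\sigma_i)$ is self-adjoint with respect to $B$ up to a common scalar (which is absorbed by projectivization). Moreover, given such a $B$, the irreducibility of $\textbf{W}_{4, \ell}$ as a $B_4$-module (Theorem~\ref{Jackson irreducibility}) together with Schur's lemma forces $M^T = \pm M$ up to scalar, making $\tau$ an involution automatically.

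The bilinear form $B$ will be constructed from the self-duality of the Verma module $\textbf{V}$ of $\mathcal{U}_q(\mathfrak{sl}_2)$. The Cartan-type anti-involution $\omega$ of $\mathcal{U}_q(\mathfrak{sl}_2)$ that swaps $E$ and $F$ and fixes $K$ yields a canonical non-degenerate contravariant pairing on $\textbf{V}$ under which $E$ and $F$ are mutually adjoint. Extending this pairing multiplicatively (with weight-dependent twists accounting for the coproduct of $\omega$) to $\textbf{V}^{\otimes 4}$ makes the $R$-matrix, and hence each $\rho(\sigma_i)$, self-adjoint up to scalar. Restricting to the highest-weight subspace $\textbf{W}_{4, \ell}$, the resulting pairing is either zero or non-degenerate by irreducibility; non-triviality is ensured using the direct-sum decomposition of $\textbf{V}_{4, \ell}$ supplied by Theorem~\ref{Vn,l irreducible decomposition}.

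The main obstacle will be verifying the self-adjointness of the $R$-matrix with respect to the contravariant pairing up to scalar. This amounts to a direct but somewhat intricate computation from the explicit formula for $R$ recorded in Section~7, using the $K$-action on the weight decomposition of $\textbf{V}^{\otimes 2}$ to track the twist introduced by the coproduct of $\omega$. Once this identity is established, projectivization to $\mathrm{PGL}_N(\mathbb{L})$ cleans up all remaining scalar discrepancies and produces the required involution $\tau$.
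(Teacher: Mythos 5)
Your proposal is correct in overall structure — it uses the same semidirect-product reduction from Lemma~\ref{semi-direct product AutF2} and, like the paper, builds the extension out of an involution of the form $A \mapsto M\,(A^T)^{-1}M^{-1}$ — but you construct $M$ along a genuinely different route. The paper works with the \emph{hermitian} $\ast$-structure $E^\ast = F$, $F^\ast = E$, $K^\ast = K^{-1}$, which is anti-linear over the field automorphism $q\mapsto q^{-1}$, $s\mapsto s^{-1}$. The resulting form does \emph{not} make the $\sigma_i$ self-adjoint; instead Corollary~\ref{preserved hermitian form} shows $\sigma_i^\ast\sigma_{4-i}=1$ (there is an index flip), and this is then combined with the separate computation of Proposition~\ref{Inverse conjugate to bar}, that $D_nT_n$ conjugates $\sigma_i^{-1}$ to $\overline{\sigma_{n+1-i}}$; the two flips and the two field twists cancel to give $J\sigma_i^TJ^{-1}=\sigma_i$ with $J=H(L^{-1})^T$. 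You propose instead to use the linear Cartan anti-involution $\omega(E)=F$, $\omega(F)=E$, $\omega(K)=K$ and to obtain $J\sigma_i^TJ^{-1}=\sigma_i$ directly, with no flip or Galois twist. This is a cleaner target, and the rest of your argument (Schur's lemma forcing $J^T=\pm J$ up to scalar, $\varphi^2=\mathrm{id}$ on $\mathrm{PSL}$, non-degeneracy on $\textbf{W}_{4,\ell}$ via the multiplicity-free decomposition of Theorem~\ref{Vn,l irreducible decomposition}) mirrors the paper exactly and is fine.

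The place where your outline hides real work is the assertion that, after a ``weight-dependent twist,'' the $R$-matrix becomes self-adjoint on $\textbf{V}^{\otimes 2}$ and that the twist then extends consistently to $\textbf{V}^{\otimes 4}$. Two cautions here. First, the naive product form $(v_{a_1}\otimes\cdots, v_{b_1}\otimes\cdots)=\prod_i(v_{a_i},v_{b_i})$ is \emph{not} contravariant for $\Delta$ when $\omega$ only intertwines $\Delta$ with $\Delta^{\mathrm{op}}$; a twist is genuinely needed, and it is not a tensor product of per-factor twists. Working it out from the explicit $R$-matrix formula in Section~7, one finds that the diagonal Gram matrix on $\textbf{V}^{\otimes 4}$ must carry a cross-term factor $q^{-2\sum_{i<j}a_ia_j}$ in addition to per-factor corrections $\bigl([a_i]_q!\,\prod_{k=0}^{a_i-1}(sq^{-k}-s^{-1}q^k)\bigr)^{-1}$; the cross term is essential for the change in the Gram entry to be independent of the coordinates not touched by a given $\sigma_i$. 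With that specific twist the ratio $R_{(i,j),(a,b)}/R_{(a,b),(i,j)}$ does factor correctly and your claim goes through, so the approach is sound — but it is not a soft consequence of $\omega$ being a Cartan anti-involution, and the calculation is of comparable weight to the paper's proofs of Corollary~\ref{preserved hermitian form} and Proposition~\ref{Inverse conjugate to bar}. Second, note that the paper's choice of the hermitian form is not incidental: it is reused in Section~9 (Remark on $\mathrm{PSU}$ quotients), so the two approaches are not interchangeable for the paper's broader aims even though both prove this theorem.
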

For the proof of this theorem, we will need a certain skew-linear non-degenerate form on the representations of a quantum group, see \cite{wenzl1998C}. For the convenience of the reader we include the details with our notations here.

Consider the order $2$ automorphism of $\mathbb{K}$ given by $q \mapsto q^{-1}, s\mapsto s^{-1}$, which preserves $\mathbb{L}$. Denote this automorphism by $f \mapsto \bar{f}$. Under this automorphism, $\mathcal{U}_q(\mathfrak{sl}_2)$ has a structure of a Hopf-$\ast$ algebra, given by the anti-linear anti-algebra homomorphism $E^\ast = F, F^\ast = E, K^\ast = K^{-1}$, i.e. it is extended by the properties $\forall a,b \in \mathcal{U}_q(\mathfrak{sl}_2),\ (ab)^\ast=b^\ast a^\ast$ and $\forall f \in \mathbb{K}, a \in \mathcal{U}_q(\mathfrak{sl}_2)\ (fa)^\ast = \bar{f}a^\ast$
We now define an hermitian form on $\textbf{V}$ as in \cite[Lemma 2.2]{wenzl1998C}
\begin{lemma}
    There is a unique skew-linear form $(\ ,\ )$ on $\textbf{V}$ that is $\mathcal{U}_q(\mathfrak{sl}_2)$-invariant, i.e. $(av_0,bv_0)=(b^\ast av_0, v_0) = (v_0, a^\ast b v_0)$ and normalized, i.e. $(v_0, v_0) = 1$.
\end{lemma}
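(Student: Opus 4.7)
The plan rests on the fact that $\textbf{V}$ is cyclic as a $\mathcal{U}_q(\mathfrak{sl}_2)$-module, generated by the highest weight vector $v_0$: the formulas in the excerpt give $F^{(n)} v_0 = c_n v_n$ with $c_n := \prod_{k=0}^{n-1}(sq^{-k} - s^{-1}q^k)$, so $v_n = c_n^{-1} F^{(n)} v_0$ after extending scalars to the fraction field $\mathbb{K}$. Combined with the invariance $(av_0, bv_0) = (v_0, a^\ast b v_0)$, this means any such form is entirely determined by the values $(v_0, a v_0)$ for $a \in \mathcal{U}_q(\mathfrak{sl}_2)$, which is the key reduction.

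For uniqueness, I would first check that $(v_0, v_k) = 0$ for every $k \geq 1$: writing $v_k = c_k^{-1} F^{(k)} v_0$ and applying invariance gives $(v_0, v_k) = c_k^{-1}\bigl((F^{(k)})^\ast v_0, v_0\bigr)$, and since $\ast$ is an anti-linear anti-homomorphism with $F^\ast = E$, unwinding $F^{(k)} = \frac{(q-q^{-1})^k}{[k]_q!} F^k$ yields $(F^{(k)})^\ast = (-1)^k \frac{(q-q^{-1})^k}{[k]_q!} E^k$, which annihilates $v_0$ because $E v_0 = 0$. Applied more generally, invariance expresses $(v_m, v_n)$ as a scalar multiple of $(v_0, E^m F^{(n)} v_0) = c_n (v_0, E^m v_n)$, which is proportional to $(v_0, v_{n-m})$ when $m \leq n$ and vanishes otherwise. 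Together with the previous observation this forces $(v_m, v_n) = 0$ for $m \neq n$, and determines $(v_n, v_n)$ uniquely from the normalization $(v_0, v_0) = 1$.

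For existence, I would take the explicit diagonal formula extracted above as the definition of the form and extend skew-linearly. Since $\mathcal{U}_q(\mathfrak{sl}_2)$ is generated by $K^{\pm 1}, E, F$ and $\ast$ is an anti-homomorphism, verifying $(av, w) = (v, a^\ast w)$ for every $a \in \mathcal{U}_q(\mathfrak{sl}_2)$ reduces to checking it for $a \in \{K, E, F\}$ on basis pairs $(v_m, v_n)$. The $K$-case is automatic, because the $v_j$ are $K$-eigenvectors with pairwise distinct eigenvalues $s q^{-2j}$ and the form is diagonal. The cases $a = E$ and $a = F$ are mutually equivalent (the invariance relations obtained from one imply the other upon swapping the roles of $v$ and $w$) and both collapse to the single recursion
\[
(v_m, v_m) \;=\; \frac{[m+1]_q(s q^{-m} - s^{-1} q^m)}{q - q^{-1}}\,(v_{m+1}, v_{m+1}),
\]
which follows from the telescoping identity $c_{n+1} = c_n(s q^{-n} - s^{-1} q^n)$.

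The main (rather mild) obstacle is this final recursive consistency check, a short but careful computation in $q$-numbers and the scalars $c_n$; the genuine conceptual content is cyclicity of $\textbf{V}$ over $v_0$ combined with $E v_0 = 0$, which together rigidly pin down the form on basis vectors.
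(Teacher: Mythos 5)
Your proof is correct and follows essentially the same approach as the paper: reduce uniqueness to the values $(v_0, av_0)$ using the invariance and $Ev_0 = 0$, extract the orthogonality and the diagonal entries via $F^{(n)}v_0 = c_n v_n$, and verify existence by checking the invariance on the generators $K, E, F$. Your version is slightly more systematic in explicitly invoking cyclicity of $\textbf{V}$ over $v_0$ and in noting that the $E$ and $F$ checks reduce to a single recursion, but the underlying computations are the same as in the paper.
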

\begin{proof}
    Under such a product for every $n > 0$, $(v_0, F^{(n)}v_0) = ((F^{(n)})^\ast v_0,v_0)=\overline{\frac{(q-q^{-1})^n}{[n]_q!}}(E^nv_0, v_0) = 0$, and so $(v_0, v_n) = 0$. A similar argument will show that $v_n$ is an orthogonal basis. Moreover,
    $$(F^{(n)}v_0, v_n) = \frac{[n]_q!}{(q-q^{-1})^n}(v_0, E^nv_n) = \frac{[n]_q!}{(q-q^{-1})^n} (v_0, v_0) = \frac{[n]_q!}{(q-q^{-1})^n},$$and so $(v_n, v_n)$ is uniquely defined, and given by
    $$(v_n, v_n) = ({n \brack 0}_q \prod_{k=0}^{n-1} (sq^{-k} - s^{-1}q^{k}))^{-1} (F^{(n)}v_0, v_n) = \prod_{k=0}^{n-1} (sq^{-k} - s^{-1}q^{k})^{-1} \frac{(q-q^{-1})^n}{[n]_q!}(F^nv_0, v_n) = $$
    $$\prod_{k=0}^{n-1} (sq^{-k} - s^{-1}q^{k})^{-1} \frac{(q-q^{-1})^n}{[n]_q!} (v_0, E^n v_n) = \prod_{k=0}^{n-1} (sq^{-k} - s^{-1}q^{k})^{-1} \frac{(q-q^{-1})^n}{[n]_q!}$$
    To prove existence, it suffices by linearity to prove that the form defined by these equations satisfies for $a \in \{E, F, K\}$ and $n, m\geq 0$, $(v_n, av_m) = (a^\ast v_n, v_m)$.
    If $a=K$ and $n\neq m$ both sides are zero. If $n = m$, then $$(v_n, Kv_n) = (v_n, sq^{-2n}v_n) = \overline{(sq^{-2n})}(v_n, v_n) = s^{-1}q^{2n}(v_n, v_n) = (K^{-1}v_n, v_n).$$If $a = E$ and $n + 1 \neq m$ then both sides are zero. If $n + 1 = m$, then
    $$(Fv_n, v_{n+1}) = (q-q^{-1})^{-1}(F^{(1)}v_n, v_{n+1}) = $$
    $$(q-q^{-1})^{-1}{n + 1 \brack n}_q (sq^{-n} - s^{-1}q^n) (v_{n+1}, v_{n+1}) = (v_n, v_n) = (v_n, Ev_{n+1}).$$and the argument for $a = F$ is the same.
\end{proof}
Note that the computation in the proof also implies that this form is non-degenerate.
We can now extend this form to $\textbf{V}^{\otimes n}$ (denoted by $( , )_p$ in \cite{wenzl1998C}) by $(v_{k_1} \otimes \dots \otimes v_{k_n}, v_{m_1} \otimes \dots \otimes v_{m_n})_p = \prod_{i=1}^n (v_{k_i}, v_{m_i})$. Then \cite[Lemma 1.4.1 part (e)]{wenzl1998C} gives in our notation, when $T_2$ denotes the flip of the two coordinates on $\textbf{V}^{\otimes 2}$:
\begin{lemma}
    For $u,v \in \textbf{V}^{\otimes 2}$, $(u, Rv)_p = (T_2R^{-1}T_2u, v)_p$, or equivalently $(T_2Ru, Rv)_p = (T_2u, v)_p.$
\end{lemma}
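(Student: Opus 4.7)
The plan is to recognize the statement as the operator adjoint identity $R^\dagger = T_2 R^{-1} T_2$ (where $\dagger$ denotes adjoint with respect to $(\cdot,\cdot)_p$) and to reduce it to the standard reality condition on the universal $R$-matrix $\reflectbox{R}\in\mathcal{U}\tilde\otimes\mathcal{U}$. Since $(\cdot,\cdot)_p$ is defined on pure tensors as a product of copies of $(\cdot,\cdot)$, the coordinate flip $T_2$ is self-adjoint; writing $R = T_2\circ \reflectbox{R}$ and unwinding adjoints and inverses gives $R^\dagger = \reflectbox{R}^\dagger T_2$ and $T_2 R^{-1} T_2 = T_2 \reflectbox{R}^{-1}$, so the target identity becomes the operator identity $\reflectbox{R}^\dagger = T_2\reflectbox{R}^{-1}T_2$ on $\textbf{V}\otimes\textbf{V}$.

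To compute $\reflectbox{R}^\dagger$ I would first extend the previous lemma's invariance $(au,v) = (u,a^\ast v)$ for $a\in\mathcal{U}$ to the tensor product form, obtaining $((a\otimes b)u,v)_p = (u,(a^\ast\otimes b^\ast)v)_p$ for $a,b\in\mathcal{U}$ (a one-line check on pure tensors). Summing termwise --- valid since only finitely many summands in $\reflectbox{R}$ act nontrivially on any given pair of weight spaces $\textbf{V}_{1,i}\otimes\textbf{V}_{1,j}$ --- the adjoint on $\textbf{V}^{\otimes 2}$ of the action of any $X\in\mathcal{U}\tilde\otimes\mathcal{U}$ equals the action of $X^{\ast\otimes\ast}$, where $\ast$ is applied componentwise. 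Similarly, conjugation of the operator by $T_2$ corresponds at the algebra level to the flip $X\mapsto X_{21}$. These two observations reduce the operator identity to the purely algebraic identity $\reflectbox{R}^{\ast\otimes\ast} = (\reflectbox{R}_{21})^{-1}$, equivalently $\reflectbox{R}^{\ast\otimes\ast}\cdot\reflectbox{R}_{21} = 1$, in $\mathcal{U}\tilde\otimes\mathcal{U}$.

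The remaining and main step is to verify this reality condition on the universal $R$-matrix of $\mathcal{U}_q(\mathfrak{sl}_2)$ for the $\ast$-structure $K^\ast=K^{-1},\ E^\ast=F,\ F^\ast=E$ together with $\bar q=q^{-1},\bar s=s^{-1}$. The natural approach is to write $\reflectbox{R}$ in its standard factored form as a diagonal prefactor (built from $K\otimes K$, with an $s$-adjustment) times $\sum_{n\geq 0}q^{n(n-1)/2}(q-q^{-1})^n E^{(n)}\otimes F^{(n)}$, apply $\ast\otimes\ast$ termwise using $(E^{(n)})^\ast=F^{(n)}$ and $(F^{(n)})^\ast=E^{(n)}$, swap tensor factors, and match the result against the analogous factored expression for $\reflectbox{R}^{-1}$. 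I expect the main obstacle to be tracking the scalar prefactors from the diagonal $K\otimes K$-part through the application of $\ast$ and the flip and verifying they cancel against those of $\reflectbox{R}^{-1}$; this is essentially the ``unitarity'' of $\reflectbox{R}$ for the compact real form $\ast$-structure, and is precisely the content of \cite[Lemma 1.4.1(e)]{wenzl1998C} in the same notation, which one may alternatively just quote.
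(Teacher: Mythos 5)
Your reduction is sound, but note that the paper itself supplies no proof: the lemma is stated purely as a translation of \cite[Lemma 1.4.1(e)]{wenzl1998C} into the paper's notation, so your closing remark that one may simply quote Wenzl \emph{is} the paper's entire argument. What your sketch adds is a correct route to Wenzl's identity from scratch: $T_2$ is self-adjoint for the product form $(\cdot,\cdot)_p$, the $\ast$-invariance of $(\cdot,\cdot)$ passes termwise to $((a\otimes b)u,v)_p=(u,(a^\ast\otimes b^\ast)v)_p$, conjugation by $T_2$ implements the flip $X\mapsto X_{21}$, and writing $R=T_2\circ\reflectbox{R}$ collapses the lemma to the universal identity $\reflectbox{R}^{\ast\otimes\ast}=(\reflectbox{R}_{21})^{-1}$ in $\mathcal{U}\tilde\otimes\mathcal{U}$. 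For the final verification, a few bookkeeping warnings. The paper's integral form $\mathcal{U}$ contains $E$ but no divided powers $E^{(n)}$, so the lower triangular part of $\reflectbox{R}$ in the paper's normalizations is $\sum_n q^{n(n-1)/2}\,E^n\otimes F^{(n)}$ rather than the expression $\sum_n q^{n(n-1)/2}(q-q^{-1})^nE^{(n)}\otimes F^{(n)}$ you wrote. Because $\bar q = q^{-1}$ sends $(q-q^{-1})^n$ to $(-1)^n(q-q^{-1})^n$, the $\ast$ of $F^{(n)}$ picks up alternating signs that must cancel against matching signs in $\reflectbox{R}^{-1}$. And for the generic-weight Verma module $\textbf{V}$ the diagonal ``$K\otimes K$'' prefactor is not literally an element of $\mathcal{U}\otimes\mathcal{U}$; it is only a diagonal operator on weight spaces, visible as the $s^{-i-j}q^{2(i-n)(j+n)}$ factor in the paper's explicit formula for $R$. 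None of this is a gap in your reduction, but the verification step is more computation than the sketch suggests, and it is exactly the work the paper avoids by outsourcing the lemma to Wenzl.
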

Consider the map $T_4: \textbf{V}^{\otimes 4} \to \textbf{V}^{\otimes 4}$ given by $v_a \otimes v_b \otimes v_c \otimes v_d \mapsto v_d \otimes v_c \otimes v_b \otimes v_a$. It follows from the previous lemma
\begin{corollary}
For $u, v \in \textbf{V}^{\otimes 4}$
$$(T_4\sigma_1 u, \sigma_3 v)_p = (T_4u, v)_p, (T_4\sigma_2u, \sigma_2v)_p = (T_4u, v)_p, (T_4\sigma_3u, \sigma_1v)_p = (T_4u, v)_p.$$
\end{corollary}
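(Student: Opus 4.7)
The plan is to reduce each of the three identities to the two-tensor identity $(T_2 R w, R z)_p = (T_2 w, z)_p$ from the preceding lemma, by carefully tracking how the local action of $\sigma_i$ on two adjacent tensor slots interacts with the coordinate-reversing map $T_4$ inside the hermitian form. By skew-linearity of $(\,,\,)_p$ in the first variable and linearity in the second, together with linearity of $T_4$ and of each $\sigma_i$, it suffices to verify the three identities on pure tensors.

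First, I would expand both sides on pure tensors $u = u_1 \otimes u_2 \otimes u_3 \otimes u_4$ and $v = v_1 \otimes v_2 \otimes v_3 \otimes v_4$. Since $T_4$ reverses coordinates, the product form evaluates as
\[
(T_4 u, v)_p = (u_4, v_1)(u_3, v_2)(u_2, v_3)(u_1, v_4),
\]
so position $i$ of $u$ is paired with position $5-i$ of $v$. The key observation is that the positions modified by $\sigma_i$ on $u$ and by $\sigma_{4-i}$ on $v$ are exactly the positions that $T_4$ pairs against each other: $\sigma_1$ moves slots $(1,2)$ of $u$, which pair with slots $(4,3)$ of $v$, i.e.\ exactly the slots moved by $\sigma_3$; symmetrically $\sigma_3 u$ interacts with $\sigma_1 v$; and $\sigma_2$ moves slots $(2,3)$ of both $u$ and $v$, which are swapped by $T_4$ against each other.

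Next, for each of the three identities I would isolate the two factors of the product that are actually altered and rewrite them as $(T_2 R w, R z)_p$ for suitable pure tensors $w,z \in \textbf{V}\otimes\textbf{V}$, then invoke the lemma to replace them by $(T_2 w, z)_p$. Concretely, for $(T_4 \sigma_1 u, \sigma_3 v)_p$ take $w = u_1 \otimes u_2$ and $z = v_3 \otimes v_4$, so that the two altered factors collapse to $(u_2, v_3)(u_1, v_4)$ while the other two factors $(u_4, v_1)(u_3, v_2)$ are untouched. For $(T_4 \sigma_2 u, \sigma_2 v)_p$ take $w = u_2 \otimes u_3$ and $z = v_2 \otimes v_3$, recovering $(u_3, v_2)(u_2, v_3)$ while $(u_4, v_1)(u_1, v_4)$ are untouched. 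For $(T_4 \sigma_3 u, \sigma_1 v)_p$ take $w = u_3 \otimes u_4$ and $z = v_1 \otimes v_2$, recovering $(u_4, v_1)(u_3, v_2)$ while $(u_2, v_3)(u_1, v_4)$ are untouched. In all three cases the product reassembles as $(T_4 u, v)_p$.

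I do not expect any genuine obstacle: once the pairing $(T_4 u, v)_p$ is expanded and one notes that $T_4$ intertwines slots $\{1,2\}$ of $u$ with slots $\{3,4\}$ of $v$ (and vice versa) while fixing the middle pair, each identity is a direct application of the two-tensor lemma in a single place, with the remaining two scalar factors acting as inert spectators. The only mild care needed is to ensure that the ordering conventions of $T_2$ match the slot-ordering of the restriction of $T_4$ to the relevant pair, which is immediate from the definitions.
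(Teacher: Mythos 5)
Your proof is correct and takes essentially the same approach as the paper: expand $(T_4 u, v)_p$ on pure tensors, isolate the two slots that $\sigma_i$ and $\sigma_{4-i}$ act on (which $T_4$ pairs together with a local reversal matching $T_2$), and apply the two-tensor lemma $(T_2 R w, R z)_p = (T_2 w, z)_p$ to that pair while the other two scalar factors are untouched. The paper writes out only the $\sigma_1$/$\sigma_3$ case explicitly, but the computation is the one you describe.
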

\begin{proof}
    We compute
    $$(T_4\sigma_1 (v_{a_1} \otimes v_{a_2} \otimes v_{a_3} \otimes v_{a_4}), \sigma_3 (v_{b_1} \otimes v_{b_2} \otimes v_{b_3} \otimes v_{b_4}))_p =$$
    $$(T_4(R(v_{a_1} \otimes v_{a_2}) \otimes v_{a_3} \otimes v_{a_4}), v_{b_1} \otimes v_{b_2} \otimes R(v_{b_3} \otimes v_{b_4}))_p =$$
    $$(v_{a_4} \otimes v_{a_3} \otimes T_2(R(v_{a_1} \otimes v_{a_2})), v_{b_1} \otimes v_{b_2} \otimes R(v_{b_3} \otimes v_{b_4}))_p =$$
    $$(v_{a_4},v_{b_1})(v_{a_3},v_{b_2})(T_2(R(v_{a_1} \otimes v_{a_2})), R(v_{b_3} \otimes v_{b_4}))_p = $$
    $$(v_{a_4},v_{b_1})(v_{a_3},v_{b_2})(T_2(v_{a_1} \otimes v_{a_2}), v_{b_3} \otimes v_{b_4})_p =$$$$(v_{a_4},v_{b_1})(v_{a_3},v_{b_2})(v_{a_2}, v_{b_3}) (v_{a_1}, v_{b_4}) =$$
    $$(T_4(v_{a_1} \otimes v_{a_2} \otimes v_{a_3} \otimes v_{a_4}), v_{b_1} \otimes v_{b_2} \otimes v_{b_3} \otimes v_{b_4})_p$$
\end{proof}
Considering the skew-linear form on $\textbf{V}^{\otimes 4}$ given by $(u, v) = (T_4u, v)_p$, we have shown that relative to this inner product, $\sigma_1^\ast \sigma_3 = \sigma_2^\ast \sigma_2 = \sigma_3^\ast \sigma_1 = 1$. Moreover, note that under this hermitian form, the spaces $\textbf{V}_{4, \ell}$ are mutually orthogonal, hence the adjoint properties are also true on $\textbf{V}_{4, \ell}$. Since by Theorem \ref{Vn,l irreducible decomposition} $\textbf{V}_{4, \ell}$ is a direct sum of non-isomorphic irreducible representations, it follows that after taking the dual we get a new decomposition into irreducible preserved $B_4$-subspaces. Therefore the decompositions are the same, so the hermitian form also restricts to $\textbf{W}_{4, \ell}.$
\begin{corollary}
\label{preserved hermitian form}
Under the skew-linear product on $\textbf{V}^{\otimes 4}$ given by $(u, v) = (T_4u, v)_p$ the subspace $\textbf{W}_{4,\ell}$ is self-dual. Moreover, $\sigma_1^\ast \sigma_3 = \sigma_2^\ast \sigma_2 = \sigma_3^\ast \sigma_1 = 1.$
\end{corollary}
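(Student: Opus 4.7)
My plan is to deduce both assertions from the preceding corollary together with a Schur-type orthogonality argument based on the multiplicity-free decomposition of Theorem \ref{Vn,l irreducible decomposition}. The adjoint identities are essentially a rewriting: unfolding the definition $(u, v) = (T_4 u, v)_p$, the identity $(T_4 \sigma_1 u, \sigma_3 v)_p = (T_4 u, v)_p$ from the previous corollary becomes $(\sigma_1 u, \sigma_3 v) = (u, v)$, or equivalently $(u, \sigma_1^\ast \sigma_3 v) = (u, v)$, and non-degeneracy of $(,)$ (inherited from the explicit non-vanishing of $(v_n, v_n)$ under $(,)_p$, via invertibility of $T_4$) then yields $\sigma_1^\ast \sigma_3 = 1$. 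The other two identities are entirely analogous.

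For self-duality of $\textbf{W}_{4, \ell}$, I would first check that the weight spaces $\textbf{V}_{4, m}$ are pairwise orthogonal under $(,)$. On pure basis tensors $(,)_p$ factors as $\prod_i (v_{k_i}, v_{m_i})$ and each factor vanishes unless $k_i = m_i$, so the product vanishes whenever $\sum k_i \neq \sum m_i$; since $T_4$ permutes coordinates and hence preserves weight, the same orthogonality passes to $(,)$. In particular $(,)$ restricts to a non-degenerate skew-linear form on each $\textbf{V}_{4, \ell}$.

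The final step is a Schur-type argument. Since the adjoints $\sigma_1^\ast, \sigma_2^\ast, \sigma_3^\ast$ equal $\sigma_3^{-1}, \sigma_2^{-1}, \sigma_1^{-1}$ respectively, and the map $\sigma_i \mapsto \sigma_i^\ast$ is an automorphism of $B_4$ (of the kind considered in Lemma \ref{semi-direct product AutF2}, obtained by composing $\varphi$ and $\psi$), the adjoints generate the same subgroup of $\mathrm{End}(\textbf{V}^{\otimes 4})$ as the original braid generators. Consequently, for every $B_4$-subrepresentation $W \leq \textbf{V}_{4, \ell}$, the orthogonal complement $W^\perp$ is again $B_4$-stable. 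Applying this to $W = \textbf{W}_{4, \ell}$, the decomposition $\textbf{V}_{4, \ell} = \bigoplus_{k=0}^\ell \textbf{W}_{4, \ell - k}$ from Theorem \ref{Vn,l irreducible decomposition} together with the pairwise distinct dimensions $\dim \textbf{W}_{4, k} = \binom{k+2}{2}$ from Theorem \ref{Jackson irreducibility} forces $\textbf{W}_{4, \ell}^\perp$ to be a direct sum of components $\textbf{W}_{4, k}$ with $k < \ell$, and in particular not to contain $\textbf{W}_{4, \ell}$. Irreducibility then gives $\textbf{W}_{4, \ell} \cap \textbf{W}_{4, \ell}^\perp = 0$, and combined with non-degeneracy on $\textbf{V}_{4, \ell}$ this proves $(,)$ restricts to a non-degenerate form on $\textbf{W}_{4, \ell}$, i.e., $\textbf{W}_{4, \ell}$ is self-dual. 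The only mild obstacle is the Schur-type step of recognizing $W^\perp$ as a sub-representation, which rests on the simple verification that $\sigma_i \mapsto \sigma_i^\ast$ preserves the braid and commutation relations and is therefore an automorphism of $B_4$.
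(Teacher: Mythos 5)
Your handling of the adjoint identities $\sigma_1^\ast\sigma_3 = \sigma_2^\ast\sigma_2 = \sigma_3^\ast\sigma_1 = 1$ is correct and matches the paper's intent: it is indeed just unfolding the definition of $(\, ,\, )$ in the previous corollary and invoking non-degeneracy. The orthogonality of the weight spaces $\textbf{V}_{4,m}$ and the observation that $W^\perp$ is $B_4$-stable when $W$ is (since $\sigma_i^\ast = \sigma_{4-i}^{-1}$ stabilizes any $B_4$-subrepresentation) are also correct; this is the paper's argument in slightly different clothing.

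The gap is in the step where you claim that the pairwise distinct dimensions of the $\textbf{W}_{4,k}$ \emph{force} $\textbf{W}_{4,\ell}^\perp$ to be $\bigoplus_{k<\ell}\textbf{W}_{4,k}$. What the multiplicity-free decomposition with distinct dimensions does force is that $\textbf{W}_{4,\ell}^\perp$ is the direct sum of \emph{some} subset of the components, and that its dimension is $\dim\textbf{V}_{4,\ell}-\dim\textbf{W}_{4,\ell} = \sum_{k<\ell}\binom{k+2}{2}$. But this dimension can coincide with that of a different subset of components. Take $\ell = 3$: then $\binom{2}{2}+\binom{3}{2}+\binom{4}{2} = 1+3+6 = 10 = \binom{5}{2} = \dim\textbf{W}_{4,3}$, so on dimension grounds alone $\textbf{W}_{4,3}^\perp$ could equally well be $\textbf{W}_{4,3}$ itself (i.e., $\textbf{W}_{4,3}$ could be totally isotropic). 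A similar coincidence happens for $\ell = 5$. So the ``in particular not to contain $\textbf{W}_{4,\ell}$'' conclusion does not follow from what you have written; you would need an additional argument (for instance, computing all the $\textbf{W}_{4,k}^\perp$ simultaneously and using consistency, or arguing directly via Schur's lemma that the pairing $\textbf{W}_{4,k}\times\textbf{W}_{4,m}\to\bar{\mathbb{K}}$ gives a twisted semilinear $B_4$-intertwiner $\textbf{W}_{4,k}\to\textbf{W}_{4,m}^\ast$ that must vanish for $k\ne m$ on dimension grounds).

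The paper avoids this subtlety by arguing at the level of the dual decomposition rather than orthogonal complements: the form induces a semilinear $B_4$-isomorphism $\textbf{V}_{4,\ell}\to\textbf{V}_{4,\ell}^\ast$ (after twisting by the inner automorphism $\sigma_i\mapsto\sigma_{4-i}$), and since such an isomorphism sends an irreducible summand to one of the same dimension and there is exactly one summand of each dimension on either side, it must match $\textbf{W}_{4,k}$ with $\textbf{W}_{4,k}^\ast$. This directly yields the mutual orthogonality of the summands, and hence non-degeneracy of the restriction to $\textbf{W}_{4,\ell}$, without any subset-sum considerations.
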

Let $D_2: \textbf{V}^{\otimes 2} \to \textbf{V}^{\otimes 2}$ denote the linear map $v_m \otimes v_k \mapsto q^{m(m+1) + k(k+1)} v_m \otimes v_k$, and let $T_2: \textbf{V}^{\otimes 2} \to \textbf{V}^{\otimes 2}$ be the linear map $v_m \otimes v_k \mapsto v_k \otimes v_m$. Note that $D_2,T_2$ commute.
\begin{lemma}
$D_2T_2R^{-1}T_2^{-1}D_2^{-1} = \overline{R}$, or equivalently stated $D_2 \reflectbox{R}^{-1}D_2^{-1} = \overline{\reflectbox{R}}$.
\end{lemma}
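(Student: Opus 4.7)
The plan is to factor $\reflectbox{R} = \mathcal{K}\,\Theta$ into a diagonal Cartan part and a quasi-$R$-matrix part, and then to reduce the claim to Lusztig's bar-invariance identity $\overline{\Theta} = \Theta^{-1}$ for the quasi-$R$-matrix. Set
\[
\mathcal{K}(v_m \otimes v_k) \,=\, s^{-m-k}q^{2mk}\,v_m \otimes v_k, \qquad \Theta \,=\, \sum_{n \geq 0}\frac{q^{n(n-1)/2}(q-q^{-1})^n}{[n]_q!}\,E^n \otimes F^n.
\]
A direct unpacking using $E^n v_i = v_{i-n}$ and the given action of $F^{(n)}$ on $v_j$ verifies that $\mathcal{K}\,\Theta$ reproduces the explicit formula for $\reflectbox{R}$.

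Two observations drive the reduction. First, $\overline{\mathcal{K}} = \mathcal{K}^{-1}$, since the bar involution sends $s^{-m-k}q^{2mk}$ to $s^{m+k}q^{-2mk}$. Second, using $m(m+1) + k(k+1) + 2mk = (m+k)(m+k+1)$, one finds
\[
(\mathcal{K}\,D_2)(v_m \otimes v_k) \,=\, s^{-(m+k)}\,q^{(m+k)(m+k+1)}\,v_m \otimes v_k,
\]
which depends only on the total weight $m+k$. Hence $\mathcal{K}\,D_2$ is a scalar on each weight subspace $\textbf{V}_{2,\ell}$ and commutes with every weight-preserving operator, in particular with $\Theta^{\pm 1}$.

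Since $\mathcal{K}$ and $D_2$ commute (both are diagonal in $\{v_m \otimes v_k\}$), one obtains
\[
D_2\,\reflectbox{R}^{-1}\,D_2^{-1} \,=\, D_2\,\Theta^{-1}\,D_2^{-1}\cdot\mathcal{K}^{-1}, \qquad \overline{\reflectbox{R}} \,=\, \mathcal{K}^{-1}\,\overline{\Theta}.
\]
Cancelling the common right factor $\mathcal{K}^{-1}$ and then pulling $\mathcal{K}\,D_2$ through $\Theta^{-1}$ via the second observation, the claim reduces precisely to $\overline{\Theta} = \Theta^{-1}$.

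The main obstacle is establishing this last identity. It is Lusztig's bar-invariance of the quasi-$R$-matrix, a standard result in the theory of quantum groups. It can also be verified directly in the present setting by expanding $\overline{\Theta}\,\Theta$ as a power series in $E \otimes F$: the coefficient of $E^N \otimes F^N$ for $N \geq 1$ reduces to $q^{N(N-1)/2}\sum_{m=0}^N(-1)^m q^{-m(N-1)}{N \brack m}_q$, which vanishes as a $q$-analogue of $\sum_m (-1)^m \binom{N}{m} = 0$. Once this identity is in hand, the remainder of the argument is purely formal.
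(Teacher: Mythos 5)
Your proof is correct, and it takes a genuinely different route from the paper's. The paper proves the identity by a direct computation on basis vectors: it applies $\overline{\reflectbox{R}}D_2\reflectbox{R}D_2^{-1}$ to $v_i\otimes v_j$, collects terms, and reduces the vanishing of all non-trivial coefficients to the single $q$-binomial identity $\sum_{m}(-1)^m q^{m(1-t)}{t\brack m}_q = 0$. You instead factor $\reflectbox{R} = \mathcal{K}\Theta$ into the Cartan piece and the quasi-$R$-matrix, observe that $\overline{\mathcal{K}}=\mathcal{K}^{-1}$ and that the product $\mathcal{K}D_2$ is scalar on each total-weight subspace (hence commutes with the weight-preserving $\Theta^{\pm 1}$), and thereby reduce the lemma to $\overline{\Theta}=\Theta^{-1}$. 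This is conceptually cleaner: it explains why $D_2$ is exactly the right twist (it completes the square $2mk+m(m+1)+k(k+1)=(m+k)(m+k+1)$, turning $\mathcal{K}D_2$ into a central weight-scalar) and isolates the essential content as the standard bar-invariance of $\Theta$. In the end the two proofs converge, since your own direct verification of $\overline{\Theta}\Theta=1$ reduces to exactly the same $q$-binomial identity as the paper's. One small phrasing glitch: after substituting $\reflectbox{R}^{-1}=\Theta^{-1}\mathcal{K}^{-1}$ and $\overline{\reflectbox{R}}=\mathcal{K}^{-1}\overline{\Theta}$ one cannot literally cancel $\mathcal{K}^{-1}$ as a common right factor since it sits on opposite sides; what works is $D_2\Theta^{-1}D_2^{-1}\mathcal{K}^{-1}=D_2\Theta^{-1}(\mathcal{K}D_2)^{-1}=D_2(\mathcal{K}D_2)^{-1}\Theta^{-1}=\mathcal{K}^{-1}\Theta^{-1}$, after which the $\mathcal{K}^{-1}$'s match on the left and the claim becomes $\Theta^{-1}=\overline{\Theta}$. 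This is exactly what your commutativity observation delivers, so the argument is sound once stated carefully.
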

\begin{proof}
    We wish to prove that $\overline{\reflectbox{R}}D_2 \reflectbox{R}D_2^{-1} = 1$. It suffices to prove this on elements of the form $v_i \otimes v_j$. Applying $\reflectbox{R}D_2^{-1}$, we get
    $$q^{- i(i+1) - j(j+1)} s^{-i-j}\sum_n q^{2(i-n)(j+n)} q^{\frac{n(n-1)}{2}}{ j + n \brack j}_q \prod_{k=0}^{n-1} (sq^{-k-j} - s^{-1}q^{k+j})  v_{i-n} \otimes v_{j+n}$$
    Applying $D_2$ again we get
    $$q^{- i(i+1) - j(j+1)} s^{-i-j}\sum_n q^{2(i-n)(j+n)} q^{\frac{n(n-1)}{2}} q^{(i-n)(i-n+1) + (j+n)(j+n+1)} \cdot $$
    $$\left({ j + n \brack j}_q \prod_{k=0}^{n-1} (sq^{-k-j} - s^{-1}q^{k+j})  v_{i-n} \otimes v_{j+n}\right) = $$
    $$s^{-i-j} \sum_n q^{2ij}q^{\frac{n(n-1)}{2}} {j + n \brack j}_q \prod_{k=0}^{n-1} (sq^{-k-j} - s^{-1}q^{k+j}) v_{i-n} \otimes v_{j + n}.$$
    Applying $\overline{\reflectbox{R}}$, we get
    $$\sum_n q^{2ij} q^{\frac{n(n-1)}{2}} {j + n \brack j}_q \cdot\left(\prod_{k=0}^{n-1} (sq^{-k-j} - s^{-1}q^{k+j})\right)\cdot \sum_m q^{-2(i-n-m)(j+n+m)} q^{-\frac{m(m-1)}{2}} \cdot$$
    $$\left({j + n + m \brack j + n}_{q^{-1}} \prod_{r=0}^{m - 1} (s^{-1}q^{j + n + r} - sq^{-j -n -r}) v_{i-n-m} \otimes v_{j+n+m}\right) = $$
    $$\sum_{n,m} q^{2ij} q^{\frac{n(n-1)}{2}}  q^{-2(i-n-m)(j+n+m)} q^{-\frac{m(m-1)}{2}} \cdot $$
    $$\left(\frac{[j + n + m]_q!}{[j]_q![n]_q![m]_q!} (-1)^m \left(\prod_{k=0}^{n + m -1} (sq^{-k-j} - s^{-1}q^{k+j})\right) v_{i-n-m} \otimes v_{j+n+m}\right) = $$
    Note that for $n=m=0$ we get simply $v_i \otimes v_j$.
    For every $t \geq 1$, we wish to prove that
    $$\sum_{n + m = t} q^{2ij} q^{\frac{n(n-1)}{2}} q^{-2(i-n-m)(j+n+m)} q^{-\frac{m(m-1)}{2}} \frac{[j+n+m]_q!}{[j]_q![n]_q![m]_q!} (-1)^m = 0.$$
    Equivalently
    $$\sum_{n + m = t} (-1)^m q^{2ij} q^{\frac{n(n-1)}{2}} q^{-2(i-t)(j+t)} q^{-\frac{m(m-1)}{2}} {t \brack m}_q = 0$$
    and again equivalently
    $$\sum_{n + m = t} (-1)^m q^{\frac{(t- m)(t - m - 1)}{2}} q^{-\frac{m(m -1)}{2}} {t \brack m}_q = 0$$
    or,
    $$\sum_{m} (-1)^m q^{m(1-t)} {t \brack m}_q = 0.$$
    It is in fact true in greater generality that
    $$\sum_m {t \brack m}_q x^m = \prod_{k=0}^{t-1} (x + q^{1 - t + 2k})$$
    and in particular for $x = -q^{1 - t}$ we get the required equality.
    We prove the last identity by induction on $t$. It is clear for both $t = 0, 1$. We now need to prove that
    $$\sum_m {t + 2 \brack m}_q x^m = (\sum_m {t \brack m}_q x^m)(x + q^{-t - 1})(x + q^{t + 1})$$
or equivalently ${t + 2 \brack m}_q = {t \brack m}_q + {t \brack m - 1}_q(q^{-t-1} + q^{t + 1}) + {t \brack m - 2}_q$
This follows from the $q$-binomial identity
${t + 1 \brack m}_q = q^m {t \brack m}_q + q^{m - t - 1}{t \brack m - 1}_q = q^{-m}{t \brack m}_q + q^{t + 1 - m}{t \brack m - 1}_q$, and so
$${t + 2 \brack m}_q = q^m{t + 1 \brack m}_q + q^{m - t - 2}{t + 1 \brack m - 1}_q = {t \brack m}_q + q^{t + 1}{t \brack m - 1} + q^{-t - 1}{t \brack m - 1}_q + {t \brack m - 2}_q.$$
\end{proof}
Assuming this lemma, consider the maps $D_n: \textbf{V}^{\otimes n} \to \textbf{V}^{\otimes n}$, given by $v_{a_1} \otimes \dots \otimes v_{a_n} \mapsto q^{\sum_{i=1}^n a_i(a_i+1)} v_{a_1} \otimes \dots \otimes v_{a_n}$, and let $T_n : \textbf{V}^{\otimes n} \to \textbf{V}^{\otimes n}$ be the map sending $v_{a_1} \otimes \dots \otimes v_{a_n} \mapsto v_{a_n} \otimes \dots \otimes v_{a_1}$. Note that $D_n,T_n$ commute. Then
\begin{proposition}
\label{Inverse conjugate to bar}
    $D_nT_n\sigma_i^{-1}T_n^{-1}D_n^{-1} = \overline{\sigma_{n + 1 -i}}$
\end{proposition}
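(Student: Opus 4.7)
The plan is to reduce the general $n$ identity to the $n=2$ case $D_2T_2R^{-1}T_2^{-1}D_2^{-1}=\overline{R}$ just established, by working locally at the two active tensor positions.

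The proof rests on three structural observations. The braid generator $\sigma_i$ acts on $\textbf{V}^{\otimes n}$ as $R$ placed on positions $i,i+1$ (identity elsewhere). The operator $D_n$ factorizes as $D_1^{\otimes n}$, where $D_1v_a = q^{a(a+1)}v_a$, so that in particular $D_2 = D_1\otimes D_1$; since $D_n$ is diagonal in the standard tensor basis it commutes with the reversal $T_n$. Finally, conjugation of a local two-position operator by $T_n$ has two effects: it relabels the two active positions via $j\mapsto n+1-j$, and it interchanges the two tensor factors of the operator, because $T_n$ is applied on both sides.

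With these facts in hand the computation is short. The operator $T_n\sigma_i^{-1}T_n^{-1}$ is $R^{-1}$ with its first and second tensor factors sitting respectively at positions $n+1-i$ and $n-i$. Rewritten in the convention that the first tensor factor occupies the smaller of the two indices, this is $(T_2R^{-1}T_2^{-1})$ placed at positions $(n-i,n+1-i)$. Conjugating by $D_n = D_1^{\otimes n}$ affects only these two positions, and amounts to conjugating the local operator there by $D_2$; this yields $(D_2T_2R^{-1}T_2^{-1}D_2^{-1})$ placed at positions $(n-i,n+1-i)$. Invoking the $n=2$ lemma collapses this to $\overline{R}$ placed at positions $(n-i,n+1-i)$, and since the bar acts coefficient-wise in the standard basis it commutes with the placement operation. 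The right-hand side is thus $\overline{R_{(n-i,n+1-i)}}$, which is the desired bar-conjugated braid generator.

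The $n=2$ lemma having already been verified by direct computation, what remains is essentially bookkeeping. The main potential obstacle is keeping track of which factor of $R^{-1}$ ends up at which of the two reversed positions, and recognizing that precisely this interchange is what the $T_2$-conjugation in the $n=2$ lemma is designed to absorb. Once the indexing is set up consistently, no further calculation is needed.
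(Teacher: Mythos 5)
Your proof is correct and takes essentially the same route as the paper's: both reduce the claim to the $n=2$ identity $D_2T_2R^{-1}T_2^{-1}D_2^{-1}=\overline{R}$ by conjugating to move the active pair of positions, with you phrasing this abstractly via local-operator placement while the paper writes out the element-wise computation (and only for $i=1$, leaving general $i$ implicit). One thing worth flagging: your conclusion, $\overline{R}$ sitting at positions $(n-i,\,n+1-i)$, is $\overline{\sigma_{n-i}}$ in the convention that $\sigma_j$ acts on positions $j,j+1$; the proposition's stated $\overline{\sigma_{n+1-i}}$ (and the proof's closing $\overline{\sigma_n}$ for $i=1$) are an off-by-one slip in the paper, since $\sigma_n$ does not exist in $B_n$, and the corrected index $\overline{\sigma_{n-i}}$ is precisely what Corollary \ref{Extension of representation to AutF2} uses for $n=4$, namely $\overline{\sigma_{4-i}}$.
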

\begin{proof}
    $$D_nT_n\sigma_1^{-1}T_n^{-1}D_n^{-1}(v_{a_1} \otimes \dots \otimes v_{a_n}) = q^{-\sum a_i(a_i + 1)} D_nT_n\sigma_1^{-1}(v_{a_n} \otimes \dots \otimes v_{a_1}) =$$
    $$q^{-\sum a_i(a_i + 1)} D_nT_n(R^{-1}(v_{a_n} \otimes v_{a_{n - 1}}) \otimes v_{a_{n-2}} \otimes \dots \otimes v_{a_1}) =$$
    $$ q^{-\sum a_i(a_i + 1)} D_n(v_{a_1} \otimes \dots \otimes v_{a_{n-2}} \otimes T_2R^{-1}(v_{a_n} \otimes v_{a_{n-1}}))=$$
    $$q^{-\sum_{i \leq n - 2} a_i(a_i + 1)}D_n(v_{a_1} \otimes \dots \otimes v_{a_{n-2}} \otimes T_2R^{-1}T_2^{-1}D_2^{-1}(v_{a_{n-1}} \otimes v_{a_n})) =$$
    $$q^{-\sum_{i \leq n - 2} a_i(a_i + 1)}D_n(v_{a_1} \otimes \dots \otimes v_{a_{n-2}} \otimes D_2^{-1}\overline{R}(v_{a_{n-1}} \otimes v_{a_n})) =$$
    Note that $q^{-\sum_{i \leq n - 2} a_i(a_i + 1)}D_n(v_{a_1} \otimes \dots \otimes v_{a_{n - 2}} \otimes  D_2^{-1}(v_m \otimes v_k) ) = v_{a_1} \otimes \dots \otimes v_{a_{n - 2}} \otimes v_m \otimes v_k $, and therefore the last term is simply $v_{a_1} \otimes \dots \otimes v_{a_{n-2}} \otimes \overline{R}(v_{a_{n-1}} \otimes v_{a_n}) = \overline{\sigma_n}(v_{a_1} \otimes \dots \otimes v_{a_n})$.
\end{proof}
Note that the hermitian form in Corollary \ref{preserved hermitian form} has a corresponding bilinear form, and we may take transpose with respect to this bilinear form. Now using Corollary \ref{preserved hermitian form} and Lemma \ref{Inverse conjugate to bar} we obtain:
\begin{corollary}
\label{Extension of representation to AutF2}
There is a matrix $J \in \mathrm{PSL}(\textbf{V}_{4,\ell})$ such that $J\sigma_i^TJ^{-1} = \sigma_i$.
Therefore the automorphism $\varphi$ of $\mathrm{PSL}(\textbf{V}_{4,\ell})$ given by $A \mapsto J(A^T)^{-1}J^{-1}$ satisfies $\varphi \sigma_i \varphi^{-1} = \sigma_i^{-1}$. 
Moreover, $\varphi$ preserves $\mathrm{PSL}(\textbf{W}_{4, \ell})$, and $\varphi|_{\mathrm{PSL}(\textbf{W}_{4, \ell})}$ is an involution.
\end{corollary}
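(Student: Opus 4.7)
The plan is to combine Corollary~\ref{preserved hermitian form} with Proposition~\ref{Inverse conjugate to bar} to produce a genuinely $\mathbb{K}$-bilinear (rather than sesquilinear) form on $\textbf{V}^{\otimes 4}$ with respect to which every $\sigma_i$ is \emph{self-adjoint}. The key point is that self-adjointness of $\sigma_i$ with respect to a non-degenerate bilinear form $B$ is equivalent, in matrix form, to the existence of a $J$ (the Gram matrix of $B$, or its inverse) conjugating $\sigma_i^T$ to $\sigma_i$.

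First I would let $\hat{H}$ denote the Gram matrix of $H$ in the pure-tensor basis, and rewrite the Hermitian-invariance of Corollary~\ref{preserved hermitian form} in matrix form as $\sigma_i^T \hat{H}\, \overline{\sigma_{4-i}} = \hat{H}$. Substituting $\overline{\sigma_{4-i}} = D_4 T_4 \sigma_i^{-1} T_4^{-1} D_4^{-1}$ from Proposition~\ref{Inverse conjugate to bar} and setting $\hat{B} := \hat{H} D_4 T_4$, this identity collapses to the self-adjointness relation $\sigma_i^T \hat{B} = \hat{B}\sigma_i$ for the bilinear form $B(u,v) := u^T \hat{B} v$. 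A short direct computation --- using that $\hat{H} = T_4 M_0$ for a diagonal $M_0$, and that both $M_0$ and $D_4$ are diagonal with entries invariant under the reversal $T_4$ --- actually shows that $\hat{B}$ simplifies all the way down to the diagonal matrix $M_0 D_4$; symmetry and non-degeneracy of $B$ on $\textbf{V}^{\otimes 4}$ (hence on the weight subspace $\textbf{V}_{4,\ell}$) are then immediate.

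The step I expect to be the main obstacle is showing that the restriction of $B$ to $\textbf{W}_{4,\ell}$ remains non-degenerate --- neither $D_4$ nor $T_4$ individually preserves $\textbf{W}_{4,\ell}$, so naive restriction of the construction is not available. I would handle this via Schur's lemma applied to the decomposition $\textbf{V}_{4,\ell} = \bigoplus_{k=0}^{\ell}\textbf{W}_{4,\ell-k}$ from Theorem~\ref{Vn,l irreducible decomposition}: the self-adjointness of every $\sigma_i$ makes the map $\textbf{V}_{4,\ell}\to\textbf{V}_{4,\ell}^*$ induced by $B$ a $B_4$-equivariant map to the contragredient representation. Since the summands have pairwise distinct dimensions $\binom{\ell-k+2}{2}$, Schur's lemma forces this intertwiner to be block-diagonal, and global non-degeneracy of $B$ on $\textbf{V}_{4,\ell}$ then forces non-degeneracy on each block, in particular on $\textbf{W}_{4,\ell}$. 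Taking $J$ to be (the inverse of) the Gram matrix of $B|_{\textbf{V}_{4,\ell}}$ gives the required element of $\mathrm{PSL}(\textbf{V}_{4,\ell})$, and block-diagonality ensures that $J$ preserves $\textbf{W}_{4,\ell}$.

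The final conclusions are then immediate: $\varphi(\sigma_i) = (J\sigma_i^T J^{-1})^{-1} = \sigma_i^{-1}$ by construction, and $\varphi^2$ acts by conjugation by $JJ^{-T}$, which is the identity on $\mathrm{PSL}$ since $J$ is symmetric. Both statements descend from $\mathrm{PSL}(\textbf{V}_{4,\ell})$ to $\mathrm{PSL}(\textbf{W}_{4,\ell})$ via the block-diagonal structure of $J$.
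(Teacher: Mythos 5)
Your proof follows essentially the same route as the paper's: both combine Corollary~\ref{preserved hermitian form} with Proposition~\ref{Inverse conjugate to bar} to produce a matrix $J$ conjugating $\sigma_i^T$ to $\sigma_i$, and both use Schur's lemma together with the pairwise distinct dimensions of the summands in $\textbf{V}_{4,\ell}=\bigoplus_k\textbf{W}_{4,\ell-k}$ to get block-orthogonality and hence non-degeneracy of the induced bilinear form on $\textbf{W}_{4,\ell}$. The one genuinely different step is your treatment of the involution. You push the computation further and observe that $\hat{H}=T_4M_0$ with $M_0$ diagonal, so $\hat{B}=\hat{H}D_4T_4=T_4M_0D_4T_4=M_0D_4$ is diagonal (using that $M_0D_4$ commutes with $T_4$); hence $J$ is symmetric, $JJ^{-T}=1$, and $\varphi^2=\mathrm{id}$. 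The paper instead argues that $\varphi^2$ fixes the Zariski-dense image of $B_4$ in $\mathrm{PSL}(\textbf{W}_{4,\ell})$, therefore fixes everything, so is the identity. Your version is more elementary --- it avoids the appeal to Zariski density/irreducibility and actually gives $\varphi^2=1$ on all of $\mathrm{PSL}(\textbf{V}_{4,\ell})$, not only on the restriction to $\mathrm{PSL}(\textbf{W}_{4,\ell})$. (As a side remark: your matrix bookkeeping, which makes the convention ``skew-linear in the first argument, linear in the second'' explicit, is internally consistent, whereas the paper's line ``$\overline{\sigma_{4-i}}^T=\sigma_{4-i}^*=H^{-1}\sigma_i^{-1}H$'' glosses over the fact that the form is not the standard Hermitian form; the explicit form $\hat B=M_0D_4$ makes everything transparent.)
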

\begin{proof}
By Corollary \ref{preserved hermitian form}, if we let $H$ denote the matrix of the inner product, then $\sigma_i H \sigma_{4-i}^\ast = H$, therefore $\overline{\sigma_{4-i}}^T = \sigma_{4-i}^\ast = H^{-1}\sigma_i^{-1}H$. Moreover, by Lemma \ref{Inverse conjugate to bar}, there is a matrix $L$ such that $\overline{\sigma_{4-i}} = L\sigma_i^{-1}L^{-1}$. Therefore $(L\sigma_i^{-1}L^{-1})^T = H^{-1}\sigma_i^{-1} H$, or equivalently $(L^{-1})^T\sigma_i^TL^T = H^{-1}\sigma_i H$. So, if $J = H(L^{-1})^T$, then $J \sigma_i^T J^{-1} = \sigma_i.$

Denote $\varphi(A) = J(A^T)^{-1}J^{-1}.$ Then $\varphi(\sigma_i) = J (\sigma_i^T)^{-1} J^{-1} = \sigma_i^{-1}.$

Now, it follows that $\sigma_1, \sigma_2, \sigma_3$ are symmetric with respect to the bilinear form given by $\langle x,y\rangle  = (x, J^{-1}y)$, since $$\langle\sigma_ix, y\rangle =(\sigma_ix, J^{-1}y) = (x, \sigma_i^TJ^{-1}y) = (x, J^{-1}\sigma_i y) = \langle x, \sigma_iy \rangle.$$Using the fact that the irreducible components of the $B_4$ representation on $\textbf{V}_{4, \ell}$ have different dimensions, it follows that each of the preserved subspaces is self-dual under this bilinear form. Indeed, it suffices to prove that every two of the subspaces are orthogonal with respect to the bilinear form. Ordering the subspaces by increasing order of their dimensions, the orthogonal subspace to the smallest dimensional subspace is also preserved, and is therefore a direct sum of a subset of the spaces. The only subset whose direct sum has the correct dimension is the complementary set of subspaces, and therefore the minimal dimensional subspace is orthogonal to the remaining spaces. Continuing this argument by order of the spaces, we get that every two of the spaces are orthogonal, and each of the spaces are self-dual.

Finally, $\varphi^2$ is an automorphism of $\mathrm{PSL}(W_{n,\ell})$ commuting with the $B_4$-action, and therefore commuting with every matrix in $\mathrm{PSL}(W_{n,\ell})$. Therefore, $\varphi^2$ is the identity.
\end{proof}
\begin{remark}
    The map $\sigma_i \mapsto \sigma_i^{-1}$ yields an outer automorphism of $B_n$ for every $n$. The proof in this section in fact gives an extension of the $B_n$-representations to this index-$2$ super-group for every $n$. However for our required application the case $n = 4$ is sufficient.
\end{remark}
\subsection{Proof of Theorem \ref{Large rank quotients}}
We may now prove Theorem \ref{Large rank quotients} using the strong approximation theorem of Weisfeller \cite{weisfeiler1984strong} and Pink \cite{pink2000strong}, see also \cite[Corollary 16.4.3]{lubotzky2012subgroup}, \cite[Theorem 3.2]{chen2025finite}. 
\begin{theorem}[Strong Approximation Theorem]
    Let $K$ be a number field and let $\textbf{G}$ be a linear algebraic group over $K$. Let $\Lambda$ be a finitely generated. Zariski-dense subgroup of $\textbf{G}(K)$. Then there is a finite set of places $S$ of $K$ such that for every $\mathfrak{p} \notin S$, $\pi_
    \mathfrak{p}(\Lambda) = \textbf{G}(\mathbb{F}_\mathfrak{p})$, where $\pi_\mathfrak{p}$ is the reduction mod $\mathfrak{p}$ map.
\end{theorem}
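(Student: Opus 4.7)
The plan is to combine Nori's theorem on subgroups of reductive groups over finite fields with a constructibility/spreading-out argument that promotes Zariski-density in characteristic zero to the analogous statement on almost every special fibre. As stated, the theorem needs implicit hypotheses on $\textbf{G}$, so first I would reduce to the case where $\textbf{G}$ is connected, simply connected and absolutely simple: Zariski-density descends to each simple factor of the semisimple part of the derived group, while classical (Eichler-type) strong approximation for tori and unipotent groups disposes of the radical and central contributions. Choosing an integral model of $\textbf{G}$ over $\mathcal{O}_K[1/N]$ and enlarging $S$ to include $N$ and the primes of bad reduction, I may then assume $\Lambda \subseteq \textbf{G}(\mathcal{O}_K[1/N])$, so that $\pi_\mathfrak{p}$ is well defined on $\Lambda$ for every $\mathfrak{p}\notin S$.

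For each such $\mathfrak{p}$ set $H_\mathfrak{p} := \pi_\mathfrak{p}(\Lambda) \leq \textbf{G}(\mathbb{F}_\mathfrak{p})$. Nori's theorem canonically attaches to $H_\mathfrak{p}$ a connected algebraic subgroup $\textbf{H}_\mathfrak{p} \leq \textbf{G}_{\mathbb{F}_\mathfrak{p}}$ such that the subgroup $H_\mathfrak{p}^+$ generated by the elements of $p_\mathfrak{p}$-power order inside $H_\mathfrak{p}$ coincides with $\textbf{H}_\mathfrak{p}(\mathbb{F}_\mathfrak{p})^+$, and so that $[H_\mathfrak{p} : H_\mathfrak{p}^+]$ is bounded in terms of the rank of $\textbf{G}$ alone. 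The task then reduces to showing $\textbf{H}_\mathfrak{p} = \textbf{G}_{\mathbb{F}_\mathfrak{p}}$ for almost all $\mathfrak{p}$: once this holds, Kneser--Tits for simply connected absolutely simple groups over finite fields gives $\textbf{G}(\mathbb{F}_\mathfrak{p}) = \textbf{G}(\mathbb{F}_\mathfrak{p})^+ = H_\mathfrak{p}^+$, and the bounded-index gap is squeezed out once $|\mathbb{F}_\mathfrak{p}|$ is larger than the uniform Nori constant.

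The hard part is exactly the step $\textbf{H}_\mathfrak{p} = \textbf{G}_{\mathbb{F}_\mathfrak{p}}$, i.e.\ propagating Zariski-density from the generic fibre to almost every special fibre. The strategy is a spreading-out argument: fix generators $\gamma_1,\dots,\gamma_k$ of $\Lambda$, and for every geometric ``type'' $\mathcal{T}$ of proper connected algebraic subgroup of $\textbf{G}$ of bounded complexity, consider the constructible locus $Y_{\mathcal{T}} \subseteq \textbf{G}^k$ of tuples contained in some conjugate of a subgroup of type $\mathcal{T}$. By Zariski-density of $\Lambda$, the tuple $(\gamma_1,\dots,\gamma_k)$ lies in no $Y_{\mathcal{T}}(\overline{K})$. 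The crux, and the point where the deep input of Weisfeiler and Pink enters, is to prove that the set of relevant types $\mathcal{T}$ can be bounded uniformly in $\mathfrak{p}$ --- a Larsen--Pink style finiteness statement on subgroup structure that prevents ``escape to infinity'' of algebraic complexity as one reduces modulo different primes. Granted such a uniform bound, the $Y_{\mathcal{T}}$ spread out over $\mathrm{Spec}\,\mathcal{O}_K[1/N]$ as constructible subschemes; generic flatness and Chevalley's theorem on constructible images then ensure that $(\pi_\mathfrak{p}(\gamma_1),\dots,\pi_\mathfrak{p}(\gamma_k))$ avoids every $Y_{\mathcal{T}}(\mathbb{F}_\mathfrak{p})$ for $\mathfrak{p}$ outside a further finite set, forcing $\textbf{H}_\mathfrak{p} = \textbf{G}_{\mathbb{F}_\mathfrak{p}}$ and completing the proof.
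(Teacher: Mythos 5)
The paper does not prove this theorem; it is quoted as a black box from Weisfeiler \cite{weisfeiler1984strong}, Pink \cite{pink2000strong}, with pointers to \cite[Corollary 16.4.3]{lubotzky2012subgroup} and \cite[Theorem 3.2]{chen2025finite}. So there is no proof in the paper to compare against. On its own merits, your sketch is a recognizable outline of the Matthews--Vaserstein--Weisfeiler / Nori strategy (Nori envelope, spreading out, uniform control of subgroup complexity), which is indeed how strong approximation is established.

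There is, however, a genuine flaw in your reduction step. You write that ``classical (Eichler-type) strong approximation for tori and unipotent groups disposes of the radical and central contributions,'' but strong approximation in the sense you need \emph{fails} for tori: take $\textbf{G} = \mathbb{G}_m$ over $\mathbb{Q}$ and $\Lambda = \langle 2 \rangle$; then $\pi_p(\Lambda) = \mathbb{F}_p^\times$ only when $2$ is a primitive root mod $p$, which is not ``almost all $p$'' (and is the content of Artin's conjecture). Eichler's theorem is an adelic statement about arithmetic groups and does not bound the index of $\pi_\mathfrak{p}(\Lambda)$ in $\textbf{G}(\mathbb{F}_\mathfrak{p})$ for a thin $\Lambda$. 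The correct resolution is not to patch the reduction but to add the missing hypothesis: the theorem as stated is false for general linear algebraic groups, and the references require $\textbf{G}$ to be connected, simply connected and semisimple (equivalently, that the Zariski closure of $\Lambda$ is such). In the paper's application one has $\textbf{G} = \mathrm{PSL}_N$; this is handled by lifting to the simply connected cover $\mathrm{SL}_N$ and descending, not by peeling off a torus. With that hypothesis in place your Nori-plus-spreading-out argument is the right skeleton, and the ``hard part'' you flag --- the uniform Larsen--Pink-type bound on subgroup complexity across fibres --- is indeed where the deep content of Weisfeiler and Pink lives.
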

We will also need the following theorem of Larsen and Lubotzky, see \cite[Theorem 16.4.20]{lubotzky2012subgroup}, \cite[Theorem 4.1]{larsen2011normal}.
\begin{theorem}
\label{Larsen-Lubotzky Specialization}
Let $A$ be a finitely generated integral domain, with fraction field $K$. Let $\Gamma$ be a finitely generated subgroup of $\mathrm{GL}_n(A)$ whose Zariski closure $\textbf{G}$ is connected and absolutely simple. Then there is a global field $k$ and a ring homomorphism $\phi:A \to k$ such that the Zariski closure of $\phi(\Gamma)$ is isomorphic to $\textbf{G}$ over a field extension of $k$.
\end{theorem}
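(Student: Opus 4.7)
The plan is to reduce, via Noether normalization, to the case where $A$ is a localization of a polynomial ring over the prime field, spread out $\textbf{G}$ to a group scheme over this base, and then apply Hilbert's irreducibility theorem to produce a specialization to a global field that preserves the Zariski closure.

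First, fix finite generators $\gamma_1,\dots,\gamma_r$ of $\Gamma$; since $\textbf{G}$ is Noetherian, after enlarging to finitely many words in the $\gamma_i^{\pm 1}$ we may assume this finite set already has Zariski closure $\textbf{G}$. Let $\mathbb{F}$ be the prime field of $A$, and by Noether normalization choose $t_1,\dots,t_d\in A$ algebraically independent over $\mathbb{F}$ so that $A$ is finite over $R:=\mathbb{F}[t_1,\dots,t_d]$ (in mixed characteristic we work over $\mathbb{Z}[1/N][t_1,\dots,t_d]$ after inverting some integer $N$). After inverting a nonzero $h\in R$ we may further assume $A_h/R_h$ is finite étale and that $\textbf{G}$ spreads out to a smooth closed subgroup scheme $\mathcal{G}\subset\mathrm{GL}_{n,A_h}$ whose geometric fibers are connected, absolutely simple, and of constant dimension.

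Next, for any field $L$ and homomorphism $\phi:A_h\to L$, the Zariski closure of $\phi(\Gamma)$ in $\mathcal{G}_L$ is a closed subgroup; by absolute simplicity of $\mathcal{G}_L$ it either has strictly smaller dimension than $\mathcal{G}$, or equals all of $\mathcal{G}_L$. To detect the good case by a single polynomial non-vanishing, use a Lie-algebra generation criterion: choose finitely many products $w_1,\dots,w_m$ of the $\gamma_i^{\pm 1}$ whose images $w_j-\mathrm{id}\in\mathfrak{gl}_n(K)$ span $\mathrm{Lie}(\textbf{G})\subseteq\mathfrak{gl}_n$ — this is possible because $\Gamma$ is Zariski dense in the connected group $\textbf{G}$. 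The determinant of the change-of-basis matrix from $\{w_j-\mathrm{id}\}$ to a fixed basis of $\mathrm{Lie}(\textbf{G})$ is a nonzero element $f\in A_h$; whenever $\phi(f)\ne 0$, the specialized vectors span $\mathrm{Lie}(\mathcal{G}_L)$, so the Zariski closure of $\phi(\Gamma)$ contains a connected subgroup of full dimension and hence equals $\mathcal{G}_L$ by absolute simplicity.

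Finally, the finite étale cover $\mathrm{Spec}(A_h)\to\mathrm{Spec}(R_h)\subseteq\mathbb{A}^d_\mathbb{F}$ admits, by Hilbert's irreducibility theorem (arithmetic form over $\mathbb{Q}$, geometric form over $\mathbb{F}_p$), specializations $(\tau_1,\dots,\tau_d)$ outside a thin subset of the base for which the fiber is a single field $k'$ avoiding both the branch locus and $V(f)$. In characteristic $0$ we may take $\tau_i\in\mathbb{Q}$, obtaining $k'$ a number field; in characteristic $p$ we leave one $t_i$ free and specialize the rest to $\mathbb{F}_p$, obtaining $k'$ a function field in one variable over $\mathbb{F}_p$ — a global field in either case. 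The Zariski closure of $\phi(\Gamma)$ is then $\mathcal{G}_{k'}\cong\textbf{G}_{k'}$, as required. The main obstacle is constructing the Lie-algebraic witness $f\in A_h$ so that $\phi(f)\ne 0$ genuinely implies equality (not merely equidimensionality) of the Zariski closure with $\mathcal{G}_L$; this requires careful use of absolute simplicity to rule out the pathologies of disconnected subgroups sharing the dimension of $\mathcal{G}$, and a uniform choice of the spanning commutators $w_j$ whose determinant descends to a single nonzero element of $A_h$.
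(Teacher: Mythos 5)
The paper does not reproduce a proof of this theorem: it quotes it from Larsen--Lubotzky (\cite[Theorem 4.1]{larsen2011normal}, \cite[Theorem 16.4.20]{lubotzky2012subgroup}), and separately quotes the key ingredient of that proof, Lemma~\ref{Open subset of good specializations} (a dichotomy: on an open subset of $\mathrm{Spec}(A)$, every specialization of $\Gamma$ has image either finite or Zariski dense). So your proposal has to stand on its own merits, and it has a genuine gap in the middle step.

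The Lie-algebraic witness does not work as written. If $w\in\Gamma\subseteq\textbf{G}(K)\subseteq\mathrm{GL}_n(K)$, there is no reason for $w-\mathrm{id}$ to lie in $\mathrm{Lie}(\textbf{G})$: already for $\textbf{G}=\mathrm{SL}_n$ one has $\mathrm{tr}(w-\mathrm{id})=\mathrm{tr}(w)-n$, which is nonzero for a generic semisimple $w$, so $w-\mathrm{id}\notin\mathfrak{sl}_n$. Hence the asserted spanning set, and the determinant $f$ built from it, do not exist. More seriously, even a repaired spanning condition would not prove what you need. Suppose you had finitely many words $w_j$ and an $f\in A_h$ with $\phi(f)\neq 0$ forcing some span condition at the specialized point; this would constrain the linear span of $\phi(\Gamma)$ (or of its adjoint image) inside $M_n(L)$, but a \emph{finite} subgroup can perfectly well act absolutely irreducibly and have full matrix span. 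So ``$\phi(f)\neq 0$'' cannot by itself imply that $\overline{\phi(\Gamma)}$ has positive dimension, let alone full dimension. The Zariski-density locus of the specialization map is not the complement of a single hypersurface, and no single polynomial $f$ can detect it.

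This is precisely the obstruction that Lemma~\ref{Open subset of good specializations} is designed to circumvent. The actual argument first establishes that on an open $U\subseteq\mathrm{Spec}(A)$ the image is either finite or dense; then rules out the finite case on a further open set using Jordan's theorem: if $\phi(\Gamma)$ were finite, some fixed infinite-order (regular semisimple) element of $\Gamma$ would have to specialize to an element whose eigenvalues are roots of unity of uniformly bounded order, and avoiding that finite list of closed conditions is what one feeds into Hilbert irreducibility. Your Noether-normalization and Hilbert-irreducibility bookkeeping at the beginning and end (including the one-free-variable trick to land in a global function field in characteristic $p$) is fine, but the engine in the middle needs to be replaced by the finite-or-dense dichotomy plus the Jordan-theorem argument, not a Lie-algebra spanning determinant.
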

Note that in the special case $\textbf{G} = \mathrm{SL}_n$, the Zariski closure is isomorphic to $\textbf{G}$ already over $k$, as a closed subgroup of $\mathrm{SL}_n$ of dimension $n^2 - 1$ must be all of $\mathrm{SL}_n$. Moreover, note that we may use this theorem for $\mathrm{PSL}_n$, as we may always add a generator to the group mapping to a diagonal element generating the center of $\mathrm{SL}_n$, such that the image will be Zariski dense, and this element maps trivially to $\mathrm{PSL}_n$.
Combining these theorems, we prove Theorem \ref{Large rank quotients}.
\begin{proof}[Proof of Theorem \ref{Large rank quotients}]
Let $\ell \geq 1$ and $N = \binom{\ell + 2}{2}$. By Theorem \ref{Knizhinik Zamolodichikov Zariski density}, $B_4 \to \mathrm{PSL}_N(\bar{\mathbb{K}})$ has Zariski-dense image. $F_2 \lhd B_4$ is a normal subgroup, and so has Zariski-dense image as long as its image is non-trivial.

Assume by contradiction that $F_2$ has trivial image. In particular, $\sigma_1 = \sigma_3$ under the mapping, and so we get that our representation of $B_4$ is in fact a representation of $B_3$ factoring through the map $B_4 \to B_3$ sending $\sigma_1 \mapsto \sigma_1, \sigma_2\mapsto \sigma_2, \sigma_3 \mapsto \sigma_1$. In particular, $\textbf{W}_{4, \ell}$ is an irreducible representation of $B_3$. However, by Theorem \ref{Irreducible Decomposition of Bn-1}, we know that $\textbf{W}_{4,\ell}$ is reducible as a $B_3$-representation, in contradiction.

Since $F_2$ is also contained in the derived subgroup of $B_4$, then the map $B_4 \to \mathrm{PGL}_N(\mathbb{K})$ restricts to a map $F_2 \to \mathrm{PSL}_N(\mathbb{K})$, with Zariski dense image. Moreover, the image is contained in the finitely generated integral domain $\mathbb{Q}[q^{\pm 1}, s^{\pm 1}]$ over $\mathbb{Q}$. Now, by the specialization theorem of Larsen and Lubotzky, there is a specialization to a global field $k$ with Zariski closure isomorphic to $\mathrm{PSL}_N$. Since we are working over the rationals, any specialization of our ring has characteristic zero, and so $k$ is a number field. By the strong approximation theorem, we get infinitely many quotients of the form $\mathrm{PSL}_N(\mathbb{F}_r)$ for some prime power $r$.

We have now shown that there is a specialization to a finite field such that $F_2 \to \mathrm{PSL}_N(\mathbb{F}_r)$ is surjective. This map extends to a map $B_4 \to \mathrm{PGL}_N(\mathbb{F}_r)$, and moreover by Corollary \ref{Extension of representation to AutF2}, we get an automorphism $\varphi \in \mathrm{Aut}(\mathrm{PSL}_N(\mathbb{F}_r))$ such that $\varphi\sigma_i\varphi^{-1} = \sigma_i^{-1}$, $\varphi^2 = 1$. By Lemma \ref{semi-direct product AutF2} this gives a map $\mathrm{Aut}(F_2) \to \mathrm{Aut}(\mathrm{PSL}_N(\mathbb{F}_r)).$ It now follows that the kernel of the map $F_2 \to \mathrm{PSL}_N(\mathbb{F}_r)$ is characteristic.
\end{proof}
\begin{remark}
    We can in fact prove that for all but finitely many primes $p$, $\mathrm{PSL}_N(\mathbb{F}_p)$ is a characteristic quotient, as explained in the proof of Theorem \ref{Fully Residually Finite Simple}.
\end{remark}
\section{Residual Finite Almost-Simplicity}
\label{Section on residual finite almost-simplicity}
Our goal in this section is to explain Remark \ref{AutFn not residually finite simple} and prove Theorem \ref{Fully Residually Finite Simple}.

We start by showing the following lemma
\begin{lemma}
Fix $n \geq 3$. Let $G$ be a finite solvable group. Then every homomorphism $\mathrm{Aut}(F_n) \to G$ has trivial image on the inner automorphisms $F_n \simeq \mathrm{Inn}(F_n) \lhd \mathrm{Aut}(F_n)$.
\end{lemma}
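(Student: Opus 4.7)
The plan is to reduce the lemma to showing that the inner automorphism subgroup lies in every iterated commutator subgroup of $\mathrm{Aut}(F_n)$.

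\textbf{Reduction.} Since inner automorphisms act trivially on $F_n^{\mathrm{ab}} \simeq \mathbb{Z}^n$, they lie in the kernel $\mathrm{SAut}(F_n)$ of the sign map $\mathrm{Aut}(F_n) \to \mathrm{GL}_n(\mathbb{Z}) \to \{\pm 1\}$. For $n \geq 3$ this sign map is the full abelianization of $\mathrm{Aut}(F_n)$ (a classical computation, e.g.\ by abelianizing Nielsen's presentation and using that $\mathrm{SL}_n(\mathbb{Z})$ is perfect for $n \geq 3$), so $\mathrm{SAut}(F_n) = [\mathrm{Aut}(F_n), \mathrm{Aut}(F_n)] = \mathrm{Aut}(F_n)^{(1)}$. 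Given $\phi \colon \mathrm{Aut}(F_n) \to G$ with $G$ finite solvable of derived length $d$, we have $\phi(\mathrm{Aut}(F_n)^{(d)}) \subseteq G^{(d)} = 1$, so it suffices to show that $\mathrm{Aut}(F_n)^{(k)} = \mathrm{SAut}(F_n)$ for every $k \geq 1$, i.e.\ that $\mathrm{SAut}(F_n)$ is perfect.

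\textbf{Perfectness of $\mathrm{SAut}(F_n)$.} This is the heart of the matter, and I would establish it via the elementary commutator identity $R_{ik} = [R_{ij}^{-1}, R_{jk}]$ for the right Nielsen transformations $R_{ab} \colon x_a \mapsto x_a x_b$ (fixing the remaining generators), valid whenever $i, j, k$ are distinct. The identity is verified by applying both sides to $x_i, x_j, x_k$ (they agree: both send $x_i \mapsto x_i x_k$ and fix $x_j, x_k$) and to the other generators (where both sides act trivially). Since $n \geq 3$, for every pair $i \neq k$ a third index $j$ is available, so each $R_{ik}$ lies in $[\mathrm{SAut}(F_n), \mathrm{SAut}(F_n)]$; a symmetric computation gives the same for the left transformations $L_{ab} \colon x_a \mapsto x_b x_a$. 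Since the $R_{ab}$ and $L_{ab}$ together generate $\mathrm{SAut}(F_n)$ (via the Nielsen--Magnus generators of $\mathrm{Aut}(F_n)$ and of the Torelli group $\mathrm{IA}_n$), we conclude $\mathrm{SAut}(F_n) = [\mathrm{SAut}(F_n), \mathrm{SAut}(F_n)]$, which is Gersten's classical result.

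\textbf{Conclusion.} Combining the two steps, $\mathrm{Aut}(F_n)^{(k)} = \mathrm{SAut}(F_n)$ for all $k \geq 1$, so $\mathrm{SAut}(F_n) \subseteq \ker \phi$, and in particular $\mathrm{Inn}(F_n) \subseteq \mathrm{SAut}(F_n) \subseteq \ker \phi$, as desired.

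The principal obstacle is the perfectness of $\mathrm{SAut}(F_n)$. The hypothesis $n \geq 3$ is used twice in an essential way: in the commutator identity itself (which needs three distinct indices) and in the generation statement for $\mathrm{SAut}(F_n)$ by $R_{ab}, L_{ab}$, which requires reducing the "signed permutation" generators of $\mathrm{SAut}(F_n)$ modulo Torelli using Magnus' generators of $\mathrm{IA}_n$. One could also bypass the explicit commutator calculation by invoking Gersten's computation $H_1(\mathrm{Aut}(F_n), \mathbb{Z}) = \mathbb{Z}/2$ for $n \geq 3$ together with the standard fact that $\mathrm{SAut}(F_n)$ is perfect in this range.
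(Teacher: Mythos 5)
Your proof is correct and in fact proves more than the lemma asks, but by a genuinely different route. The paper's argument stays elementary: it picks the last stage of the derived series of $G$ at which $\varphi(\mathrm{Inn}(F_n))$ is nontrivial, reduces to the case where the image lies in an abelian group $A$, notes that the kernel of the resulting map $F_n \to A$ is characteristic so $A\simeq(\mathbb{Z}/d)^n$ with $d\geq 2$, and then observes that the induced conjugation action of $\mathrm{Aut}(F_n)$ on $A$ surjects onto $\mathrm{SL}_n^{\pm 1}(\mathbb{Z}/d)$ while factoring through $G$ --- impossible, since that group is not solvable when $n\geq 3$. You instead argue entirely inside $\mathrm{Aut}(F_n)$: since $\mathrm{Inn}(F_n)\subseteq\mathrm{SAut}(F_n)$ and $\mathrm{SAut}(F_n)$ is perfect for $n\geq 3$, $\mathrm{SAut}(F_n)$ sits inside every derived subgroup $\mathrm{Aut}(F_n)^{(k)}$ and therefore dies under any homomorphism to a solvable group. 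This is cleaner and gives the strictly stronger conclusion that all of $\mathrm{SAut}(F_n)$ has trivial image, but it rests on Gersten's theorem: your commutator identity $R_{ik}=[R_{ij}^{-1},R_{jk}]$ --- the $\mathrm{Aut}(F_n)$-analogue of the elementary-matrix identity behind the perfectness of $\mathrm{SL}_n(\mathbb{Z})$ --- is fine, but the claim that the Nielsen transvections $R_{ab},L_{ab}$ generate $\mathrm{SAut}(F_n)$ is itself a substantial input that the paper avoids. One small simplification: you do not actually need $\mathrm{Aut}(F_n)^{\mathrm{ab}}\simeq\mathbb{Z}/2$; perfectness of $\mathrm{SAut}(F_n)$ alone gives $\mathrm{SAut}(F_n)=\mathrm{SAut}(F_n)^{(k)}\subseteq\mathrm{Aut}(F_n)^{(k)}$ for every $k$. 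In the end both proofs lean on the same $n\geq 3$ phenomenon for $\mathrm{SL}_n$: the paper uses non-solvability of $\mathrm{SL}_n(\mathbb{Z}/d)$, you use perfectness of $\mathrm{SAut}(F_n)$ over $\mathbb{Z}$.
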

\begin{proof}
Assume by contradiction that there is a finite solvable group $G$ and a homomorphism $\varphi:\mathrm{Aut}(F_n) \to G$ such that $\mathrm{Inn}(F_n)$ has non-trivial image. Let $G^{(k)}$ denote the $k^\mathrm{th}$ derived subgroup of $G$. let $k$ be the maximal integer such that $\varphi(\mathrm{Inn}(F_n)) \leq G^{(k)}$. Replacing $G$ by $G / G^{(k + 1)}$, we may assume that the image of $\mathrm{Inn}(F_n)$ is a subgroup of $A = G^{(k)}/G^{(k+1)}$ and is therefore abelian.

In particular, the map $F_n \to A$ has characteristic kernel. Indeed, any automorphism of $F_n$ preserves the kernel of this map. So, $A \simeq (\mathbb{Z}/d)^n$ for some $d \geq 2$. Let $\mathrm{SL}_n^{\pm 1}(\mathbb{Z}/d)$ denote the group of $n$ by $n$ matrices with determinant $\pm 1$, the image of the reduction modulo $d$ map from $\mathrm{GL}_n(\mathbb{Z})$.

The map $\varphi: F_n \to A$ with characteristic kernel then induces a map $\tilde{\varphi}:\mathrm{Aut}(F_n) \to \mathrm{Aut}(A)$, given by the action of $\mathrm{Aut}(F_n)$ on $F_n / \mathrm{ker}(\varphi) \simeq A$. $\tilde{\varphi}$ has image $\mathrm{SL}_n^{\pm 1}(\mathbb{Z}/d)$. In particular, as $n \geq 3$, the image is not solvable. However, this map factors through $G$, as it is given by $\tilde{\varphi}(\psi)(a) = \varphi(\psi(w)) = \varphi(\psi \cdot w \cdot \psi^{-1}) = \varphi(\psi) \cdot a \cdot \varphi(\psi)^{-1}$ where $w$ is some lift of $a$. 
Since $G$ is solvable this is a contradiction.
\end{proof}
\begin{proof}[Proof of Remark \ref{AutFn not residually finite simple}]
Let $\varphi: \mathrm{Aut}(F_n) \to A$ be a map to a finite almost-simple group. Assume that $S \leq A \leq \mathrm{Aut}(S)$ for a finite simple $S$. In particular, $S \lhd A$.

Schreier's conjecture proved by the classification of finite simple groups states that the outer automorphism group $\mathrm{Out}(S)$ is solvable. By the previous Lemma, it follows that the inner automorphisms $\mathrm{Inn}(F_n)$ are mapped trivially to $A / S$. Hence, the image of $\mathrm{Inn}(F_n)$ is contained in $S$. Since $F_n \simeq \mathrm{Inn}(F_n) \lhd \mathrm{Aut}(F_n)$, the image of $\mathrm{Inn}(F_n)$ is normalized by $A$, and in particular by $S$. As $S$ is simple, the image is either trivial or all of $S$.

In the latter case, the kernel of the map $F_n \to S$ is a characteristic subgroup of $F_n$, in contradiction to the Baby Wiegold Conjecture.
\end{proof}
In order to prove Theorem \ref{Fully Residually Finite Simple} we will need the following lemma from the proof of the Larsen-Lubotzky specialization theorem \ref{Larsen-Lubotzky Specialization}, see \cite[Lemma 4.4]{larsen2011normal}
\begin{lemma}
\label{Open subset of good specializations}
Let $A$ be an integral domain, finitely generated over $\mathbb{Q}$, with fraction field $K$. Let $\textbf{G}$ be an algebraic group over $A$, and let $\Gamma \leq \mathrm{GL}_n(A)$ be Zariski-dense over $K$. Then there is an open subset $U \subseteq \mathrm{spec}(A)$ such that for every point $s \in U$ the image of $\Gamma$ in $\textbf{G}(k_s)$ is either finite or Zariski-dense.
\end{lemma}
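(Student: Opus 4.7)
The plan relies on a structural classification of closed subgroups of $\textbf{G}$, which works under the (implicit but natural) hypothesis from Theorem \ref{Larsen-Lubotzky Specialization} that $\textbf{G}$ is connected and absolutely simple: without this, the conclusion genuinely fails, as can be seen for $\textbf{G}=\mathbb{G}_m^2$, $\Gamma = \langle(x,y)\rangle$ over $A = \mathbb{Q}[x^{\pm 1}, y^{\pm 1}]$, where at the generic point of the curve $xy = 1$ the image is infinite but only Zariski-dense in a $1$-dimensional subtorus.

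Assuming simplicity, the key input is that proper connected positive-dimensional closed subgroups of $\textbf{G}$ come in finitely many algebraic ``types'' --- Borels, parabolics, Levis, centralisers of semisimple elements, and a short list of reductive maximal subgroups --- each parametrised by a scheme $Y_T$ of finite type over $A$. For each type $T$ I would form the incidence locus $Z_T \subseteq \mathrm{Spec}(A) \times Y_T$ consisting of pairs $(s, y)$ for which the subgroup $H_y \subseteq \textbf{G}_s$ contains the image $\Gamma_s$. Since $\Gamma$ is finitely generated and each $H_y$ is cut out in $\textbf{G}$ by finitely many polynomial equations, the condition $\Gamma_s \subseteq H_y$ becomes a finite polynomial system in $(s, y)$, so $Z_T$ is closed, and by Chevalley's theorem its projection to $\mathrm{Spec}(A)$ is constructible. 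Its closure $W_T$ cannot contain the generic point $\eta$: if it did, $\Gamma = \Gamma_\eta$ would lie inside some proper closed subgroup of $\textbf{G}_K$, contradicting Zariski density. Hence each $W_T$ is a proper closed subset of $\mathrm{Spec}(A)$.

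Setting $U = \mathrm{Spec}(A) \setminus \bigcup_T W_T$, a finite union of proper closed subsets whose complement is open, any $s \in U$ has the property that $\overline{\Gamma_s}$ is not contained in any subgroup of the listed proper-subgroup families. By simplicity of $\textbf{G}_s$ and connectedness, this forces the identity component $\overline{\Gamma_s}{}^0$ to be either trivial (so $\Gamma_s$ finite) or all of $\textbf{G}_s$ (so $\Gamma_s$ Zariski-dense), yielding the required dichotomy. The main difficulty is to make the classification of subgroup types effective and to realise each $Y_T$ as a bona fide scheme of finite type over $A$ --- this step uses non-trivial structure theory of reductive groups and is precisely where the simplicity of $\textbf{G}$ is used; once this is in place, the remaining argument is a routine constructibility and Zariski-density manipulation.
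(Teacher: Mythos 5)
The paper does not prove this lemma itself --- it quotes it from Larsen--Lubotzky \cite[Lemma 4.4]{larsen2011normal} --- so there is no internal proof to compare against. Your observation that the statement implicitly carries the absolute-simplicity hypothesis of Theorem \ref{Larsen-Lubotzky Specialization} is correct and worth making, and the $\mathbb{G}_m^2$ example is of the right kind; to genuinely defeat the statement, though, you need the whole family of translated subtori $x^ay^b=\zeta$ ($\zeta$ a root of unity), not just $xy=1$: a single curve can be discarded from $U$, whereas a nonempty Zariski-open subset of $\mathrm{spec}(A)$ must meet all but finitely many of these curves, and each of their generic points is bad.

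The real gap is that the proof hinges on a heavy claim you bracket as a ``main difficulty'': that every proper positive-dimensional closed subgroup of a fiber $\textbf{G}_s$ lies in one of finitely many families $Y_T$, realised as finite-type schemes over $A$ with universal closed subgroup schemes. This is true --- one needs the Dynkin/Liebeck--Seitz classification of maximal closed subgroups, spread out over an open of $\mathrm{spec}(A)$, together with the observation that if $\overline{\Gamma_s}^{\,0}$ is proper, nontrivial and connected then $\Gamma_s\subseteq N_{\textbf{G}_s}(\overline{\Gamma_s}^{\,0})$, which by simplicity is a proper positive-dimensional subgroup and hence lies in a maximal one --- but your specific list (Borels, Levis, centralisers of semisimple elements) is not the right one, since these are generally non-maximal; what one actually wants is parabolics together with finitely many conjugacy classes of reductive maximal subgroups. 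The Chevalley/generic-point bookkeeping you wrap around this is sound, but the load-bearing step is precisely the one you set aside, and it is arguably most of the proof.

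A much lighter argument avoids maximal subgroups entirely and is essentially the standard one. Since $\textbf{G}_K$ is absolutely simple and $\Gamma$ is Zariski-dense, $\mathrm{Ad}(\Gamma)$ acts absolutely irreducibly on $\mathfrak{g}_K$. Absolute irreducibility of a finitely generated linear group is a Zariski-open condition on the base: by Burnside, $\mathrm{Ad}(\Gamma_s)$ is absolutely irreducible on $\mathfrak{g}_s$ iff the monomials of length $<(\dim\mathfrak{g})^2$ in $\mathrm{Ad}(\gamma_1),\dots,\mathrm{Ad}(\gamma_r)$ span $\mathrm{End}(\mathfrak{g}_s)$, a full-rank (hence open) condition which holds at the generic point and therefore on a nonempty open $U$. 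For $s\in U$: if $\overline{\Gamma_s}^{\,0}$ were a proper nontrivial connected subgroup, then $\mathrm{Lie}(\overline{\Gamma_s}^{\,0})$ would be a proper nonzero $\mathrm{Ad}(\Gamma_s)$-invariant subspace of $\mathfrak{g}_s$, contradicting irreducibility. Hence $\overline{\Gamma_s}^{\,0}$ is trivial or all of $\textbf{G}_s$, which is exactly the finite-or-dense dichotomy.
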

We will also use the following two facts about the representation of $B_n$ on $\textbf{W}_{n, 2}$. The first being \cite[Theorem 2]{jackson2011lawrence} that the representation $\textbf{W}_{n, 2}$ is isomorphic to the Lawrence-Krammer-Bigelow representation of $B_n$, and the second being the fact that these representations are faithful \cite{bigelow2001braid}.
\begin{proof}[Proof of Theorem \ref{Fully Residually Finite Simple}]
Consider the $6$-dimensional representation of $B_4$ on $\textbf{W}_{4, 2}$. We know that this representation is Zariski-dense by Theorem \ref{Knizhinik Zamolodichikov Zariski density}. By Lemma \ref{Open subset of good specializations}, we know that there are choices of rational values for $q,s$ such that the specialized representation is also Zariski-dense. Indeed, the image is not finite, as $\sigma_1$ has an eigenvalue of the form $\frac{q^2}{s^4}$ by Lemma \ref{Eigenvalue computation}, Theorem \ref{Irreducible Decomposition of Bn-1}, and we may assume that $\frac{q^2}{s^4} \neq \pm 1$. Using strong approximation as in the proof of Theorem \ref{Large rank quotients}, we can pick a sufficiently large prime $p$ and obtain a characteristic quotient $F_2 \to \mathrm{PSL}_6(\mathbb{F}_p)$.

By Bigelow's theorem the representation is faithful. Note that this implies that the projective representation to $\mathrm{PSL}(\textbf{W}_{4,2})$ is a faithful representation of $\mathrm{Aut}^+(F_2)$. Indeed, if $b \in B_4$ has image in $Z(\mathrm{SL}(\textbf{W}_{4,2}))$, then it commutes with every element of $B_4$, and so is in $Z(B_4)$.

Choose a finite subset $F \subseteq \mathrm{Aut}^+(F_2)$. Then the images in $\mathrm{PSL}(\textbf{W}_{4,2})$ are distinct. Using the fact that distinct polynomials have different values when evaluated at sufficiently large points, we can find a rational specialization such that $F$ is mapped injectively. Reducing modulo a sufficiently large prime $p$, $F$ would still be mapped injectively.

Now, choose a finite subset $F \subseteq \mathrm{Aut}(F_2)$. Under the extension given by Corollary \ref{Extension of representation to AutF2}, elements of the non-trivial coset of $\mathrm{Aut}(F_2)$ act as outer automorphisms of $\mathrm{PGL}(\textbf{W}_{4,2})$, and the same is true after the specialization to $\mathbb{F}_p$. Therefore, no two elements of different cosets of $\mathrm{Aut}^+(F_2)$ will have the same image in $\mathrm{Aut}(\mathrm{PSL}_6(\mathbb{F}_p))$.

Fix an element $\alpha$ in the non-trivial coset of $\mathrm{Aut}^+(F_2)$. Let $F = A \cup B \alpha$ for $A, B \subseteq \mathrm{Aut}^+(F_2)$ be the decomposition of $F$ into the two cosets. Consider $\tilde{F} = A \cup B \subseteq \mathrm{Aut}^+(F_2)$, and choose the specialization to be injective on $\tilde{F}$. Then the restriction to $F$ under the map to $\mathrm{Aut}(\mathrm{PSL}_6(\mathbb{F}_p))$ is injective.
\end{proof}
\section{Some remarks for future research}
\subsection{Possible prime powers}
Our Zariski-dense representations have coefficients in the ring $\mathbb{Z}[q^{\pm 1}, s^{\pm 1}]$, so using similar ideas we could in fact prove a stronger result about the set of prime powers $r$ such that $\mathrm{PSL}_N(\mathbb{F}_r)$ is a characteristic quotient. 
Using Lemma \ref{Open subset of good specializations} we get that for almost every prime $p$ the specialization to $\mathbb{F}_p[q^{\pm 1}, s^{\pm 1}]$ is Zariski-dense. Then, using a version of the strong approximation theorem in positive characteristic, we could get almost every power of $p$ as a quotient. Pushing these ideas further it seems likely that we can obtain all but finitely many prime powers as quotients.

Using the preserved hermitian product in Corollary \ref{preserved hermitian form}, it should also be possible to get quotients of the form $\mathrm{PSU}_{\binom{n}{2}}(\mathbb{F}_r)$, in the same way that the $\mathrm{PSU}_3(\mathbb{F}_r)$-quotients are obtained in \cite{chen2025finite}.

\subsection{Induced Cayley graphs}
Using ideas of Bourgain and Gamburd, a generalization of the strong approximation theorem was proved in \cite{golsefidy2012expansion}, sometimes called the superstrong approximation theorem. It states that the family of congruence quotients of a Zariski-dense subgroup is not only generating for large primes, but further that the Cayley graphs form a family of expanders. In our case, this means that for any fixed specialization to a number field, the family of characteristic quotients $F_2 \to \mathrm{PSL}_{\binom{n}{2}}(\mathbb{F}_r)$ for that we have constructed form a family of expanders.

If $\mathrm{Aut}(F_2)$ had Kazhdan's property (T), then the fact that these are a family of expanders could have been deduced directly, but this is of course not the case as $\mathrm{Aut}(F_2)$ is large. Two natural questions therefore arise
\begin{question}
\begin{enumerate}
    \item Considering the full family of quotients, i.e. allowing both $n, r$ to range over their possible values, do we get a family of expanders?
    \item Is the family of alternating quotients a family of expanders?
\end{enumerate}    
\end{question}

Another natural question arises regarding these Cayley graphs. Since these arise from a family quotients of the free group, there are no natural relations on our generators. Therefore, it is possible for this family of graphs to have girth tending to infinity. In fact, Theorem \ref{Fully Residually Finite Simple} shows that the quotients of the form $\mathrm{PSL}_6(\mathbb{F}_r)$ can indeed have girth tending to infinity.
It is then a natural to ask how fast do these girths grow to infinity?
\begin{question}
What is the order of magnitude of the girth of the constructed Cayley graphs?
\end{question}
The question of the maximal girth of Cayley graph of a finite simple group is an interesting open problem, especially in the case of the alternating groups, see for instance \cite{gamburd2009girth}. These characteristic quotients are a natural source of examples, as any short cycle in such a graph would then yield many more short cycles after applying automorphisms of the free group.

A first interesting result to prove would be that these alternating quotients have girth that tends to infinity, also implying that $\mathrm{Aut}(F_2)$ is residually finite alternating or symmetric. We believe that this should be provable assuming the following conjecture
\begin{conjecture}
\label{Transitive action}
    Under Assumption \ref{Assumption on non-conjugation}, the action of $B_4$ on $\mathrm{X}^{(2)}_{\gamma,\delta}$ is transitive.
\end{conjecture}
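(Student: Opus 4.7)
The plan is to combine the foliation structure from Section~\ref{The Equivariant Quandle} (and its specialization in Section~4) with a careful character-variety analysis and the primitivity results already established. First, I would parametrize the set $X^{(2)}_{\gamma,\delta}$ by trace coordinates: using Lemma~\ref{Magnus character varieties}, a generic equivalence class of $(A,B,C,D)$ should be determined by a finite collection of trace invariants such as $\mathrm{tr}(AB^{-1})$, $\mathrm{tr}(BC^{-1})$, $\mathrm{tr}(CD^{-1})$ together with a few of their products, subject to the polynomial constraints coming from fixing $\gamma,\delta$. This should realize $X^{(2)}_{\gamma,\delta}$ (on a Zariski-open subset) as a $4$-dimensional affine variety over $\mathbb{F}_p$ whose points are coordinatized by these invariants.

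Second, I would analyze how the three generators of $B_4$ act on this parametrization. The key observation from Section~4 is that $\langle\sigma_1,\sigma_3\rangle$ preserves the first proper decomposition $(x,y,z,w)$ and, by Corollary~\ref{Moving inside first foliation}, acts transitively on any maximal leaf. The generator $\sigma_2$ changes the first decomposition in an explicit way, inducing a polynomial map on the character parameter space; computing this map and showing that together with the transitive leaf action it generates a transitive action on the full character parametrization is the heart of the problem. One would in particular need to identify explicit polynomial invariants of the whole $B_4$-action (beyond $\gamma,\delta$) and verify that none are non-constant.

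Third, I would invoke Theorem~\ref{Primitive action on orbit} and Theorem~\ref{Alternating action on orbit} to show that the $B_4$-orbit of the special point $P$ satisfying Assumption~\ref{Assumption on point in orbit} is already so large that it must be essentially all of $X^{(2)}_{\gamma,\delta}$. Specifically, a counting argument comparing $|\mathrm{Orb}_{B_4}(P)|$ to $|X^{(2)}_{\gamma,\delta}|$ (both of which should behave like fixed polynomials in $p$) would then close the gap for generic $\gamma,\delta$, and the ``alternating or symmetric'' conclusion forbids the orbit from having any small $B_4$-invariant complement.

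The main obstacle is ruling out exceptional orbits: one has to show that \emph{every} configuration, regardless of whether it satisfies Assumption~\ref{Assumption on point in orbit}, can be moved by some braid to one that does. This is a delicate finite-field transitivity statement in the spirit of the Markoff-triple results of \cite{meiri2018markoff}. A promising approach is to establish a Zariski-density statement for the induced $B_4$-action in an appropriate algebraic group scheme over $\overline{\mathbb{F}_p}$, and then to use strong approximation or explicit polynomial identities on trace coordinates to descend back to $\mathbb{F}_p$; however, handling the degenerate loci where some $\sigma_i$-matrix fails to be maximal, and points with unusually large stabilizers under the equivalence relation, will likely require additional case-specific arguments using the explicit description of maximal subgroups of $\mathrm{PSL}_2(\mathbb{F}_p)$.
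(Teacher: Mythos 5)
This statement is Conjecture~\ref{Transitive action}; the paper does not prove it. It is stated in \S9 as an open problem, together with a one-paragraph suggestion that the right strategy is to imitate the Bourgain--Gamburd--Sarnak argument for Markoff triples in \cite{bourgain2016markoff}, using the proper-decomposition foliations as the analogue of their conic sections and first handling the ``endgame'' that most points lie on a single orbit. So there is no proof in the paper against which to compare your attempt.

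Your proposal overlaps substantially with the paper's own suggested line of attack (character-variety coordinates, the $\langle\sigma_1,\sigma_3\rangle$-foliation via Corollary~\ref{Moving inside first foliation}, a Markoff-triple flavoured transitivity argument, and special handling of degenerate/non-maximal loci). But two of your steps contain genuine errors as stated, independent of the fact that the whole thing remains a plan rather than a proof. First, in step~3 you claim a counting argument comparing $|\mathrm{Orb}_{B_4}(P)|$ with $|X^{(2)}_{\gamma,\delta}|$ should ``close the gap,'' with both behaving like fixed polynomials in $p$. The paper only establishes $|\mathrm{Orb}_{B_4}(P)|\ge p$, whereas $|X^{(2)}_{\gamma,\delta}|$ has order roughly $p^4$; nothing in Theorems~\ref{Primitive action on orbit} or~\ref{Alternating action on orbit} gives a lower bound on the orbit anywhere near the size of the full set, so this comparison cannot close anything. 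Second, the inference that ``the alternating or symmetric conclusion forbids the orbit from having any small $B_4$-invariant complement'' is a non sequitur: primitivity and the identification of the image in $\mathrm{Sym}(\mathrm{Orb}_{B_4}(P))$ are statements about the permutation action \emph{on the orbit itself}, and say nothing whatsoever about the action on $X^{(2)}_{\gamma,\delta}\setminus\mathrm{Orb}_{B_4}(P)$, which could a priori decompose into many further orbits.

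Finally, the appeal to ``Zariski density over $\overline{\mathbb{F}_p}$ plus strong approximation'' to descend to transitivity over $\mathbb{F}_p$ does not apply in this setting: strong approximation (Weisfeiler--Pink) concerns finitely generated Zariski-dense \emph{linear} subgroups of algebraic groups and gives surjectivity of reduction maps on the group, not transitivity of a nonlinear permutation action of $B_4$ on the $\mathbb{F}_p$-points of a variety. The relevant analogue here is the nonlinear sieve/expansion machinery of Bourgain--Gamburd--Sarnak, which is precisely what the paper flags as the open difficulty; you would have to develop the conic-section counting, the endgame, and the degenerate-locus analysis from scratch for this family of varieties.
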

In fact, a much weaker result should be sufficient - merely that most (a subset of density tending to 1) of the points of $X_{\gamma,\delta}^{(2)}$ are on the same orbit. Using the fact that the map $B_4 \to \mathrm{Aut}(F_4)$ induced by the equivariant quandle is injective, it should then be possible to prove that for generic values of $\gamma,\delta$ we can distinguish finite subsets of elements of $B_4$ by their actions on $X_{\gamma,\delta}^{(2)}$. The algebraic description for the variety given in the Appendix could be helpful in understanding these claims.

A possible approach towards Conjecture \ref{Transitive action} is to try and use similar ideas to \cite{bourgain2016markoff}. The set $X_{\gamma,\delta}^{(2)}$ can be given the structure of a $4$-dimensional variety, and has $2$-dimensional foliations arising from the proper decompositions. These can be used in a similar fashion to the conic sections used in \cite{bourgain2016markoff}. In fact, the weaker claim that most of the points are on a single orbit is also a step in the proof in \cite{bourgain2016markoff}, which they call "The Endgame". The relevant subset of points $(A,B,C,D)$ we could consider (and try to prove that they are all on the same orbit) are those such that either the first or second proper decompositions are maximal in the sense of Definition \ref{proper decomposition definition}.

\subsection{Residual Finite Simplicity}
We would expect the following strengthened version of Theorem \ref{Fully Residually Finite Simple} to hold:
\begin{conjecture}
$\mathrm{Aut}(F_2)$ is fully residually finite simple
\end{conjecture}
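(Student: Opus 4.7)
The plan is to upgrade Theorem \ref{Fully Residually Finite Simple} by replacing almost-simple quotients with genuinely simple ones. The Lawrence-Krammer quotient $\mathrm{Aut}(F_2) \to \mathrm{PSL}_6(\mathbb{F}_p).\langle\varphi\rangle$ fails to be simple only because the outer coset $\mathrm{Aut}(F_2)\setminus\mathrm{Aut}^+(F_2)$ maps to the graph automorphism of $\mathrm{PSL}_6$. The equivariant-quandle quotients of Theorem \ref{Alternating quotients} are, by contrast, already simple (namely $A_n$) whenever the extension element $\varepsilon\colon (A,B,C,D)\mapsto(D,C,B,A)$ acts as an even permutation on $\mathrm{Orb}_{B_4}(P)$: the image of $\mathrm{Aut}^+(F_2)$ already equals $A_n$, so the full image is $A_n$ or $S_n$ according to the parity of $\varepsilon$. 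My strategy is therefore to take the alternating-quotient construction as the source of simple quotients and to argue that, for any finite $F\subseteq\mathrm{Aut}(F_2)$, one such quotient separates $F$.

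First I would control the parity of $\varepsilon$. Its sign is $(-1)^{(|\mathrm{Orb}_{B_4}(P)|-|\mathrm{Fix}(\varepsilon)|)/2}$, and $\mathrm{Fix}(\varepsilon)\subseteq X_{\gamma,\delta}^{(2)}$ is the subvariety cut out by $(D,C,B,A)\sim(A,B,C,D)$ in the equivalence relation defining $X_{\gamma,\delta}^{(2)}$. Using the algebraic-variety structure of $X_{\gamma,\delta}^{(2)}$ from the Appendix, one should compute $|\mathrm{Fix}(\varepsilon)|$ modulo $4$ as a polynomial expression in $p$ for each fixed $(\gamma,\delta)$; varying $(\gamma,\delta)$ then produces both parities infinitely often, so the even cases give an infinite family of simple quotients $\mathrm{Aut}(F_2)\twoheadrightarrow A_{n(p)}$ with $n(p)\to\infty$. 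For the separation step, I would use the embedding $B_4\rtimes\mathbb{Z}/2\hookrightarrow\mathrm{Aut}(F_4)$ furnished by the equivariant quandle (see Section \ref{The Equivariant Quandle}): this implies that every non-trivial element of $\mathrm{Aut}(F_2)\simeq(B_4/Z(B_4))\rtimes\mathbb{Z}/2$ acts non-trivially on the universal equivariant-quandle variety over $F_4$, and hence non-trivially on the specialization $X_{\gamma,\delta}^{(2)}$ for generic $(\gamma,\delta)$ and all sufficiently large $p$.

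The hard part will be to show that non-triviality is detectable on a single orbit $\mathrm{Orb}_{B_4}(P)$, rather than merely on the whole $X_{\gamma,\delta}^{(2)}$. Since the $B_4$-action may be far from transitive --- Conjecture \ref{Transitive action} is still open --- one needs a quantitative lower bound on the size of the orbit through a generic point, together with an effective statement that for each non-trivial $f\in F$ the fixed locus $\mathrm{Fix}(f)\cap X_{\gamma,\delta}^{(2)}$ has positive codimension. The foliation analysis from Section 4, combined with Meiri-Puder style orbit-growth arguments as in Section 6, should show that generic orbits are large enough, and generic enough, for $F$ to embed. Pulling these pieces together would give, for each finite $F$, an even-parity orbit $\mathrm{Orb}_{B_4}(P)$ on which $F$ is injective, producing the desired simple quotient $\mathrm{Aut}(F_2)\twoheadrightarrow A_{|\mathrm{Orb}_{B_4}(P)|}$.
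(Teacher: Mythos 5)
This statement is a \emph{conjecture} in the paper (the final conjecture of Section~9), not a theorem: the paper offers no proof of it, only the remark immediately following it, which sketches a possible strategy (use the alternating quotients, compute parities of all permutations involved, understand the orbit structure, ideally resolve Conjecture~\ref{Transitive action}). Your proposal essentially reproduces that same sketch and inherits the same gaps, so it does not constitute a proof.

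Beyond that, a few of the steps you assert are incorrect or unsupported even as a plan. First, you claim that ``the image of $\mathrm{Aut}^+(F_2)$ already equals $A_n$,'' but Theorem~\ref{Alternating action on orbit} only establishes that the image of $B_4$ is $A_n$ \emph{or} $S_n$, and the proof of Theorem~\ref{Alternating quotients} only pins down the image of $\mathrm{Inn}(F_2)$ as $A_n$ (because $F_2\leq\mathrm{Aut}(F_2)'$). Since $B_4$ abelianizes to $\mathbb{Z}$ with each $\sigma_i$ a generator, the $\sigma_i$ can certainly map to odd permutations. Therefore, to land in $A_n$, you must control the parity of $\sigma_1,\sigma_2,\sigma_3$ as well as $\varepsilon$; tracking $\varepsilon$ alone is not enough. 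Second, you propose computing $|\mathrm{Fix}(\varepsilon)|$ modulo $4$ on the full variety $X_{\gamma,\delta}^{(2)}$, but the permutation in question is the restriction of $\varepsilon$ to the single orbit $\mathrm{Orb}_{B_4}(P)$; without the (open) Conjecture~\ref{Transitive action}, or at least a density statement, the parity on the orbit is not determined by the parity on the ambient variety. Third, the separation step --- that for a finite $F\subseteq\mathrm{Aut}(F_2)$ each non-trivial $f$ acts non-trivially \emph{on the chosen orbit}, not merely on $X_{\gamma,\delta}^{(2)}$ --- is acknowledged in your own words as ``the hard part'' and is not actually carried out; the faithfulness of $B_4\to\mathrm{Aut}(F_4)$ gives non-triviality on the universal variety but says nothing about a specific finite orbit without quantitative lower bounds on orbit size and codimension bounds on fixed loci, neither of which is supplied. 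In short, the plan matches the paper's own stated hopes, misstates one of the paper's results, and leaves every substantive step open, so the conjecture remains unproved.
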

The Lie-type quotients constructed both in this paper and in \cite{chen2025finite} are insufficient in proving this result, as the extension from $\mathrm{Aut}^+(F_2)$ to $\mathrm{Aut}(F_2)$ will necessarily act via an outer automorphism of the finite simple quotient.
However, we believe that a more careful analysis of the alternating quotients given by Theorem \ref{Alternating quotients} might prove this conjecture. For this it would be necessary to compute the signs of all permutations involved, and hence it would be necessary to identify precisely (the size and structure of) the orbits of the actions. Proving Conjecture \ref{Transitive action} would be a step in this direction.

\appendix
\section{Description of the algebraic variety}
In this Appendix we will give an algebraic structure for the finite sets $X^{(2)}_{\gamma,\delta}$, and show that under a small assumption on $\gamma,\delta$ the group $B_4$ acts by algebraic automorphisms of this algebraic variety. This could be used in further research for example in order to prove Conjecture \ref{Transitive action}.

We make the following extra assumption in addition to Assumption \ref{Assumption on non-conjugation} in order to simplify the discussion, but it could be relaxed.
\begin{assumption}
\label{Assumption on large orders}
    Assume that either $\mathrm{ord}(\gamma)> 60$ or $\mathrm{ord}(\delta) > 60$.
\end{assumption}
\begin{remark}
    Using the description of maximal subgroups of $\mathrm{PSL}_2(\mathbb{F}_p)$, c.f. \cite{hall1936eulerian}, every maximal subgroup of sufficiently large order (greater than $60$) is either cyclic, dihedral or contained in a maximal parabolic of $\mathrm{PSL}_2(\mathbb{F}_p)$ in which case it does not contain any non-split elements. Therefore under Assumption \ref{Assumption on non-conjugation} if the orders of either $\gamma$ or $\delta$ are sufficiently large, then every pair of conjugates of $\gamma,\delta$ generates all of $\mathrm{PSL}_2(\mathbb{F}_p)$.
\end{remark}
\begin{corollary}
\label{Conjugates Generate}
    Under Assumptions \ref{Assumption on non-conjugation}, \ref{Assumption on large orders}, any pair of conjugates of $\gamma,\delta$ generate $\mathrm{PSL}_2(\mathbb{F}_p)$.
\end{corollary}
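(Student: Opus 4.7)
The plan is to argue by contradiction using Dickson's classification of subgroups of $\mathrm{PSL}_2(\mathbb{F}_p)$ cited in \cite{hall1936eulerian}. Suppose for contradiction that there exist conjugates $\gamma' = g\gamma g^{-1}$ and $\delta' = h\delta h^{-1}$ for which $H := \langle \gamma',\delta'\rangle$ is a proper subgroup of $\mathrm{PSL}_2(\mathbb{F}_p)$. By Assumption \ref{Assumption on large orders}, one of the two conjugates has order greater than $60$, hence $|H| > 60$. By Dickson's theorem, every proper subgroup of $\mathrm{PSL}_2(\mathbb{F}_p)$ is contained in one of the following: a Borel subgroup, the normalizer of a split torus (dihedral of order $p-1$), the normalizer of a non-split torus (dihedral of order $p+1$), or an exceptional subgroup isomorphic to $A_4$, $S_4$, or $A_5$. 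The exceptional possibilities are immediately ruled out since they have order at most $60$.

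For the Borel case, one notes that a Borel subgroup $B$ stabilizes a point in $\mathbb{P}^1(\mathbb{F}_p)$, so every element of $B$ has an $\mathbb{F}_p$-rational eigenvector. A non-split element of $\mathrm{PSL}_2(\mathbb{F}_p)$ has irreducible characteristic polynomial over $\mathbb{F}_p$ and therefore no $\mathbb{F}_p$-rational eigenvector. By Assumption \ref{Assumption on non-conjugation}, one of $\gamma',\delta'$ is non-split, so this case cannot occur.

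In the remaining two cases, $H$ lies in $N = T \sqcup (N\setminus T)$ for a cyclic maximal torus $T \leq \mathrm{PSL}_2(\mathbb{F}_p)$, where the non-trivial coset $N\setminus T$ acts on $T$ by inversion. A direct computation with a representative $w \in N \setminus T$ shows $w^2 = -I$ in $\mathrm{SL}_2(\mathbb{F}_p)$, so every element of $N \setminus T$ is an involution in $\mathrm{PSL}_2(\mathbb{F}_p)$. Since by Assumption \ref{Assumption on non-conjugation} neither $\gamma'$ nor $\delta'$ is an involution, both must lie in $T$. But every non-identity element of $T$ has the same splitting type as $T$ itself (split if $T$ is a split torus, non-split if $T$ is non-split), so $\gamma'$ and $\delta'$ would share a common type, contradicting the assumption that one is split and the other non-split. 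This exhausts all cases and forces $H = \mathrm{PSL}_2(\mathbb{F}_p)$. The argument is essentially a careful case analysis; there is no substantive obstacle beyond checking that each type of maximal subgroup fails to simultaneously accommodate a non-involution split element and a non-involution non-split element.
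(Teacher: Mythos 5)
Your proof is correct and follows essentially the same route as the paper's remark preceding the corollary: invoke Dickson's classification of (maximal) subgroups of $\mathrm{PSL}_2(\mathbb{F}_p)$, rule out the exceptional subgroups by order, the Borel/parabolic case by the presence of a non-split conjugate, and the torus-normalizer (dihedral) case by the observation that the non-torus coset consists of involutions while the torus itself has a single splitting type. Your write-up simply makes explicit the case analysis that the paper compresses into a two-sentence remark.
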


We also need the following Lemma with a similar proof to Lemma \ref{Pairs up to conjugation}:
\begin{lemma}
\label{Triples up to conjugation}
    Let $A,B,C \in \mathrm{SL}_2(\mathbb{F}_p)$ be three matrices with images generating $\mathrm{PSL}_2(\mathbb{F}_p)$. Then every triple of matrices $\tilde{A},\tilde{B},\tilde{C} \in \mathrm{SL}_2(\mathbb{F}_p)$ yielding the same character in $\mathrm{Ch}_{\mathrm{SL}_2(\mathbb{F}_p)}(F_3)$ is conjugate under $\mathrm{GL}_2(\mathbb{F}_p)$ to $A,B,C$.
\end{lemma}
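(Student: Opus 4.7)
The plan is to mirror the template of the proof of Lemma \ref{Pairs up to conjugation}: reduce to the pair case by identifying two matrices (possibly after replacing one of them by a short product in $A, B, C$) whose images generate $\mathrm{PSL}_2(\mathbb{F}_p)$, apply Lemma \ref{Pairs up to conjugation} to conjugate that pair into a standard form, and finally pin down the third matrix from a handful of trace equations.

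For the first step, I would show that a generating pair always exists (possibly among short products). If no pair in $\{A, B, C\}$ by itself generates $\mathrm{PSL}_2(\mathbb{F}_p)$, each pair lies in a proper subgroup of $\mathrm{PSL}_2(\mathbb{F}_p)$; by Dickson's classification of subgroups and the hypothesis that the triple generates the full group, a short case analysis forces one of the pairs in $\{A, B, C, AB, AC, BC\}$ to generate $\mathrm{PSL}_2(\mathbb{F}_p)$. After renaming (or replacing $C$ by a short word, which does not change the character data), Lemma \ref{Pairs up to conjugation} provides $g \in \mathrm{GL}_2(\mathbb{F}_p)$ simultaneously conjugating $(\tilde A, \tilde B)$ to $(A, B)$; replacing $(\tilde A, \tilde B, \tilde C)$ by its $g$-conjugate we may assume $\tilde A = A$ and $\tilde B = B$, and it remains to show $\tilde C = C$.

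The traces $\mathrm{tr}(\tilde C), \mathrm{tr}(A\tilde C), \mathrm{tr}(B\tilde C), \mathrm{tr}(AB\tilde C)$ coincide with those of $C$, giving four linear equations $\mathrm{tr}((\tilde C - C)W) = 0$ for $W \in \{I, A, B, AB\}$. Since $A, B$ generate a subgroup whose image is $\mathrm{PSL}_2(\mathbb{F}_p)$, the associated representation is absolutely irreducible (any $\langle A, B\rangle$-invariant subspace of $\bar{\mathbb{F}}_p^2$ is automatically invariant under $-I$, hence under all of $\mathrm{SL}_2(\mathbb{F}_p)$). Burnside's theorem then implies that the $\bar{\mathbb{F}}_p$-algebra generated by $A, B$ is all of $M_2(\bar{\mathbb{F}}_p)$, and a short Cayley--Hamilton computation shows $\{I, A, B, AB\}$ is in fact a basis of $M_2(\bar{\mathbb{F}}_p)$ whenever $A, B$ do not commute. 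Non-degeneracy of the trace pairing on $M_2$ then forces $\tilde C = C$.

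The main obstacle is the first step — verifying that a generating pair can always be produced, possibly after replacing one matrix by a short product — which requires a routine but slightly tedious traversal of Dickson's list. A cleaner but less elementary alternative would be to invoke the Brauer--Nesbitt theorem directly: the hypothesis gives that the representation $F_3 \to \mathrm{SL}_2(\bar{\mathbb{F}}_p)$ defined by $(A, B, C)$ is absolutely irreducible (again since $\langle A, B, C, -I\rangle = \mathrm{SL}_2(\mathbb{F}_p)$), so it is determined up to $\mathrm{GL}_2(\bar{\mathbb{F}}_p)$-conjugation by its character, and the resulting intertwiner can be descended to $\mathrm{GL}_2(\mathbb{F}_p)$ via Hilbert 90 applied to the $\mathbb{G}_m$-torsor of conjugating elements.
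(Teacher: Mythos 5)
Your proposal really contains two distinct strategies, and they have very different statuses.

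Your second strategy (Brauer--Nesbitt plus descent) is correct and is a genuinely different, cleaner route than the paper's. The paper proves this lemma by an explicit case analysis in coordinates, exactly mirroring the proof of Lemma~\ref{Pairs up to conjugation}: diagonalize $A$ (when $\pm A$ is non-unipotent), read off the diagonal entries of $B$, $C$, $BC$ from traces, then spend one remaining conjugation degree of freedom to pin down the off-diagonal entries, with a separate case when $\pm A,\pm B,\pm C$ are all unipotent. Your Brauer--Nesbitt argument replaces all of this bookkeeping by: (i) the representation $F_3\to\mathrm{SL}_2(\bar{\mathbb{F}}_p)$ is absolutely irreducible since $\langle A,B,C,-I\rangle=\mathrm{SL}_2(\mathbb{F}_p)$ and any invariant subspace of $\bar{\mathbb{F}}_p^2$ is automatically $-I$-invariant; (ii) two absolutely irreducible representations of the same dimension with identical trace functions are isomorphic (this is fine in characteristic $p\ge 3$ for $n=2$, and in fact one can argue without any characteristic restriction: if they were non-isomorphic, $A,B,C$ would span $M_2\times M_2$ under $\rho_1\oplus\rho_2$, forcing every trace to vanish); and (iii) descent of the conjugating matrix to $\mathbb{F}_p$. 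For (iii), Hilbert 90 works, but an even more elementary phrasing is available: the space of intertwiners is a one-dimensional $\bar{\mathbb{F}}_p$-space cut out by $\mathbb{F}_p$-linear equations, hence contains a nonzero $\mathbb{F}_p$-rational element, which is invertible by Schur. This route trades the paper's elementary but tedious coordinate computations for a short appeal to standard representation theory; both are valid.

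Your first strategy, however, has a genuine gap. The reduction to a generating pair hinges on the claim that some pair among $\{A,B,C,AB,AC,BC\}$ must generate $\mathrm{PSL}_2(\mathbb{F}_p)$, which you assert follows from a ``routine but slightly tedious traversal of Dickson's list'' but do not prove. This is not a routine verification --- it is not even clear that the statement is true as formulated --- and the subsequent trace-pairing argument really does need it: the equations $\mathrm{tr}\bigl((\tilde C-C)W\bigr)=0$ only force $\tilde C=C$ when the $W$'s span $M_2$, which by Burnside plus your Cayley--Hamilton observation is equivalent to the chosen pair acting irreducibly. So as written, Plan A does not constitute a proof, and you would do better to drop it in favor of the Brauer--Nesbitt argument, which is complete.
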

\begin{proof}
    We will closely follow the strategy of proof of Lemma \ref{Pairs up to conjugation}.
    Firstly, assume as before that $\pm A$ is non-unipotent, and conjugate $A$ to the form $\begin{pmatrix}\lambda &  0 \\ 0 & \lambda^{-1}\end{pmatrix}.$ Now, knowing both $\mathrm{tr}(B),\mathrm{tr}(AB)$ we get as before the diagonal entries of $B$, and similarly for $C$ and for $BC$. The diagonal entries of $BC$ are $B_{11}C_{11} + B_{12}C_{21}$ and $B_{22}C_{22} + B_{21}C_{12}$ and so both $B_{12}C_{21}$ and $B_{21}C_{12}$ are fixed. Moreover, $B_{12}B_{21}$ and $C_{12}C_{21}$ are fixed as $\det(B)=\det(C)=1$. Conjugating by a matrix $\begin{pmatrix}\mu & 0 \\ 0 & 1\end{pmatrix}$, and assuming that none of $B_{12},B_{21},C_{12},C_{21}$ are zero, gives us an arbitrary choice of the off-diagonal entries with all the fixed product values. Moreover, if at most one of the off-diagonal values is zero, the argument still works (there are two product conditions and one degree of freedom $\mu$ for three variables). So, at least two are zero. If they are both on the same matrix, then two of our matrices are diagonal and we may finish via a completely analogous argument to the previous lemma. If both of the matrices are upper triangular or both are lower triangular, once again they generate a solvable subgroup in contradiction. So, one of the matrices is upper triangular and the other is lower triangular. Now, the fixed product value and the $\mu$-degree of freedom allows us to fix all the values.

    We are now left with the case that $\pm A,\pm B,\pm C$ are all unipotent. In such a case, either they are all in the same unipotent subgroup, and therefore generate a cyclic subgroup of $\mathrm{PSL}_2$, or some pair is in distinct unipotent subgroups. Assume that these are $A,B$, and conjugate them to the form $\varepsilon_A\begin{pmatrix}1 & t \\ 0 & 1\end{pmatrix},\varepsilon_B\begin{pmatrix}1 & 0 \\ s & 1\end{pmatrix}$, and $t,s\neq 0$ as they are in distinct unipotent subgroups. As before, $ts$ is fixed by $\mathrm{tr}(AB)$, and so up to conjugation we may fix the values of $t,s$. Now, for any matrix $C$, $\mathrm{tr}(A^2C) = C_{11} + 2tC_{21} + C_{22} = \mathrm{tr}(C) + 2tC_{21}$ and $\mathrm{tr}(B^2C) = \mathrm{tr}(C) + 2sC_{12}$. Therefore the off-diagonal entries are uniquely defined, and $\mathrm{tr}(A^2B^2C) = \mathrm{tr}\left(\begin{pmatrix}1 + 4ts &  2t \\ 2s & 1\end{pmatrix}C\right) = (1+4ts)C_{11} + 2tC_{21} + 2sC_{12} + C_{22} = \mathrm{tr}(C) + 2tC_{21} + 2sC_{22} + 4tsC_{11}$ and so $C_{11}$ is fixed, and similarly $\mathrm{tr}(B^2A^2C)$ fixes the value of $C_{22}$, proving our claim.
\end{proof}

Given a homomorphism $F_n \to \mathrm{PSL}_2(\mathbb{F}_p)$, let $\mathrm{Ch}_{\mathrm{PSL}_2(\mathbb{F}_p)}(F_n)$ be the set of $2^n$ possible characters of lifts to homomorphisms $F_n \to \mathrm{SL}_2(\mathbb{F}_p)$. So, two $\mathrm{PSL}_2(\mathbb{F}_p)$-characters are equal if and only if there are some two $\mathrm{SL}_2(\mathbb{F}_p)$-lifts with the same character. Therefore the previous Lemmas \ref{Pairs up to conjugation}, \ref{Triples up to conjugation} imply that for a generating set of $2$ or $3$ matrices the $\mathrm{PSL}_2(\mathbb{F}_p)$-character uniquely defines the matrices up to simultaneous $\mathrm{PGL}_2(\mathbb{F}_p)$-conjugation.
Now, using these Lemmas, we obtain:
\begin{proposition}
\label{Characterization of the algebraic variety}
Let $\gamma,\delta \in \mathrm{SL}_2(\mathbb{F}_p)$ satisfy Assumptions \ref{Assumption on non-conjugation}, \ref{Assumption on large orders}.
Let $$\varphi:\tilde{X}^{(2)}_{\gamma,\delta} \to \mathrm{Ch}_{\mathrm{PSL_2}(\mathbb{F}_p)}(F_3),\ (A,B,C,D) \mapsto (B^{-1}A, A^{-1}C, D^{-1}C).$$ Then,
\begin{enumerate}
    \item $\varphi$ is well defined as a map from $X_{\gamma,\delta}^{(2)}$, and as such is injective.
    \item The image of $\varphi$ is precisely those triples of matrices $(M_1,M_2,M_3)$ such that $\mathrm{tr}(M_1M_3)=\mathrm{tr}(\delta)$ and $\mathrm{tr}(M_1M_2M_3M_2^{-1})=\mathrm{tr}(\gamma)$.
\end{enumerate}
\end{proposition}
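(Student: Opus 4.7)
The plan is to establish the two claims of the proposition in four steps: well-definedness of $\varphi$ on the quotient $X^{(2)}_{\gamma,\delta}$, the necessity of the two stated trace identities, realizability of every triple satisfying them, and injectivity.

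For well-definedness, under the relation $(A,B,C,D)\sim(\hat\gamma A\hat\delta,\hat\gamma B\hat\delta,\hat\gamma C\hat\delta,\hat\gamma D\hat\delta)$ the $\hat\gamma$'s cancel pairwise in each of $B^{-1}A$, $A^{-1}C$, $D^{-1}C$, so the triple is globally conjugated by $\hat\delta$ and the $F_3$-character is unchanged. For the two trace identities, I would invert $A^{-1}BC^{-1}D=\delta$ to get $B^{-1}A=C^{-1}D\delta^{-1}$, so that $M_1M_3=(B^{-1}A)(D^{-1}C)=C^{-1}D\,\delta^{-1}\,D^{-1}C$ is a conjugate of $\delta^{-1}$; and expand $M_1M_2M_3M_2^{-1}=B^{-1}CD^{-1}A$, then use $AB^{-1}CD^{-1}=\gamma$ to recognize this as $A^{-1}\gamma A$. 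The claimed trace equalities follow.

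For realizability, given $(M_1,M_2,M_3)$ satisfying the trace conditions, I would conjugate inside $\mathrm{PSL}_2$ so that $M_3M_1=\delta^{-1}$, pick $x\in\mathrm{PSL}_2(\mathbb{F}_p)$ with $xNx^{-1}=\gamma$ where $N:=M_1M_2M_3M_2^{-1}$, and set $(A,B,C,D)=(x,\,xM_1^{-1},\,xM_2,\,xM_2M_3^{-1})$. A direct expansion recovers $B^{-1}A=M_1$, $A^{-1}C=M_2$, $D^{-1}C=M_3$, together with $AB^{-1}CD^{-1}=xNx^{-1}=\gamma$ and $A^{-1}BC^{-1}D=(M_3M_1)^{-1}=\delta$, placing the tuple in $\tilde X^{(2)}_{\gamma,\delta}$. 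The main subtlety here is performing both conjugations inside $\mathrm{PSL}_2$ rather than only $\mathrm{PGL}_2$: under Assumption \ref{Assumption on non-conjugation}, $\gamma$ and $\delta$ are non-central non-involutive regular semisimple elements, whose $\mathrm{PSL}_2$- and $\mathrm{PGL}_2$-conjugacy classes coincide (the index-two gap between the two groups is exactly cancelled by the index-two gap between the respective centralizers), so the required conjugations can be arranged within $\mathrm{PSL}_2$.

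For injectivity, let $(A,B,C,D),(A',B',C',D')\in\tilde X^{(2)}_{\gamma,\delta}$ have equal $\mathrm{PSL}_2$-character. The formulas in the second step show $M_1M_3$ and $M_1M_2M_3M_2^{-1}$ are conjugates of $\delta^{-1}$ and $\gamma$, so by Corollary \ref{Conjugates Generate} the triples generate $\mathrm{PSL}_2(\mathbb{F}_p)$; Lemma \ref{Triples up to conjugation} then yields $g\in\mathrm{PGL}_2(\mathbb{F}_p)$ with $M_i'=gM_ig^{-1}$. Since $M_3M_1=\delta^{-1}=M_3'M_1'$ holds automatically in $\tilde X^{(2)}_{\gamma,\delta}$, necessarily $g\in C_{\mathrm{PGL}_2}(\delta)$. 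Using the parametrization from the previous step, both tuples take the shape $(x,\,xM_1^{-1},\,xM_2,\,xM_2M_3^{-1})$ for distinguished conjugators $x_A,x_{A'}\in\mathrm{PSL}_2$ of $N,N'$ to $\gamma$; setting $\hat\delta:=g^{-1}\in C_{\mathrm{PGL}_2}(\delta)$ and $\hat\gamma:=x_{A'}gx_A^{-1}$, the relation $x_A^{-1}x_{A'}g\in C_{\mathrm{PGL}_2}(N)$ gives $\hat\gamma\in C_{\mathrm{PGL}_2}(\gamma)$, and a componentwise check yields $(A',B',C',D')=\hat\gamma(A,B,C,D)\hat\delta$, establishing the equivalence in $X^{(2)}_{\gamma,\delta}$.
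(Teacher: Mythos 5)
Your proof is correct and follows the same route as the paper: both establish necessity by the same two short trace computations, parametrize a preimage via a conjugator taking $N=M_1M_2M_3M_2^{-1}$ to $\gamma$ (the paper sets $A=1$ and conjugates afterwards, you set $A=x$ directly, which is equivalent), and prove injectivity via Lemma~\ref{Triples up to conjugation} together with Corollary~\ref{Conjugates Generate} to obtain the conjugator $g\in C_{\mathrm{PGL}_2}(\delta)$, then extract a left factor in $C_{\mathrm{PGL}_2}(\gamma)$. The one place you supply detail the paper leaves implicit is the observation that for non-unipotent, non-involutive elements the $\mathrm{PSL}_2$- and $\mathrm{PGL}_2$-conjugacy classes coincide, which is indeed what justifies choosing the conjugators inside $\mathrm{PSL}_2$.
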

\begin{proof}
    \begin{enumerate}
    \item We start by proving well-definiteness. If we multiply all matrices on the left by an element of $C_\Gamma(\gamma)$, then the map is preserved. Moreover, if we multiply all elements on the right by an element of $C_\Gamma(\delta)$, then we simply simultaneously conjugate all three elements, so get the same point on the character variety.
    Now, we prove injectivity. Assume that $(A_1,B_1,C_1,D_1)$ and $(A_2,B_2,C_2,D_2)$ have the same image in the character variety. By Lemma \ref{Triples up to conjugation}, and using Corollary \ref{Conjugates Generate}, let $g$ be a conjugating element.
    
    Since $D_2^{-1}C_2\cdot B_2^{-1}A_2=\delta^{-1}=D_1^{-1}C_1\cdot B_1^{-1}A_1$, it follows that $g \in C_\Gamma(\delta)$. Conjugating all elements simultaneously by $g$, we may assume that the two triples are equal.
    Now denote by $h \in \Gamma$ the element such that $A_2=hA_1$. It follows that $(A_2,B_2,C_2,D_2)=h(A_1,B_1,C_1,D_1)$. On the other hand, $A_1B_1^{-1}C_1D_1^{-1}=\gamma=A_2B_2^{-1}C_2D_2^{-1}$, and so $h \in C_\Gamma(\gamma)$. Therefore the points are equal in $X_{\gamma,\delta}^{(2)}.$
    \item We start by proving that the image is contained in this set. Indeed, 
    $$\mathrm{tr}(M_1M_3)=\mathrm{tr}(B^{-1}AD^{-1}C)=\mathrm{tr}(D^{-1}CB^{-1}A)=\mathrm{tr}(\delta^{-1})=\mathrm{tr}(\delta).$$ $$\mathrm{tr}(M_1M_2M_3M_2^{-1}) = \mathrm{tr}(B^{-1}AA^{-1}CD^{-1}CC^{-1}A)=\mathrm{tr}(B^{-1}CD^{-1}A)=\mathrm{tr}(AB^{-1}CD^{-1})=\mathrm{tr}(\gamma).$$
    Now, choose a triple of matrices $M_1,M_2,M_3$ such that $\mathrm{tr}(M_1M_3)=\mathrm{tr}(\delta)$ and $\mathrm{tr}(M_1M_2M_3M_2^{-1})=\mathrm{tr}(\gamma)$.
    Finding a quadruple $(A,B,C,D)$ with the correct equalities is easy, simply choose $A = 1$, and then this gives $B = M_1^{-1},C=M_2,D=M_2M_3^{-1}$. It follows that $\mathrm{tr}(AB^{-1}CD^{-1})=\mathrm{tr}(\gamma)$, as well as $\mathrm{tr}(A^{-1}BC^{-1}D)=\mathrm{tr}(\delta)$. Since $\gamma,\delta$ are non-unipotent, it follows that there are $g,h\in \mathrm{PSL}_2(\mathbb{F}_p)$ such that $AB^{-1}CD^{-1}=g\gamma g^{-1}$ and $A^{-1}BC^{-1}D=h\delta h^{-1}$. Replacing $(A,B,C,D)$  with $g(A,B,C,D)h^{-1}$, we get a quadruple in $X_{\gamma,\delta}^{(2)}$, with our given image $(M_1,M_2,M_3)$.
    \end{enumerate}
\end{proof}
Note that we used the map $\varphi:(A,B,C,D) \mapsto (B^{-1}A, A^{-1}C, D^{-1}C)$ in the proposition as a choice of coordinates, however we could just as well have used the more natural map $\varphi:(A,B,C,D) \mapsto (A^{-1}B, B^{-1}C, C^{-1}D)$.

After having proved this proposition, we see that our question is entirely one of a $4$-dimensional subvariety of the character variety of $F_3$. Moreover the proposition allows us to give explicit algebraic equations for the algebraic variety in question. We include the equations for the interested reader:
    $$p^2 - (ax+by+cz - abc)p + (a^2 + b^2 + c^2 + x^2 + y^2 + z^2 + xyz - abz - bcx - cay - 4) = 0.$$
    $$y = \mathrm{tr}(\delta)$$
    $$ac + bp - xz = \mathrm{tr}(\gamma) + \mathrm{tr}(\delta)$$
    Where
    $$a = \mathrm{tr}(B^{-1}A), b=\mathrm{tr}(A^{-1}C), c = \mathrm{tr}(D^{-1}C)$$
    $$x = \mathrm{tr}(A^{-1}CD^{-1}C), y = \mathrm{tr}(B^{-1}AD^{-1}C), z = \mathrm{tr}(B^{-1}C), p = \mathrm{tr}(B^{-1}CD^{-1}C)$$
    are the $7$-coordinate tuple of $(B^{-1}A, A^{-1}C, D^{-1}C)$ as in Lemma \ref{Magnus character varieties}.

    A given first proper decomposition fixes the values of $a,c$, and a given second proper decomposition fixes the values of $b,p$.
    
    Explicit formulae for the algebraic actions of $\sigma_1,\sigma_2,\sigma_3$ can be written using this description. We  include them here for the interested reader:
    $$\sigma_1: (a,b,c,x,y,z,p)\mapsto(a,ab-z,c,ax-p,y,b,x)$$
    $$\sigma_2: (a,b,c,x,y,z,p)\mapsto(az-b,a,cz-p,acz-ap-bc+x,y,z,c)$$    $$\sigma_3: (a,b,c,x,y,z,p)\mapsto(a,x,c,cx-b,y,p,cp-z)$$
\bibliographystyle{alpha}
\bibliography{bibli}

\vspace{0.5cm}

\noindent{\textsc{DPMMS, Centre for Mathematical Sciences, Wilberforce Road, Cambridge, CB3 0WB, UK}}

\noindent{\textit{Email address:} \texttt{liamhanany@gmail.com}} \\
\end{document}